\newcommand{\N}{{\mathbb{N}}}
\newcommand{\Z}{{\mathbb{Z}}}
\newcommand{\C}{{\mathbb{C}}}
\newcommand{\ol}{\overline}
\newcommand{\uloopr}[1]{\ar@'{@+{[0,0]+(-4,5)}@+{[0,0]+(0,10)}@+{[0,0] +(4,5)}}^{#1}}
\newcommand{\uloopd}[1]{\ar@'{@+{[0,0]+(5,4)}@+{[0,0]+(10,0)}@+{[0,0]+ (5,-4)}}^{#1}}
\newcommand{\dloopr}[1]{\ar@'{@+{[0,0]+(-4,-5)}@+{[0,0]+(0,-10)}@+{[0, 0]+(4,-5)}}_{#1}}
\newcommand{\dloopd}[1]{\ar@'{@+{[0,0]+(-5,4)}@+{[0,0]+(-10,0)}@+{[0,0 ]+(-5,-4)}}_{#1}}
\newcommand{\calV}{{\mathcal V}}
\newcommand{\gotr}{{\mathfrak r}}
\newcommand{\alphaxi}{{\alpha
^{x_i}(x_1,\dots , \widehat{x_i},\dots ,x_{k_u})}}
\newcommand{\Lab}{L^{{\rm ab}}}
\newcommand{\luloop}[1]{\ar@'{@+{[0,0]+(-8,2)}@+{[0,0]+(-10,10)}@+{[0, 0]+(2,2)}}^{#1}}
\newcommand{\dotedge}{\ar@{.}}
\newcommand{\eqedge}{\ar@{=}}
\newcommand{\FSGr}{\mathbf{FSGr}}
\newcommand{\Mon}{\mathbf{Mon}}
\newcommand{\Ehat}{\hat{E}}
\newcommand{\mon}[1]{\calV(#1)} 
\newcommand{\So}{{\rm Source}}
\newcommand{\bgast}{\mbox{\Large$*$}}
\numberwithin{equation}{section}
\theoremstyle{plain}
\newtheorem{theorem}{Theorem}[section]
\newtheorem{lemma}[theorem]{Lemma}
\newtheorem{proposition}[theorem]{Proposition}
\newtheorem{corollary}[theorem]{Corollary}
\theoremstyle{definition}
\newtheorem{definition}[theorem]{Definition}
\newtheorem{example}[theorem]{Example}
\newtheorem{remark}[theorem]{Remark}
\newtheorem{remarks}[theorem]{Remarks}
\newtheorem*{remark*}{Remark}
\newtheorem*{assumption*}{Assumption}
\newtheorem{construction}[theorem]{Construction}
\newtheorem{hypothesis}[theorem]{Hypothesis}
\newtheorem{notation}[theorem]{Notation}
\begin{document}

\null\vskip-1cm

\title[Dynamical systems]{Dynamical systems associated to separated graphs, graph algebras, and paradoxical decompositions}%
\author{Pere Ara}
\address{Departament de Matem\`atiques, Universitat Aut\`onoma de Barcelona,
08193 Bellaterra (Barcelona), Spain.} \email{para@mat.uab.cat}
\author{ Ruy Exel}
\address{Departamento de Matem\'atica, Universidade Federal de Santa Catarina,
88010-970 Florian\'opolis SC, Brazil.}\email{exel@mtm.ufsc.br}
\thanks{The first named author was partially supported by DGI MICIIN-FEDER
MTM2011-28992-C02-01, and by the Comissionat per Universitats i
Recerca de la Generalitat de Catalunya. The second named author was
partially supported by CNPq.} \subjclass[2000]{Primary 16D70, 46L35;
Secondary 06A12, 06F05, 46L80} \keywords{Graph algebra, dynamical
system, refinement monoid, nonstable K-theory, partial
representation, partial action, crossed product, condition (L)}
\date{\today}

\maketitle

\begin{abstract}
We attach to each finite bipartite separated graph $(E,C)$ a partial
dynamical system $(\Omega (E,C), \mathbb  F, \theta)$, where $\Omega
(E,C)$ is a zero-dimensional metrizable compact space, $\mathbb F$
is a finitely generated free group, and $\theta $ is a continuous
partial action of $\mathbb F$ on $\Omega (E,C)$. The full crossed
product C*-algebra $\mathcal O (E,C)= C(\Omega (E,C)) \rtimes
_{\theta^*} \mathbb F$ is shown to be a canonical quotient of the
graph C*-algebra $C^*(E,C)$ of the separated graph $(E,C)$.
Similarly, we prove that, for any $*$-field $K$, the algebraic
crossed product $\Lab _K(E,C) = C_K(\Omega (E,C))\rtimes
_{\theta^*}^{\text{alg}} \mathbb F$ is a canonical quotient of the
Leavitt path algebra $L_K(E,C)$ of $(E,C)$. The monoid $\mon{\Lab
_K(E,C)}$ of isomorphism classes of finitely generated projective
modules over $\Lab _K (E,C)$ is explicitly computed in terms of
monoids associated to a canonical sequence of separated graphs.
Using this, we are able to construct an action of a finitely
generated free group $\mathbb F$ on a zero-dimensional metrizable
compact space $Z$ such that the type semigroup $S(Z,\mathbb F,
\mathbb K)$ is not almost unperforated, where $\mathbb K$ denotes
the algebra of clopen subsets of $Z$. Finally we obtain a
characterization of the separated graphs $(E,C)$ such that the
canonical partial action of $\mathbb F$ on $\Omega (E,C)$ is
topologically free.
\end{abstract}

\tableofcontents

\section{Introduction}
\label{sect:intro}

In their seminal paper \cite{CK}, J. Cuntz and W. Krieger gave a
dynamical interpretation of the Cuntz-Krieger algebras associated to
square $\{ 0,1\}$-matrices in terms of the corresponding subshifts
of finite type. Since then, there has been a long and fruitful
interaction between the theory of combinatorially defined
C*-algebras and dynamics, see
  \cite{WatatanibyRuy},
  \cite{KPPRByRuy},
  \cite{KPR},
  \cite{ExelLaca},
  \cite{KPActionsByRuy},
  \cite{BPRSByRuy},
  \cite{FLRByRuy},
  \cite{BHRSByRuy},
  \cite{PatGraphByRuy},
  \cite{TomfordeByRuy},
  \cite{RaeSzyByRuy},
  \cite{PTWByRuy},
  \cite{RSYByRuy},
  \cite{MuhlyByRuy},
  \cite{CombinByRuy}.

\medskip

In  \cite[Section 5]{ExelAmena}, the second named author describes
any Cuntz-Krieger C*-algebra as a crossed product of a commutative
C*-algebra by a \emph{partial action} of a non-abelian free group. The same
applies to graph C*-algebras, which are generalizations of
Cuntz-Krieger algebras.

\medskip

A wider class of graph algebras has been introduced in \cite{AG} and
\cite{AG2}. These algebras are attached to {\it separated graphs},
where a separated graph is a pair $(E,C)$ consisting of a directed
graph $E$ and a set $C=\bigsqcup _{v\in E^0} C_v$, where each $C_v$
is a partition of the set of edges whose terminal vertex is $v$.
Their associated graph C*-algebras $C^*(E,C)$ and Leavitt path
algebras $L_K(E,C)$ (for an arbitrary field $K$), provide
generalizations of the usual graph C*-algebras (\cite{Raeburn}) and
Leavitt path algebras (\cite{AA1}, \cite{AMP}) associated to
directed graphs, although these algebras behave quite differently
from the usual graph algebras because the range projections
associated to different edges need not commute. One motivation for
their introduction was to provide graph-algebraic models for the
Leavitt algebras $L_K(m,n)$ of \cite{lea}, and the C*-algebras
$U^{\text{nc}}_{m,n}$ studied L. Brown \cite{Brown} and  McClanahan
 \cite{McCla1}, \cite{McCla2}, \cite{McCla3}. Another motivation
was to obtain graph algebras whose structure of projections is as
general as possible. Thanks to deep  results due to G. Bergman
\cite{Berg1} \cite{Berg2}, it is possible  to show that the natural
map $M(E,C) \to \mon{L_K(E,C)}$ from a monoid $M(E,C)$ naturally
associated to $(E,C)$ to the monoid $\mon{L_K(E,C)}$ of isomorphism
classes of finitely generated projective right modules over
$L_K(E,C)$ is an isomorphism, see \cite[Theorem 4.3]{AG}.  Since
each conical abelian monoid $M$ is isomorphic to a monoid of the
form $M(E,C)$ for a suitable separated graph $(E,C)$
(\cite[Proposition 4.4]{AG}), we conclude that Leavitt path algebras
of separated graphs realize all the conical abelian monoids. (By the
results of \cite{AMP}, this is not true for the class of Leavitt
path algebras of non-separated graphs.) The first author and Ken
Goodearl raised in \cite{AG2} the problem of whether the natural map
$M(E,C)\to \mon{C^*(E,C)}$ is an isomorphism (see also \cite[Section
6]{Aone-rel} for a brief discussion on this problem).

\medskip

Recall that a set $S$ of partial isometries in a $*$-algebra
$\mathcal A$ is said to be {\it tame} \cite[Proposition
5.4]{ExelPJM} if every element of $U=\langle S\cup S^*\rangle$, the
multiplicative semigroup generated by $S\cup S^*$, is a partial
isometry, and the set $\{ e(u)\mid u\in U \}$ of final projections
of the elements of $U$ is a commuting set of projections  of
$\mathcal A$. A main difficulty in working with $C^*(E,C)$ and
$L_K(E,C)$ is that, in general, the generating set of partial
isometries of these algebras is not tame. This is not the case for
the usual graph algebras, where it can be easily shown that the
generating set of partial isometries is tame.

\medskip

The main objective of the present paper is to associate a partial
dynamical system to any finite bipartite separated graph $(E,C)$,
and to uncover some of the fundamental properties of the crossed
products corresponding to it. To a large extent, this task can be
made simultaneously in the purely algebraic category and in the
C*-algebraic category. However, several analytic aspects, such as
the study of exactness, nuclearity and reduced crossed products will
need more specific tools. This dynamical system is described by the
action $\theta $ of a finitely generated free non-abelian group
$\mathbb F$, whose rank coincides with the number of edges of $E$,
on a  zero-dimensional metrizable compact space $\Omega (E,C)$. The
corresponding C*-algebra full crossed product $\mathcal O (E,C) :=
C(\Omega (E,C)) \rtimes _{\theta^*} \mathbb F$ is shown  to coincide
with a canonical quotient of the graph C*-algebra $C^*(E,C)$ of the
separated graph $(E,C)$ (Corollary
\ref{cor:Lab+OECarecrossedprods}). Namely we obtain an isomorphism
$$ C(\Omega (E,C)) \rtimes _{\theta^*}
\mathbb F\cong C^*(E,C)/J $$ where $J$ is the closed ideal of
$C^*(E,C)$ generated by all the commutators $[e(\gamma ), e(\mu)]$,
where $\gamma, \mu $ belong to the multiplicative semigroup $U$ of
$C^*(E,C)$ generated by the canonical partial isometries,
corresponding to $E^1\cup (E^1)^*$. Note that $E^1\cup (E^1)^*$
becomes a tame set of partial isometries in $\mathcal O (E,C)$. The
ideal $J$ is the closure of an increasing union of ideals $J_n$,
with $J_n$ the ideal generated by all the commutators $[e(\gamma) ,
e(\mu)]$, where $\gamma, \mu$ can be expressed as products of $\le
n$ canonical generators. We show in Section
\ref{sect:main-construct} that there is a canonical sequence $\{
(E_n, C^n)\}$ of finite bipartite separated graphs such that
$C^*(E,C)/J_n\cong C^*(E_n, C^n)$ for all $n$.
 Consequently the C*-algebra $\mathcal O (E;C)=
C^*(E,C)/J$ is isomorphic to an inductive limit $\varinjlim_n
C^*(E_n,C^n)$ with surjective transition maps. This gives an
approximation result of the dynamical C*-algebra $\mathcal O (E,C)$
by graph C*-algebras of separated graphs. We also consider in
Section \ref{sect:topfree} a reduced C*-algebra $\mathcal O ^r
(E,C)$, which is defined in terms of the {\it reduced crossed
product} of $C(\Omega (E,C))$ by $\mathbb F$. These C*-algebras have
been studied in detail in \cite{AEK} in the case where
$(E,C)=(E(m,n), C(m,n))$ is the separated graph whose graph
C*-algebra is Morita-equivalent to MacClanahan's algebras
$U^{\text{nc}}_{m,n}$ (see Example \ref{exam:m,ndyn-system} for the
definition),  under the notations $\mathcal O _{m,n}$ and $\mathcal
O _{m,n}^r$. They are attached to the so-called universal
$(m,n)$-dynamical system, see \cite[Theorem 3.8]{AEK}. It is shown
in \cite{AEK} that the natural map $\mathcal O _{m,n}\to \mathcal O
^r _{m,n}$ is not injective.

\medskip

A similar result holds when we work in the category of $*$-algebras,
obtaining a corresponding isomorphism
$$C_K(\Omega (E,C))\rtimes ^{\text{alg}}_{\theta^*} \mathbb F \cong L_K(E,C)/J \cong \varinjlim
L_K(E_n,C^n),$$ where $C_K(\Omega(E,C))$ is the $*$-algebra of
continuous functions from $\Omega (E,C)$ to the $*$-field $K$
endowed with the discrete topology, and the crossed product is taken
in the algebraic category.
 We denote this quotient $*$-algebra $L_K(E,C)/J$ by
$\Lab _K(E,C)$. (Here, of course, $J$  means the algebraic ideal
generated by the commutators $[e(\gamma), e(\mu)]$, for $\gamma,
\mu\in U$.) This leads to a monoid isomorphism $\mon{\Lab
_K(E,C)}\cong \varinjlim_n \mon{L_K (E_n,C^n)}\cong \varinjlim _n
M(E_n, C^n)$. Conjecturally, the same monoid $\varinjlim _n M(E_n,
C^n)$ would compute the nonstable K-theory of $\mathcal O (E,C)$.
The monoid $\mathfrak R (E,C) : =\varinjlim M(E_n, C^n)$ is a
refinement monoid (see Section 2 for the definition) and the natural
map $M(E,C)\to \mathfrak R (E,C)$ gives a canonical refinement of
$M(E,C)$ (Lemma \ref{lem:refinement}). This condition implies in
particular that the map $M(E,C)\to \mathfrak R (E,C)$ is an
order-embedding.

\medskip

We remark that there is no loss of generality in assuming that our
separated graphs are bipartite, as we show in Proposition
\ref{prop:reductiontobipartitegraphs} that every graph algebra of a
separated graph is Morita-equivalent to a graph algebra of a
bipartite separated graph.

\medskip

As a biproduct of our results, we are able to answer a question
raised in recent papers by Kerr \cite{Kerr}, Kerr and Nowak
\cite{KN}, and R\o rdam and Sierakowski \cite{RS}. The statement of
this question is unrelated to graph algebras but, using our new
theory, we can give a complete answer to it. The classical Tarski's
Theorem (\cite[Corollary 9.2]{Wagon}) asserts that, for an action of
a group $G$ on a set $X$, a subset $E$ of $X$ is not $G$-paradoxical
if and only if there is a finitely additive $G$-invariant measure
$\mu \colon \mathcal P (X)\to [0,\infty]$ such that $\mu (E)= 1$.
This result follows from special properties of the {\it type
semigroup} $S(X,G)$ of the action. Observe that the set of finitely
additive $G$-invariant measures $\mu$ as above is precisely the
state space of $(S(X, G), [E])$.

\medskip

If $G$ is a discrete group acting by continuous transformations on a
topological space $X$ and $\mathbb D$ is a $G$-invariant subalgebra
of subsets of $X$, one may restrict the equidecomposability relation
to subsets in $\mathbb D$, obtaining a ``relative" type semigroup
$S(X, G, \mathbb D) $, as in \cite{Kerr}, \cite{KN}, \cite{RS}. Of
particular interest is the case where $X$ is a zero-dimensional
metrizable compact space and $\mathbb D$ is the subalgebra $\mathbb
K$ of clopen subsets of $X$. It has been asked in the
above-mentioned papers (cf. \cite[page 285]{RS}, \cite[Question
3.10]{Kerr}) whether the analogue of Tarski's Theorem holds in this
more general context, that is, whether for $E\in \mathbb K$ we have
that $2[E]\nleq [E]$ in $S(X,G,\mathbb K)$ if and only if the state
space of $(S(X,G, \mathbb K), [E])$ is non-empty. We show in
Corollary \ref{cor:non-paradoxical2} that the answer to this
question is negative. Indeed, our Theorem
\ref{thm:main-typesemigroup2} implies that $S(X,G, \mathbb K)$ does
not satisfy in general any cancellation or order-cancellation law,
since, given any finitely generated conical abelian monoid $M$ we
can find a triple $(X,G, \mathbb K)$, with $X$ a 0-dimensional
metrizable compact space, such that there is an order-embedding of
$M$ into $S(X,G, \mathbb K)$. Our methods give in a natural way {\it
partial actions} of discrete groups on totally disconnected spaces.
We use globalization results from \cite{Abadie} to reach the desired
goal with {\it global actions}.

\medskip

We also obtain a characterization of the finite bipartite separated
graphs $(E,C)$ such that the canonical action of the free group
$\mathbb F$ on the space $\Omega (E,C)$ is topologically free. It
turns out that this is equivalent to a version, adapted to separated
graphs, of the well-known condition (L) for graphs, which is the
condition that every cycle has an entry (\cite{KPR},
\cite[Definition 12.1]{ExelLaca}), see Theorem \ref{thm:top-free}.
Using this result and known facts on crossed products by partial
actions from \cite{ELQ} we show that, if $(E,C)$ satisfies condition
(L), then the  {\it reduced crossed product} C*-algebra $\mathcal O
^r (E,C)=C(\Omega (E,C))\rtimes ^r \mathbb F$ satisfies condition
(SP), that is, every nonzero hereditary subalgebra contains a
nonzero projection, see Theorem \ref{thm:SPproperty}. A
corresponding result is also shown for the algebra $\Lab _K (E,C)$
(Theorem \ref{thm:AlgebraicSP} and Remark
\ref{rem:one-sidedideals}).

\medskip

Many properties of these new classes of algebras remain to be
investigated. We mention the computation of K-theoretic invariants
in terms of the separated graph $(E,C)$, the study of the structure
of ideals, and the study of exactness and nuclearity. Topologically
free minimal orbits of the topological space $\Omega (E,C)$ with
respect to the canonical action of the free group will surely
provide examples of simple C*-algebras with exotic properties. In a
different direction, the algebras constructed in the present paper
represent a further step in the strategy initiated in \cite{AG} to
construct von Neumann regular rings and exchange rings with
prescribed $\mathcal V$-monoids (see \cite{Areal} for a survey on
this problem). This will be discussed in more detail in
\cite{AGprep}, where the refinement monoid $\mathfrak R
(E,C)=\varinjlim M(E_n, C^n)$ associated to the separated graph
$(E,C)$ described in Example \ref{exam:alggenepi} will be realized
as the $\mathcal V$-monoid of an exchange $K$-algebra obtained as a
universal localization of the corresponding algebra $\Lab _K(E,C)$.
The latter algebra is Morita equivalent to the algebra of the {\it
free monogenic inverse semigroup} (see Example
\ref{exam:alggenepi}).

\medskip

\noindent{\bf Contents.} We now explain in more detail the contents
of this paper. In Section 2 we recall the basic definitions needed
for our work, coming from the papers \cite{AG2} and \cite{AG}.
Section 3 contains some preparatory material concerning graph theory
and semigroup theory. In particular we introduce the crucial concept
of a multiresolution of a separated graph $(E,C)$ at a set of
vertices $V$ of $E$. This is a modification of the concept of
resolution of a separated graph at a set of vertices, which has been
introduced in \cite[Section 8]{AG}. As in \cite{AG}, the aim to
introduce this concept is to give embeddings of the monoid $M(E,C)$
associated to a separated graph $(E,C)$ into a refinement monoid
corresponding  to another separated graph $(F,D)$, given in general
as the limit of an infinite sequence of separated graphs obtained by
successive applications of the resolution process. The
multiresolution process differs from the resolution process in that
it involves at once all the sets in the corresponding partitions
$C_v$, $v\in V$, and so it does not depend on the particular choice
of pairs $X,Y\in C_v$, for $v\in V$. In Section 4, we define, for a
given finite bipartite separated graph $(E,C)$, a canonical sequence
of finite separated graphs $(F_n,D^n)$, obtained by successive
applications of the multiresolution process, in such a way that the
union $(F_{\infty}, D^{\infty})=\bigcup_{n=0}^{\infty}(F_n,D^n)$
satisfies that $M(F_{\infty}, D^{\infty})$ is a refinement of
$M(E,C)$. The canonical sequence $\{(E_n,C^n)\}$ of finite bipartite
separated graphs obtained as the {\it sections} of $(F_{\infty},
D^{\infty})$ is of special importance. On one hand, it is shown in
Lemma \ref{lem:refinement} that there are natural connecting
homomorphisms $\iota_n\colon M(E_n,C^n)\to M(E_{n+1}, C^{n+1})$ such
that $M(F_{\infty}, D^{\infty})\cong \varinjlim _n
(M(E_n,C^n),\iota_n)$, so that the map $M(E,C)\to \varinjlim _n
(M(E_n,C^n),\iota_n)$ is a refinement of $M(E,C)$. On the other
hand, it is shown in Theorem \ref{thm:commutatorsstepn} that there
is a natural isomorphism $L_K(E,C)/J_n\cong L_K(E_n,C^n)$, where
$J_n$ is the ideal of $L_K(E,C)$ generated by all the commutators
$[e(\gamma), e(\mu)]$, where $\gamma, \mu$ are products of $\le n$
canonical generators from $E^1\cup (E^1)^*$. A similar result holds
for the corresponding graph C*-algebras. The notations $\Lab (E,C)$
and $\mathcal O (E,C)$ are introduced in \ref{notation:Lab}. The
topological space $\Omega (E,C)$ is also introduced in Section 5 as
the spectrum of the canonical commutative AF-subalgebra $\mathcal
B_0$ of $\mathcal O (E,C)$.

\medskip

We develop in Section 6 the crossed product structure of the
algebras $\Lab _K(E,C)$ and $\mathcal O (E,C)$. This is done in a
soft way, using the universal properties of the involved objects.
The topological space $\Omega (E,C)$ is shown in Corollary
\ref{cor:univECdynsustem} to be the {\it universal $(E,C)$-dynamical
system}. This gives a different proof of \cite[Theorem 3.8]{AEK},
where the particular case of $(m,n)$-dynamical systems is
considered. Moreover the natural partial action of the free group
$\mathbb F= \mathbb F \langle E^1 \rangle $ on $\Omega (E,C)$
induces $*$-isomorphisms $\Lab _K(E,C)\cong C_K(\Omega (E,C))\rtimes
_{\alpha} \mathbb F$ and $\mathcal O (E,C)\cong C(\Omega
(E,C))\rtimes _{\alpha } \mathbb F$ (Corollary
\ref{cor:Lab+OECarecrossedprods}). Section 7 is devoted to the study
of the type semigroups. Our main result is Theorem
\ref{thm:main-typesemigroup2}, where it is shown that, given any
finitely generated abelian conical monoid $M$ there exists a
0-dimensional metrizable compact space $Z$ and an action of a
finitely generated free group $\mathbb F$ on it so that $M$
order-embeds in the type semigroup $S(Z, \mathbb F, \mathbb K)$.
This uses the machinery from Sections 3--6 plus Abadie's
globalization results \cite{Abadie}. In Section 8 we develop
descriptions of the space $\Omega (E,C)$ and the canonical partial
action of the free group $\mathbb F$ on it. There are two different
descriptions, one in terms of configurations and another in terms of
certain ``choice functions", called here $E$-functions. We show in
Theorem \ref{thm:Efunctions} that the basic clopen sets
corresponding to the vertices in the odd layers of the graph
$F_{\infty}$ (introduced in Construction
\ref{cons:complete-multiresolution}) correspond precisely to the
partial $E$-functions. Several examples are presented in Section 9.
Apart from the motivational example leading to the $(m,n)$-dynamical
system (Example \ref{exam:m,ndyn-system}), we consider various
interesting examples which are connected with constructions already
investigated in disparate contexts. So for example we recover Truss
example from \cite{Truss} of a $G$-space $X$ with failure of the
$2$-cancellation law $2x=2y\implies x=y$ in the type semigroup
(Example \ref{exam:trussexam}), a separated graph $(E,C)$ such that
the algebra $\mathcal O (E,C)$ is Morita-equivalent to the
C*-algebra of the free monogenic inverse semigroup \cite{HR}
(Example \ref{exam:alggenepi}), and another separated graph $(F,D)$
such that $\mathcal O (F,D)$ (respectively $\Lab _K(F,D)$) is
Morita-equivalent to the group C*-algebra of the lamplighter group
$C^*(\Z_2\wr \Z)$ (respectively the group algebra $K[\Z_2\wr \Z]$).
 Finally, Section 10 contains our
characterization of the finite bipartite separated graphs $(E,C)$
such that the canonical action of the free group group $\mathbb F$
on the space $\Omega (E,C)$ is topologically free and the
consequences for the algebraic structure of $\mathcal O (E,C)$ and
$\Lab _K(E,C)$.

\medskip

After the first version of this paper was posted at arXiv, we have been informed by Fred Wehrung that, by using results of H.
Dobbertin, he has obtained a proof of the fact that any countable conical refinement monoid with order-unit 
is isomorphic to a type semigroup of the form $S(Z,\mathbb F,\mathbb K)$, where $\mathbb F$ is a countably generated free group.

\section{Preliminary definitions}
\label{sect:prels}

The concept of separated graph, introduced in \cite{AG}, plays a
vital role in our construction. In this section, we will recall this
concept and we will also recall the definitions of the {\it monoid
associated to a separated graph}, the {\it Leavitt path algebra} and
the {\it graph C*-algebra} of a separated graph.

We will use the reverse notation as in \cite{AG} and \cite{AG2}, but
in agreement with the one used in \cite{AEK}, and in the book
\cite{Raeburn}.

\begin{definition}{\rm (\cite{AG})} \label{defsepgraph}
A \emph{separated graph} is a pair $(E,C)$ where $E$ is a graph,
$C=\bigsqcup _{v\in E^ 0} C_v$, and $C_v$ is a partition of
$r^{-1}(v)$ (into pairwise disjoint nonempty subsets) for every
vertex $v$. (In case $v$ is a source, we take $C_v$ to be the empty
family of subsets of $r^{-1}(v)$.)

If all the sets in $C$ are finite, we say that $(E,C)$ is a
\emph{finitely separated} graph. This necessarily holds if $E$ is
column-finite (that is, if $r^{-1}(v)$ is a finite set for every
$v\in E^0$.)

The set $C$ is a \emph{trivial separation} of $E$ in case $C_v=
\{r^{-1}(v)\}$ for each $v\in E^0\setminus \So (E)$. In that case,
$(E,C)$ is called a \emph{trivially separated graph} or a
\emph{non-separated graph}.
\end{definition}

The following definition gives the Leavitt path algebra $L_K(E,C)$
as a universal object in the category of $*$-algebras over a fixed
field with involution $K$. The underlying algebra is naturally
isomorphic to the algebra defined in \cite[Definition 2.2]{AG} in
case every vertex is either the range vertex or the source vertex of
some edge in $E$.

\begin{definition}
\label{def:LPASG} {\rm Let $(K, *)$ be a field with involution. The
{\it Leavitt path algebra of the separated graph} $(E,C)$ with
coefficients in the field $K$, is the $*$-algebra $L_K(E,C)$ with
generators $\{ v, e\mid v\in E^0, e\in E^1 \}$, subject to the
following relations:}
\begin{enumerate}
\item[] (V)\ \ $vv^{\prime} = \delta_{v,v^{\prime}}v$ \ and \ $v=v^*$ \ for all $v,v^{\prime} \in E^0$ ,
\item[] (E)\ \ $r(e)e=es(e)=e$ \ for all $e\in E^1$ ,
\item[] (SCK1)\ \ $e^*e'=\delta _{e,e'}s(e)$ \ for all $e,e'\in X$, $X\in C$, and
\item[] (SCK2)\ \ $v=\sum _{ e\in X }ee^*$ \ for every finite set $X\in C_v$, $v\in E^0$.
\end{enumerate}
\end{definition}

The Leavitt path algebra $L_K(E)$ is just $L_K(E,C)$ where $C_v= \{
r^{-1}(v)\}$ if $r^{-1}(v)\ne \emptyset $ and $C_v=\emptyset $ if
$r^{-1}(v)=\emptyset$. An arbitrary field can be considered as a
field with involution by taking the identity as the involution.
However, our ``default" involution over the complex numbers $\C$
will be the complex conjugation.

We now recall the definition of the graph C*-algebra $C^*(E,C)$,
introduced in \cite{AG2}.

\begin{definition} The \emph{graph C*-algebra} of a separated graph $(E,C)$ is the
C*-algebra $C^*(E,C)$  with generators $\{ v, e \mid v\in E^0,\ e\in
E^1 \}$, subject to the relations (V), (E), (SCK1), (SCK2). In other
words, $C^*(E,C)$ is the enveloping C*-algebra of $L(E,C)$.
\end{definition}

In case $(E,C)$ is trivially separated, $C^*(E,C)$ is just the
classical graph C*-algebra $C^*(E)$. There is a unique
*-homomorphism  $L_\C(E,C) \rightarrow
C^*(E,C)$ sending the generators of $L(E,C)$ to their canonical
images in $C^*(E,C)$. This map is injective by \cite[Theorem
3.8(1)]{AG2}.

If no confusion can arise, we will suppress the field $K$ from our
notation.

Since both $L_K(E,C)$ and $C^*(E,C)$ are universal objects with
respect to the same sets of generators and relations, many of the
constructions of this paper apply in the same way to either class. A
remarkable difference is that we {\it know} exactly what is the
structure of the monoid $\mon{L_K(E,C)}$ for any separated graph
$(E,C)$, but we still do not know the structure of the monoid
$\mon{C^*(E,C)}$, although it is conjectured in \cite{AG2} that the
natural map $L_\C(E,C)\to C^*(E,C)$ induces an isomorphism
$\mon{L_\C(E,C)}\to \mon{C^*(E,C)}$. See \cite[Section 6]{Aone-rel}
for a short discussion on this problem.

Recall that for a unital ring $R$, the monoid $\mon{R}$ is usually
defined as the set of isomorphism classes $[P]$ of finitely
generated projective (left, say) $R$-modules $P$, with an addition
operation given by $[P]+[Q]= [P\oplus Q]$. For a nonunital version,
see Definition \cite[Definition 10.8]{AG}.

For arbitrary rings, $\mon{R}$ can also be described in terms of
equivalence classes of idempotents from the ring $M_\infty(R)$ of
$\omega\times\omega$ matrices over $R$ with finitely many nonzero
entries. The equivalence relation is \emph{Murray-von Neumann
equivalence}: idempotents $e,f\in M_\infty(R)$ satisfy $e\sim f$ if
and only if there exist $x,y\in M_\infty(R)$ such that $xy=e$ and
$yx=f$. Write $[e]$ for the equivalence class of $e$; then $\mon{R}$
can be identified with the set of these classes. Addition in
$\mon{R}$ is given by the rule $[e]+[f]= [e\oplus f]$, where
$e\oplus f$ denotes the block diagonal matrix $\left(
\begin{smallmatrix} e&0\\ 0&f \end{smallmatrix} \right)$. With this
operation, $\mon{R}$ is an abelian monoid, and it is \emph{conical},
meaning that $a+b=0$ in $\mon{R}$ only when $a=b=0$. Whenever $A$ is
a $C^*$-algebra, the monoid $\mon{A}$ agrees with the monoid of
equivalence classes of projections  in $M_{\infty}(A)$ with respect
to the equivalence relation given by $e\sim f $ if and only if there
is a partial isometry $w$ in $M_{\infty}(A)$ such that $e=ww^*$ and
$f=w^*w$; see \cite[4.6.2 and 4.6.4]{Black} or \cite[Exercise
3.11]{rordam}.

We will need the definition of $M(E,C)$ only for finitely separated
graphs. The reader can consult \cite{AG} for the definition in the
general case. Let $(E,C)$ be a finitely separated graph, and let
$M(E,C)$ be the abelian monoid given by generators $a_v$, $v\in
E^0$, and relations $a_v=\sum _{e\in X} a_{s(e)}$, for $X\in C_v$,
$v\in E^0$. Then there is a canonical monoid homomorphism $M(E,C)\to
\mon{L_K(E,C)}$, which is shown to be an isomorphism in
\cite[Theorem 4.3]{AG}. The map $\mon{L_{\C}(E,C)}\to
\mon{C^*(E,C)}$ induced by the natural $*$-homomorphism $L_K(E,C)\to
C^*(E,C)$ is conjectured to be an isomorphism for all finitely
separated graphs $(E,C)$ (see \cite{AG2} and \cite[Section
6]{Aone-rel}).

\section{Multiresolutions}
\label{sect:multires}

In this section, we will introduce the concept of mutiresolution of
a finitely separated graph $(E,C)$ at a vertex $v\in E^0$, which is
closely related to the notion of resolution, studied in \cite{AG}.
These computations will be heavily used in the forthcoming sections.

An (abelian) monoid $M$ is said to be a {\it refinement monoid} if
whenever $a+b=c+d$ in $M$, there exist $x,y,z,t$ in $M$ such that
$a=x+y$ and $b=z+t$ while $c=x+z$ and $d=y+t$.

We recall the following definitions from \cite{AG}. For $X\in C$, we
write ${\bf s}(X):= \sum_{e\in X} s(e)$.

\begin{definition} \label{defsquigarrow}
Let $F$ be the free abelian monoid on $E^0$. For $\alpha,\beta \in
F$, write $\alpha \rightsquigarrow_1 \beta$ to denote the following
situation:
\begin{itemize}
\item[] $\alpha= \sum_{i=1}^k u_i+\sum_{i=k+1}^m u_i$ and
$\beta= \sum_{i=1}^k  {\bf s}(X_i)+\sum_{i=k+1}^m u_i$ for some
$u_i\in E^0$ and some $X_i\in C_{u_i}$, where $u_1,\dots ,u_k\in
E^0\setminus \So (E)$.
\end{itemize}
We view $\alpha=0$ as an empty sum of the above form (i.e.,
$k=m=0$), so that $0\rightsquigarrow_1 0$. Note also, taking $k=0$,
that $\alpha \rightsquigarrow_1 \alpha$ for all $\alpha \in F$.
\end{definition}

\begin{definition}
\label{defstar} {\bf Assumption $(*)$:} Let $v\in E^0$. Then $(E,C)$
satisfies $(*)$ at $v$  if for all $X,Y\in C_v$, there exists
$\gamma\in F$ such that ${\bf s}(X) \rightsquigarrow_1 \gamma$ and
${\bf s} (Y) \rightsquigarrow_1 \gamma$.

This assumption only needs to be imposed when $X$ and $Y$ are
disjoint, since otherwise $X=Y$ and the conclusion is trivially
satisfied.

The separated graph $(E,C)$ satisfies Assumption $(*)$ in case it
satisfies $(*)$ at every vertex $v\in E^0$.
\end{definition}

Assumption (*) guarantees the refinement property in the monoid
$M(E,C)$:

\begin{proposition}
\label{refinement} \cite[Proposition 5.9]{AG} Let $(E,C)$ be a
finitely separated graph. If Assumption $(*)$ holds, then the monoid
$M(E,C)$ is a refinement monoid.
\end{proposition}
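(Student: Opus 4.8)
The plan is to work directly with the defining presentation of $M(E,C)$ as a commutative monoid with generators $\{a_v : v\in E^0\}$ and relations $a_v = \sum_{e\in X} a_{s(e)}$ for $X\in C_v$. By a standard fact about commutative monoids defined by generators and relations (congruence generated by a set of pairs, described via symmetric-transitive closure of one-step moves), every identity $\alpha = \beta$ in $M(E,C)$ lifts to a finite chain in the free abelian monoid $F$ on $E^0$ using the one-step relation $\rightsquigarrow_1$ and its reverse; concretely, $[\alpha] = [\beta]$ in $M(E,C)$ if and only if there is a sequence $\alpha = \gamma_0, \gamma_1, \dots, \gamma_n = \beta$ in $F$ with $\gamma_i \rightsquigarrow_1 \gamma_{i+1}$ or $\gamma_{i+1}\rightsquigarrow_1\gamma_i$ for each $i$. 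So first I would record this lifting lemma (this is essentially \cite[Proposition 4.1]{AG} or its analogue), which reduces any computation in $M(E,C)$ to a combinatorial statement about $\rightsquigarrow_1$ in $F$.

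Next, given $a+b = c+d$ in $M(E,C)$, I would lift to a chain in $F$ as above and argue by induction on the length $n$ of the chain. The base case $n=0$ is trivial. For the inductive step, one has $a+b = \gamma_0$, a single move $\gamma_0 \rightleftharpoons \gamma_1$ (in one of the two directions), and then a chain of length $n-1$ from $\gamma_1$ to $c+d$. The key technical point — and the place where Assumption $(*)$ enters — is a \emph{confluence-type lemma}: if $\alpha \rightsquigarrow_1 \beta_1$ and $\alpha \rightsquigarrow_1 \beta_2$, then there is $\delta$ with $\beta_1 \rightsquigarrow_1 \delta$ and $\beta_2 \rightsquigarrow_1 \delta$; equivalently, the reversed-then-forward zigzag $\beta_1 \leftsquigarrow_1 \alpha \rightsquigarrow_1 \beta_2$ can be completed to a forward-then-reversed zigzag $\beta_1 \rightsquigarrow_1 \delta \leftsquigarrow_1 \beta_2$. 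Assumption $(*)$ is exactly what makes this work when the two moves act on the ``same'' generator $a_v$ via different $X, Y \in C_v$; when they act on disjoint parts of $\alpha$ the two moves simply commute. Using this, one can push all ``backward'' moves in the lifted chain to the end, obtaining $a+b \rightsquigarrow_1 \rho \leftsquigarrow_1 c+d$ for some $\rho \in F$ — i.e. $a$ and $b$ refine into pieces of $\rho$, and likewise $c$ and $d$, which after grouping yields the refinement elements $x,y,z,t$.

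Alternatively, and perhaps more cleanly, I might avoid redeveloping confluence from scratch and instead cite the structure theory for such monoids directly: Assumption $(*)$ guarantees that the relation $\rightsquigarrow:= \bigcup_n \rightsquigarrow_1^{\,n}$ (the reflexive-transitive closure) is ``directed'' in the sense that it has the Church–Rosser property, and a conical commutative monoid whose defining congruence comes from a confluent rewriting system of this ``splitting'' type is automatically a refinement monoid — this is the content of \cite[Proposition 5.9]{AG}, here restated as Proposition \ref{refinement}, so in the present paper one could simply invoke it. Since the problem is precisely to prove that stated proposition, I would carry out the induction sketched above, and I expect the main obstacle to be the bookkeeping in the confluence lemma: tracking exactly which summands of $\alpha$ are touched by each of the two competing moves, handling the overlapping case via $(*)$, and verifying that iterating the single-step confluence to commute a backward move past a whole forward chain terminates and produces the claimed common lower bound $\rho$. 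Once confluence is in hand, extracting $x,y,z,t$ from $a+b\rightsquigarrow_1\rho$ and $c+d\rightsquigarrow_1\rho$ is a routine matching of summands.
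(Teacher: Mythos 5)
Your outline is sound, but note that the paper does not actually prove this statement: Proposition~\ref{refinement} is imported verbatim from \cite[Proposition 5.9]{AG}, with no argument given beyond the citation (and the remark that \cite{AG} has a version for arbitrary separated graphs), so your closing suggestion to ``simply invoke'' that result is exactly what the paper does, while as a proof it would be circular. What you sketch is, in substance, the proof from the cited source (itself modelled on the argument of \cite{AMP} for ordinary graph monoids): lift $a+b=c+d$ to a zigzag of $\rightsquigarrow_1$-moves in the free abelian monoid on $E^0$, prove a diamond lemma in which disjointly supported moves commute by additivity of $\rightsquigarrow_1$ and Assumption $(*)$ handles precisely the overlap of two moves applied to the same generator $a_v$ via distinct $X,Y\in C_v$, deduce the Church--Rosser property, and then extract $x,y,z,t$ from a common successor $\rho$ using the evident decomposition (if $\alpha+\beta\rightsquigarrow\rho$ then $\rho=\rho_1+\rho_2$ with $\alpha\rightsquigarrow\rho_1$, $\beta\rightsquigarrow\rho_2$) together with refinement in the free abelian monoid. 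The only blemishes are notational: the common successor is reached under the transitive closure $\rightsquigarrow$, not in a single step $\rightsquigarrow_1$ as written in your display of the conclusion, and the lifting lemma you cite lives in Section 5 of \cite{AG} rather than where you guessed; neither affects correctness.
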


In fact \cite{AG} contains a version of the above result which holds
for arbitrary separated graphs $(E,C)$.

The multiresolution of a separated graph $(E,C)$ at a vertex $v$
(with $|r^{-1}(v)|<\infty$) is the simultaneous resolution (by
refinement) of {\it all} the subsets $X\in C_v$ at once. We describe
this process here. This should be compared with the resolution
process of \cite{AG}, where  only {\it pairs} $X,Y\in C_v$ are
refined.

\begin{definition}
\label{multiresatv} Let $C_v=\{ X_1,\dots ,X_k\}$ with each $X_i$ a
finite subset of $r^{-1}(v)$. Put $M=\prod _{i=1}^k |X_i|$. Then the
multiresolution of $(E,C)$ at $v$ is the separated graph $(E_v,
C^v)$ with
$$E_v^0=E^0\sqcup \{v(x_1,\dots ,x_k)\mid x_i\in X_i, i=1,\dots ,k \},$$
and with $E_v^1=E^1\sqcup \Lambda$, where $\Lambda $ is a new set of
arrows defined as follows. For each $x_i\in X_i$, we put $M/|X_i|$
new arrows $\alpha ^{x_i}(x_1,\dots ,x_{i-1},x_{i+1},\dots ,x_k )$,
$x_j\in X_j$, $j\ne i$, with
$$r(\alpha ^{x_i}(x_1,\dots ,x_{i-1},x_{i+1},\dots ,x_k ))=s(x_i), \text{ and }
s(\alpha ^{x_i}(x_1,\dots ,x_{i-1},x_{i+1},\dots ,x_k ))=v(x_1,\dots
,x_k).$$ For a vertex $w\in E^0$, define the new groups at $w$ as
follows. These groups are indexed by the edges $x_i\in X_i$,
$i=1,\dots ,k$, such that $s(x_i)=w$. For one such $x_i$, set
$$X(x_i) =\{\alpha ^{x_i} (x_1,\dots ,x_{i-1},x_{i+1},\dots , x_k)\mid x_j\in X_j, j\ne i\}.$$
Then $$(C^v)_w=C_w\sqcup \{X(x_i)\mid x_i\in X_i, s(x_i)=w,
i=1,\dots , k\}.$$ The new vertices $v(x_1,\dots , x_k)$ are sources
in $E_v$.
\end{definition}

\begin{definition}
\label{multiresatsetofvs} Let $V\subseteq E^0$ be a set of vertices
such that, for each $u\in V$,  $C_u=\{X_1^u, \dots ,X_{k_u}^u\}$,
with each $X^u_i$ a finite subset of $r^{-1}(u)$. Then the {\it
multiresolution of $E$ at $V$} is the separated graph $(E_V, C^V)$
obtained by applying the above process to all vertices $u$ in $V$.

Hence
$$E_V^0=E^0\sqcup \Big( \bigsqcup _{u\in V} \{v(x^u_1,\dots ,x^u_{k_u})\mid x^u_i\in X^u_i, i=1,\dots ,k_u \} \Big),$$
and $E_V^1=E^1\sqcup \Big( \bigsqcup _{u\in V} \Lambda_u\Big)$,
where $\Lambda _u$ is the corresponding set of arrows $\alpha
^{x^u_i}(x^u_1,\dots ,x^u_{i-1},x^u_{i+1},\dots ,x^u_{k_u} )$
defined as in Definition \ref{multiresatv}, for each $u\in V$. The
sets $(C^V)_w$, for $w\in E_V^0$, are defined just as in Definition
\ref{multiresatv}:
$$(C^V)_w=C_w\sqcup \{X(x^u_i)\mid x^u_i\in X^u_i, s(x^u_i)=w,
i=1,\dots , k_u, u\in V\}.$$ The new vertices $v(x^u_1,\dots ,
x^u_{k_u})$ are sources in $E_V$.

\end{definition}

\begin{lemma}
\label{lem:refiatV} Let $V\subseteq E^0$ be a set of vertices such
that, for each $u\in V$,  $C_u=\{X^u_1, \dots ,X^u_{k_u}\}$, with
each $X^u_i$ a finite subset of $r^{-1}(u)$. Then the separated
graph $(E_V, C^V)$ satisfies (*) at $u$ for all $u\in V$.
\end{lemma}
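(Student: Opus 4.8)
The plan is to verify Assumption $(*)$ at each $u \in V$ directly from the construction of the multiresolution, by exhibiting an explicit common $\gamma \in F$ (the free abelian monoid on $E_V^0$) to which $\mathbf{s}(X)$ and $\mathbf{s}(Y)$ both one-step-reduce, for any two sets $X,Y \in (C^V)_u$. First I would note that, since the new vertices $v(x^u_1,\dots,x^u_{k_u})$ are sources in $E_V$, the partition $(C^V)_u$ at a vertex $u \in V$ consists precisely of the old sets $X^u_1,\dots,X^u_{k_u}$ from $C_u$ (the new arrows in $\Lambda_{u'}$ have source among the new vertices $v(\dots)$ or, more precisely, their ranges are old vertices $s(x^{u'}_i)$, so they do not land in $r^{-1}(u)$ for $u\in V$ unless $u = s(x^{u'}_i)$ — but then the relevant new set $X(x^{u'}_i)$ is one of the sets to be checked; I should be careful here and treat both the sets $X^u_i$ coming from $C_u$ and any sets $X(x^{u'}_i)$ with $s(x^{u'}_i) = u$ that may appear in $(C^V)_u$). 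So the key calculation splits into cases depending on which of these two types the sets $X, Y$ belong to.

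The central case is $X = X^u_i$ and $Y = X^u_j$ with $i \neq j$, both inherited from $C_u$. Here I would compute $\mathbf{s}(X^u_i) = \sum_{x_i \in X^u_i} s(x_i)$ in $F$, and the claim is that
\[
\mathbf{s}(X^u_i) \rightsquigarrow_1 \sum_{x_1 \in X^u_1,\dots,x_{k_u}\in X^u_{k_u}} v(x_1,\dots,x_{k_u}),
\]
the sum being taken over all tuples. Indeed, for a fixed $x_i \in X^u_i$, the vertex $s(x_i)$ carries the new partition set $X(x_i) = \{\alpha^{x_i}(x_1,\dots,\widehat{x_i},\dots,x_{k_u}) \mid x_j \in X^u_j, j\neq i\}$, and $\mathbf{s}(X(x_i)) = \sum_{x_j\in X^u_j,\,j\ne i} v(x_1,\dots,x_{k_u})$. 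Summing the relation $s(x_i) \rightsquigarrow_1 \mathbf{s}(X(x_i))$ over all $x_i \in X^u_i$ gives exactly the full sum over all tuples $v(x_1,\dots,x_{k_u})$, which is independent of $i$. Hence both $\mathbf{s}(X^u_i)$ and $\mathbf{s}(X^u_j)$ reduce in one step to this same $\gamma$, as required. One has to check that the vertices $s(x_i)$, $x_i \in X^u_i$, are not sources in $E_V$ — but they are ranges of the arrows in $X(x_i) \subseteq E_V^1$, so they are not sources, and the side condition $u_1,\dots,u_k \in E^0 \setminus \mathrm{So}(E)$ in Definition \ref{defsquigarrow} is met. (If $X^u_i$ consists of a single edge, $k_u$ might still be $\geq 2$ and the argument goes through; if $|C_u| = 1$ there is nothing to check.)

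The remaining cases are easier and I would dispatch them quickly. If $X = X^u_i \in C_u$ and $Y = X(x^{u'}_\ell)$ is a new set with $s(x^{u'}_\ell) = u$ for some $u' \in V$, or if both $X,Y$ are new sets of this latter form, then $Y \subseteq \Lambda_{u'}$ consists of arrows whose sources are the new vertices $v(x^{u'}_1,\dots,x^{u'}_{k_{u'}})$, which are sources in $E_V$; so $\mathbf{s}(Y)$ already lies in the submonoid generated by sources and $\mathbf{s}(Y) \rightsquigarrow_1 \mathbf{s}(Y)$ trivially, while for $\mathbf{s}(X)$ I would need to reduce it to something compatible — but in fact if $Y$ is a set of new arrows then no common reduction with an old set is forced unless they overlap, and by the remark in Definition \ref{defstar} we only need $(*)$ for disjoint $X,Y$; I should check whether such $X$ and $Y$ can actually be disjoint and both sit in $(C^V)_u$, and if so produce the common $\gamma$ by combining the two reductions above. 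The main obstacle is precisely this bookkeeping: correctly identifying the full partition $(C^V)_u$ at a vertex $u \in V$ (remembering that $u$ may simultaneously be a ``resolved'' vertex and serve as the source of edges resolved at other vertices of $V$), and making sure the explicit $\gamma$ works uniformly across all pairs. Once the partition is pinned down, each individual reduction is the routine computation sketched above.
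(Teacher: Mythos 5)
Your central case is exactly the paper's proof: for each $x^u_i\in X^u_i$ one has $s(x^u_i)\rightsquigarrow_1 \mathbf{s}(X(x^u_i))=\sum_{x^u_j\in X^u_j,\,j\ne i} v(x^u_1,\dots,x^u_{k_u})$, and summing over $x^u_i\in X^u_i$ gives $\mathbf{s}(X^u_i)\rightsquigarrow_1\gamma$ with $\gamma=\sum_{(x^u_1,\dots,x^u_{k_u})\in X^u_1\times\cdots\times X^u_{k_u}} v(x^u_1,\dots,x^u_{k_u})$, independent of $i$ --- this one-step common refinement is the entirety of the paper's argument, and your extra observation that the vertices $s(x^u_i)$ are not sources of $E_V$ (being ranges of the new arrows in $X(x^u_i)$) is a point the paper leaves tacit but which is indeed what legitimizes the $\rightsquigarrow_1$ step in Definition \ref{defsquigarrow}. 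Concerning the ``remaining cases'' you flag --- groups $X(x^{u'}_\ell)$ adjoined to $(C^V)_u$ when some $u\in V$ happens to be the source of an edge whose range lies in $V$ --- the paper's proof simply does not consider them: it verifies the condition only for pairs $X^u_i, X^u_j\in C_u$. In the situation where the lemma is actually invoked (Construction \ref{cons:complete-multiresolution}, with $V=V_n=F^{0,n}$ and every edge with range in $V_n$ having its source in the disjoint layer $F^{0,n+1}$), no new group is attached at any vertex of $V$, so $(C^V)_u=C_u$ for $u\in V$ and your main computation covers every pair; hence your unfinished bookkeeping is not a defect relative to the paper. Do note, though, that your suggested fix of ``combining the two reductions'' would not work for those extra pairs in full generality: $\mathbf{s}(X(x^{u'}_\ell))$ is a sum of new vertices, which are sources, so it admits only the trivial reduction, and a common $\gamma$ would force $\mathbf{s}(X^u_i)\rightsquigarrow_1\mathbf{s}(X(x^{u'}_\ell))$, which is not automatic --- so in the generality of an arbitrary $V$ that case genuinely needs (and in the paper does not receive) a separate treatment.
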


\begin{proof}
Observe that, for $x_i^u\in X^u_i$, we have $s(x^u_i)\rightarrow _1
\sum _{x^u_j\in X^u_j, j\ne i} v(x^u_1,\dots
,x^u_{i-1},x^u_i,x^u_{i+1},\dots ,x^u_{k_u})$, so that
$${\bf s}(X^u_i)\rightsquigarrow _1 \sum _{(x_1^u,\dots ,
x_{k_u}^u)\in X_1^u\times \cdots \times X_{k_u}^u} v(x^u_1,\dots
,x^u_{k_u})=:\gamma .$$ This clearly shows the result, indeed we
have ${\bf s}(X^u_i)\rightsquigarrow _1 \gamma$, for all $i=1,\dots
,k_u$.
\end{proof}

Let us recall the definition of unitary embedding:

\begin{definition}
\label{def:unitemb} {\rm Following \cite{W}, a monoid homomorphism
$\psi:M\rightarrow F$ is a \emph{unitary embedding} provided
\begin{enumerate}
\item $\psi$ is injective;
\item $\psi(M)$ is \emph{cofinal} in $F$, that is, for each $u\in F$ there is some $v\in M$ with $u\le \psi(v)$;
\item whenever $u,u'\in M$ and $v\in F$ with $\psi(u)+v= \psi(u')$, we have $v\in \psi(M)$.
\end{enumerate}
}\end{definition}

\begin{lemma} \label{lem:unitary3}
Let $(E,C)$ be a separated graph and let $V\subseteq E^0$ be a set
of vertices such that $|r^{-1}(u)|<\infty$ for all $u\in V$. Let
$\iota: (E,C) \rightarrow (E_V,C^V)$ denote the inclusion morphism,
where $(E_V, C^V)$ is the multiresolution of $(E,C)$ at $V$. Then
$M(\iota): M(E,C)\rightarrow M(E_V,C^V)$ is a unitary embedding.
\end{lemma}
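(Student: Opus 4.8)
The plan is to verify the three defining properties of a unitary embedding (Definition \ref{def:unitemb}) for $M(\iota)\colon M(E,C)\to M(E_V,C^V)$ directly from the presentation of these monoids by generators and relations. Recall that $M(E,C)$ is the commutative monoid generated by $\{a_v : v\in E^0\}$ modulo the relations $a_v=\sum_{e\in X}a_{s(e)}$ for $X\in C_v$, and that $M(E_V,C^V)$ has the same generators together with the new source vertices $a_{v(x^u_1,\dots,x^u_{k_u})}$, and the additional relations coming from the sets $X(x^u_i)$, namely $a_{s(x^u_i)}=\sum_{x^u_j\in X^u_j,\,j\ne i} a_{v(x^u_1,\dots,x^u_{k_u})}$. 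The homomorphism $M(\iota)$ sends $a_v\mapsto a_v$. The key structural observation, to be recorded first, is that in $M(E_V,C^V)$ every new generator $a_{v(x^u_1,\dots,x^u_{k_u})}$ can be eliminated: indeed, combining the relations at a fixed $u$, one gets for each $i$ that ${\bf s}(X^u_i)$ maps to $\gamma:=\sum_{(x^u_1,\dots,x^u_{k_u})} a_{v(x^u_1,\dots,x^u_{k_u})}$ (exactly the computation in Lemma \ref{lem:refiatV}), so the new relations are equivalent to saying that this single element $\gamma_u$ equals each of the old sums ${\bf s}(X^u_i)$; in particular the new relations restricted to old generators are precisely the relations ${\bf s}(X^u_i)={\bf s}(X^u_j)$ for $u\in V$, $X^u_i,X^u_j\in C_u$, which are already consequences of the relations in $M(E,C)$. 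This should make (1) (injectivity) and (3) (the unitary/purity condition) amenable to a normal-form argument.

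For the concrete execution I would pass to the universal group completions or, better, use the standard technique for monoids given by refinement-type presentations: represent elements of $M(E_V,C^V)$ by elements of the free abelian monoid on $E_V^0$ modulo the congruence $\sim$ generated by the single-step moves $\rightsquigarrow_1$ (and their reverses) associated to $(E_V,C^V)$, as in \cite[Section 4]{AG}. First, for \textbf{cofinality (2)}: every new vertex generator $a_{v(x^u_1,\dots,x^u_{k_u})}$ satisfies $a_{v(x^u_1,\dots,x^u_{k_u})}\le a_{s(x^u_i)}= M(\iota)(a_{s(x^u_i)})$ for any chosen $i$ (since $\gamma_u$ is a sum of such terms and equals ${\bf s}(X^u_i)$), and old generators are in the image; since the $a_w$'s and $a_{v(\vec x)}$'s generate $M(E_V,C^V)$, cofinality follows. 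For \textbf{injectivity (1)}: suppose $\alpha,\beta$ are elements of the free abelian monoid on $E^0$ with $\alpha\sim\beta$ in $M(E_V,C^V)$; I want $\alpha\sim\beta$ already in $M(E,C)$. Here one uses a confluence/normal-form analysis of the rewriting system for $(E_V,C^V)$: any chain of $\rightsquigarrow_1$-moves connecting two ``old'' elements can be rearranged so that whenever a new move introduces vertices $v(\vec x)$ it is later cancelled by the reverse move, and the net effect on old generators is realized by old moves (the moves at $u\in V$ contribute, after cancellation, exactly a substitution of one ${\bf s}(X^u_i)$ for another, which is an old relation). For \textbf{(3)}: given $u,u'\in M(E,C)$ and $w\in M(E_V,C^V)$ with $M(\iota)(u)+w=M(\iota)(u')$, write $w$ in terms of generators; using the normal form one shows the coefficients of the new generators $a_{v(\vec x)}$ in $w$ must, for each $u\in V$, be ``balanced'' so that $w$ is congruent to an expression in old generators only, i.e.\ $w\in M(\iota)(M(E,C))$.

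I expect the \textbf{main obstacle} to be the bookkeeping in the confluence argument for (1) and (3): one must show that a sequence of $\rightsquigarrow_1$ and reverse-$\rightsquigarrow_1$ moves in $(E_V,C^V)$ between elements of $F(E^0)$ can be ``pushed down'' to a sequence of such moves in $(E,C)$, which amounts to verifying that the only new relations among old generators are the already-derivable equalities ${\bf s}(X^u_i)\sim{\bf s}(X^u_j)$, and that the new generators $a_{v(\vec x)}$ behave like a genuine common refinement without introducing extra identifications. A clean way to package this is to exhibit an explicit retraction: define a monoid homomorphism $\rho\colon M(E_V,C^V)\to M(E,C)$ on generators by $\rho(a_w)=a_w$ for $w\in E^0$ and $\rho(a_{v(x^u_1,\dots,x^u_{k_u})})=$ a suitable element (for instance, fix for each $u$ a choice and distribute—here the symmetry of the multiresolution, noted in the introduction, helps, though one must check well-definedness against the relations). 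If such a $\rho$ with $\rho\circ M(\iota)=\mathrm{id}$ exists, injectivity is immediate, and (3) reduces to checking $\ker$-type compatibility; if a literal retraction is awkward, I would instead invoke the general machinery for resolutions from \cite[Section 8]{AG}, of which the multiresolution is a finite iterate, reducing this lemma to the already-established unitary-embedding property for single pairwise resolutions.
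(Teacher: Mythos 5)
Your plan correctly isolates where the difficulty lies, but it does not actually close it, and the two shortcuts you offer to avoid the hard combinatorics both fail or are unsubstantiated. The central claims of your argument --- that any chain of $\rightsquigarrow_1$-moves in $(E_V,C^V)$ between elements of the free monoid on $E^0$ can be rearranged so that the new generators cancel and the net effect is an old relation (needed for injectivity), and that in condition (3) of Definition \ref{def:unitemb} the coefficients of the new generators in $w$ must be ``balanced'' --- are precisely the content of the lemma, and you leave them as an announced ``confluence/normal-form analysis'' without carrying it out. The paper does not do this bare-hands rewriting either: it forms the auxiliary monoids $M$ (generated by the $b(x_i^u)$ with the relations $\sum_{x_i^u\in X_i^u}b(x_i^u)=\sum_{x_j^u\in X_j^u}b(x_j^u)$) and the free monoid $F$ on the new vertices, proves that the local map $\psi\colon M\to F$ is a unitary embedding (as in \cite[Lemma 8.2]{AG}), shows that the square \eqref{monres} is a pushout of abelian monoids (adapting \cite[Lemma 8.6]{AG}), and then invokes \cite[Lemma 1.6]{W}, which says that unitary embeddings are preserved under pushouts. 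Some such structural input is what replaces your unexecuted confluence argument.

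More seriously, your proposed ``clean packaging'' via a retraction $\rho\colon M(E_V,C^V)\to M(E,C)$ with $\rho\circ M(\iota)=\mathrm{id}$ cannot work in general. To define $\rho$ on a new generator $a_{v(x_1^u,\dots,x_{k_u}^u)}$ compatibly with the relations $a_{s(x_i^u)}=\sum_{j\ne i}a_{v(\dots)}$, you would need elements $c(x_1^u,\dots,x_{k_u}^u)$ of $M(E,C)$ giving a multiresolution of the equations ${\bf s}(X_1^u)=\cdots={\bf s}(X_{k_u}^u)$ \emph{inside} $M(E,C)$ itself; by Remark \ref{remark:univpro} such data exist when the target is a refinement monoid, but $M(E,C)$ is typically not one --- indeed the whole point of the construction is to repair this. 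Concretely, for the graph of Example \ref{exam:purerefine}, $M(E,C)=\langle a,b_1,b_2,b_3,b_4\mid a=b_1+b_2=b_3+b_4\rangle$ and each $b_i$ is an atom, so no elements $c_{ij}$ with $b_1=c_{13}+c_{14}$, etc., exist, and no retraction exists. Your fallback --- that the multiresolution is ``a finite iterate'' of the pairwise resolutions of \cite[Section 8]{AG}, so the lemma follows from the single-pair case --- is also not established: the graph $(E_V,C^V)$ is not obtained by iterating that construction, and even an identification at the level of monoids would itself require proof. So as it stands the proposal has a genuine gap at exactly the step the lemma is about.
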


\begin{proof} The proof follows the steps of the proof of \cite[Lemma 8.6]{AG}.

We will sketch some of the details.

Set $\mu= M(\iota)$. For $u\in V$, set $C_u=\{ X_1^u,\dots
,X_{k_u}^u\}$. Let $F$ be the free monoid on generators
$a(x_1^u,\dots ,x_{k_u}^u)$, for $x_i^u\in X_i^u$, $i=1,\dots ,k_u$,
$u\in V$. Let $M$ be the monoid given by generators $\{b(x_i^u)\mid
x_i^u\in X_i^u, i=1,\dots ,k_u, u\in V\}$ with the relations
$\sum_{x_i^u\in X_i^u} b(x_i^u)= \sum_{x_j^u\in X_j^u} b(x_j^u)$ for
all $1\le i <j\le k_u$, for all $u\in V$. There is a natural
homomorphism $\psi: M\rightarrow F$ sending $b(x_i^u)$ to
$$ \sum _{j\ne i} \sum_{x_j^u\in X_j^u} a(x_1^u,\dots ,x_{i-1}^u, x_i^u,x_{i+1}^u,\dots ,x_k^u)$$
for $x_i^u\in X_i^u$, $i=1\dots ,k_u$, $u\in V$. Arguments similar
to the ones used in the proof of \cite[Lemma 8.2]{AG} give that
$\psi$ is a unitary embedding.

There is a unique homomorphism $\eta: M\rightarrow M(E,C)$ sending
$b(x_i^u)$ to $[s(x_i^u)]$ for $x_i^u\in X_i^u$, $1\le i\le k_u$,
$u\in V$, and there is a unique homomorphism $\eta': F\rightarrow
M(E_V,C^V)$ sending $a(x_1^u,\dots ,x_{k_u}^u) \mapsto
[v(x_1^u,\dots ,x_{k_u}^u)]$ for $x_i^u\in X_i^u$, $i=1,\dots ,k_u$,
$u\in V$. There is a commutative diagram as follows:
\begin{equation} \label{monres}
\xymatrixrowsep{3pc}\xymatrixcolsep{5pc} \xymatrix{
M \ar[r]^{\psi} \ar[d]_{\eta} & F \ar[d]^{\eta'} \\
M(E,C) \ar[r]^{\mu} &M(E_V,C^V) }
\end{equation}
An easy adaptation
of the proof of \cite[Lemma 8.6]{AG} gives that \eqref{monres} is a
pushout in the category of abelian monoids. It follows from
\cite[Lemma 1.6]{W} that $\mu$ is a unitary embedding, completing
the proof.
\end{proof}

\begin{remark}
\label{remark:univpro} (Universal property) Let $(E,C)$ be a
separated graph and let $V\subseteq E^0$ be a set of vertices such
that $|r^{-1}(u)|<\infty$ for all $u\in V$.

The pushout property appearing in the proof of the above lemma is
equivalent to the following universal property of $M(E_V,C^V)$:
Given a monoid homomorphism $\Phi\colon M(E,C)\to N$ and given a
multiresolution  $\{ c(x^u_1,\dots ,x^u_{k_u})\mid x_i^u\in
X^u_i\}$, $u\in V$, in $N$ of the set of equations
$$ \Phi ({\bf s}(X^u_1))=\Phi ({\bf s}(X^u_2))=\cdots = \Phi ({\bf
s}(X^u_{k_u})), \quad (u\in V)$$ (so that $\Phi (s(x^u_i))= \sum
_{j\ne i, x^u_j\in X^u_j} c(x_1^u,\dots, x_{k_u}^u)$ for all
$x^u_i\in X^u_i$, $u\in V$), there exists a unique monoid
homomorphism $\tilde{\Phi}\colon M(E_V,C^V)\to N$ such that
$\tilde{\Phi}(v)= \Phi(v)$ for all $v\in E^0$ and $\tilde{\Phi}
(v(x^u_1,\dots , x_{k_u}^u))= c(x_1^u,\dots , x^u_{k_u})$ for all
$x^u_i\in X^u_i$, $i=1,\dots ,k_u$, $u\in V$.

Note that the above multiresolutions always exist if $N$ is a
refinement monoid.
\end{remark}

\section{Bipartite separated graphs}
\label{sect:bipsepgraphs}

\begin{definition}
\label{def:bipartitesepgraph} Let $E$ be a directed graph. We say
that $E$ is a {\it bipartite directed graph} if $E^0= E^{0,0} \sqcup
E^{0,1}$, with all arrows in $E^1$ going from a vertex in $E^{0,1}$
to a vertex in $E^{0,0}$. To avoid trivial cases, we will always
assume that $r^{-1}(v)\ne \emptyset$ for all $v\in E^{0,0}$ and
$s^{-1}(v)\ne \emptyset $ for all $v\in E^{0,1}$.

A {\it bipartite separated graph} is a separated graph $(E,C)$ such
that the underlying directed graph $E$ is a bipartite directed
graph.
\end{definition}

\begin{definition}
\label{def:sep-graph-of-cam} Given a finitely presented abelian
conical monoid
$$M=\langle \mathcal X \mid \mathcal R \rangle \, ,$$
where $\mathcal X$ is a finite set of generators $a_1,\dots ,a_n$
and $\mathcal R$ is a finite set of relations $\mathbf r_1,\dots
,\mathbf r_m$ of the form
\begin{equation}
\label{eq:rels-form} \mathbf r_j :  \qquad \sum _{i=1}^n r_{ji}a_i =
\sum _{i=1}^n s_{ji}a_i ,
\end{equation}
where $r_{ji}$ and $s_{ji}$ are non-negative integers satisfying
$\sum _{i=1}^n r_{ji}>0$ and $\sum _{i=1}^n s_{ji}>0$ for all $j$,
and $\sum _{j=1}^m (r_{ji}+s_{ji}) >0$ for all $i$,  we may
associate to it a finite bipartite separated graph $(E,C)$ such that
$s^{-1}(v)\ne \emptyset $ for all $v\in E^{0,0}$ and $r^{-1}(v)\ne
\emptyset$ for all $v\in E^{0,1}$, as follows (cf. \cite[Proposition
4.4]{AG}:)
$$E^0=E^{0,0}\sqcup E^{0,1}, \quad \text{with} \quad E^{0,0}=\mathcal R,
\quad E^{0,1}=\mathcal X.  $$ For $\mathbf r_j\in \mathcal R$ given
by (\ref{eq:rels-form}), we set $C_{\mathbf r_j}= \{X_j,Y_j\}$,
where $X_j$ has exactly $r_{ji}$ arrows from $a_i$ to $\mathbf r_j$,
for $i=1,\dots ,n$, and $Y_j$ has exactly $s_{ji}$ arrows from $a_i$
to $\mathbf r_j$, for $i=1,\dots ,n$, so that $r^{-1} (\mathbf r_j)=
X_j\sqcup Y_j$. Then $E^1= \bigsqcup_{j=1}^m r^{-1}(\mathbf r_j)$.
We have an isomorphism $M(E, C)\cong M$ (\cite[Proposition
4.4]{AG}).
\end{definition}

Note that there is a bijective correspondence between the finite
bipartite separated graphs $(E,C)$ satisfying the conditions in
Definition \ref{def:bipartitesepgraph} such that $|C_v|= 2$ for all
$v\in E^{0,0}$ and the finite presentations of abelian conical
monoids as above. Also, recall that every finitely generated abelian
semigroup is finitely presented, by Redei's Theorem (see
\cite{Freyd} for a very simple proof), so that we can apply the
above process to any finitely generated abelian conical monoid. (Of
course, the finite bipartite separated graph will depend on the
finite presentation, not just on the monoid.)

\medskip

It is useful to introduce the following terminology:

\begin{definition}
\label{def:refinementofM} Let $M$ be an abelian conical monoid. A
{\it refinement} of $M$ is an abelian conical monoid $N$, together
with a monoid homomorphism $\iota \colon M\to N$ such that:
\begin{enumerate}
\item[(a)] $\iota $ is a unitary embedding.
\item[(b)] $N$ is a refinement monoid.
\item[(c)] For each refinement monoid $P$ and each monoid homomorphism
$\psi \colon M\to P$ there is a monoid homomorphism
$\widetilde{\psi}\colon N\to P$ such that $\psi =
\widetilde{\psi}\circ \iota$.
\end{enumerate}
\end{definition}

\begin{construction}
\label{cons:complete-multiresolution} (a) Let $(E, C)$ be a finite
bipartite separated graph. We define a nested sequence of finite
separated graphs $(F_n, D^n)$ as follows. Set $(F_0,D^0)=(E, C)$.
Assume that a nested sequence
$$(F_0,D^0)\subset (F_1,D^1)\subset\dots \subset (F_n,D^n)$$
has been constructed in such a way that for $i=1,\dots ,n$, we have
$F_i^0=\bigsqcup _{j=0}^{i+1} F^{0,j}$ for some finite sets
$F^{0,j}$ and $F_i^1=\bigsqcup _{j=0}^i F^{1,,j}$, with $s(F^{1,j})=
F^{0,j+1}$ and $r(F^{1,j})= F^{0,j}$ for $j=1,\dots ,i$. We can
think of $(F_n, D^n)$ as a union of $n$
 bipartite separated graphs. Assume that condition (*) for $(F_n, D^n)$ holds at all
 vertices in $\bigsqcup_{j=0}^{n-1} F^{0,j}$. Set $V_n= F^{0, n}$,
 and let $(F_{n+1}, D^{n+1})$ be the multiresolution of $(F_n, D^n)$
 at $V_n$. Then $F_{n+1}^0= F_n^0\bigsqcup F^{0,n+2}=\bigsqcup _{j=1}^{n+2} F^{0,j}$ and
 $F_{n+1}^1= F_n^1\bigsqcup F_{n+1}^{1,n+1}= \bigsqcup _{j=0}^{n+1}
 F^{1,j}$, with $s(F^{1,n+1}) = F^{0,n+2}$ and $r(F^{1,n+1})=s(F^{1,n})= F^{0,n+1}$.
Moreover, by Lemma \ref{lem:refiatV}, the separated graph $(F_{n+1},
D^{n+1})$ satisfies condition (*) at all the vertices in
 $\bigsqcup _{j=0}^{n} F^{0,j}$.

\medskip

\noindent (b) Let
 $$(F_{\infty}, D^{\infty})= \bigcup _{n=0}^{\infty} (F_n,
D^n) \, .$$
 Observe that $(F_{\infty}, D^{\infty})$ is the direct limit of the
 sequence $\{ (F_n,D^n) \}$ in the category $\FSGr$ defined in
 \cite[Definition 8.4]{AG}. Since the functor $M\colon \FSGr\to \Mon $
 is continuous (see \cite[8.4, 4.1]{AG}), it follows
that $M(F_{\infty}, D^{\infty})=\varinjlim M(F_n, D^n)$. Since
$(F_{\infty}, D^{\infty})$ clearly satisfies condition (*) at all
its vertices, it follows from \cite[Proposition 5.9]{AG} that
$M(F_{\infty}, D^{\infty})$ is a refinement monoid. Moreover , since
all maps $M(F_n, D^n)\to M(F_{n+1}, D^{n+1})$ are unitary embeddings
by Lemma \ref{lem:refiatV}, it follows that the map $M(E,C)\to
M(F_{\infty}, D^{\infty})$ is a unitary embedding. Finally,
condition (c) in Definition \ref{def:refinementofM} follows from
Remark \ref{remark:univpro}. Hence, we have that $M(E,C)\to
M(F_{\infty}, D^{\infty})$ is a refinement of $M(E,C)$. We call
$(F_{\infty}, D^{\infty})$ the {\it complete multiresolution} of
$(E,C)$.

\medskip

\noindent (c) We define a canonical sequence $ (E_n,C^n)$ of finite
bipartite separated graphs as follows:
\begin{enumerate}
\item Set $(E_0,C^0)= (E,C)$.
\item $E_n^{0,0}= F^{0,n}$,  $E_n^{0,1}=F^{0,n+1}$, and $E_n^1= F^{1,n}$.
Moreover $C^n_v= D^n_v$ for all $v\in E_n^{0,0}$ and
$C^n_v=\emptyset$ for all $v\in E_n^{0,1}$.
\end{enumerate}
We call the sequence $\{ (E_n,C^n)\}_{n\ge 0}$ the {\it canonical
sequence of bipartite separated graphs} associated to $(E,C)$.
\end{construction}

\begin{lemma}
\label{lem:refinement} Let $(E,C)$ be a finite bipartite separated
graph, let $(E_n,C^n)$ be the canonical sequence of bipartite
separated graphs associated to $(E, C)$, and let $(F_{\infty},
D^{\infty})$ be the complete multiresolution of $(E,C)$. Then the
following properties hold:
\begin{enumerate}
\item[(a)]  For each $n\ge 0$, there is a natural isomorphism
$$\varphi _n \colon M(E_{n+1}, C^{n+1}) \longrightarrow M((E_n)_{V_n},
(C^n)^{V_n}), $$ where $V_n= E_n^{0,0}= F^{0,n}$.
\item[(b)] For each $n\ge 0$, there is a canonical unitary embedding
$$\iota _n \colon
M(E_n, C^n)\to M(E_{n+1}, C^{n+1}).$$
\item[(c)] The canonical inclusion $j_n\colon (E_n,C^n)\to (F_n,
D^n)$ induces an isomorphism  $$M(j_n)\colon M(E_n,C^n)\to M(F_n,
D^n).$$
\item[(d)] We have $M(F_{\infty}, D^{\infty}) \cong  \varinjlim (M(E_n, C^n),\iota _n)$.
Consequently, the natural map \linebreak $M(E, C)\to \varinjlim
(M(E_n, C^n),\iota _n)$ is a refinement of $M(E, C)$.
\end{enumerate}
\end{lemma}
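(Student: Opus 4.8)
The plan is to establish the four claims (a)--(d) essentially by bookkeeping, leveraging the constructions already in place. The key observation is that everything reduces to understanding how the multiresolution process, the functor $M\colon\FSGr\to\Mon$, and the passage to ``sections'' $(E_n,C^n)$ of $(F_n,D^n)$ interact.

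\medskip

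For part (c), I would argue by induction on $n$. For $n=0$ this is the identity, since $(E_0,C^0)=(E,C)=(F_0,D^0)$. Assuming $M(j_n)$ is an isomorphism, note that by Construction \ref{cons:complete-multiresolution}, $(F_{n+1},D^{n+1})$ is obtained from $(F_n,D^n)$ by multiresolving at $V_n=F^{0,n}$, and all of $V_n$ together with the new arrows $F^{1,n}$ and new vertices $F^{0,n+1}$ sits inside the section $(E_n,C^n)$. The point is that the multiresolution at $V_n$ only adds vertices and arrows that ``see'' the sub-bipartite-graph at levels $n,n+1$; vertices at levels $<n$ in $F_n$ are sources in $F_{n+1}$ with respect to the new arrows and contribute no new relations involving the newly created generators. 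Concretely, the monoid relation $a_v=\sum_{e\in X}a_{s(e)}$ in $M(F_{n+1},D^{n+1})$ for the new sets $X(x_i^u)\in (D^{n+1})_w$ involves only the generators $a_w$ for $w$ a source of an old edge $x_i^u\in X_i^u$ (so $w\in F^{0,n+1}$, i.e. $w\in E_n^{0,1}$) and the new vertices $v(x_1^u,\dots,x_{k_u}^u)$. This is exactly the relation defining $M((E_n)_{V_n},(C^n)^{V_n})$, which gives part (a): the natural identification of generators and relations yields the isomorphism $\varphi_n$. Combining this with the inductive hypothesis and the fact that $M(E_{n+1},C^{n+1})$ is by definition the section of $(F_{n+1},D^{n+1})$ at levels $n+1,n+2$ (and that the old vertices $F^{0,j}$, $j\le n$, only interact via relations already accounted for in $M(F_n,D^n)$) completes the induction for (c).

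\medskip

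For part (b), the embedding $\iota_n$ should be constructed as the composite $M(E_n,C^n)\xrightarrow{M(j_n)}M(F_n,D^n)\hookrightarrow M(F_{n+1},D^{n+1})\xrightarrow{M(j_{n+1})^{-1}}M(E_{n+1},C^{n+1})$, where the middle map is $M$ applied to the nested inclusion $(F_n,D^n)\subset(F_{n+1},D^{n+1})$. By Lemma \ref{lem:unitary3} applied with $V=V_n$, the map $M(F_n,D^n)\to M((F_n)_{V_n},(C^n)^{V_n})=M(F_{n+1},D^{n+1})$ is a unitary embedding; since $M(j_n)$ and $M(j_{n+1})$ are isomorphisms by (c), $\iota_n$ is a unitary embedding. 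One should also check this agrees on the nose with the map induced by the obvious partial inclusion of generators, which is routine.

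\medskip

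For part (d), I would use that $(F_{\infty},D^{\infty})=\varinjlim(F_n,D^n)$ in $\FSGr$ and that $M$ is continuous (both recalled in Construction \ref{cons:complete-multiresolution}(b)), so $M(F_{\infty},D^{\infty})=\varinjlim M(F_n,D^n)$ with the connecting maps being $M$ of the inclusions. Under the isomorphisms $M(j_n)$ from (c), this direct system is isomorphic to $(M(E_n,C^n),\iota_n)$ by the very definition of $\iota_n$, hence $M(F_{\infty},D^{\infty})\cong\varinjlim(M(E_n,C^n),\iota_n)$. The final ``consequently'' clause then follows immediately: Construction \ref{cons:complete-multiresolution}(b) already shows $M(E,C)\to M(F_{\infty},D^{\infty})$ is a refinement of $M(E,C)$, and we have just identified the target. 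The main obstacle is the careful verification in (a)/(c) that the section $(E_n,C^n)$ genuinely captures all the monoid-theoretic data of the multiresolution step and that no relations are lost or spuriously introduced when discarding the lower-level vertices --- this is a combinatorial check on the definitions of multiresolution (Definition \ref{multiresatsetofvs}) and of the canonical sequence, and is where one must be most careful; everything else is formal nonsense about direct limits and pushouts.
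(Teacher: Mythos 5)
Your overall architecture (induct on $n$, reduce to a single multiresolution step, then use continuity of $M$ together with Lemma \ref{lem:unitary3} for (b) and (d)) is the same as the paper's, but the decisive step is missing: part (a) is not a ``natural identification of generators and relations''. The monoids $M(E_{n+1},C^{n+1})$ and $M((E_n)_{V_n},(C^n)^{V_n})$ are presented on \emph{different} generating sets: the latter has the extra generators $a_v$ for $v\in E_n^{0,0}=F^{0,n}$, together with the extra relations $a_v={\bf s}(X)$ for every $X\in C^n_v$. The map $\varphi_n(a_v)=a_v$ is clearly well defined, and since each extra generator equals ${\bf s}(X)$ it is surjective; the whole content of (a) is injectivity, i.e.\ that the level-$n$ generators can be eliminated \emph{consistently}. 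Concretely one must produce an inverse $\psi_n$ with $\psi_n(a_v)={\bf s}(X)$ for $v\in E_n^{0,0}$, and its well-definedness requires that ${\bf s}(X)={\bf s}(Y)$ already holds in $M(E_{n+1},C^{n+1})$ for all $X,Y\in C^n_v$. This is exactly the computation recorded in (\ref{eq:psi-well-defined}): writing $C_v=\{X_1,\dots,X_s\}$, the new relations give ${\bf s}(X_p)=\sum_{x_p\in X_p}a_{s(x_p)}=\sum_{(x_1,\dots,x_s)\in\prod_{i}X_i}a_{v(x_1,\dots,x_s)}$, an expression symmetric in $p$. This identity is special to the multiresolved graph (it would fail if one simply discarded the level-$n$ vertices of $(E_n,C^n)$ itself, where ${\bf s}(X)\neq{\bf s}(Y)$ in general), so it is not mere bookkeeping; your closing sentence defers precisely this verification, and your stated justification for (a) (``the new relations involve only levels $n+1$, $n+2$'') only shows that $\varphi_n$ is well defined, not that it is injective.

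The same gap propagates to (c): knowing that the new sets are attached only at level $n+1$ does not by itself show that adjoining the generators $a_v$, $v\in\bigsqcup_{j\le n}F^{0,j}$, with all their relations leaves the monoid of the top two levels unchanged; the paper gets this from (a) together with the fact that the isomorphism $M(j_n)$ extends to an isomorphism $M((E_n)_{V_n},(C^n)^{V_n})\to M(F_{n+1},D^{n+1})$ (via the pushout description behind Lemma \ref{lem:unitary3} and Remark \ref{remark:univpro}), a step you also leave implicit. Parts (b) and (d) as you set them up are fine once (a) and (c) are in place, and your composite $\iota_n$ does agree with the paper's $\varphi_n^{-1}\circ M(\iota_{V_n})$; so the proposal is repairable, but as written the heart of the lemma --- the identity (\ref{eq:psi-well-defined}) and the resulting two-sided inverse $\psi_n$ --- is asserted rather than proved.
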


\begin{proof}
To simplify the notation, write $E_{V_n}:=(E_n)_{V_n}$ and $C^{V_n}=
(C^n)^{V_n}$.

 (a) Define $\varphi _n \colon M(E_{n+1}, C^{n+1})
\longrightarrow M(E_{V_n}, C^ {V_n}) $ by $\varphi _n (a_v)= a_v $
for $v\in E_{n+1}^0$. Conversely, define $\psi _n\colon M(E_{V_n},
C^{V_n})\to M(E_{n+1}, C^{n+1})$ by $\psi _n(a_v)= a_v$ for $v\in
E_{n+1}^0\subset E_{V_n}^0$ and $\psi_n (a_v)= {\bf s}(X)$ for $X\in
C_v$ if $v\in E_n^{0,0}$. We have to show that $\psi_n$ is a
well-defined monoid homomorphism. There are no relations at the
vertices at $E^{0,1}_{n+1}$, both in $M(E_{n+1}, C^{n+1})$ and in
$M(E_{V_n}, C^{V_n})$, because these vertices are sources in both
graphs. It is clear that the relations at vertices $v$ in
$E^{0,0}_{n+1}$ are preserved by $\psi_n$, because
$C^{n+1}_v=C^{V_n}_v$ for these vertices.   For a vertex $v\in
E^{0,0}_n$, take $X, Y\in C_v$.
 Setting $C_v=\{X_1,\dots ,X_s\}$, there are $1\le
p,q\le s$ such that $X=X_p$ and $Y=X_q$. In $M(E_{n+1}, C^{n+1})$,
we have
\begin{equation}
\label{eq:psi-well-defined}
 {\bf s}(X_p)= \sum _{x_p\in X_p} s(x_p)=
\sum _{(x_1,\dots, x_s)\in \prod_{i=1}^s X_i} v(x_1,\dots ,x_s)=\sum
_{x_q\in X_q} s(x_q)={\bf s}(X_q).
\end{equation}
 Hence ${\bf s}(X)={\bf s}(Y)$ for all $X,Y\in
C_v$, which shows that $\psi_n$ is well-defined. Clearly $\varphi
_n$ and $\psi _n$ are mutually inverse, so we get that $\varphi _n$
is an isomorphism.

(b) By Lemma  \ref{lem:unitary3}, the natural map
$M(\iota_{V_n})\colon M(E_n, C^n)\to M(E_{V_n}, C^{V_n})$ is a
unitary embedding. Hence the map $\iota_n = \varphi _n^{-1}\circ
M(\iota _{V_n})\colon M(E_n,C^n)\to M(E_{n+1}, C^{n+1})$ is a
unitary embedding.

(c) We use induction on $n$. For $n=0$, we have $(E_0,C^0)=(F_0,
D^0)$. Assume that, for some $n\ge 0$, the natural map $j_n\colon
(E_n,C^n)\to (F_n,D^n)$ induces an isomorphism $M(j_n)\colon
M(E_n,C^n)\to M(F_n,D^n)$. Since $(F_{n+1}, D^{n+1})=((F_n)_{V_n},
(D^n)^{V_n})$, we get an isomorphism $M(E_{V_n},C^{V_n})\to
M(F_{n+1}, D^{n+1})$ extending $M(j_n)$. Now we have the following
commutative diagram:

\begin{equation}
\begin{CD}
M(E_n,C^n) @>{M(\iota_{V_n})}>>
  M(E_{V_n}, C^{V_n}) @<{\varphi_n}<< M(E_{n+1}, C^{n+1})\\
@V{M(j_n)}VV  @V{\cong }VV  @VV{M(j_{n+1})}V \\
M(F_n, D^n) @>>> M(F_{n+1},D^{n+1}) @>{\text{id}}>>   M(F_{n+1},
D^{n+1})
\end{CD}
\end{equation}
Since $\varphi _n$ is an isomorphism by (a), we get that
$M(j_{n+1})$ is also an isomorphism.

(d) Write $\mathfrak R = \varinjlim (M(\iota_n),\iota_n)$. We have
from (c) a commutative diagram
$$\xymatrix@!=6.5pc{M(E_0,C^0) \ar[r]^{\iota_0} \ar[d]_{\text{id}} & M(E_{1}, C^{1}) \ar[r]^{\iota_{1}} \ar[d]_{M(j_1)}^{\cong} & M(E_{2},
C^{2}) \ar[d]_{M(j_2)}^{\cong}
\ar @{.>}[rr] & &  \mathfrak R\ar[d]\\
M(F_{0}, D^{0}) \ar[r]  & M(F_{1},D^{1}) \ar[r]  & M(F_{2}, D^{2})
\ar @{.>}[rr] & &  M(F_{\infty},D^{\infty})}
$$
such that all the vertical maps $M(j_n)\colon M(E_{n}, C^{n})\to
M(F_{n}, D^{n})$ are isomorphisms, so we get an isomorphism
$\mathfrak R\to M(F_{\infty},D^{\infty})$. This shows the result.
\end{proof}

\medskip

Given a finite presentation $\langle \mathcal X \mid \mathcal
R\rangle $ of a finitely presented abelian conical monoid $M$ as in
Definition \ref{def:sep-graph-of-cam}, we can associated to it the
refinement monoid
$$\mathfrak R (\mathcal X\mid \mathcal R )= M(F_{\infty}, D^{\infty}) \, ,$$
where $(F_{\infty}, D^{\infty})$ is the complete multiresolution of
the bipartite separated graph associated to the presentation
$\langle \mathcal X \mid \mathcal R\rangle $. Observe that this
construction depends strongly on the presentation, not just the
monoid $M$. For instance, we may consider two different
presentations $\langle a,b \mid  a=a, b=b\rangle $ and $\langle
a,b\mid a+b=a+b\rangle$ of the free abelian monoid on two generators
$F$. In the first presentation the resulting $\mathfrak R$ is just
$F$.  For the second presentation, the corresponding refinement
monoid $\mathfrak R$ is atomless, and is isomorphic to the monoid
$\mon{K[G]}$, where $G$ is the lamplighter group, see Example
\ref{exam:lamplighter}.

If we consider the standard presentation
$$M(E)=\langle a_v\mid a_v= \sum _{e\in r^{-1}(v)} a_{s(e)}
\quad (v\in E^0)\,  \rangle$$ of a  graph monoid $M(E)$ associated
to a non-separated finite graph $E$, then it can be shown that
$\mathfrak R = M(E)$. Since $\mathfrak R$ is a refinement monoid, we
obtain an alternative proof of \cite[Proposition 4.4]{AMP} for
finite graphs.

\section{The main construction}
\label{sect:main-construct}

This section contains our main construction on {\it algebras}. Let
$(E,C)$ be a finite bipartite separated graph, with $r(E^1)=E^{0,0}$
and $s(E^1)=E^{0,1}$. Let $\{ (E_n, C^n)\}_{n\ge 0}$ be the
canonical sequence of bipartite separated graphs associated to it
(see Definition \ref{cons:complete-multiresolution}(c)), and let
$B_n$ be the commutative, finite dimensional subalgebra of
$L(E_n,C^n)$ generated by $E_n^0$. Here $L(E_n,C^n)$ stands for the
Leavitt path *-algebra of the separated graph $(E_n,C^n)$ over a
fixed field with involution $(K,*)$.

\begin{theorem}
\label{thm:algebras} With the above notation, for each $n\ge 0$,
there exists a surjective $*$-algebra homomorphism $$\phi _n \colon
L(E_n, C^n)\twoheadrightarrow L(E_{n+1}, C^{n+1}).$$ Moreover, the
following properties hold:
\begin{enumerate}
\item[(a)]
 $\ker (\phi _n)$
is the ideal $I_n$ of $L(E_n,C^n)$ generated by all the commutators
$[ee^*, ff^*]$, with $e,f\in E_n^1$, so that $L(E_{n+1},
C^{n+1})\cong L(E_n, C^n))/I_n$.
\item[(b)] The restriction of $\phi_n$ to $B_n$ defines an injective homomorphism from $B_n$
into $B_{n+1}$.
\item[(c)] There is a commutative diagram
\begin{equation}
\label{eq:commu-daigram1}
 \begin{CD}
M(E_n, C^n) @>\iota_n>> M(E_{n+1}, C^{n+1})\\
@V{\cong}VV @VV{\cong}V  \\
\mon{L(E_n, C^n)} @>\mon{\phi _n}>> \mon{L(E_{n+1}, C^{n+1})}
\end{CD}\end{equation}
where the vertical maps are the canonical maps, which are
isomorphisms by \cite[Theorem 4.3]{AG}.
\end{enumerate}
\end{theorem}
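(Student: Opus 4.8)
The plan is to construct $\phi_n$ using the universal property of the Leavitt path algebra $L(E_n,C^n)$, i.e.\ by specifying where each generator $v\in E_n^0$ and $e\in E_n^1$ should go inside $L(E_{n+1},C^{n+1})$, and then checking that the images satisfy relations (V), (E), (SCK1), (SCK2). Recall that $(E_{n+1},C^{n+1})\cong (E_n)_{V_n}/\!\sim$ in the sense of Lemma \ref{lem:refinement}(a), where $V_n=E_n^{0,0}$, so $(E_{n+1},C^{n+1})$ is obtained from the multiresolution of $(E_n,C^n)$ at $V_n$. Thus $L(E_{n+1},C^{n+1})$ contains the canonical generators $v$ (for $v\in E_n^0$), the new source vertices $v(x_1,\dots,x_s)$, and the new edges $\alpha^{x_i}(\dots)$. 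First I would send each old vertex $v\in E_n^0$ to itself, and each old edge $e\in E_n^1$ to itself. The only relations of $L(E_n,C^n)$ that are not literally relations of $L(E_{n+1},C^{n+1})$ are the (SCK2) relations $v=\sum_{e\in X}ee^*$ for $v\in E_n^{0,0}$, $X\in C^n_v$ --- in $(E_{n+1},C^{n+1})$ these $X$ are no longer in $C^{n+1}_v$. So the crux is to verify that in $L(E_{n+1},C^{n+1})$ one still has $\sum_{e\in X_p}ee^*=\sum_{e\in X_q}ee^*$ for all $X_p,X_q\in C^n_v$, and that this common value equals $v$.

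The key computation mirrors \eqref{eq:psi-well-defined}: for $v\in E_n^{0,0}$ with $C^n_v=\{X_1,\dots,X_s\}$ and $x_p\in X_p$, the new SCK1/SCK2 relations at the source vertex $v(x_1,\dots,x_s)$ give $x_p x_p^* = \sum_{(x_j)_{j\ne p}} \alpha\alpha^*$ summed over the edges $\alpha=\alpha^{x_p}(x_1,\dots,\widehat{x_p},\dots,x_s)$ out of the various $v(x_1,\dots,x_s)$, because each such $\alpha$ is a partial isometry with $\alpha^*\alpha=v(x_1,\dots,x_s)$ and $\sum_{\alpha\in X(x_p)}\alpha\alpha^*$ is precisely the identity $s(x_p)$... actually I must be careful about the direction: in these graphs $r(x_p)=v\in E^{0,0}_n$ and $s(x_p)=s(\alpha^{x_p})$-range, so $x_p x_p^*$ lives at $v$. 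The point is that, summing the relation $x_px_p^*=(\text{something symmetric in }p)$ over $x_p\in X_p$ and using the (SCK2) relations at the new sources plus relation (V), one gets $\sum_{x_p\in X_p}x_px_p^*=\sum_{(x_1,\dots,x_s)}v(x_1,\dots,x_s)$'s worth of projections, a quantity independent of $p$, and equal to $v$ via the surviving (SCK2) relation attached to one of the $X(x_i)$ blocks --- exactly the algebra-level shadow of \eqref{eq:psi-well-defined}. Granting this, $\phi_n$ is a well-defined $*$-homomorphism; it is surjective because its image contains all $v\in E_n^0$, all old edges, and (using SCK1/SCK2 at the new sources inverted) the new vertices $v(x_1,\dots,x_s)=\alpha^*\alpha$ and new edges $\alpha$ as well. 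I expect this verification --- that the multiresolution relations force the old (SCK2) relations to collapse correctly, with the right common value --- to be the main obstacle; everything else is bookkeeping.

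For part (a), I would show $\ker\phi_n$ is exactly the ideal $I_n$ generated by the commutators $[ee^*,ff^*]$, $e,f\in E_n^1$. The inclusion $I_n\subseteq\ker\phi_n$ follows from the tameness/commutativity of the range projections in the multiresolved graph: in $L(E_{n+1},C^{n+1})$, for $e,f\in E_n^1$, $\phi_n(ee^*)$ and $\phi_n(ff^*)$ both lie in the commutative subalgebra generated by the projections $\{\alpha\alpha^*\}$ attached to the new sources together with the $v\in E_n^0$, so they commute (one checks this directly from the multiresolution description: $ee^*$ with $e=x_p\in X_p$ expands as a sum of mutually orthogonal projections indexed by tuples $(x_1,\dots,x_s)$, and these expansions for different edges at the same vertex $v$ are compatible). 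For the reverse inclusion $\ker\phi_n\subseteq I_n$, I would pass to the quotient $L(E_n,C^n)/I_n$ and exhibit an inverse to the induced map: in $L(E_n,C^n)/I_n$ the range projections commute, so one can form the orthogonal ``corner" projections $q(x_1,\dots,x_s)=\prod_{i}x_i x_i^*\cdot(\text{correction terms})$ realizing the new source vertices, and the elements realizing the new edges $\alpha$, and check these satisfy the defining relations of $L(E_{n+1},C^{n+1})$; the resulting homomorphism $L(E_{n+1},C^{n+1})\to L(E_n,C^n)/I_n$ inverts $\bar\phi_n$. This is again an application of universal properties, modeled on the corresponding monoid statement. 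Part (b): $\phi_n|_{B_n}$ is injective because $B_n$ is spanned by the pairwise orthogonal projections $\{v:v\in E_n^0\}$, which map to the pairwise orthogonal nonzero projections $\{v:v\in E_n^0\}\subseteq B_{n+1}$ (nonzero since each $v$ survives in $L(E_{n+1},C^{n+1})$, e.g.\ by faithfulness of the relevant representations or by the monoid computation $a_v\ne 0$), so $\phi_n$ carries a basis of $B_n$ to a linearly independent set. Part (c): the square \eqref{eq:commu-daigram1} commutes by naturality --- on generators, both composites send $a_v\mapsto[v]$ --- once we know $\iota_n$ is the map induced on $M$ by the graph inclusion composed with $\varphi_n^{-1}$ (Lemma \ref{lem:refinement}(b)) and that $\mon{-}$ is functorial sending $[v]$ to $[v]$; the vertical maps are the isomorphisms of \cite[Theorem 4.3]{AG}. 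I would state (c) as an immediate consequence of the constructions of $\phi_n$ and $\iota_n$ plus functoriality of $\mon{-}$, with the only content being the identification of the two maps $M(E_n,C^n)\to M(E_{n+1},C^{n+1})$ on generators.
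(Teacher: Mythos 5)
There is a genuine gap, and it occurs at the very first step: you misidentify the graph $(E_{n+1},C^{n+1})$. By Construction \ref{cons:complete-multiresolution}(c), $E_{n+1}$ is the bipartite graph whose layers are $E_{n+1}^{0,0}=F^{0,n+1}=E_n^{0,1}$ and $E_{n+1}^{0,1}=F^{0,n+2}$ (the new source vertices $v(x_1,\dots,x_{k_u})$ created by multiresolving at $V_n=E_n^{0,0}$), and whose edge set $E_{n+1}^1=F^{1,n+1}$ consists only of the new arrows $\alpha^{x_i}(x_1,\dots,\widehat{x_i},\dots,x_{k_u})$. The old vertices $u\in E_n^{0,0}$ and all old edges $x\in E_n^1$ are not vertices or edges of $E_{n+1}$; the isomorphism of Lemma \ref{lem:refinement}(a) is purely a monoid statement and does not identify $(E_{n+1},C^{n+1})$ with the multiresolution graph $((E_n)_{V_n},(C^n)^{V_n})$, whose Leavitt path algebra is in general quite different from $L(E_{n+1},C^{n+1})$. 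Consequently your prescription ``send each old vertex and each old edge to itself'' does not define any map into $L(E_{n+1},C^{n+1})$: the proposed images simply do not exist there. The entire content of the theorem lies in choosing the images correctly, namely $\phi_n(w)=w$ for $w\in E_n^{0,1}$, $\phi_n(u)=\sum_{(x_1,\dots,x_{k_u})\in\prod_i X_i^u} v(x_1,\dots,x_{k_u})$ for $u\in E_n^{0,0}$, and, for $x_i\in X_i^u$, $\phi_n(x_i)=\sum_{x_j\in X_j^u,\,j\ne i}\alpha^{x_i}(x_1,\dots,\widehat{x_i},\dots,x_{k_u})^*$, and then verifying (SCK1) and (SCK2) for these sums and proving surjectivity via the identities $\phi_n(x_i)\phi_n(x_i)^*=\sum_{y_j\in X_j^u,\,j\ne i}v(y_1,\dots,x_i,\dots,y_{k_u})$, $v(x_1,\dots,x_{k_u})=\phi_n\big((x_1x_1^*)\cdots(x_{k_u}x_{k_u}^*)\big)$ and $\alpha^{x_i}(x_1,\dots,\widehat{x_i},\dots,x_{k_u})^*=v(x_1,\dots,x_{k_u})\phi_n(x_i)$, i.e.\ the paper's (\ref{eq:xixi*-formula})--(\ref{eq:alsoalphsinim}). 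None of this appears in your proposal, and your central computation about $\sum_{e\in X_p}ee^*=\sum_{e\in X_q}ee^*$ ``in $L(E_{n+1},C^{n+1})$'' cannot even be formulated before those images are defined --- which is presumably why that paragraph ends in an acknowledged confusion about sources and ranges.

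On the positive side, your sketch of the reverse inclusion in (a) does contain the correct key idea: in $L(E_n,C^n)/I_n$ the range projections commute, and one realizes the new vertices as $v(x_1,\dots,x_{k_u})\mapsto (x_1x_1^*)(x_2x_2^*)\cdots(x_{k_u}x_{k_u}^*)$ and the new edges as $\alpha^{x_i}(x_1,\dots,\widehat{x_i},\dots,x_{k_u})\mapsto x_i^*(x_1x_1^*)\cdots(x_{k_u}x_{k_u}^*)$ (no ``correction terms'' are needed), which is exactly the paper's inverse $\gamma_n$ to $\overline{\phi}_n$. Parts (b) and (c) are essentially bookkeeping once $\phi_n$ is correctly defined, but note that your argument for (b) again assumes vertices map to themselves; in fact a vertex $u\in E_n^{0,0}$ maps to the sum of all $v(x_1,\dots,x_{k_u})$, and injectivity on $B_n$ holds because these sums, together with the vertices of $E_n^{0,1}$, are nonzero pairwise orthogonal projections in $B_{n+1}$. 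As it stands, the proposal does not prove the theorem, because the homomorphism $\phi_n$ it purports to construct is never actually defined.
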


\begin{proof} Set $A_n=L(E_n, C^n)$. Define $\phi _n\colon A_n\to
A_{n+1}$ on vertices $u\in E_n^{0,0}$ by the formula
$$\phi_n (u)= \sum _{(x_1,\dots ,x_{k_u})\in \prod_{i=1}^{k_u} X^u_i} v(x_1,\dots
,x_{k_u}), $$ where $C_u=\{ X_1^u,\dots , X^u_{k_u} \}$, and by $\phi
_n(w)=w$ for all $w\in E_n^{0,1}$. For an arrow $x_i\in X^u_i$,
define
$$\phi _n(x_i)= \sum _{x_j\in X^u_j, j\ne i} (\alpha
^{x_i}(x_1,\dots , \widehat{x_i},\dots ,x_{k_u}))^* \, ,$$ where
$\alpha ^{x_i}(x_1,\dots , \widehat{x_i},\dots ,x_{k_u})=\alpha
^{x_i}(x_1,\dots , x_{i-1}, x_{i+1}, \dots  ,x_{k_u})$. We have to
show that the defining relations of $L(E_n,C^n)$ are preserved by
$\phi _n$. It is easily checked that (V) and (E) are preserved by
$\phi _n$. To see that (SCK1) is preserved by $\phi _n$, take $u\in
E_n^{0,0}$ and $x_i,x_i'\in X^u_i$ for some $i$. Observe that
$$ \phi _n(x_i)^*\phi _n(x_i') = \sum _{x_j,y_j\in X_j^u, j\ne i}
\alphaxi \alpha ^{x_i'} (y_1,\dots ,\widehat{x_i'},\dots ,
y_{k_u})^* .$$ If $x_i\ne x_i'$, then $v(x_1,\dots, x_i,\dots
,x_{k_u})v(y_1,\dots , x_i',\dots , y_{k_u})=0$ for all $x_j,y_j\in
X_j^u$, $j\ne i$, and so $\phi _n(x_i)^*\phi_n(x_i')=0$. If
$x_i=x_i'$ then $v(x_1,\dots, x_i,\dots ,x_{k_u})v(y_1,\dots ,
x_i,\dots , y_{k_u})=0$ except when $x_j=y_j$ for all $j\ne i$, and,
hence recalling that $X(x_i)\in C_{s(x_i)}^{n+1}$, we get
$$\phi _n (x_i)^* \phi_n (x_i')= \sum _{x_j\in X_j^u, j\ne i}
\alphaxi \alphaxi ^* =\sum _{z\in X(x_i)} zz^*= s(x_i)
 \, ,$$
as desired. Finally we check that (SCK2) is also preserved by $\phi
_n$. Take $u\in E_n^{0,0}$ and $X^u_i\in C^n_u$. Then
\begin{align*}
\sum _{x_i\in X^u_i} \phi _n(x_i)\phi _n(x_i)^* & = \sum _{x_i\in
X_i^u} \sum _{x_j\in
X^u_j, j\ne i} \alphaxi ^*\alphaxi\\
&  =\sum _{(x_1,\dots, x_{k_u})\in \prod_{j=1}^{k_u}X_j^u}
v(x_1,\dots, x_{k_u}) =\phi _n(u)\, ,
\end{align*} as wanted.

To check surjectivity, it is enough to check that all vertices and
edges in $E_{n+1}$ belong to the image of $\phi _n$. Clearly, $w\in
\phi _n(A_n)$ for all $w\in E_{n+1}^{0,0}$. Now take $x_i\in X^u_i$,
for $i=1,2,\dots ,k_u$. We have
\begin{equation}
\label{eq:xixi*-formula} \phi_n(x_i)\phi _n(x_i)^*= \sum _{y_j\in
X_j^u, j\ne i} v(y_1,\dots , x_i, \dots , y_{k_u}) \, .
\end{equation}
Therefore, we get
\begin{equation}
\label{eq:vx1xku-formula} v(x_1,x_2,\dots ,x_{k_u}) = \phi
_n((x_1x_1^*)(x_2x_2^*)\cdots (x_{k_u}x_{k_u}^*)) \, ,
\end{equation}
showing that $v(x_1,x_2,\dots ,x_n)\in \phi _n (A_n)$. Moreover, we
get
\begin{equation}
\label{eq:alsoalphsinim} \alphaxi ^* = v(x_1,\dots ,x_{k_u}) \phi
_n(x_i) \, ,
\end{equation}
which, together with (\ref{eq:vx1xku-formula}) gives that $\alphaxi
\in \phi _n(A_n)$. This concludes the proof that $\phi _n$ is
surjective.

We now proceed to show (a), (b), (c).

(a) It follows from (\ref{eq:xixi*-formula}) that $\{ \phi
_n(ee^*)\mid e\in E_n^1 \}$ is a family of commuting projections in
$L(E_{n+1}, C^{n+1})$. Therefore all commutators $[ee^*, ff^*]$ are
contained in the kernel of $\phi _n$, and we obtain an induced
surjective map $\ol{\phi}_n\colon A_n/I_n\to A_{n+1}$. We define a
map $\gamma _n\colon A_{n+1}\to A_n/I_n$ by $\gamma _n(w) =w$ for
$w\in E_{n+1}^{0,0}$, and
$$\gamma_n (v(x_1,\dots , x_{k_u})) = (x_1x_1^*) (x_2x_2^*) \cdots
(x_{k_u}x_{k_u}^*),$$
$$\gamma _n(\alphaxi )= x_i^*(x_1x_1^*)(x_2x_2^*)\cdots
(x_{k_u}x_{k_u}^*)$$ for $(x_j)\in \prod_{j=1}^{k_u} X_j$. Using the
commutativity in $A_n/I_n$ of the set of projections $\{ ee^* \mid
e\in E_n^1 \}$, and the defining relations of $L(E_n,C^n)$, it is
easy to check that the defining relations of $A_{n+1}= L(E_{n+1},
C^{n+1})$ are preserved by $\gamma _n$. Clearly, $\gamma _n$ is the
inverse of $\ol{\phi}_n$, and so we get that $\ker (\phi _n)= I_n$.

(b) This follows from the definition of $\phi _n$.

(c) Denote by $\tau _n \colon M(E_n,C^n)\to \mon{A_n}$ the natural
map sending $a_v$ to $[v]$, for $v\in E_n^0$. This map is an
isomorphism by \cite[Theorem 4.3]{AG}. Recall that $\iota _n=
\varphi _n^{-1}\circ M(\iota_{V_n}) = \psi_n \circ M(\iota_{V_n})$
has been defined in the proof of Lemma \ref{lem:refinement}(b).

If $w\in E_n^{0,1}$ then $(\tau _{n+1}\circ \iota _n) (a_w) = [w]=
(\mon{\phi _n}\circ \tau _n )(a_w)$. If $u\in E_n^{0,0}$, then it
follows from the definition of $\psi _n$, formula
(\ref{eq:psi-well-defined}) and the definition of $\phi _n$ that
$$(\tau _{n+1}\circ \iota _n) (a_u)= \sum _{(x_j)\in \prod
_{j=1}^{k_u} X_j^u} [v(x_1,\dots ,x_{k_u})] = (\mon{\phi _n}\circ
\tau _n)(a_u)\, ,$$ so that we get the commutativity of the diagram
(\ref{eq:commu-daigram1}).
\end{proof}

We now study in more detail the relationship between the different
layers. To simplify the notation, we will write $D_n=
F^{0,n}=E_n^{0,0}$ for all $n\ge 0$.

Note that, for $n\ge 2$ we have a surjective map $r_n\colon D_{n}\to
D_{n-2}$ given by $r_n (v(x_1,\dots x_{k_u}))= u$, where $u\in
D_{n-2}$ and $x_i\in X^u_i$, and where, as usual, $C_u=\{X^u_1,\dots
, X^u_{k_u} \}$. For $n=2m$, we thus obtain a surjective map $\gotr
_{2m}=r_2\circ r_4\circ \cdots \circ r_{2m}\colon D_{2m}\to D_0$.
Similarly, we have a map $\gotr_{2m+1}= r_3\circ r_5 \circ \cdots
\circ r_{2m+1} \colon D_{2m+1}\to D_1$. We call $\gotr (v)$ the {\it
root} of $v$. Observe that we have
\begin{equation}
\label{eq:Ddisjintunion} D_{2n}=\bigsqcup _{v\in D_0}
\gotr_{2n}^{-1}(v); \qquad D_{2n+1}=\bigsqcup _{v\in D_1}
\gotr_{2n+1}^{-1}(v)
\end{equation}

\begin{lemma}
\label{lem:firststep}
\begin{enumerate}
\item For all $n\ge 1$ and $v\in D_{2n}$ we have $|s^{-1}(v)|=|C_v|=
|C_{\gotr _{2n}(v)}| $.
\item For all $n\ge 0$ and $w\in D_{2n+1}$ we have $|s^{-1} (w) |=
|C_ w| = | C_{\gotr _{2n+1} (w)} |$.
\end{enumerate}
\end{lemma}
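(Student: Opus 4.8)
\textbf{Proof proposal for Lemma~\ref{lem:firststep}.}

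The plan is to reduce to a single step of the multiresolution process and then iterate, so the real content lies in comparing $|s^{-1}(v)|$, $|C_v|$ in $(F_{n+1},D^{n+1})$ with the analogous data for the parent vertex in $(F_n,D^n)$. First I would fix $n$ and consider a vertex $v=v(x_1,\dots,x_{k_u})\in D_{n+1}=F^{0,n+1}$, which by construction arises from a vertex $u\in D_{n-1}=F^{0,n-1}$ with $C_u=\{X_1^u,\dots,X_{k_u}^u\}$ and $x_i\in X_i^u$. By Definition~\ref{multiresatv}, applied at the set of vertices $V_{n-1}=F^{0,n-1}$, the arrows emanating from $v$ are exactly the $\alpha^{x_i}(x_1,\dots,\widehat{x_i},\dots,x_{k_u})$ for $i=1,\dots,k_u$, grouped into the $k_u$ sets $X(x_i)$; these sets are precisely the members of $(D^{n+1})_{s(x_i)}$ that involve $v$. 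The key bookkeeping observation is that for each $i=1,\dots,k_u$ exactly one new group $X(x_i)$ is created having $v$ as source vertex, so $|s^{-1}(v)|$ partitions into exactly $k_u$ new groups, giving $|C_v|=k_u=|C_u|$. Since $\gotr_{n+1}(v)=u$ (for the appropriate parity), this is the inductive step.

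The induction itself is on $n$. For the base case I would check $n=1$ in part (1) and $n=0$ in part (2) directly from Construction~\ref{cons:complete-multiresolution}: the vertices of $D_2$ (resp. $D_1$) are the new source vertices produced by the first multiresolution of $(E,C)=(F_0,D^0)$ at $V_0=F^{0,0}$ (resp. $V_1 = F^{0,1}$), so the computation above with $u$ the root in $D_0$ (resp. $D_1$) gives the result immediately, using that $|s^{-1}(\gotr(v))| = |C_{\gotr(v)}|$ holds for the original bipartite graph. For the inductive step, given $v\in D_{2n}$ with root $u\in D_0$, I would write $\gotr_{2n}=\gotr_{2n-2}\circ r_{2n}$; applying the single-step analysis to the last multiresolution shows $|C_v|=|C_{r_{2n}(v)}|$ with $r_{2n}(v)\in D_{2n-2}$, and then the inductive hypothesis for $D_{2n-2}$ gives $|C_{r_{2n}(v)}|=|C_{\gotr_{2n-2}(r_{2n}(v))}|=|C_u|$. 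The equality $|s^{-1}(v)|=|C_v|$ follows because, after the multiresolution at the layer below, $v$ has no edges in $r^{-1}$-direction removed and the only edges with source $v$ are the newly created $\alpha$-arrows, which are exhausted by the groups $X(x_i)$; the odd case is identical with $D_1$ in place of $D_0$.

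I expect the main obstacle to be purely notational rather than conceptual: keeping straight which layer a vertex lives in, which multiresolution step created it, and that the groups $C_v$ at a \emph{new} source vertex $v$ are genuinely in bijection with the index set $\{1,\dots,k_u\}$ rather than with something larger (one must be careful that distinct $i$ never produce the same group $X(x_i)$ at the same vertex $s(x_i)$, and that a group $X(x_i)$ attached to $w=s(x_i)$ is recorded once even though it is ``seen'' from every $v(x_1,\dots,x_{k_u})$ with that fixed $x_i$ — but from the side of $v$ we are counting groups containing an arrow out of $v$, and there are exactly $k_u$ such). Once this indexing is pinned down via Definition~\ref{multiresatv}, both statements drop out of the iteration; I would present it as the two-line induction sketched above, citing Construction~\ref{cons:complete-multiresolution} and Definition~\ref{multiresatv} for the structure of each layer.
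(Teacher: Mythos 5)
Your overall plan is the same as the paper's (a one-step comparison plus induction on the layer), and the first equality is argued correctly: the arrows with source $v=v(x_1,\dots,x_{k_u})$ are exactly the $k_u$ arrows $\alpha^{x_i}(x_1,\dots,\widehat{x_i},\dots,x_{k_u})$, so $|s^{-1}(v)|=k_u=|C_{r(v)}|$ for the one-step root. The gap is in the middle equality $|C_v|=|s^{-1}(v)|$. You derive $|C_v|=k_u$ by counting the groups $X(x_i)$ that contain the arrows emanating from $v$; but, as you yourself note, these are members of $(D^{n+1})_{s(x_i)}$, i.e.\ they partition portions of $r^{-1}(s(x_i))$, not anything attached to $v$. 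In this paper's convention (Definition~\ref{defsepgraph}) $C_v$ is a partition of $r^{-1}(v)$, and at the step that creates $v$ one has $r^{-1}(v)=\emptyset$: the new vertex is a source and $C_v=\emptyset$ at that stage. Your parenthetical ``from the side of $v$ we are counting groups containing an arrow out of $v$'' makes the misidentification explicit: that count is a genuinely different quantity (for an original vertex $w\in E^{0,1}$ it is $|s^{-1}(w)|\ge 1$ while $C_w=\emptyset$ in $(E,C)$), so ``giving $|C_v|=k_u$'' is a non sequitur as written.

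The missing ingredient is the \emph{next} multiresolution step, which is where $C_v$ is actually created. For $v\in D_{2n}$, when one multiresolves $(F_{2n-1},D^{2n-1})$ at $V_{2n-1}$, each edge $z\in s^{-1}(v)\subseteq F^{1,2n-1}$ (i.e.\ each $z=\alpha^{x_i}(\dots)$) gives rise, via the multiresolution at $r(z)\in D_{2n-1}$, to exactly one new group $X(z)$ attached to $s(z)=v$; since later multiresolutions only attach groups to deeper layers, $C_v=\{X(z)\mid z\in s^{-1}(v)\}$ and hence $|C_v|=|s^{-1}(v)|$ (this is precisely how the paper argues, cf.\ Definition~\ref{multiresatv} and Remark~\ref{rem:enumerations}). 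The same repair is needed in your base cases: the vertices of $D_1$ are not ``new source vertices produced by the multiresolution at $V_1$'' — they are the original vertices of $E^{0,1}$ — and the identity you invoke, that $|s^{-1}(u)|=|C_u|$ holds in the original bipartite graph, is false there (for $u\in E^{0,0}$ one has $s^{-1}(u)=\emptyset$ but $C_u\neq\emptyset$). What one needs for $w\in D_1$ is again the next-step bijection: the multiresolution at $V_0$ attaches to $w$ one group $X(x)$ for each $x\in s^{-1}(w)$, so $|C_w|=|s^{-1}(w)|$, and $\gotr_1$ is the identity. Once the equality $|C_v|=|s^{-1}(v)|$ is justified this way, your induction goes through and coincides with the paper's proof.
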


\begin{proof}
(1) By induction, it suffices to show that $|s^{-1}(v)| = |C_v| =
|C_{r_{2n} (v)}|$ for every $v\in D_{2n}$, $n\ge 1$. Let $v\in
D_{2n}$, $n\ge 1$, and write $v=v(x_1,\dots ,x_{k_u})$, where
$x_i\in X_i^u$ and $C_u=\{ X^u_1,\dots , X^u_{k_u} \}$. Note that
$s^{-1}(v)$ has exactly $k_u$ elements, namely $$\alpha
^{x_1}(x_2,\dots , x_{k_u}), \dots , \alpha ^{x_{k_u}}(x_1,\dots
,x_{k_u-1}).$$ Thus $|s^{-1}(v)|= k_u= |C_u|= |C_{r_{2n}(v)}|$. On
the other hand, by definition $C_v=\{X(y) \mid y\in
s_{E_{2n-1}}^{-1}(v) \}$, so that $|C_v|= |s^{-1}(v)|=|C_u|$, as
desired.

(2) This is proved exactly as in (1).
\end{proof}

\begin{remark}
\label{rem:enumerations} Fix an enumeration $\{X^u_1, \dots
,X^u_{k_u} \}$ for the elements of $C_u$, where $u\in D_0= E^{0,0}$.
This gives an enumeration of the elements in $s^{-1}(v)$ for $v\in
r_2^{-1} (u)$. Indeed, for such an element $v$ we have
$v=v(x_1,\dots ,x_{k_n})$, with $x_i \in X^u_i$, and we can set $z_i
= \alphaxi $, so that we obtain the enumeration $\{ z_1,\dots
,z_{k_u}\}$ of $s^{-1}(v)$. This in turn gives an enumeration of the
elements of the set $C_v$, namely $C_v =\{X^v_1, \dots ,X^v_{k_u}
\}$, where $X^v_i= X(z_i)$ for all $i$. Obviously this translates to
all the vertices $v\in D_{2n}$, so that we obtain canonical
enumerations of the elements of $s^{-1}(v)$ and of the elements of
$C_v$.
\end{remark}

Define $\Phi _n=\phi_{n-1}\circ \dots \circ \phi _0\colon L(E,C)\to
L(E_n, C^n)$.

\begin{lemma}
\label{lem:form-of-xiinstep2n} Let $u\in D_0$ and set
$C_u=\{X^u_1,\dots X^u_{k_u} \}$. For each $n\ge 1$ and each $i$ with $1\le i\le k_u$, 
there exists a
partition
$$\gotr _{2n}^{-1} (u) =\bigsqcup _{x_i\in X^u_i} Z_{2n}(x_i) \, ,$$
such that, for each $x_i\in X_i^u$, we have
\begin{equation}
\label{eq:Phi2nformula} \Phi _{2n} (x_i)= \sum _{v\in Z_{2n}(x_i)}
\sum _{x\in X^v_i} x \, ,
\end{equation}
where the enumeration of the elements in $C_v$, $v\in D_{2n}$, is
the canonical one described in Remark \ref{rem:enumerations}.
\end{lemma}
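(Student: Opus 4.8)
I would prove this by induction on $n$, driven by a single ``two-step'' computation: starting from a generator $x$ that sits in the $i$-th group $X^v_i$ (in the canonical enumeration of Remark \ref{rem:enumerations}) at a vertex $v\in D_{2n}$, apply $\phi_{2n}$, then $\phi_{2n+1}$, and read off $\phi_{2n+1}(\phi_{2n}(x))$. The base case $n=1$ is exactly this computation run with $(\phi_0,\phi_1)$ in place of $(\phi_{2n},\phi_{2n+1})$, applied to the original edge $x_i\in X^u_i$ (with $u=r(x_i)\in D_0$). Before starting I would record that for $v\in\gotr_{2n}^{-1}(u)$ Lemma \ref{lem:firststep}(1) gives $|C_v|=|C_u|=k_u$, so the index $i$ is meaningful at each such $v$ and the formula in the statement makes sense.

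For the core computation, fix $x\in X^v_i$ and a choice of $y_j\in X^v_j$ for $j\ne i$, and let $v'=v(y_1,\dots,y_{k_u})\in D_{2n+2}$ be the new vertex obtained by putting $x$ in the $i$-th slot. By the formula for $\phi_{2n}$ on arrows from the proof of Theorem \ref{thm:algebras} (for the multiresolution of $(E_{2n},C^{2n})$ at $V_{2n}=D_{2n}$), the term of $\phi_{2n}(x)$ indexed by this choice is $(\alpha^{x}(\dots))^{*}$, where the new arrow $\alpha^{x}(\dots)$ has source $v'$, range $s(x)\in D_{2n+1}$, lies in the group $X(x)\in C^{2n+1}_{s(x)}$, and — by the construction of the canonical enumeration in Remark \ref{rem:enumerations} — is the $i$-th element of $s^{-1}(v')$. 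Now apply $\phi_{2n+1}$, the multiresolution at $V_{2n+1}=D_{2n+1}$; being a $*$-homomorphism it sends $(\alpha^{x}(\dots))^{*}$ to $\phi_{2n+1}(\alpha^{x}(\dots))^{*}$, and by the formula for $\phi_{2n+1}$ on arrows this equals the sum of \emph{all} arrows of the new group at $v'$ generated by $\alpha^{x}(\dots)$; by Remark \ref{rem:enumerations} that group is precisely $X^{v'}_i$. Summing over the choices of $(y_j)_{j\ne i}$, and then over $x\in X^v_i$ — observing that the $i$-th coordinate is a well-defined function on $r_{2n+2}^{-1}(v)$, so the vertices $v'$ so obtained exhaust $r_{2n+2}^{-1}(v)$ disjointly — I conclude that the two steps carry $\sum_{x\in X^v_i}x$ to $\sum_{v'\in r_{2n+2}^{-1}(v)}\sum_{x'\in X^{v'}_i}x'$.

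With this the induction closes at once. For $n=1$ the computation applied to $x_i\in X^u_i$ gives $\Phi_2(x_i)=\phi_1(\phi_0(x_i))=\sum_{v\in Z_2(x_i)}\sum_{x\in X^v_i}x$, where $Z_2(x_i)\subseteq\gotr_2^{-1}(u)=r_2^{-1}(u)$ consists of the vertices of $r_2^{-1}(u)$ with $i$-th coordinate $x_i$, and $\gotr_2^{-1}(u)=\bigsqcup_{x_i\in X^u_i}Z_2(x_i)$. For the inductive step, since $\Phi_{2n+2}=\phi_{2n+1}\circ\phi_{2n}\circ\Phi_{2n}$ and $\Phi_{2n}(x_i)=\sum_{v\in Z_{2n}(x_i)}\sum_{x\in X^v_i}x$ by hypothesis, applying the computation termwise yields $\Phi_{2n+2}(x_i)=\sum_{v\in Z_{2n}(x_i)}\sum_{v'\in r_{2n+2}^{-1}(v)}\sum_{x'\in X^{v'}_i}x'$; I would then set $Z_{2n+2}(x_i)=r_{2n+2}^{-1}(Z_{2n}(x_i))$, and $\gotr_{2n+2}^{-1}(u)=r_{2n+2}^{-1}(\gotr_{2n}^{-1}(u))=\bigsqcup_{x_i\in X^u_i}Z_{2n+2}(x_i)$ follows since $r_{2n+2}$ is a function, the union staying disjoint. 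The hard part is the bookkeeping behind ``this group is precisely $X^{v'}_i$'': one must trace, through the two successive multiresolutions, which partition set of $(E_{2n+2},C^{2n+2})$ each freshly created arrow lands in, and verify that it always inherits the index $i$ carried by the original edge $x_i\in X^u_i$. This is where the explicit descriptions of the new vertices, arrows and partitions in Definition \ref{multiresatv} and the identities (\ref{eq:xixi*-formula})--(\ref{eq:alsoalphsinim}) in the proof of Theorem \ref{thm:algebras} do the real work, while Lemma \ref{lem:firststep} keeps the range $1\le i\le k_u$ of the index stable across the even layers.
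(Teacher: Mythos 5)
Your proof is correct and follows essentially the same route as the paper: an induction on $n$ whose engine is the two-step computation of $\phi_{2n+1}\circ\phi_{2n}$ on a generator $x\in X^v_i$, using Definition \ref{multiresatv} and Remark \ref{rem:enumerations} to see that the resulting arrows form exactly the group $X^{v'}_i$ at each $v'\in r_{2n+2}^{-1}(v)$, and then setting $Z_{2n+2}(x_i)=r_{2n+2}^{-1}(Z_{2n}(x_i))$. The paper's proof of Lemma \ref{lem:form-of-xiinstep2n} does precisely this (base case via $\phi_1\circ\phi_0$, inductive step via formula (\ref{eq:phi2nPhi2nxi})), so no further comment is needed.
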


\begin{proof}
We first prove the case $n=1$. Observe that $\phi _0(x_i)= \sum
_{y\in X(x_i)} y^*$. The elements $y\in X(x_i)$ are of the form
$\alphaxi $, and we set
$$Z_2(x_i) = \{ s(y)\mid y\in X(x_i) \}= \{v(x_1,\dots ,x_i,\dots
,x_{k_u})\mid x_j\in X_j^u, j\ne i \} .$$ Note that $s(y)\ne s(y')$
for $y,y'\in X(x_i)$ with $y\ne y'$. Observe that we get a partition
$$\gotr _2^{-1} (u)=\bigsqcup _{x_i\in X_i^u} Z_2(x_i) .$$
Let $v=v(x_1,\dots ,x_i, \dots , x_{k_u})\in Z_2(x_i)$. Then in the
canonical enumeration of the elements of $s^{-1}(v)$, say
$\{z_1,\dots ,z_{k_u}\}$, the element $\alphaxi $ corresponds to
$z_i$, and there is no other element of $X(x_i)$ having source equal
to $v$. The corresponding element $X(y)=X(z_i)$ of $C_v$ is the
element labelled as $X_i^v$ (see Remark \ref{rem:enumerations}).
Therefore we get
$$\Phi _2(x_i) = \phi _1( \sum _{y\in X(x_i)} y^*) =
\sum _{y\in X(x_i)} \sum _{x\in X(y)} x =  \sum _{v\in Z_2(x_i)}
\sum _{x\in X_i^v} x \, .$$

Assume now that $n\ge 1$ and that (\ref{eq:Phi2nformula}) holds.
Note that
\begin{equation}
\label{eq:phi2nPhi2nxi}
 \phi _{2n}(\Phi _{2n}(x_i)) = \sum _{v\in Z_{2n}(x_i)}
\sum _{x_i'\in X^v_i} \sum _{y\in X(x_i')} y^* .
\end{equation}
Observe that $\bigsqcup _{x_i'\in X^v_i} \{ s(y) \mid y\in X(x_i')
\} =r_{2n+2}^{-1}(v)$, and that $s(y)\ne s(y')$ for $y\ne y'$ and
$y,y'\in \bigcup _{v\in Z_{2n}(x_i)}\bigcup _{x_i'\in X_i^v}
X(x_i')$. Set
$$Z_{2n+2}(x_i)= \bigsqcup _{v\in Z_{2n}(x_i)}r_{2n+2}^{-1}
(v)=r_{2n+2}^{-1} (Z_{2n}(x_i)). $$ Clearly we get a partition
$\gotr_{2n+2}^{-1} (u)=\bigsqcup _{x_i\in X^u_i} Z_{2n+2}(x_i)$.
Moreover, arguing as in the case $n=1$ we get
\begin{align*}\Phi_{2n+2} & (x_i)  = \phi _{2n+1} (\sum _{v\in Z_{2n}(x_i)}
\sum _{x_i'\in X^v_i} \sum _{y\in X(x_i')} y^*)\\ &  = \sum _{v\in
Z_{2n}(x_i)} \sum _{x_i'\in X^v_i} \sum _{y\in X(x_i')} \sum _{x\in
X(y)} x= \sum _{v'\in Z_{2n+2}(x_i)} \sum _{x\in X_i^{v'}}x \, ,
\end{align*} completing the induction step.
\end{proof}

\begin{lemma}\label{lem:internalBnnplus1}
Let $u\in D_0$ and set $C_u=\{X_1^u,\dots X^u_{k_u} \}$. For $x_i\in
X_i^u$, $n\ge 0$ and $b\in B_n$ we have
\begin{equation*}
\Phi _{n+1}(x_i)\phi_n(b) \Phi_{n+1}(x_i)^*\in B_{n+1},\qquad
\Phi_{n+1}(x_i)^*\phi _n(b) \Phi _{n+1}(x_i)\in B_{n+1}.
\end{equation*}
Moreover, if $b$ is a projection in $B_n$, then both $\Phi
_{n+1}(x_i)\phi_n(b) \Phi_{n+1}(x_i)^*$ and $\Phi_{n+1}(x_i)^*\phi
_n(b) \Phi _{n+1}(x_i)$ are projections in $B_{n+1}$.
\end{lemma}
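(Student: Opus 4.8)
The plan is to strip off one layer and reduce to a mechanical computation with the Leavitt path relations. Since $\Phi_{n+1}=\phi_n\circ\Phi_n$ (with the convention $\Phi_0=\id$) and $\phi_n$ is a $*$-algebra homomorphism, for $b\in B_n$ we have
\begin{align*}
\Phi_{n+1}(x_i)\,\phi_n(b)\,\Phi_{n+1}(x_i)^*&=\phi_n\bigl(\Phi_n(x_i)\,b\,\Phi_n(x_i)^*\bigr),\\
\Phi_{n+1}(x_i)^*\,\phi_n(b)\,\Phi_{n+1}(x_i)&=\phi_n\bigl(\Phi_n(x_i)^*\,b\,\Phi_n(x_i)\bigr).
\end{align*}
So it suffices to show that $\phi_n\bigl(\Phi_n(x_i)\,b\,\Phi_n(x_i)^*\bigr)$ and $\phi_n\bigl(\Phi_n(x_i)^*\,b\,\Phi_n(x_i)\bigr)$ lie in $B_{n+1}$, and to treat the projection claim afterwards.

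The crucial preliminary is to pin down the shape of $\Phi_n(x_i)$. By induction on $n$ I would show that there is a finite set $S_n\subseteq E_n^1$ of edges with: (i) $\Phi_n(x_i)=\sum_{e\in S_n}e$ if $n$ is even and $\Phi_n(x_i)=\sum_{e\in S_n}e^*$ if $n$ is odd; (ii) distinct edges of $S_n$ have distinct sources; (iii) any two edges of $S_n$ with the same range lie in a common member of $C^n$. The base case is $S_0=\{x_i\}$. For the step, the defining formula $\phi_n(e)=\sum_{z\in X(e)}z^*$ on an edge $e\in X_j^{r(e)}$ (Theorem \ref{thm:algebras}) gives $S_{n+1}=\bigsqcup_{e\in S_n}X(e)$, where for $z\in X(e)$ one has $r(z)=s(e)$ and $s(z)$ a new vertex whose label records the vertex $r(e)$ and the edge $e$ (in the $j$-th slot). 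Then (iii) for $S_{n+1}$ follows from (ii) for $S_n$: if $z\in X(e)$, $z'\in X(e')$ have $r(z)=r(z')$ then $s(e)=s(e')$, so $e=e'$ and $z,z'$ lie in the single group $X(e)$. And (ii) for $S_{n+1}$ follows from (ii) and (iii) for $S_n$: if $s(z)=s(z')$ then the labels agree, so $r(e)=r(e')$; by (iii) $e,e'$ lie in a common group $X_j^{r(e)}$, hence the $j$-th entries of the two labels, which are $e$ and $e'$, agree; finally distinct elements of $X(e)$ have distinct sources, so $z=z'$. For even $n\ge 2$ one may instead quote Lemma \ref{lem:form-of-xiinstep2n}, which gives $S_n=\bigsqcup_{v\in Z_n(x_i)}X_i^v$ explicitly, with (ii) amounting to the fact that $\Phi_n(x_i)^*\Phi_n(x_i)=\Phi_n(s(x_i))$ is a genuine projection.

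Writing $P=\sum_{e\in S_n}e$, the computation is now routine: for a vertex $v\in E_n^0$, the relations (V), (E), (SCK1) and properties (ii), (iii) give $PvP^*=e_ve_v^*$, where $e_v$ is the unique edge of $S_n$ with $s(e_v)=v$ when one exists and $PvP^*=0$ otherwise, and $P^*vP=\sum_{e\in S_n,\ r(e)=v}s(e)$. Applying $\phi_n$: $\phi_n(e_ve_v^*)$ is a sum of vertices of $E_{n+1}$ by \eqref{eq:xixi*-formula}, so lies in $B_{n+1}$; and since $\phi_n(w)=w$ for $w\in E_n^{0,1}=E_{n+1}^{0,0}$, $\phi_n(P^*vP)$ is a sum of vertices of $E_{n+1}$ too. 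Extending linearly over $b\in B_n$ and plugging into the two identities of the first paragraph shows that $\Phi_{n+1}(x_i)\phi_n(b)\Phi_{n+1}(x_i)^*$ and $\Phi_{n+1}(x_i)^*\phi_n(b)\Phi_{n+1}(x_i)$ lie in $B_{n+1}$ (depending on the parity of $n$, one gets $PbP^*$ and the other $P^*bP$).

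For the projection claim, note that $\phi_n(b)$ is a projection in $B_{n+1}$ whenever $b$ is one in $B_n$ (Theorem \ref{thm:algebras}(b), and $\phi_n$ being a $*$-homomorphism), that $w:=\Phi_{n+1}(x_i)$ is a partial isometry, and that $w^*w=\Phi_{n+1}(s(x_i))$ is a sum of vertices (being $\phi_n\circ\cdots\circ\phi_0$ applied to the vertex $s(x_i)$) while $ww^*=\Phi_{n+1}(x_ix_i^*)$ lies in $B_{n+1}$ (this last by induction on the layer, using \eqref{eq:xixi*-formula} for the base case and $\phi_k(B_k)\subseteq B_{k+1}$ for the step). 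Hence $w^*w$ and $ww^*$ lie in the commutative algebra $B_{n+1}$ and so commute with $\phi_n(b)$, and the identity $(wpw^*)^2=w\,p(w^*w)p\,w^*=wpw^*$, valid for a partial isometry $w$ and a projection $p$ with $pw^*w=w^*wp$, together with its mirror version interchanging $w$ and $w^*$, shows that $w\phi_n(b)w^*$ and $w^*\phi_n(b)w$ are self-adjoint idempotents; with the previous paragraph they are projections in $B_{n+1}$. I expect the one genuinely delicate point to be the structural claim (ii)+(iii): once $\Phi_n(x_i)$ is known to be a sum of edges satisfying these, everything else is just the relations and \eqref{eq:xixi*-formula}.
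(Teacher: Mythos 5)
Your proof is correct and follows essentially the same route as the paper's: reduce to vertices $w\in E_n^0$, use that $\Phi_n(x_i)$ is a sum of edges (or of adjoints of edges) of $E_n$ whose sources are pairwise distinct and such that any two of them with the same range lie in a single group of $C^n$, and conclude via (SCK1), (\ref{eq:xixi*-formula}) and the fact that $\phi_n$ fixes the vertices of $E_n^{0,1}$. The only difference is organizational: the paper extracts these structural facts from Lemma \ref{lem:form-of-xiinstep2n} and (\ref{eq:phi2nPhi2nxi}), handling $n=0,1$ and the even/odd indices separately, whereas you prove them by a single parity-uniform induction and spell out the final projection argument (which the paper leaves implicit in its closing partial-isometry remarks).
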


\begin{proof}
Note that it is enough to check that $\phi _n(\Phi _n(x_i) w\Phi
_n(x_i)^*)\in B_{n+1}$ and $\phi_n (\Phi _n(x_i)^*w\Phi _n(x_i))\in
B_{n+1}$ for all vertices $w\in E_n^0$.

We start with the cases $n=0$ and $n=1$. For $n=0$ the result is
clear by (SCK1) and (\ref{eq:xixi*-formula}). For $n=1$, observe
that $\phi _0(x_i) w\phi_0 (x_i)^*$ is $0$ except when $w=s(x_i)$
and in this case $\phi_0(x_i) w\phi (x_i)^*= \phi _0(x_ix_i^*)\in
B_1$ by (\ref{eq:xixi*-formula}). On the other hand,
$\phi_0(x_i)^*w\phi_0(x_i)$ is $0$ except for $w=v(x_1,\dots ,x_i,
\dots x_{k_u})$ for some $x_j\in X_j^u$, $j\ne i$. In this case we
have
$$\phi _0(x_i)^*v(x_1,\dots ,xŽ_{k_u})\phi_0 (x_i)=
\alphaxi \alphaxi^* \, ,$$ so that $\phi _1(\phi
_0(x_i)^*v(x_1,\dots ,xŽ_{k_u})\phi_0 (x_i))\in B_2$ again by
(\ref{eq:xixi*-formula}).

Now we will show the result for an even index $2n$, with $n\ge 1$.
By (\ref{eq:Phi2nformula}), we have
$$\Phi_{2n} (x_i) w\Phi _{2n}(x_i) ^*= (\sum _{v\in Z_{2n}(x_i)}
\sum _{x\in X^v_i} x) w( \sum _{v\in Z_{2n}(x_i)} \sum _{x\in X^v_i}
x)^*.$$ Since $n\ge 1$, the projections $\{s(x)\mid x\in \bigcup
_{v\in Z_{2n}(x_i)} X_i^v\}$ are pairwise orthogonal, and we
conclude that the above term is $0$ except when $w= s(x)$ for a,
necessarily unique, $ x\in \bigcup _{v\in Z_{2n}(x_i)} X_i^v $. In
this case ,we get $$\Phi_{2n} (x_i) w\Phi _{2n}(x_i) ^*= xx^* ,$$
and so $\phi_{2n} (\Phi_{2n} (x_i) w\Phi _{2n}(x_i) ^*) =
\phi_{2n}(xx^*)\in B_{2n+1}$ by (\ref{eq:xixi*-formula}).

On the other hand, note that $\Phi_{2n} (x_i)^* v\Phi _{2n} (x_i)
=0$ except for $v\in Z_{2n} (x_i)$. If $v\in Z_{2n}(x_i)$, the we
get
$$\Phi_{2n} (x_i)^* v\Phi _{2n} (x_i)
= (\sum _{x\in X_i^v} x^*)( \sum _{y\in X_i^v} y)= \sum _{x\in
X_i^v} x^*x=\sum _{x\in X_i^v} s(x)\in B_{2n} \, .
$$
We conclude that $\Phi_{2n}(x_i)$ is a partial isometry in $A_{2n}$
with initial projection $\Phi_{2n}(x_i)^*\Phi _{2n}(x_i)= \sum
_{v\in Z_{2n}(x_i)} \sum _{x\in X^v_i} s(x)$ and final projection
$\Phi _{2n}(x_i)\Phi_{2n}(x_i)^*= \sum _{v\in Z_{2n}(x_i)}v $. This
shows the result for the index $2n$.

For an odd integer $2n+1$, with $n\ge 1$, use
(\ref{eq:phi2nPhi2nxi}) and an argument similar to the above to
conclude the result. In this case $\Phi _{2n+1}(x_i)$ is a partial
isometry in $A_{2n+1}$ with initial projection $\Phi _{2n+1}
(x_i)^*\Phi _{2n+1}(x_i)= \sum _{v\in Z_{2n}(x_i)} \sum _{x\in
X^v_i} s(x) $ and final projection $\Phi _{2n+1}(x_i)
\Phi_{2n+1}(x_i)^*=\sum _{v\in Z_{2n+2}(x_i)} v $.
\end{proof}

\begin{remark}
 \label{rem:partial-isometry-at-large}
 Recall that, in the context of separated graphs, the elements of the mutiplicative semigroup $U$ 
 of $L(E,C)$ generated by the canonical set of partial isometries $E^1\cup (E^1)^*$ are not partial isometries
 in general. A similar problem arises when we consider the C*-algebra $C^*(E,C)$ of the separated graph $(E,C)$.
 Lemma \ref{lem:internalBnnplus1} says that, for each element $u$ of $U$, there is some
 $m\ge 1$ such that the image of $u$ in $A_m=L(E_m, C^m)$ {\it is} a partial isometry in $A_m$. This is 
 an essential point in our construction.
 \end{remark}

\begin{theorem}
\label{thm:commutatorsstepn} Let $U$ be the subsemigroup of the
multiplicative semigroup of $A_0=L(E,C)$ generated by $\{x,x^*:x\in
E^1 \}$. For $w\in U$ set $e(w)=ww^*$. For $n\ge 1$, let $J_n$ be
the ideal of $A_0$ generated by all the commutators $[e(w), e(w')]$,
with $w,w'$ elements of $U$ which are products of $\le n$ generators
$\{x,x^*:x\in E^1 \}$. Then $\ker (\Phi _n)=J_n$ so that $A_n\cong
A_0/J_n$.
\end{theorem}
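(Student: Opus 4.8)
The plan is to establish $\ker(\Phi_n) = J_n$ by a double inclusion, using the already-known description $\ker(\phi_k) = I_k$ from Theorem \ref{thm:algebras}(a), together with the partial-isometry phenomenon recorded in Lemma \ref{lem:internalBnnplus1} (see Remark \ref{rem:partial-isometry-at-large}).

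\textbf{The inclusion $J_n \subseteq \ker(\Phi_n)$.} It suffices to show that for any $w, w' \in U$ that are products of at most $n$ generators from $E^1 \cup (E^1)^*$, the commutator $[e(w), e(w')]$ maps to zero under $\Phi_n$. The key point is Lemma \ref{lem:internalBnnplus1}: iterating it shows that for such a word $w$ of length $\le n$, the image $\Phi_n(w)$ is a \emph{partial isometry} in $A_n = L(E_n, C^n)$, and moreover $\Phi_n(e(w)) = \Phi_n(w)\Phi_n(w)^* = \Phi_n(w w^*)$ lies in $B_n$. More precisely, writing $w = g_\ell \cdots g_1$ with each $g_j \in E^1 \cup (E^1)^*$ and $\ell \le n$, one proves by induction on $\ell$ that $\Phi_\ell(w)$ is a partial isometry with $\Phi_\ell(e(w)) \in B_\ell$, applying Lemma \ref{lem:internalBnnplus1} at each step to the element $\Phi_j(g_j)$ conjugating the previous projection $\Phi_j(e(g_{j-1}\cdots g_1)) \in B_j$; then $\Phi_n(e(w))$ is obtained by applying $\phi_{n-1}\circ\cdots\circ\phi_\ell$, which keeps us inside $B_n$ by Theorem \ref{thm:algebras}(b). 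Since $B_n$ is commutative, $\Phi_n(e(w))$ and $\Phi_n(e(w'))$ commute, whence $\Phi_n([e(w), e(w')]) = 0$. As $J_n$ is the ideal generated by such commutators and $\Phi_n$ is an algebra homomorphism, $J_n \subseteq \ker(\Phi_n)$, so $\Phi_n$ factors through a surjection $\bar\Phi_n \colon A_0/J_n \twoheadrightarrow A_n$.

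\textbf{The inclusion $\ker(\Phi_n) \subseteq J_n$.} Here I would argue by induction on $n$, the case $n = 1$ being the statement that $\ker(\phi_0) = I_0$ is generated by the commutators $[ee^*, ff^*]$ with $e, f \in E^1$ (Theorem \ref{thm:algebras}(a)), and every such commutator is of the form $[e(w), e(w')]$ with $w, w'$ single generators, so $\ker(\Phi_1) = I_0 = J_1$. For the inductive step, suppose $\ker(\Phi_{n}) = J_{n}$; since $\Phi_{n+1} = \phi_{n} \circ \Phi_{n}$ and $\ker(\phi_n) = I_n$, we get $\ker(\Phi_{n+1}) = \Phi_n^{-1}(I_n)$, i.e. $\ker(\Phi_{n+1})$ is the ideal generated by $J_n$ together with lifts to $A_0$ of the generators $[ee^*, ff^*]$ of $I_n$ with $e, f \in E_n^1$. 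So the heart of the matter is: choose for each edge $e \in E_n^1$ a word $w_e \in U$ (of controlled length) with $\Phi_n(e e^*) = \Phi_n(e(w_e))$ modulo $J_n$, and then check that $[e(w_e), e(w_f)]$ is a product of $\le n+1$ generators — or, more robustly, that it already lies in $J_{n+1}$. The explicit formulas (\ref{eq:vx1xku-formula}), (\ref{eq:alsoalphsinim}) and (\ref{eq:Phi2nformula}) give exactly such preimages: each edge of $E_{n+1}$ and each range projection $ee^*$ for $e \in E_n^1$ is, via $\Phi_n$, the image of a concrete word in the original generators, and tracking the multiresolution construction bounds the word length.

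\textbf{Main obstacle.} The delicate step is the length bookkeeping in the second inclusion: one must verify that the preimage words $w_e$ obtained from formulas (\ref{eq:vx1xku-formula})--(\ref{eq:Phi2nformula}) can be taken so that the resulting commutators $[e(w_e), e(w_f)]$ are expressible using at most $n+1$ generators, matching the definition of $J_{n+1}$ — and conversely that adding the length-$(n+1)$ commutators does not overshoot $\ker(\Phi_{n+1})$. This is where the bipartite/multiresolution structure (the layers $D_n = F^{0,n}$, the root maps $\gotr_n$, and the canonical enumerations of Remark \ref{rem:enumerations}) must be used carefully: the vertex $v(x_1,\dots,x_{k_u})$ is $\Phi_n$ of a product $(x_1 x_1^*)\cdots(x_{k_u} x_{k_u}^*)$, so projections one layer up correspond to words whose length is governed by the previous layer's data, and one has to confirm the induction is self-consistent. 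Everything else — surjectivity of $\bar\Phi_{n}$, commutativity of $B_n$, the homomorphism properties — is routine given the earlier results.
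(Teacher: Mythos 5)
Your first inclusion $J_n\subseteq\ker(\Phi_n)$ is correct and is exactly the paper's argument (Lemma \ref{lem:internalBnnplus1} gives $\Phi_n(e(w))\in B_n$ for $w\in U_n$, and $B_n$ is commutative). The gap is in the reverse inclusion, and it sits precisely at the point you flag as the ``main obstacle'' but do not resolve. Your plan is to find, for each edge $z\in E_n^1$, a \emph{single} word $w_z\in U$ of controlled length with $\Phi_n(zz^*)\equiv\Phi_n(e(w_z))$ mod $J_n$, and then observe $[e(w_z),e(w_{z'})]\in J_{n+1}$. This is not what the formulas (\ref{eq:vx1xku-formula}), (\ref{eq:alsoalphsinim}), (\ref{eq:Phi2nformula}) deliver, and the single-word version runs into exactly the length problem you worry about: what those formulas (bootstrapped inductively, as in the Claim inside the paper's proof) give is that every $z\in E_{2n}^1$ is of the form $z=\Phi_{2n}\bigl(x\,e(w_1)\cdots e(w_t)\bigr)$ with $x\in E^1$ and $w_1,\dots,w_t\in U_{2n}$ (and the analogous statement with $x^*$ in odd degree), so the lift of $zz^*$ modulo $J_{2n}$ is a \emph{product} of several final projections $e(xw_1)\cdots e(xw_t)$, each of a word of length $\le 2n+1$ -- not a single $e(w)$. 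If you try to compress such a product into one $e(w)$ (e.g.\ $e(w_1)e(w_2)=e(w_1w_1^*w_2)$ modulo commutators), the word length inflates well beyond $n+1$, so the resulting commutator is not among the generators of $J_{n+1}$ and your induction does not close.

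The missing argument, which is the actual content of the paper's proof, is twofold. First, one must prove the lifting Claim by an induction that alternates parity (edges of $E_{2n}$ lift to words beginning with $x\in E^1$, edges of $E_{2n+1}$ to words beginning with $x^*$), using (\ref{eq:alsoalphsinim}) and (\ref{eq:vx1xku-formula}) together with the fact that the relevant $\Phi_m(e(\cdot))$'s already commute in $A_m$ to rearrange, e.g., $x^*e(w_1)\cdots e(w_t)$ into $e(x^*w_1)\cdots e(x^*w_t)x^*$. Second, having written the lift of $[zz^*,z'z'^*]$ modulo $J_m$ as a commutator of two products $e(xw_1)\cdots e(xw_t)$ and $e(x'w_1')\cdots e(x'w_{t'}')$, one uses the Leibnitz rule $[ab,c]=a[b,c]+[a,c]b$ to expand this into a sum of terms each containing a single commutator $[e(xw_i),e(x'w_j')]$, and these words $xw_i$, $x'w_j'$ have length $\le m+1$, so the whole expression lies in $J_{m+1}$. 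Without this bookkeeping (kept as products of short-word projections rather than one long-word projection, plus the bilinear expansion), the inductive step ``$\ker(\Phi_{m+1})\subseteq J_{m+1}$'' is not established; so as written the proposal describes the correct strategy but leaves its essential step unproved.
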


\begin{proof}
Observe that, for $n=1$ this follows from Theorem
\ref{thm:algebras}(a).

Write $U_n$ for the set of elements in $U$ which can be written as a
product of $\le n$ generators. Using Lemma
\ref{lem:internalBnnplus1} it is very simple to show inductively
that $\Phi _n (e(w))= \Phi _n(w)\Phi _n(w)^*$ is a projection in
$B_n$ for any $w\in U_n$. Since $B_n$ is a commutative algebra, we
deduce that $[e(w), e(w')]\in \ker (\Phi _n)$ if $w,w'\in U$ are
products of $\le n$ generators. This shows that $J_n\subseteq \ker
(\Phi _n) $ for all $n\ge 1$.

Before we prove the reverse inclusion, we need to establish a claim.

\medskip

\noindent {\it Claim:} (1) If $z\in E_{2n}^1$, then there exist
$x\in E^1$ and $w_1,\dots w_t\in U_{2n}$ such that 
\begin{equation}
\label{eq:zinE2n} z= \Phi _{2n} (xe(w_1)e(w_2) \cdots e(w_t)) .
\end{equation}

(2) If $z\in E^1_{2n+1}$, then there exist $x\in E^1$ and
$w_1,\dots w_t\in U_{2n+1}$ such that 
\begin{equation}
\label{eq:zinE2nplus1} z= \Phi _{2n+1} (x^*e(w_1)e(w_2) \cdots
e(w_t)) .
\end{equation}

\medskip

\noindent {\it Proof of Claim.} The claim is proved by induction. It
trivially holds for $z\in E_0^1$, and it holds for $z\in E^1_1$ by
(\ref{eq:alsoalphsinim}) and (\ref{eq:vx1xku-formula}). Assume that
(2) holds for all $z\in E_{2n-1}^1$, for some $n\ge 1$. Then we will
show that (1) holds for all $z\in E_{2n}^1$. So, let $z\in
E_{2n}^1$. Then there exist $x_j\in X_j^u$, $j=1,\dots ,k_u$, for
some $u\in D_{2n-1}=E^{0,0}_{2n-1}$, such that $z= \alphaxi $, and it
follows from (\ref{eq:alsoalphsinim}) that
$$z= \phi _{2n-1}(x_i^*)v(x_1,\dots ,x_{k_u}) .$$
By induction hypothesis, we have that there are $x\in E^1$ and
$w_1,\dots ,w_t\in U_{2n-1}$ such that  $x_i = \Phi _{2n-1}(x^*e(w_1)\dots e(w_t))$. 
It
follows that
\begin{equation}
\label{eq:tran-tran} z= \Phi _{2n}(x^*e(w_1)\cdots e(w_t)) ^*
v(x_1,\dots ,x_{k_u}).
\end{equation}
Since $x, w_1, \dots , w_t\in U_{2n-1}$ we have that $\Phi_{2n-1}(xx^*)$, $\Phi_{2n-1}(e(w_j))$, $j=1,\dots ,t$,
is a (commuting) set of projections in $B_{2n-1}\subseteq A_{2n-1}$, and so
\begin{align*}
\Phi_{2n-1} & (x^*e(w_1)\cdots e(w_t))=\Phi_{2n-1}(x^*e(w_1)(xx^*)e(w_2)\cdots (xx^*)e(w_t)(xx^*))\\
\notag & =\Phi_{2n-1}(e(x^*w_1)\cdots e(x^*w_t)x^*).
\end{align*}
Therefore,we get 
\begin{equation}
 \label{eq:tran-tran2}
\Phi_{2n}(x^*e(w_1)\cdots e(w_t))^*= \Phi_{2n}(xe(x^*w_t)\cdots e(x^*w_1)) 
\end{equation}
On the other hand, using  the
induction hypothesis, (\ref{eq:vx1xku-formula}) and an argument as before, we see that
there are $w'_1,\dots w_s'$ in $U_{2n}$ such that
\begin{equation}
\label{eq:tran-tran3} v(x_1,\dots ,x_{k_u})=\Phi
_{2n}(e(w'_1)\cdots e(w_s')).
\end{equation}
Substituting (\ref{eq:tran-tran2}) and (\ref{eq:tran-tran3}) in
(\ref{eq:tran-tran}) we get
$$z= \Phi _{2n} (x e(x^*w_t)\cdots e(x^*w_1)e(w_1')\cdots e(w_s'))\,
, $$ showing (\ref{eq:zinE2n}). A similar argument shows that
(1) implies (2) for the index $2n+1$. This concludes the proof of
the Claim.

\medskip

Now we follow with the proof of the theorem. We show the reverse
inclusion $\ker (\Phi _m)\subseteq J_m$ by induction on $m$. 
For $m=1$, the result follows from Theorem \ref{thm:algebras}(a).
Assume that the result holds for some $m\ge 1$ and let us show that
it also holds for $m+1$. Observe that, by Theorem
\ref{thm:algebras}(a),  $\ker{\Phi_{m+1}}/\ker{\Phi _m}$ is the
ideal of $A_m$ generated by $[zz^*, (z')(z')^*]$, where $z,z'\in
E_m^1·$. Assume first that $m=2n$ for some $n\ge 0$ and that
$z,z'\in E_{2n}^1$. By the first part of the Claim, there exist then
$x,x'\in E^1$ and $w_1,\dots w_t,w'_1,\dots ,w_{t'}'\in U_{2n}$ such
that 
\begin{equation*}
 z= \Phi _{2n} (xe(w_1)e(w_2) \cdots e(w_t)), \qquad z'=\Phi _{2n} (x'e(w_1')e(w_2')\cdots  e(w_{t'}')).
 \end{equation*}
Since $\ker{\Phi_{2n}}=J_{2n}$ by induction hypothesis, and
$J_{2n}\subseteq J_{2n+1}$, it is enough to check that
$$\Big[ xe(w_1) \cdots e(w_t)e(w_t)^*\cdots e(w_1)^*x^*\, ,\,
x'e(w_1')\cdots e(w_{t'}')e(w_{t'}')^*\cdots e(w_1')^*(x')^*
\Big]\in J_{2n+1}.$$ We have
\begin{align*}
& \Big[  xe(w_1) \cdots   e(w_t)e(w_t)^*\cdots e(w_1)^*x^*+J_{2n}\,
,\, x'e(w_1')\cdots e(w_{t'}')e(w_{t'}')^*\cdots
e(w_1')^*(x')^*+J_{2n}
\Big]\\
& = \Big[ xe(w_1) \cdots e(w_t)x^*+J_{2n}\, ,\, x'e(w_1')\cdots
e(w_{t'}')(x')^* + J_{2n} \Big]\\
&  = \Big[ xe(w_1)(x^*x)e(w_2)\cdots (x^*x)e(w_t)x^*+J_{2n}\, ,\,
x'e(w_1')(x')^*x'e(w_2')\cdots (x')^*x'e(w_{t'}')(x')^*+J_{2n}
\Big]\\
& = \Big[ e(xw_1)e(xw_2) \cdots e(xw_t)+J_{2n}\, ,\,
e(x'w_1')e(x'w_2')\cdots e(x'w_{t'}')+J_{2n} \Big]\\
& \subseteq \sum _{i,j} A_0[e(xw_i),e(x'w'_j)]A_0 +J_{2n} \subseteq
J_{2n+1} \, ,
\end{align*}
where we have used Leibnitz rule (i.e. $[xy,z]=[x,z]y+x[y,z]$) for the first containment. A
similar argument, using part (2) of the Claim instead of part (1),
gives the proof of the inductive step for $m=2n+1$. This completes
the proof of the theorem.
\end{proof}

Write $A_{\infty}= \varinjlim A_n$ ,and $B_{\infty}= \varinjlim
B_n$. We have a commutative diagram as follows:

$$\xymatrix@!=6.5pc{B_0 \ar[r] \ar[d] & B_1 \ar[r] \ar[d] & B_2 \ar[d]
\ar @{.>}[rr] & &  B_{\infty}\ar[d]\\
A_0 \ar[r]^{\phi _0}  & A_1 \ar[r]^{\phi_1}  & A_2 \ar @{.>}[rr] & &
A_{\infty}}
$$

All the maps $B_n\to B_{n+1}$ are injective and all the maps $A_n\to
A_{n+1}$ are surjective.

\begin{notation}
\label{notation:Lab} We write $\Lab (E,C)= A_{\infty}.$ Observe
that, by Theorem \ref{thm:commutatorsstepn},  $\Lab (E,C)\cong
L(E,C)/J$, where $J=\bigcup _{n=1}^{\infty} J_n$ is the ideal
generated by all the commutators $[e(w),e(w')]$, with $w,w'\in U$.
 If we use in
the above construction the full graph C*-algebras $C^*(E_n,C^n)$
instead of the Leavitt path algebras $L(E_n, C^n)$, we obtain in the
limit a certain C*-algebra, that we denote by $\mathcal O (E,C)$, in
analogy with the notation introduced in \cite{AEK}.
\end{notation}

\begin{corollary}
\label{cor:VmonoidofLab} We have $\mon{\Lab (E,C)}\cong \varinjlim
(M(E_n, C^n),\iota _n )\cong M(F_{\infty}, D^{\infty})$.
Consequently the natural map $M(E,C)\to \mon{\Lab (E,C) }$ is a
refinement of $M(E,C)$.
\end{corollary}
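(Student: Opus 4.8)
The plan is to obtain the monoid isomorphism from the already-established diagram of algebras and their $\mathcal V$-monoids, using the fact that $\mon{-}$ commutes with direct limits. First I would recall that $\Lab(E,C)=A_\infty = \varinjlim A_n$, where $A_n = L(E_n,C^n)$ and the transition maps $\phi_n\colon A_n\twoheadrightarrow A_{n+1}$ are the surjections of Theorem \ref{thm:algebras}. Since the functor $\mon{-}$ is continuous (it preserves direct limits of rings; this is standard and is implicitly used throughout \cite{AG}), we get $\mon{\Lab(E,C)} = \mon{\varinjlim A_n} \cong \varinjlim \mon{A_n}$, where the connecting maps on the right are $\mon{\phi_n}$.

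Next I would invoke Theorem \ref{thm:algebras}(c): for each $n$ there is a commutative square identifying $\mon{\phi_n}$ with $\iota_n\colon M(E_n,C^n)\to M(E_{n+1},C^{n+1})$ via the vertical isomorphisms $\tau_n\colon M(E_n,C^n)\xrightarrow{\cong}\mon{A_n}$ coming from \cite[Theorem 4.3]{AG}. Passing to direct limits, the collection of squares assembles into an isomorphism of directed systems, hence $\varinjlim \mon{A_n} \cong \varinjlim (M(E_n,C^n),\iota_n)$. Combining with the previous paragraph gives $\mon{\Lab(E,C)}\cong \varinjlim(M(E_n,C^n),\iota_n)$. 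Then Lemma \ref{lem:refinement}(d) supplies the second isomorphism $\varinjlim(M(E_n,C^n),\iota_n)\cong M(F_\infty,D^\infty)$.

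For the final assertion, I would simply transport the conclusion of Lemma \ref{lem:refinement}(d) along this chain of isomorphisms: that lemma already asserts that the natural map $M(E,C)\to \varinjlim(M(E_n,C^n),\iota_n)$ is a refinement of $M(E,C)$ in the sense of Definition \ref{def:refinementofM}. Since $(E_0,C^0)=(E,C)$ and $\tau_0$ is the identification of $M(E,C)$ with $\mon{L(E,C)}=\mon{A_0}$, the composite $M(E,C)\to \mon{A_0}\to \mon{A_\infty}=\mon{\Lab(E,C)}$ agrees with the natural map, and therefore it too is a refinement of $M(E,C)$.

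The only real point requiring care is the continuity of $\mon{-}$ with respect to direct limits with \emph{surjective} transition maps; I expect this to be the main (minor) obstacle, since one must check that every finitely generated projective module over $A_\infty$ is induced from some $A_n$ and that two such become isomorphic over $A_\infty$ only if they are already isomorphic over some $A_m$. This is a routine compactness argument for finitely generated projectives under direct limits of rings, and can be cited; everything else is formal diagram-chasing using results already proved in the excerpt.
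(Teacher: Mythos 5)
Your proposal is correct and follows essentially the same route as the paper: the published proof also deduces the statement immediately from the continuity of the functor $\mathcal V$, the commutative squares of Theorem \ref{thm:algebras}(c), and Lemma \ref{lem:refinement}(d). Your extra remark on checking continuity of $\mathcal V$ for limits with surjective transition maps is the standard compactness argument the paper treats as known, so nothing is missing.
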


\begin{proof}
This follows immediately from the continuity of the functor
$\mathcal V$, Theorem \ref{thm:algebras}(c) and Lemma
\ref{lem:refinement}(d).
\end{proof}

\begin{remark}
\label{rem:LabsubO} Assume that $K=\C$, the complex field. Then the
commutative $*$-algebra $B_{\infty}$ is an algebraic direct limit of
finite-dimensional commutative C*-algebras $B_n$, with injective
maps $\phi_n|_{B_n}\colon B_n\to B_{n+1}$. Since each $B_n$ sits as
a $C^*$-subalgebra of $C^*(E_n,C^n)$ (cf. \cite[Theorem
3.8(1)]{AG2}), it follows that the $C^*$-completion $\mathcal B _0$
of $B_{\infty}$ sits as a C*-subalgebra of $\mathcal O (E,C)$.
\end{remark}

Recall the following definition from \cite{ExelPJM}. (The definition
has been conveniently modified to work in the purely algebraic
situation as well as in the C*-context.)

\begin{definition}
\label{def:tame}
  A set $S$ of partial isometries in a $*$-algebra $\mathcal A$
  is said to be {\it tame} if
every element of $U=\langle S\cup S^*\rangle$, the multiplicative
semigroup generated by $S\cup S^*$, is a partial isometry, and the
set $\{ e(u)\mid u\in U \}$ of final projections of the elements of
$U$ is a commuting set of projections  of $\mathcal A$.
\end{definition}

We are now ready to show that $\Lab (E,C)$ is the universal algebra
generated by a {\it tame} set of partial isometries $\underline{x}$,
for $x\in E^1$ satisfying the defining relations of $L(E,C)$, which
can be re-stated completely in terms of the elements in $E^1\cup
(E^1)^*$, indeed in terms of the final projections $e(u)$ of these
elements, as follows:

(PI1) $e(x)e(y)=\delta_{x,y} e(x)$, for $x,y\in X$, $X\in C$.

(PI2) $e(x^*)=e(y^*)$ if $x,y\in E^1$ and $s(x)=s(y)$.

(PI3) $\sum _{x\in X} e(x) =\sum _{y\in Y} e(y)$ for all $X,Y\in
C_v$, $v\in E^{0,0}$.

\smallskip

\noindent For $w\in E^{0,1}$ and $x\in E^1$ with $s(x)=w$, denote
the projection $e(x^*)$ by $p_w$. Similarly, for $v\in E^{0,0}$,
denote by $p_v$ the projection corresponding to $\sum _{x\in X}
e(x)$, for $X\in C_v$. (Note that these projections are well-defined
by (PI1)--(PI3).)

\smallskip

(PI4) $p_vp_{v'}= 0$ for every pair $v,v'$ of different vertices in
$E^0$, and $\sum _{v\in E^0} p_v=1$.

\medskip

Note that the condition $\sum _{v\in E^0} p_v=1$ can be deduced from
the other conditions, because $E$ is a finite graph. It is easily
seen that for a finite bipartite separated graph satisfying our
standing assumptions that $s(E^1)=E^{0,1}$ and $r(E^1)=E^{0,0}$, the
algebra $L(E,C)$ is just the universal algebra generated by a set of
partial isometries $x\in E^1$ subject to the  above relations
(PI1)--(PI4). A similar statement holds for the full graph
C*-algebra $C^*(E,C)$.

\begin{theorem}
\label{thm:universal-tame} The algebra $\Lab (E,C)$ (respectively,
the C*-algebra  $\mathcal O (E,C)$) is the universal $*$-algebra
(respectively, universal C*-algebra) generated by a tame set of
partial isometries $\{ \underline{x}\mid x\in E^1\}$ satisfying the
relations (PI1)--(PI4).
\end{theorem}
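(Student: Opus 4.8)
The plan is to verify the two halves of a universal property: first that the generating partial isometries $\{\underline x \mid x \in E^1\}$ inside $\Lab(E,C) = A_\infty$ (respectively $\mathcal O(E,C)$) are tame and satisfy (PI1)--(PI4), and second that $\Lab(E,C)$ is universal among all $*$-algebras equipped with such a tame family. The first half is essentially bookkeeping already done: since $L(E,C)$ is, under our standing assumptions, the universal $*$-algebra on partial isometries subject to (PI1)--(PI4), and $\Lab(E,C)$ is a quotient of it, the images $\underline x$ of the $x \in E^1$ satisfy (PI1)--(PI4) automatically. That the set $\{\underline x\}$ is tame is exactly the content of the preceding lemmas: by Lemma~\ref{lem:internalBnnplus1} and Remark~\ref{rem:partial-isometry-at-large}, every $w \in U$ has image in some $A_m$ that is a genuine partial isometry, hence its image in $A_\infty$ is a partial isometry; and the proof of Theorem~\ref{thm:commutatorsstepn} shows $\Phi_m(e(w)) = \Phi_m(w)\Phi_m(w)^*$ is a projection in the commutative subalgebra $B_m$ for $w \in U_m$, so the final projections $\{e(u) \mid u \in U\}$ all land in $B_\infty$, a commutative algebra, and therefore commute.

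For the universality half, suppose $\mathcal A$ is any $*$-algebra carrying a tame set of partial isometries $\{t_x \mid x \in E^1\}$ satisfying (PI1)--(PI4). By the universal property of $L(E,C) = A_0$ we get a unique $*$-homomorphism $\rho_0 \colon A_0 \to \mathcal A$ sending $x \mapsto t_x$. I claim $\rho_0$ factors through $A_\infty = \Lab(E,C)$, i.e.\ kills $J = \bigcup_n J_n$. Indeed, $J_n$ is generated by the commutators $[e(w), e(w')]$ for $w, w'$ products of $\le n$ generators of $U$; tameness of $\{t_x\}$ says precisely that the corresponding final projections $e(\rho_0(w))$ and $e(\rho_0(w'))$ commute in $\mathcal A$, and one checks $\rho_0(e(w)) = e(\rho_0(w))$ by induction on word length using that $\rho_0$ is a $*$-homomorphism. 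Hence $\rho_0(J) = 0$ and we obtain $\rho \colon \Lab(E,C) \to \mathcal A$ with $\rho(\underline x) = t_x$. Uniqueness is clear since the $\underline x$ generate $\Lab(E,C)$. The C*-algebraic statement is proved the same way, starting from the universal property of $C^*(E,C)$ and using that a $*$-homomorphism of $C^*$-algebras is automatically norm-decreasing, so the factorization through $\mathcal O(E,C) = \varinjlim C^*(E_n,C^n)$ is continuous.

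The one point requiring genuine care -- the main obstacle -- is the compatibility of the notation $e(w) = ww^*$ used at the level of $U \subseteq A_0$ with the abstract final-projection $e(u)$ of Definition~\ref{def:tame}: in $A_0$ the element $w$ need not be a partial isometry, so $ww^*$ is not literally a ``final projection'' there, yet the definition of $J_n$ is phrased with this $e(w) = ww^*$. The resolution is that under $\rho_0$ (which has tame image) the element $\rho_0(w)$ \emph{is} a partial isometry in $\mathcal A$, so $\rho_0(ww^*) = \rho_0(w)\rho_0(w)^* = e(\rho_0(w))$ genuinely is the final projection of $\rho_0(w)$, and these do commute by tameness; so the commutators generating $J_n$ are killed. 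One should also note, as the paper does, that the relation $\sum_{v\in E^0} p_v = 1$ is redundant for finite $E$, so a tame family satisfying (PI1)--(PI3) plus the orthogonality part of (PI4) already gives everything. Assembling these observations gives the theorem.
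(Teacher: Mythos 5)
Your proposal is correct and follows essentially the same route as the paper: tameness of $\{\underline{x}\}$ in the quotient is extracted from Lemma~\ref{lem:internalBnnplus1} and the projections $\Phi_n(e(w))\in B_n$ established in the proof of Theorem~\ref{thm:commutatorsstepn}, and universality is obtained by factoring the canonical map out of $L(E,C)$ through $J=\bigcup_n J_n$, since tameness of the target family forces the images of the commutators $[e(w),e(w')]$ to vanish. Your explicit remark reconciling $e(w)=ww^*$ in $L(E,C)$ (where $w$ need not be a partial isometry) with the genuine final projection $e(\rho_0(w))$ in the target is exactly the point the paper handles implicitly, and the C*-case is treated the same way in both arguments.
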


\begin{proof} Let $U$ be the multiplicative semigroup of $L(E,C)$
generated by $E^1\cup (E^1)^*$, and let $U_n$ be the set of those
elements of $U$ which can be written as a product of $\le n$
generators. Then $\Lab (E,C)= L(E,C)/J$, where $J= \bigcup
_{n=1}^{\infty} J_n$ and $J_n$ is the ideal generated by the
commutators $[e(w), e(w')]$, with $w,w'\in U_n$ (Theorem
\ref{thm:commutatorsstepn}). If $w\in U_n$ then it was observed in
the proof of Theorem \ref{thm:commutatorsstepn} that $e(w)$ is a
projection in $A_n=L(E,C)/J_n$. It is rather easy
 to show, by induction on $n$, that this implies that every element of $U_n$ is a partial
 isometry in $L(E,C)/J_n$. Denoting by $\underline{u}$ the image of $u\in U$ through
 the canonical projection $L(E,C)\to \Lab (E,C)$, we thus have that
all elements $\underline{u}$ are partial isometries and the final
projections of these elements commute with each other. It follows
that $\{ \underline{x}\mid x\in E^1\}$ is a tame set of partial
isometries in $\Lab (E,C)$. Since $J$ is the ideal generated by all
the commutators $[e(w), e(w')]$, with $w,w'\in U$, it is clear that
$\Lab (E,C)$ is the universal algebra generated by a tame set of
partial isometries satisfying conditions (PI1)--(PI4), as claimed.

The same proof works for $\mathcal O (E,C)$.
\end{proof}

By Stone duality (\cite{Stone}), there is a zero-dimensional metrizable compact
space $\Omega(E,C)$ such that the Boolean algebra of projections in
$B_{\infty}$ is isomorphic to the Boolean algebra of clopen subsets
of $\Omega(E,C)$. A convenient way of describing this space is as
follows. Consider the commutative $AF$-algebra $\mathcal B_0$, which
is the $C^*$-completion of the locally matricial algebra
$B_{\infty}$ (taking $K=\C$). Then there is a unique
zero-dimensional metrizable compact topological space $\Omega(E,C)$
such that $\mathcal B _0\cong C(\Omega(E,C))$. Observe that we have
a basis of clopen sets for the topology of $\Omega(E,C)$ which is in
bijective correspondence with the vertices of the graph
$F_{\infty}$. Note that for any field $K$, the algebra $B_{\infty}$
is the algebra $C_K(\Omega(E,C))$ of continuous functions $f\colon
\Omega(E,C)\to K$, where $K$ is endowed with the discrete topology, see
\cite[Corollaire 1]{Keimel}.

\medskip

We record for later use a fundamental property of the algebra
$B_{\infty}$.

\begin{proposition}
\label{prop:ewgeneratesbinfty} The algebra $B_{\infty}$ is generated
by all the projections $e(w)$, for $w\in U$.
\end{proposition}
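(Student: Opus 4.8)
The plan is to unwind the definitions so that the claim reduces to a statement about the finite-dimensional subalgebras $B_n$, where it follows by a direct induction along the tower $B_0\hookrightarrow B_1\hookrightarrow\cdots$. First I would recall that $B_\infty=\varinjlim B_n$, with $B_n$ the commutative subalgebra of $A_n=L(E_n,C^n)$ generated by the vertex set $E_n^0$, and that the transition maps $B_n\to B_{n+1}$ are the restrictions of $\phi_n$, which are injective by Theorem \ref{thm:algebras}(b). So it suffices to show that each $B_n$, viewed inside $B_\infty$, lies in the subalgebra generated by $\{e(w):w\in U\}$; equivalently, that each vertex projection $v\in E_n^0$, pushed forward to $A_\infty=\Lab(E,C)$, is a (finite sum of products of) the projections $e(w)$ for $w\in U$ in the multiplicative semigroup of $L(E,C)$.

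The key computational input is already available: equation (\ref{eq:xixi*-formula}) and (\ref{eq:vx1xku-formula}) together show that every vertex $v(x_1,\dots,x_{k_u})$ of $E_1$ satisfies $v(x_1,\dots,x_{k_u})=\phi_0\big((x_1x_1^*)\cdots(x_{k_u}x_{k_u}^*)\big)$, i.e.\ it is the image of a product of projections $e(x_i)=x_ix_i^*$ with $x_i\in E^1\subseteq U$. This handles $B_1$, and $B_0$ is generated by the $p_v$, which by (PI3) and (PI1) are sums of the $e(x)$. For the inductive step I would use the Claim established inside the proof of Theorem \ref{thm:commutatorsstepn}: every edge $z\in E_m^1$ is of the form $\Phi_m\big(xe(w_1)\cdots e(w_t)\big)$ (or with $x^*$ in place of $x$, for odd $m$), where $x\in E^1$ and $w_j\in U_m\subseteq U$. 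Hence $zz^*=\Phi_m\big(xe(w_1)\cdots e(w_t)e(w_t)^*\cdots e(w_1)^*x^*\big)$, and since the $e(w_j)$ are projections commuting with $e(x)=xx^*$ in $A_m$ (Lemma \ref{lem:internalBnnplus1} and its use in Theorem \ref{thm:commutatorsstepn}), this simplifies to $\Phi_m\big(e(xw_1)\cdots e(xw_t)\big)$ up to the identification $A_m\cong A_0/J_m$; here $xw_j\in U$. Thus every edge projection $zz^*$ of $E_m$ is, in $A_\infty$, a product of projections $e(w)$ with $w\in U$.

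From there the argument closes quickly. The algebra $B_{m+1}$ is generated as a commutative algebra by the vertices $w\in E_{m+1}^0$, which are of two kinds: the ``old'' vertices coming from $E_m^0$ (for which $\phi_m$ is the appropriate map and the inductive hypothesis applies directly, since $\phi_m$ restricted to $B_m$ maps into $B_{m+1}$ and is the transition map), and the ``new'' source vertices $v(x_1,\dots,x_k)$ introduced by the multiresolution. Each such new vertex, by (\ref{eq:vx1xku-formula}) applied at level $m$, equals $\phi_m\big((z_1z_1^*)\cdots(z_kz_k^*)\big)$ for suitable edges $z_i\in E_m^1$, and we have just seen that each $z_iz_i^*$ pushes forward to a product of $e(w)$'s with $w\in U$. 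Composing with $\phi_m$ (and noting $\Phi_{m+1}=\phi_m\circ\Phi_m$) shows the new vertex is a product of $e(w)$'s as well. Hence $B_{m+1}$ is contained in the subalgebra of $A_\infty$ generated by $\{e(w):w\in U\}$, completing the induction; since $B_\infty=\bigcup_n B_n$, the proposition follows.

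The main obstacle I anticipate is purely bookkeeping: one must be careful that the projections $e(w)$ appearing are genuinely the final projections $ww^*$ of elements $w$ of the semigroup $U$ of $L(E,C)$ (not of $L(E_n,C^n)$), and that the commuting/idempotency facts needed for the simplifications hold \emph{after} passing to $A_n=A_0/J_n$ — which is exactly what Lemma \ref{lem:internalBnnplus1} and the Claim in Theorem \ref{thm:commutatorsstepn} were set up to provide. No genuinely new idea is required; the work is in correctly invoking those two results and tracking the index on which level of the tower each computation lives.
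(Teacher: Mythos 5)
Your argument is correct and is essentially the paper's own proof: both reduce to the vertex projections generating $B_\infty$, handle the first two layers via the defining relations, and for higher layers combine equation (\ref{eq:vx1xku-formula}) with the Claim of Theorem \ref{thm:commutatorsstepn} (equations (\ref{eq:zinE2n}), (\ref{eq:zinE2nplus1})) to write each vertex $v(x_1,\dots,x_{k_u})$ as a product of images $\Phi_n(e(w))$ with $w\in U$. The only difference is that you spell out the commutation bookkeeping (via Lemma \ref{lem:internalBnnplus1}) that the paper leaves implicit, which is fine.
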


\begin{proof}
 The algebra $B_{\infty}$ is generated by the images under the limit maps
 of the projections $v\in D_n$,
 for $n\ge 0$. Now if $v\in D_0$ then it follows from (SCK2) that
 $v$ is a sum of projections $e(x)$, for $x\in X$, $X\in
 C_v$. Similarly, (SCK1) gives that the projections in $D_1$ are of
 the form $e(x^*)$.
Suppose now that $v\in D_{n+1}$ with $n\ge 1$. Then  $v=v(x_1,\dots
,x_{k_u})$, where $x_i\in X_i^u$, $C_u=\{X_1^u,\dots ,X_{k_u}^u \}$
and $u\in D_{n-1}$. Then it follows from either (\ref{eq:zinE2n}) or
(\ref{eq:zinE2nplus1}), and  equation (\ref{eq:vx1xku-formula}) that
$v(x_1,\dots ,x_{k_u})$ is a product of projections of the form
$\Phi _{n}(e(w))$, where $w\in U_{n}$.
\end{proof}

\section{Dynamical systems}
\label{sect:dyn-systems}

\begin{definition}
\label{def:partrep} Let $G$ be a group with identity element $1$. A
{\it partial representation} of $G$ on a unital $K$-algebra
$\mathcal A$ is a map $\pi \colon G\to \mathcal A$ such that $\pi
(1)=1_{\mathcal A}$ and
$$\pi (g)\pi (h)\pi (h^{-1})=\pi (gh)\pi
(h^{-1}), \quad \pi(g^{-1})\pi (g)\pi (h)=\pi (g^{-1})\pi (gh)$$ for
all $g,h\in G$. If $G$ is a free group on a set $X$ and $|g|$
denotes the length of $g\in G$ with respect to $X$, we say that a
partial representation $\pi$ is {\it semi-saturated} in case $\pi
(ts)=\pi (t)\pi (s)$ for all $t,s\in G$ with $|ts|=|t|+|s|$.
\end{definition}

\begin{definition}
\label{def:partaction} Let $G$ be a group and let $\mathcal B$ be an
associative $K$-algebra (possibly without unit). A {\it partial
action} of $G$ on $\mathcal B$ consists of a family of two-sided
ideals $\mathcal D _g$ of $\mathcal B$ ($g\in G$) and algebra
isomorphisms $\alpha _g\colon \mathcal D_{g^{-1}}\to \mathcal D_g$
such that

(i)  $\mathcal D_1= \mathcal B$ and $\alpha _1$ is the identity map
on $\mathcal B$;

(ii) $\alpha _h (\mathcal D _{h^{-1}} \cap \mathcal D_g)=\mathcal D
_{h} \cap  \mathcal D _{hg}$;

(iii) $\alpha _g\circ \alpha _h (x)= \alpha _{gh}(x)$ for each $x\in
\alpha _h^{-1}(\mathcal D _h \cap \mathcal D _{g^{-1}})$.
\end{definition}

Let $(E,C)$ be a finite bipartite separated graph, with
$r(E^1)=E^{0,0}$ and $s(E^1)=E^{0,1}$, as in Definition
\ref{def:bipartitesepgraph}. Throughout this section, we will denote
by $\mathbb F$ the free group on the set $E^1$ of edges of $E$.

\begin{proposition}
\label{prop:univ-partialreps} The algebra $\Lab (E,C)$ is the
universal $*$-algebra for semi-saturated partial representations of
$\mathbb F$ satisfying the relations (PI1)-(PI4). That is, there is
a semi-saturated partial representation $\pi \colon \mathbb F\to
\Lab (E,C)$ sending each $x\in E^1$ to the element $\underline{x}$
of $\Lab (E,C)$, and, moreover given any semi-saturated partial
representation $\rho$ of $\mathbb F$ on a unital $*$-algebra $B$
such that the partial isometries $\rho (x)$, $x\in E^1\sqcup
(E^1)^*$, satisfy relations (PI1)--(PI4), there is a unique
$*$-algebra homomorphism $\varphi \colon \Lab (E,C)\to B$ such that
$\rho = \varphi \circ \pi $.
\end{proposition}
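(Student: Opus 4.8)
The plan is to establish the two directions of the universal property separately, using that $\Lab(E,C)$ is already known (Theorem \ref{thm:universal-tame}) to be the universal $*$-algebra generated by a tame set of partial isometries $\{\underline{x}\mid x\in E^1\}$ satisfying (PI1)--(PI4). First I would construct the map $\pi\colon \mathbb F\to \Lab(E,C)$. Since $\mathbb F$ is free on $E^1$, there is a unique monoid (indeed, group-compatible) assignment extending $x\mapsto \underline{x}$ and $x^{-1}\mapsto \underline{x}^*$ to all reduced words, by setting $\pi(g)=\underline{x_{i_1}}^{\varepsilon_1}\cdots \underline{x_{i_k}}^{\varepsilon_k}$ whenever $g=x_{i_1}^{\varepsilon_1}\cdots x_{i_k}^{\varepsilon_k}$ is the reduced expression of $g$ (with $x^1:=x$, $x^{-1}:=x^*$). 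One checks $\pi(1)=1$. The key point is that $\pi$ is a semi-saturated partial representation: semi-saturatedness $\pi(ts)=\pi(t)\pi(s)$ when $|ts|=|t|+|s|$ is immediate from the definition of $\pi$ on reduced words, since concatenation of the reduced expressions for $t$ and $s$ with no cancellation is exactly the reduced expression for $ts$. The partial representation identities $\pi(g)\pi(h)\pi(h^{-1})=\pi(gh)\pi(h^{-1})$ and $\pi(g^{-1})\pi(g)\pi(h)=\pi(g^{-1})\pi(gh)$ then follow from semi-saturatedness together with the fact that each $\underline{x}$ is a partial isometry whose initial and final projections commute with all the relevant projections (tameness); this is the standard argument (see \cite{ExelAmena}) that a semi-saturated map into partial isometries on a free group, with commuting range projections, automatically satisfies the partial representation relations. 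I would reduce the general identity to the case $h=x^{\pm 1}$ a single generator by induction on $|h|$, using tameness to absorb $\pi(h)\pi(h^{-1})$ (a product of commuting final projections) where cancellation occurs in $gh$.

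Next, for the universal property proper, suppose $\rho\colon \mathbb F\to B$ is a semi-saturated partial representation on a unital $*$-algebra $B$ such that the partial isometries $\rho(x)$, $x\in E^1\sqcup(E^1)^*$, satisfy (PI1)--(PI4). I need to produce a $*$-homomorphism $\varphi\colon \Lab(E,C)\to B$ with $\rho=\varphi\circ\pi$. The idea is to invoke Theorem \ref{thm:universal-tame}: it suffices to show that $\{\rho(x)\mid x\in E^1\}$ is a \emph{tame} set of partial isometries in $B$ satisfying (PI1)--(PI4). The relations (PI1)--(PI4) hold by hypothesis. Tameness requires that every element of the multiplicative semigroup generated by $\{\rho(x),\rho(x)^*\}$ is a partial isometry, and that their final projections commute. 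Here I would use the key structural fact about semi-saturated partial representations of free groups: the multiplicative semigroup generated by $\{\rho(x),\rho(x^{-1})\}$ consists (up to the partial-isometry relations) of elements of the form $\rho(g)$ times a product of commuting projections of the form $\rho(h)\rho(h^{-1})$ — this is again exactly the computation underlying \cite[Section 5]{ExelAmena} and \cite{ExelPJM}. Concretely, one shows by induction on word length that any product $\rho(x_{i_1})^{\varepsilon_1}\cdots\rho(x_{i_m})^{\varepsilon_m}$ equals $p\cdot \rho(g)$ where $g$ is the reduced word obtained by free cancellation and $p$ is a product of projections $e(\rho(w))=\rho(w)\rho(w)^*$ for various $w$; since $\rho$ is a partial representation these projections $e(\rho(w))$ all commute (this commutation is what must be verified, and it follows from (PI1)--(PI4) by the same argument as in the proof of Theorem \ref{thm:universal-tame} that $e(w)$ is a projection in $L(E,C)/J_n$). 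Granting that, each such product is a partial isometry (product of a projection and a partial isometry $\rho(g)$ whose final projection commutes with $p$), and its final projection is again a product of commuting projections; hence the set is tame. Then Theorem \ref{thm:universal-tame} yields a unique $*$-homomorphism $\varphi\colon \Lab(E,C)\to B$ with $\varphi(\underline{x})=\rho(x)$ for all $x\in E^1$, and one checks $\varphi\circ\pi=\rho$ on generators of $\mathbb F$ and hence on all of $\mathbb F$ (both sides are semi-saturated partial representations agreeing on $E^1$, so they agree on all reduced words). Uniqueness of $\varphi$ is immediate since $\Lab(E,C)$ is generated as a $*$-algebra by $\{\underline{x}\mid x\in E^1\}$ together with the vertex projections, which are polynomials in the $\underline{x}$'s via (V), (E), (SCK1), (SCK2).

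The step I expect to be the main obstacle is the careful verification that the final projections $\{e(\rho(w))\mid w\in U\}$ in $B$ form a commuting family — equivalently, the normal-form/tameness analysis for semi-saturated partial representations of a free group whose generators satisfy (PI1)--(PI4). This is not a one-line check: one must track how cancellations in products of generators interact with the partial representation relations, and show inductively that every new final projection $e(\rho(w))$ that appears can be rewritten in terms of projections already known to commute. The cleanest route is probably to mirror precisely the inductive argument in the proof of Theorem \ref{thm:commutatorsstepn} (or cite \cite[Section 5]{ExelAmena} and \cite[Proposition 5.4]{ExelPJM} where this machinery is developed), reducing the semi-saturated partial representation $\rho$ to its "canonical form" in which $\rho(g)$ for $|g|=\ell$ is a product of a word of length $\ell$ in the generators with a commuting bunch of final projections of shorter words. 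Everything else — constructing $\pi$, checking it is a semi-saturated partial representation, deducing existence and uniqueness of $\varphi$ — is then formal, flowing directly from Theorem \ref{thm:universal-tame} and the definitions.
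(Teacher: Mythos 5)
Your proposal is correct and follows essentially the same route as the paper: construct the semi-saturated partial representation $\pi$ from the tameness of $\{\underline{x}\mid x\in E^1\}$ in $\Lab(E,C)$, and obtain the universal property from Theorem \ref{thm:universal-tame} by showing that the $\rho(x)$ form a tame set satisfying (PI1)--(PI4). The paper handles both steps by citing \cite[Proposition 5.4]{ExelPJM} and arguing as in \cite{AEK}, and the ``main obstacle'' you single out --- commutativity of the final projections $e(\rho(w))$ --- is exactly the standard fact that the idempotents $\rho(g)\rho(g^{-1})$ of any partial representation commute (cf. \cite{DokExel}), so your normal-form induction writing each word as $p\,\rho(g)$ with $p$ a product of such idempotents is the known argument behind that citation rather than a genuinely new difficulty.
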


\begin{proof}
Since $\{ \underline{x}\mid x\in E^1 \}$ is a tame set of partial
isometries in $\Lab (E,C)$, it follows from \cite[Proposition
5.4]{ExelPJM} that there exists a (unique) semi-saturated partial
representation $\pi \colon \mathbb F\to \Lab (E, C)$ such that $\pi
(x) = \underline{x}$ for all $x\in E^1$.

The universality follows from Theorem \ref{thm:universal-tame}, just
as in \cite[2.4]{AEK}.
\end{proof}

Let $\pi \colon G\to \mathcal A$ be a partial representation
of a group $G$ on a unital $K$-algebra $\mathcal A$. Then the
elements $\varepsilon _g =\pi (g)\pi (g^{-1})$ $(g\in G)$ are
commuting idempotents such that
\begin{equation}
\label{eq:vareps} \pi (g)\varepsilon _h=\varepsilon _{gh}\pi
(g),\qquad  \varepsilon _h\pi (g)= \pi (g)\varepsilon _{g^{-1}h},
\end{equation}
see \cite[page 1944]{DokExel}. Let $\mathcal B$ be the subalgebra of
$\mathcal A$ generated by all the $\varepsilon _g$ $(g\in G)$, and
for $g\in G$ set $\mathcal D _g = \varepsilon _g \mathcal B$. By
\cite[Lemma 6.5]{DokExel}, the maps $\alpha _g^{\pi}\colon \mathcal
D_{g^{-1}}\to \mathcal D _g$ $(g\in G)$ given by
$$\alpha _g^{\pi }(b)= \pi (g)b\pi (g)^*$$
are isomorphisms of $K$-algebras which determine a partial action
$\alpha ^{\pi}$ of $G$ on $\mathcal B$.

In the case where $\mathcal A= \Lab (E,C)$ and $\pi \colon \mathbb
F\to \Lab (E,C)$ is the partial representation described in
Proposition \ref{prop:univ-partialreps}, we set $\varepsilon
_w=e(w)\in B_{\infty }$, and we observe that, by Proposition
\ref{prop:ewgeneratesbinfty}, the algebra $\mathcal B$ generated by
all the projections $e(w)$ $(w\in \mathbb F)$ is precisely the
commutative algebra $B_{\infty}$. It follows that $\mathcal D
_w=B_{\infty}e(w)$, the ideal of $B_{\infty}$ generated by $e(w)$.

As noted above, it follows that the maps
$$\phi_w\colon B_{\infty}e(w^*)\longrightarrow B_{\infty}e(w)$$
defined by $\phi _w (b)= \pi (w)b\pi (w)^*$, give a partial action
of $\mathbb F$ on $B_{\infty}$, namely $\phi =\alpha ^{\pi}$.

We now recall the notion of a crossed product by a partial action,
see e.g. \cite[Definition 1.2]{DokExel}.

\begin{definition}
\label{def:skewgrouprin}
 Given a partial action
$\alpha$ of a group $G$ on a $K$-algebra $\mathcal B$, the {\it
crossed product} $\mathcal B \rtimes_{\alpha} G$ is the set of all
finite formal sums $\{ \sum _{g\in G} a_g \delta _g\mid a_g\in
\mathcal D_g\}$, where $\delta _g$ are symbols. Addition is defined
in the obvious way, and multiplication is determined by $(a_g\delta
_g)\cdot (b_h\delta _h)= \alpha _g (\alpha
_{g^{-1}}(a_g)b_h)\delta_{gh}$.
\end{definition}

Note that, given a partial action of $G$ on a unital $K$-algebra
$\mathcal B$, we have a partial representation $G\to \mathcal
B\rtimes _{\alpha} G$, defined by $g\mapsto 1_g\delta_g$ (see
\cite[Lemma 6.2]{DokExel}).

We need a further definition.

\begin{definition}
\label{def:univpartialaction} A {\it partial $(E,C)$-action} on a
unital commutative $*$-algebra $\mathcal B$ consists of a collection
of projections $\{ e(u)\mid u\in E^1\cup (E^1)^*\}$ satisfying the
conditions (PI1)-(PI4), together with a collection of $*$-algebra
isomorphisms $\alpha _x\colon e(x^*)\mathcal B\to e(x) \mathcal B$
for every $x\in E^1$.

If $\mathcal B_1$ and $\mathcal B_2$ are two unital commutative
$*$-algebras with partial $(E,C)$ actions $\alpha $ and $\beta $
respectively, then an {\it equivariant homomorphism} from $\mathcal
B_1$ to $\mathcal B_2$ is a $*$-algebra homomorphism $\psi \colon
\mathcal B_1\to \mathcal B_2$ such that $\psi
(e_{\alpha}(u))=e_{\beta} (u)$ for all $u\in (E^1)\cup (E^1)^*$ and
such that the following diagram
\begin{equation}
\begin{CD}
e_{\alpha}(x^*)\mathcal B_1 @>{\psi |}>>
 e_{\beta}(x^*)\mathcal B _2\\
@V{\alpha _x}VV  @V{\beta _x}VV  \\
e_{\alpha}(x)\mathcal B_1 @>{\psi |}>> e_{\beta}(x) \mathcal B_2
\end{CD}
\end{equation}
commutes for all $x\in E^1$.

A commutative $*$-algebra $\mathcal B$ with a partial $(E,C)$-action
$\alpha$ is {\it universal for partial $(E,C)$-actions} if given any
commutative $*$-algebra $\mathcal C$ with a partial $(E,C)$-action
$\beta$, there is a unique equivariant homomorphism $\psi \colon
\mathcal B\to \mathcal C$
\end{definition}

\begin{lemma}
\label{lem:extendingECactions} Any partial $(E,C)$-action on a
unital commutative $*$-algebra $\mathcal B$ can be extended to a
partial action $\alpha$ of $\mathbb F$ on $\mathcal B$ such that
$\mathcal D _g=e(g)\mathcal B$, with $e(g)$ a projection in
$\mathcal B$ for all $g\in \mathbb F$. Moreover, the corresponding
partial representation $\pi\colon \mathbb F\to \mathcal B\rtimes
_{\alpha}\mathbb F$, defined by $\pi (g)= e(g)\delta_g$ is
semi-saturated.
\end{lemma}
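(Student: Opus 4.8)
The plan is to build the ideals $\mathcal D_g = e(g)\mathcal B$ and the isomorphisms $\alpha_g$ by induction on the word length $|g|$ in $\mathbb F = \mathbb F\langle E^1\rangle$, mimicking the universal construction that produced $\Lab(E,C)$ itself. First I would set $e(1) = 1$, $\alpha_1 = \id$, and for $|g| = 1$ take $e(x)$, $e(x^*)$ and $\alpha_x$ from the given partial $(E,C)$-action data. The key point is that a partial action of a \emph{free} group is freely determined by a choice, for each generator $x$, of ideals $\mathcal D_x, \mathcal D_{x^{-1}}$ and an isomorphism $\alpha_x\colon \mathcal D_{x^{-1}}\to \mathcal D_x$ — provided one is careful about how to define $\alpha_g$ on reduced words. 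For a reduced word $g = y_1 y_2 \cdots y_n$ with each $y_i \in E^1 \sqcup (E^1)^*$, I would define $e(g)$ by the recursion $e(y_1\cdots y_n) = \alpha_{y_1}\!\bigl(e(y_1^{-1})\, e(y_2\cdots y_n)\bigr)$ (noting $e(y_2\cdots y_n)$ is a projection in $\mathcal B$ by induction, so the product with $e(y_1^{-1})$ makes sense in $\mathcal D_{y_1^{-1}}$, and $\alpha_{y_1}$ carries it to a projection $\le e(y_1)$ in $\mathcal D_{y_1}$), and correspondingly $\alpha_g = \alpha_{y_1}\circ \alpha_{y_2\cdots y_n}$ suitably restricted. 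One checks $e(g)$ is a projection and $\mathcal D_g := e(g)\mathcal B$ is an ideal of the commutative algebra $\mathcal B$.

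Next I would verify the partial-action axioms (i)--(iii) of Definition \ref{def:partaction}. Axiom (i) is immediate. For (ii) and (iii) the essential case, by induction on lengths and using that $\mathbb F$ is free, reduces to the situation where multiplying two reduced words produces cancellation only at the junction; here the semi-saturation-type identity $\alpha_g \circ \alpha_h = \alpha_{gh}$ on the appropriate domain follows from the recursive definition together with the fact that $\alpha_{y}\circ \alpha_{y^{-1}} = \id$ on $\mathcal D_y$ for a generator $y$. The commutativity of $\mathcal B$ is used repeatedly to guarantee that products of the projections $e(g)$ are again projections and that the relevant ideals intersect correctly. I expect the main obstacle to be the bookkeeping in the cancellation case of axiom (iii): one must show that when $g = g' z$ and $h = z^{-1} h'$ with no further cancellation, the two ways of computing $\alpha_{gh}$ — directly via the reduced form $g' h'$, and as $\alpha_g\circ\alpha_h$ — agree on $\alpha_h^{-1}(\mathcal D_h\cap \mathcal D_{g^{-1}})$. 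This is precisely where the telescoping $\alpha_z\circ \alpha_{z^{-1}} = \id$ on $\mathcal D_z$ must be inserted and tracked through the nested compositions.

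Finally, for the ``moreover'' clause: given the partial action $\alpha$ just constructed, form the crossed product $\mathcal B \rtimes_\alpha \mathbb F$ and the canonical partial representation $\pi(g) = e(g)\delta_g$ (this is a partial representation by \cite[Lemma 6.2]{DokExel}, as noted in the text after Definition \ref{def:skewgrouprin}). To see $\pi$ is semi-saturated, I must check $\pi(ts) = \pi(t)\pi(s)$ whenever $|ts| = |t|+|s|$, i.e. when writing $t$ and $s$ in reduced form and concatenating there is no cancellation. In that case $e(ts)$ is built by iterating the recursion $e(y_1\cdots y_k) = \alpha_{y_1}(e(y_1^{-1})e(y_2\cdots y_k))$ through $t$ and into $s$, which is exactly what one gets from computing $\pi(t)\pi(s) = (e(t)\delta_t)(e(s)\delta_s) = \alpha_t(\alpha_{t^{-1}}(e(t))\,e(s))\,\delta_{ts} = \alpha_t(e(t^{-1})\,e(s))\,\delta_{ts}$ and unwinding using axiom (iii) and the recursive definition of $e$; so the identity $e(ts)=\alpha_t(e(t^{-1})e(s))$ holds in the no-cancellation case, giving semi-saturation. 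I would remark that $e(g)$ for $g$ of the special forms $x$ or $x^*$ recovers the original projections $e(x), e(x^*)$, so the extension genuinely extends the given partial $(E,C)$-action.
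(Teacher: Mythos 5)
Your proposal is correct and follows essentially the same route as the paper: the paper extends $\alpha$ to reduced words by taking the product $\alpha_{g_1}\cdots \alpha_{g_k}$ in the inverse semigroup $\mathcal I({\bf B})$ of partial isomorphisms between ideals ${\bf b}\mathcal B$, and your recursion $e(y_1\cdots y_n)=\alpha_{y_1}\bigl(e(y_1^{-1})\,e(y_2\cdots y_n)\bigr)$ is precisely the range projection of that product, with your cancellation analysis playing the role of the (cited) verification that this yields a partial action. Your derivation of semi-saturation from $e(ts)=\alpha_t\bigl(e(t^{-1})e(s)\bigr)$ in the no-cancellation case matches the paper's observation that $\pi(ts)=\pi(t)\pi(s)$ amounts to $e(ts)\le e(t)$, which is immediate from the definition on reduced words.
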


\begin{proof}
Let $\alpha$ be a partial $(E,C)$-action on a unital commutative
$*$-algebra $\mathcal B$.  Let ${\bf B}$ be the Boolean algebra of
projections of $\mathcal B$, and let $\mathcal I ({\bf B})$ be the
inverse semigroup of partially defined $*$-algebra isomorphisms with
domain and range of the form ${\bf b}\mathcal B$, with ${\bf b}\in
{\bf B}$. Then $\alpha $ can be extended to a map (also denoted by
$\alpha$) from $\mathbb F$ to $\mathcal I ({\bf B})$ by the rule
$$\alpha _{g_1g_2\cdots g_k}= \alpha_{g_1}\cdot \alpha_{g_2}\cdot
\cdots \cdot \alpha_{g_k} \, , $$ where $g_1g_2\cdots g_k$ is the
reduced form of an element of $\mathbb F$ and $\alpha
_{x^{-1}}=\alpha_x^{-1}$ for all $x\in E^1$. It is easily checked
that this extension provides a partial action on $\mathcal B$. See
also \cite[Example 2.4]{McCla4}.

Finally we show that the representation $\pi \colon \mathbb F\to
\mathcal B \rtimes _{\alpha} \mathbb F$ is semi-saturated. Let
$g,h\in \mathbb F$ be such that $|gh|=|g|+|h|$. The identity
$e(gh)\delta_{gh}=(e(g)\delta _g)(e(h)\delta _h)$ is equivalent to
$e(gh)\le e(g)$. Since $|gh|=|g| |h|$, it is evident from the
definition that $e(gh)\le e(g)$, because $\alpha _{gh}=\alpha
_g\cdot \alpha _h$ in $\mathcal I ({\bf B})$.
\end{proof}

Now we can obtain the basic relation between universality for
partial representation obeying (PI1)-(PI4) and universality for
partial $(E,C)$ actions.

\begin{theorem}
\label{thm:equivalenceofuniversalprops} A partial representation
$\pi$ of $\mathbb F$ on $\mathcal A$ is universal for semi-saturated
partial representations satisfying (PI1)-(PI4) if and only if
$\mathcal A \cong \mathcal B\rtimes_{\alpha} \mathbb F$, where
$(\mathcal B,\alpha )$ is a universal partial $(E,C)$-action on the
commutative $*$-algebra $\mathcal B$.
\end{theorem}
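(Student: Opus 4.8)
The plan is to prove both implications by exploiting the universal properties already established, reducing everything to checking that the two universal objects correspond under the crossed-product construction. Throughout, recall that by the paragraph preceding the theorem, the assignment $\pi \mapsto (\mathcal B, \alpha^{\pi})$ produces, from a semi-saturated partial representation $\pi \colon \mathbb F \to \mathcal A$ on a unital $*$-algebra, a partial action of $\mathbb F$ on the commutative subalgebra $\mathcal B$ generated by the idempotents $\varepsilon_g = \pi(g)\pi(g)^*$; and conversely, by Lemma \ref{lem:extendingECactions}, any partial $(E,C)$-action on a commutative $*$-algebra extends to a partial action of $\mathbb F$ whose associated partial representation $g \mapsto e(g)\delta_g$ is semi-saturated. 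The key point to keep in mind is that if $\pi$ satisfies (PI1)--(PI4), then the projections $\varepsilon_x$, $x \in E^1 \sqcup (E^1)^*$, satisfy exactly the defining data of a partial $(E,C)$-action, and the isomorphisms $\alpha_x^{\pi}\colon \varepsilon_{x^*}\mathcal B \to \varepsilon_x\mathcal B$ given by $b \mapsto \pi(x)b\pi(x)^*$ furnish the required $*$-algebra isomorphisms.

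\textbf{The forward direction.} Suppose $\pi \colon \mathbb F \to \mathcal A$ is universal for semi-saturated partial representations satisfying (PI1)--(PI4). Form $(\mathcal B, \alpha^{\pi})$ as above; I would first verify that $(\mathcal B, \alpha^{\pi})$ is a partial $(E,C)$-action, which is immediate from (PI1)--(PI4) and \eqref{eq:vareps}. Next, by \cite[Lemma 6.2]{DokExel} (cited just after Definition \ref{def:skewgrouprin}) there is a canonical $*$-homomorphism $\Theta \colon \mathcal A \to \mathcal B \rtimes_{\alpha^{\pi}} \mathbb F$ determined by $\pi(g) \mapsto \varepsilon_g \delta_g$; conversely, since $g \mapsto \varepsilon_g\delta_g$ is a semi-saturated partial representation of $\mathbb F$ in $\mathcal B \rtimes_{\alpha^{\pi}} \mathbb F$ satisfying (PI1)--(PI4), the universal property of $\pi$ gives a $*$-homomorphism $\Psi$ in the reverse direction. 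One checks $\Theta$ and $\Psi$ are mutually inverse on generators, so $\mathcal A \cong \mathcal B \rtimes_{\alpha^{\pi}} \mathbb F$. It then remains to show $(\mathcal B, \alpha^{\pi})$ is itself \emph{universal} for partial $(E,C)$-actions: given any partial $(E,C)$-action $(\mathcal C, \beta)$, extend it to a partial action of $\mathbb F$ by Lemma \ref{lem:extendingECactions}, so that the partial representation $g \mapsto e_\beta(g)\delta_g$ of $\mathbb F$ on $\mathcal C \rtimes_\beta \mathbb F$ is semi-saturated and satisfies (PI1)--(PI4); universality of $\pi$ yields a $*$-homomorphism $\mathcal A \to \mathcal C \rtimes_\beta \mathbb F$, which restricted to $\mathcal B$ lands in $\mathcal C$ (the span of the $e_\beta(g)$'s) and is the desired equivariant homomorphism. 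Uniqueness of this equivariant map follows because $\mathcal B$ is generated by the $\varepsilon_g$ and an equivariant map is forced on those.

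\textbf{The converse direction.} Suppose $\mathcal A \cong \mathcal B \rtimes_\alpha \mathbb F$ with $(\mathcal B, \alpha)$ a universal partial $(E,C)$-action. By Lemma \ref{lem:extendingECactions} the partial representation $\pi\colon g \mapsto e(g)\delta_g$ is semi-saturated, and by construction the $e(x)$, $x\in E^1\sqcup(E^1)^*$, satisfy (PI1)--(PI4). Given a semi-saturated partial representation $\rho$ of $\mathbb F$ on some unital $*$-algebra with $\rho(x)$ satisfying (PI1)--(PI4), form $(\mathcal B_\rho, \alpha^\rho)$ as in the forward step; this is a partial $(E,C)$-action, so universality of $(\mathcal B, \alpha)$ gives a unique equivariant $*$-homomorphism $\mathcal B \to \mathcal B_\rho$, which induces a $*$-homomorphism of crossed products $\mathcal B \rtimes_\alpha \mathbb F \to \mathcal B_\rho \rtimes_{\alpha^\rho} \mathbb F$; composing with the canonical map $\mathcal B_\rho \rtimes_{\alpha^\rho} \mathbb F \to \mathcal A_\rho$ (target of $\rho$, again via \cite[Lemma 6.2]{DokExel}) gives the required factorization, and uniqueness is forced since the image of $\pi$ generates.

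\textbf{Main obstacle.} The routine but genuinely delicate point is the interplay between the two notions of ``universal'': I must be careful that the equivariance condition in Definition \ref{def:univpartialaction} really does force the map on crossed products to be unique, and that when I restrict a homomorphism $\mathcal A = \mathcal B \rtimes_\alpha \mathbb F \to \mathcal C \rtimes_\beta \mathbb F$ obtained from universality of the partial representation, its restriction to $\mathcal B$ actually has image inside the copy of $\mathcal C$ rather than a larger subalgebra — this uses that $\mathcal C$ is exactly the algebra generated by the $e_\beta(g)$, $g\in \mathbb F$, which in turn relies on the extension mechanism of Lemma \ref{lem:extendingECactions} giving $\mathcal D_g = e_\beta(g)\mathcal C$ with $e_\beta(g) \in \mathcal C$. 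Everything else is bookkeeping with the identities \eqref{eq:vareps} and the definition of multiplication in the crossed product.
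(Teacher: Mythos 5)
Your proposal follows essentially the same route as the paper: in the forward direction you pass to the subalgebra $\mathcal B$ generated by the projections $\varepsilon_g$, identify $\mathcal A$ with $\mathcal B\rtimes_{\alpha^{\pi}}\mathbb F$, and obtain the equivariant map to an arbitrary partial $(E,C)$-action $(\mathcal C,\beta)$ by extending $\beta$ via Lemma \ref{lem:extendingECactions} and invoking universality of $\pi$; the converse likewise mirrors the paper's argument. Two points need repair, though neither changes the structure. First, the justifications of your two auxiliary maps in the forward step are interchanged: the universal property of $\pi$ produces the homomorphism \emph{out of} $\mathcal A$ into $\mathcal B\rtimes_{\alpha^{\pi}}\mathbb F$ (it can never produce maps into $\mathcal A$), while the homomorphism from a crossed product into an algebra carrying the partial representation, $b\delta_g\mapsto b\pi(g)$ (resp.\ $b\delta_g\mapsto b\rho(g)$ in the converse), comes from \cite[Proposition 6.8]{DokExel}, not from \cite[Lemma 6.2]{DokExel}, which only asserts that $g\mapsto 1_g\delta_g$ is a partial representation. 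Second, in the converse direction, to induce the homomorphism $\mathcal B\rtimes_{\alpha}\mathbb F\to\mathcal B_{\rho}\rtimes_{\alpha^{\rho}}\mathbb F$ from the equivariant map $\psi$ — which by Definition \ref{def:univpartialaction} is only required to match the projections $e(u)$, $u\in E^1\cup(E^1)^*$, and to intertwine $\alpha_x$ and $\beta_x$ for $x\in E^1$ — you must verify that $\psi(e_{\alpha}(g))=e_{\beta}(g)$ and $\psi\circ\alpha_g=\beta_g\circ\psi$ for all $g\in\mathbb F$; this is exactly where the semi-saturation of both $\pi$ and $\rho$ is used, as the paper spells out, and the analogous check is also what makes your restricted map in the forward direction land in $\mathcal C$ and be \emph{the} equivariant homomorphism. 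With these corrections your argument coincides with the paper's proof.
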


\begin{proof}
Let $\pi$ be a semi-saturated partial representation of $\mathbb F$
on the unital $*$-algebra $\mathcal A$ satisfying (PI1)-(PI4),  and
assume that $(\mathcal A,\pi)$ is universal for semi-saturated
partial representations of $\mathbb F$ satisfying (PI1)-(PI4). Let
$\mathcal B$ be the commutative subalgebra of $\mathcal A$ generated
by the projections $e(g)=\pi (g)\pi (g)^*$. Then $(\mathcal
B,\alpha^{\pi})$ is a partial $(E,C)$-action on $\mathcal B$, and
$\mathcal A\cong \mathcal B\rtimes _{\alpha^{\pi}} \mathbb F$ by
\cite[Proposition 6.8]{DokExel} and the universal property of $\pi$.
Assume that $(\mathcal C, \beta)$ is a partial $(E,C)$-action on a
unital commutative $*$-algebra $\mathcal C$. By Lemma
\ref{lem:extendingECactions}, $\beta$ can be extended to a partial
action, also denoted by $\beta$, of $\mathbb F$ on $\mathcal C$ such
that $\mathcal D _g=e_{\beta}(g)\mathcal C$ for some projection
$e_{\beta}(g)$,  for all $g\in \mathbb F$, and such that the induced
partial representation $\rho\colon \mathbb F\to \mathcal
C\rtimes_{\beta} \mathbb F$ is semi-saturated. Since $\beta $ is a
partial $(E,C)$-action, the partial isometries $\rho (x)$, $x\in
E^1$, satisfy the relations (PI1)-(PI4), so that there is a unique
$*$-homomorphism $\varphi \colon \mathcal B\rtimes _{\alpha} \mathbb
F\to \mathcal C\rtimes_{\beta} \mathbb F$ such that $\varphi (\pi
(g))= e_{\beta}(g)\delta _g$ for all $g\in \mathbb  F$. In
particular, we see that $$\varphi (e(g))= \varphi (\pi (g)\pi (g)^*)
= (e_{\beta}(g)\delta_g)(e_{\beta}(g)\delta _g)^*=
e_{\beta}(g)\delta_e . $$ Moreover, for $x\in E^1$ and $b\in
e(x^*)\mathcal B$, we have
$$\varphi (\alpha _x(b))=\varphi (\pi(x)b\pi (x)^*)= \rho (x)
\varphi (b)\rho (x)^*= \beta_x( \varphi (b))\, ,$$ showing that the
restriction of $\varphi $ to $\mathcal B$ is an equivariant
homomorphism from $\mathcal B$ to $\mathcal C$. Observe that, being
$\pi $ a semi-saturated representation of $\mathbb F$ on $\mathcal
A$, we have that $\alpha ^{\pi}(g_1g_2\cdots g_n)=
\alpha^{\pi}(g_1)\cdot \alpha^{\pi}(g_2)\cdot \cdots \cdot
\alpha^{\pi} (g_n)$ in $\mathcal I ({\bf B})$, the inverse semigroup
associated to the Boolean algebra ${\bf B}$ of $\mathcal A$, where
$g_1g_2\cdots g_n$ is the reduced form of an element in $\mathbb F$.
Since the corresponding statement for the partial representation
$\rho$ on $\mathcal C\rtimes _{\beta} \mathbb F$ also holds, we see
that $\varphi (e(g))= e_{\beta}(g)$ for all $g\in \mathbb F$. Now
$\mathcal B$ is generated as an algebra by the projections $e(g)$,
with $g\in \mathbb F$, and so we deduce from the above that
$\varphi|_{\mathcal B}$ is the unique equivariant homomorphism from
$\mathcal B$ to $\mathcal C$. We have shown that $(\mathcal B,\alpha
)$ is a universal $(E,C)$-partial action.

Suppose conversely that $(\mathcal B, \alpha)$ is a universal
partial $(E,C)$-action on the unital commutative $*$-algebra
$\mathcal B$. Let $\alpha$ denote also the extension to a partial
action of $\mathbb F$ on $\mathcal B$ described in Lemma
\ref{lem:extendingECactions}, and let $\pi \colon \mathbb F\to
\mathcal B\rtimes _{\alpha } \mathbb F$ be the corresponding
semi-saturated partial representation (Lemma
\ref{lem:extendingECactions}). Obviously, the unitaries  $\pi(x)$,
$x\in E^1$ satisfy (PI1)-(PI4). Let $\rho $ be a semi-saturated
partial representation of $\mathbb F$ on a unital $*$-algebra
$\mathcal A '$ such that the partial isometries $\rho (x)$, $x\in
E^1$, satisfy (PI1)-(PI4). Let $\mathcal B '$ be the subalgebra of
$\mathcal A '$ generated by the projections $\rho (g)\rho (g)^*$,
for $g\in \mathbb F$ and let $\beta$ be the canonical partial action
of $\mathbb F$ on $\mathcal B '$. Observe that, in particular, the
partial action $\beta$ induces a partial $(E,C)$-action on $\mathcal
B '$. Therefore there is a unique equivariant homomorphism $\psi
\colon \mathcal B\to \mathcal B'$. Since both $\pi $ and $\rho $ are
semi-saturated we see that $\psi (e_{\alpha}(g)) = e_{\beta}(g)$ for
all $g\in \mathbb F$, and that $\psi (\alpha _g(b))=\beta_g( \psi
(b))$ for all $b\in e_{\alpha} (g^{-1}) \mathcal B$. We can thus
define a $*$-homomorphism $\varphi \colon \mathcal B \rtimes
_{\alpha} \mathbb F\to \mathcal B '\rtimes _{\beta} \mathbb F$ such
that $\varphi (b\delta_g)= \psi (b) \delta _g$ for all $b\in
e_{\alpha} (g) \mathcal B$. Composing the map $\varphi \colon
\mathcal B \rtimes _{\alpha} \mathbb F\to \mathcal B '\rtimes
_{\beta} \mathbb F$ with the natural $*$-homomorphism  $\mathcal
B'\rtimes _{\beta} \mathbb F \to \mathcal A'$ sending $b\delta _g$
to $b\rho (g)$ (\cite[Proposition 6.8]{DokExel}), we obtain a
$*$-homomorphism $\varphi ' \colon \mathcal B\rtimes _{\alpha}
\mathbb F\to \mathcal A'$ such that $\varphi ' (\pi (g))=\rho (g)$
for all $g\in \mathbb F$. If $\varphi ''$ is another such
$*$-homomorphism, then it must agree with $\varphi '$ on the
commutative subalgebra $\mathcal B$, and also $$\varphi
''(e_{\alpha}(g)\delta _g)= \varphi ''(\pi (g)) =\rho (g)=\varphi'
(e_{\alpha}(g)\delta _g) \, ,$$ so that $\varphi'=\varphi ''$.

We have shown that the partial representation $\pi$ of $\mathbb F$
on $\mathcal B\rtimes _{\alpha} \mathbb F$ is universal for partial
representations satisfying (PI1)-(PI4). This concludes the proof of
the theorem.
\end{proof}

Note that we have a canonical partial $(E,C)$ action on the unital
commutative $*$-algebra $B_{\infty}$, given by the projections
$e(x)=xx^*$, $e(x^*)= s(x)$, and by the isomorphisms $\alpha
_x\colon e(x^*)B_{\infty} \to e(x) B_{\infty}$ given by conjugation
by $\underline{x}$, for all $x\in E^1$.

The following result follows immediately from Proposition
\ref{prop:univ-partialreps} and Theorem
\ref{thm:equivalenceofuniversalprops}.

\begin{corollary}
\label{cor:unipropBinfty} The canonical partial $(E,C)$-action
$\alpha $ on the unital commutative $*$-algebra $B_{\infty}$ is
universal.
\end{corollary}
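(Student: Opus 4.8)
The plan is to deduce Corollary \ref{cor:unipropBinfty} directly by combining the two results it cites, namely Proposition \ref{prop:univ-partialreps} and Theorem \ref{thm:equivalenceofuniversalprops}, so that no fresh argument is really needed. First I would recall that by Proposition \ref{prop:univ-partialreps} the semi-saturated partial representation $\pi\colon \mathbb F\to \Lab(E,C)$ sending $x\in E^1$ to $\underline{x}$ is universal among semi-saturated partial representations of $\mathbb F$ satisfying (PI1)--(PI4). Next I would invoke the ``only if'' direction of Theorem \ref{thm:equivalenceofuniversalprops}: since $(\Lab(E,C),\pi)$ has that universal property, we obtain an isomorphism $\Lab(E,C)\cong \mathcal B\rtimes_{\alpha^{\pi}}\mathbb F$ where $\mathcal B$ is the commutative subalgebra of $\Lab(E,C)$ generated by the idempotents $e(g)=\pi(g)\pi(g)^*$, $g\in\mathbb F$, and $(\mathcal B,\alpha^{\pi})$ is a universal partial $(E,C)$-action.

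The only remaining point is the identification $\mathcal B = B_{\infty}$ together with the statement that the partial $(E,C)$-action obtained this way is the \emph{canonical} one described just before the corollary. For this I would point to Proposition \ref{prop:ewgeneratesbinfty}, which says precisely that $B_{\infty}$ is generated by the projections $e(w)$, $w\in U$; since $e(g)=\pi(g)\pi(g)^*$ coincides with $e(w)$ for $g$ the reduced word corresponding to $w\in U$ (and $e(g)=0$ when $g$ is not of this ``semi-saturated-reducible'' form, the relevant idempotents being exhausted by the $e(w)$, $w\in U$, as in the discussion preceding Definition \ref{def:skewgrouprin}), the subalgebra $\mathcal B$ generated by the $e(g)$ is exactly $B_{\infty}$. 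Finally, the isomorphisms $\alpha^{\pi}_x(b)=\pi(x)b\pi(x)^*=\underline{x}\,b\,\underline{x}^*$ are, by definition, conjugation by $\underline{x}$, with $e(x)=xx^*$ and $e(x^*)=s(x)$; this is verbatim the canonical partial $(E,C)$-action on $B_{\infty}$ recorded in the paragraph before the corollary. Hence the universal partial $(E,C)$-action produced by Theorem \ref{thm:equivalenceofuniversalprops} is the canonical one, which is the assertion.

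The step I expect to require the most care is this bookkeeping identification of $\mathcal B$ with $B_{\infty}$ and of $\alpha^{\pi}$ with the canonical action: one must be sure that the idempotents $e(g)$ for arbitrary $g\in\mathbb F$ do not generate anything larger than the $e(w)$, $w\in U$, generate, which is where the semi-saturatedness of $\pi$ (so that $\pi(g)$ for a reduced word factors as a product of $\pi(x^{\pm1})$'s whose tail final projections already lie among the $e(w)$) combines with Proposition \ref{prop:ewgeneratesbinfty}. Everything else is a direct citation. I would therefore write the proof as a short paragraph: ``By Proposition \ref{prop:univ-partialreps} the partial representation $\pi$ is universal; by Theorem \ref{thm:equivalenceofuniversalprops} this forces $(\mathcal B,\alpha^{\pi})$ to be a universal partial $(E,C)$-action, where $\mathcal B$ is generated by the $e(g)$; by Proposition \ref{prop:ewgeneratesbinfty} and the relations (SCK1)--(SCK2) this subalgebra is $B_{\infty}$, and $\alpha^{\pi}$ unwinds to the canonical action given by conjugation by $\underline{x}$. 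Hence the canonical partial $(E,C)$-action on $B_{\infty}$ is universal.''
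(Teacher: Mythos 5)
Your proposal is correct and follows exactly the paper's route: the paper derives the corollary immediately from Proposition \ref{prop:univ-partialreps} and Theorem \ref{thm:equivalenceofuniversalprops}, with the identification $\mathcal B=B_{\infty}$ (via Proposition \ref{prop:ewgeneratesbinfty}) and the recognition of $\alpha^{\pi}$ as the canonical action already recorded in the text preceding the corollary. Your extra care about the idempotents $e(g)$ for general $g\in\mathbb F$ reducing to the $e(w)$, $w\in U$, by semi-saturatedness is exactly the bookkeeping the paper does there, so nothing is missing.
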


Similarly, we obtain the corresponding result for the completion
$\mathcal B_0$ of $B_{\infty}$:

\begin{corollary}
\label{cor:unipropmathcalB0} The canonical partial $(E,C)$-action
$\alpha $ on the unital commutative C*-algebra $\mathcal B _0$ is
universal for partial $(E,C)$-actions on commutative C*-algebras.
\end{corollary}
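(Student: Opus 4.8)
The plan is to bootstrap from the universal property of $B_{\infty}$ already established in Corollary \ref{cor:unipropBinfty} by passing to the $C^*$-completion. Let $(\mathcal C,\beta)$ be a partial $(E,C)$-action on a unital commutative $C^*$-algebra $\mathcal C$. Forgetting the norm, $\mathcal C$ is in particular a unital commutative $*$-algebra carrying a partial $(E,C)$-action, so Corollary \ref{cor:unipropBinfty} provides a unique equivariant $*$-homomorphism $\psi_0\colon B_{\infty}\to\mathcal C$. The goal is then to show that $\psi_0$ extends uniquely to a $*$-homomorphism $\psi\colon\mathcal B_0\to\mathcal C$, that this $\psi$ is equivariant for the canonical partial $(E,C)$-action on $\mathcal B_0$, and that it is the only equivariant $*$-homomorphism $\mathcal B_0\to\mathcal C$.

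The extension step is soft. Recall that $B_{\infty}=\varinjlim B_n$ is a directed union of finite-dimensional commutative $C^*$-algebras $B_n$, with the connecting maps $\phi_n|_{B_n}$ injective $*$-homomorphisms and hence isometric; thus $B_{\infty}$ carries a canonical $C^*$-norm whose completion is $\mathcal B_0$. The restriction of $\psi_0$ to each $B_n$ is a $*$-homomorphism between $C^*$-algebras, hence norm-decreasing; since every element of $B_{\infty}$ lies in some $B_n$, the map $\psi_0$ is contractive, and therefore extends uniquely by continuity to a $*$-homomorphism $\psi\colon\mathcal B_0\to\mathcal C$. To see that $\psi$ is equivariant, note first that each defining projection $e_{\alpha}(u)$, $u\in E^1\cup(E^1)^*$, already lies in $B_{\infty}$, so $\psi(e_{\alpha}(u))=\psi_0(e_{\alpha}(u))=e_{\beta}(u)$. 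For $x\in E^1$, the complemented ideal $e_{\alpha}(x^*)\mathcal B_0$ is the norm-closure of $e_{\alpha}(x^*)B_{\infty}$ (multiplication by the projection $e_{\alpha}(x^*)$ is continuous and idempotent), the map $\alpha_x$ on $\mathcal B_0$ restricts on this dense $*$-subalgebra to the $\alpha_x$ of the $B_{\infty}$-action, and likewise for $\beta_x$ on $\mathcal C$; since $\psi_0\circ\alpha_x=\beta_x\circ\psi_0$ on $e_{\alpha}(x^*)B_{\infty}$ and all four maps are norm-continuous, the corresponding square for $\psi$ commutes on $e_{\alpha}(x^*)\mathcal B_0$ by density. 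Hence $\psi$ is equivariant.

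For uniqueness, observe that any equivariant $*$-homomorphism $\psi'\colon\mathcal B_0\to\mathcal C$ carries the projections $e_{\alpha}(u)\in B_{\infty}$ to $e_{\beta}(u)$ and intertwines each $\alpha_x$ with $\beta_x$, so its restriction to $B_{\infty}$ is an equivariant $*$-homomorphism for the $B_{\infty}$-action; by the uniqueness clause of Corollary \ref{cor:unipropBinfty} this restriction equals $\psi_0$, and then $\psi'=\psi$ because $B_{\infty}$ is dense in $\mathcal B_0$. The one point that genuinely requires care — and the closest thing to an obstacle — is to verify at the outset that the canonical partial $(E,C)$-action on $\mathcal B_0$ really is the continuous extension of the one on $B_{\infty}$: the projections $e(x)=\underline{x}\,\underline{x}^*$ and $e(x^*)=s(x)$ lie unchanged in $B_{\infty}\subseteq\mathcal B_0$, while $\alpha_x$ on $\mathcal B_0$ is conjugation by the partial isometry $\underline{x}$ inside $\mathcal O(E,C)\supseteq\mathcal B_0$, which is norm-continuous and maps $e(x^*)B_{\infty}$ onto $e(x)B_{\infty}$, hence maps their closures $e(x^*)\mathcal B_0$ onto $e(x)\mathcal B_0$; this is exactly what legitimises the restriction-and-density arguments above. (Alternatively one could run the whole argument through Gelfand duality, identifying $\mathcal B_0\cong C(\Omega(E,C))$ and a partial $(E,C)$-action on $C(Y)$ with a dual partial $(E,C)$-action on the compact space $Y$, but the completion argument is more direct given the tools already assembled.)
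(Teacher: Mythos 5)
Your argument is correct, but it takes a different route from the paper. The paper obtains this corollary by running the same machinery that gave Corollary \ref{cor:unipropBinfty} a second time, now in the C*-category: $\mathcal O(E,C)$ is the universal C*-algebra for semi-saturated partial representations of $\mathbb F$ subject to (PI1)--(PI4) (the C*-analogue of Proposition \ref{prop:univ-partialreps}), and the C*-version of Theorem \ref{thm:equivalenceofuniversalprops} then identifies universality of that representation with universality of the underlying partial $(E,C)$-action on the commutative subalgebra generated by the final projections, which is $\mathcal B_0$. You instead deduce the C*-statement directly from the already-proved algebraic one: forget the norm on the target, invoke Corollary \ref{cor:unipropBinfty} to get $\psi_0\colon B_\infty\to\mathcal C$, and extend to $\mathcal B_0$ using automatic contractivity of $*$-homomorphisms out of the finite-dimensional C*-algebras $B_n$, density of $B_\infty$ in $\mathcal B_0$ (Remark \ref{rem:LabsubO}), and the automatic isometry of the structural isomorphisms $\beta_x$ on the C*-side; uniqueness follows because any equivariant $*$-homomorphism $\mathcal B_0\to\mathcal C$ is automatically contractive and restricts on $B_\infty$ to $\psi_0$. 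Your completion argument is softer and avoids redoing the crossed-product/partial-representation equivalence with full C*-crossed products, while the paper's parallel treatment keeps the C*-universality tied directly to the crossed-product description of $\mathcal O(E,C)$ that is exploited immediately afterwards in Corollaries \ref{cor:univECdynsustem} and \ref{cor:Lab+OECarecrossedprods}. You also rightly isolate the one point needing care, namely that the canonical action on $\mathcal B_0$ is the continuous extension of the one on $B_\infty$ (conjugation by $\underline{x}$ inside $\mathcal O(E,C)$ carries $e(x^*)B_\infty$ onto $e(x)B_\infty$ and hence their closures onto each other), so there is no gap.
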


We can now use duality to obtain the universal property of the
topological space $\Omega (E,C)$. Recall that the symbol $\sqcup $
is used to indicate disjoint unions.

\begin{definition}
\label{def:univtopspaces} An {\it $(E,C)$-dynamical system} consists
of a compact Hausdorff space $\Omega$ with a family of clopen
subsets $\{ \Omega_v \}_{v\in E^{0}}$ such that
$$\Omega = \bigsqcup _{v\in E^{0}} \Omega_v , $$ and, for each $v\in E^{0,0}$,
a family of clopen subsets $\{ H_x \}_{x\in r^{-1}(v)}$ of
$\Omega_v$, such that
$$\Omega_v= \bigsqcup _{x\in X} H_x \qquad \text{ for all } X\in C_v , $$
together with a family of homeomorphisms
$$\theta_x\colon \Omega_{s(x)} \longrightarrow H_x $$
for all $x\in E^1$.

Given two $(E,C)$-dynamical systems $(\Omega, \theta)$, $(\Omega ',
\theta')$, there is an obvious definition of equivariant map
$f\colon (\Omega, \theta) \to (\Omega',\theta ')$, namely $f\colon
\Omega \to \Omega '$ is {\it equivariant} if $f(\Omega_w)\subseteq
\Omega'_w$ for all $w\in E^{0}$, $f(H_x)\subseteq H'_x$ for all
$x\in E^1$ and $f(\theta_x(y))= \theta'_x (f(y))$ for all $y\in
\Omega_{s(x)}$.

We say that an $(E,C)$-dynamical system $(\Omega ,\theta)$ is {\it
universal} in case there is a unique continuous equivariant map from
every $(E,C)$-dynamical system to $(\Omega ,\theta )$.
\end{definition}

For instance if $(E,C)=(E(m,n), C(m,n))$, then this is exactly the
definition of an $(m,n)$-dynamical system, given in \cite[Definition
3.1]{AEK}.

Using the (contravariant) equivalence between commutative unital
C*-algebras and compact Hausdorff topological spaces we get
immediately from Corollary \ref{cor:unipropmathcalB0}:

\begin{corollary}
\label{cor:univECdynsustem} The dynamical system
$(\Omega(E,C),\alpha ^*)$ is the universal $(E,C)$-dynamical system.
\end{corollary}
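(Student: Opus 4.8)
The plan is to obtain Corollary \ref{cor:univECdynsustem} by simply transporting Corollary \ref{cor:unipropmathcalB0} across Gelfand--Naimark duality. First I would record the dictionary: a zero-dimensional (more generally, compact Hausdorff) space $\Omega$ corresponds to the unital commutative $C^*$-algebra $C(\Omega)$; a clopen subset $Y\subseteq\Omega$ corresponds to the projection $\chi_Y\in C(\Omega)$, with $C(Y)=\chi_Y\,C(\Omega)$; a finite clopen partition $\Omega=\bigsqcup_v\Omega_v$ corresponds to a family of pairwise orthogonal projections with sum $1$; and a homeomorphism between clopen sets corresponds contravariantly to a $*$-isomorphism between the associated corners. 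Under this dictionary, the data of an $(E,C)$-dynamical system $(\Omega,\theta)$ as in Definition \ref{def:univtopspaces} translates precisely into the data of a partial $(E,C)$-action on $C(\Omega)$ as in Definition \ref{def:univpartialaction}: put $p_v=\chi_{\Omega_v}$, $e(x)=\chi_{H_x}$ for $x\in E^1$, and $e(x^*)=p_{s(x)}$, and let $\alpha_x\colon e(x^*)C(\Omega)=C(\Omega_{s(x)})\to e(x)C(\Omega)=C(H_x)$ be the $*$-isomorphism dual to the homeomorphism $\theta_x^{-1}\colon H_x\to\Omega_{s(x)}$. A routine check shows that the relations (PI1)--(PI4) for this family of projections are equivalent to the decomposition conditions $\Omega=\bigsqcup_{v\in E^0}\Omega_v$ and $\Omega_v=\bigsqcup_{x\in X}H_x$ ($X\in C_v$), so this assignment is a bijection between $(E,C)$-dynamical systems and unital commutative $C^*$-algebras carrying a partial $(E,C)$-action.

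Next I would verify that this bijection extends to a contravariant equivalence of categories, i.e. that a continuous map $f\colon\Omega\to\Omega'$ is equivariant in the sense of Definition \ref{def:univtopspaces} if and only if $f^*\colon C(\Omega')\to C(\Omega)$ is an equivariant $*$-homomorphism in the sense of Definition \ref{def:univpartialaction}. Using the clopen-set/projection correspondence, and the fact that the $\Omega'_w$ (respectively the $H'_x$ with $x$ ranging over a fixed $X\in C_v$) form partitions, the containments $f(\Omega_w)\subseteq\Omega'_w$ and $f(H_x)\subseteq H'_x$ become the identities $f^*(p'_w)=p_w$ and $f^*(e'(x))=e(x)$; and the intertwining relation $f(\theta_x(y))=\theta'_x(f(y))$ is exactly the assertion that $f^*$ makes the square in Definition \ref{def:univpartialaction} commute. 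Since every unital $*$-homomorphism $C(\Omega')\to C(\Omega)$ is of the form $f^*$ for a unique continuous $f\colon\Omega\to\Omega'$, the two categories are anti-equivalent.

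Finally, by construction $\mathcal B_0\cong C(\Omega(E,C))$, and the canonical partial $(E,C)$-action $\alpha$ on $\mathcal B_0$ corresponds under the dictionary to the dynamical system $(\Omega(E,C),\alpha^*)$. Corollary \ref{cor:unipropmathcalB0} says that $(\mathcal B_0,\alpha)$ is an initial object in the category of unital commutative $C^*$-algebras with a partial $(E,C)$-action; applying the anti-equivalence turns this into the statement that $(\Omega(E,C),\alpha^*)$ is a terminal object in the category of $(E,C)$-dynamical systems, that is, every $(E,C)$-dynamical system admits a unique continuous equivariant map to $(\Omega(E,C),\alpha^*)$ --- which is the claimed universality. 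The only real work is the middle step: matching the three separate conditions defining an equivariant map of dynamical systems with the single commuting-square condition defining an equivariant homomorphism, and keeping the direction of $\theta_x$ versus $\alpha_x$ straight under pullback. Everything else is the standard Gelfand--Naimark correspondence, which is why the corollary is immediate once Corollary \ref{cor:unipropmathcalB0} is available.
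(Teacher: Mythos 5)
Your proposal is correct and is essentially the paper's own argument: the paper deduces the corollary in one line from Corollary \ref{cor:unipropmathcalB0} via the contravariant equivalence between unital commutative C*-algebras and compact Hausdorff spaces, which is exactly the duality transport you carry out (your version just makes the dictionary and the equivariance check explicit).
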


We also obtain the structure of $\Lab_K (E,C)$ and $\mathcal O
(E,C)$ as a crossed product:

\begin{corollary}
\label{cor:Lab+OECarecrossedprods}
\begin{enumerate}
\item For any field with involution $K$, we have
$$\Lab _K (E, C)\cong C_K(\Omega (E,C))\rtimes _\alpha
\mathbb F,$$ where $C_K(\Omega (E,C))$ is the algebra of $K$-valued
continuous functions on $\Omega (E,C)$, and $K$ is given the
discrete topology.
\item We have $$\mathcal O (E,C) \cong C(\Omega (E,C))\rtimes _\alpha \mathbb
F,$$ where $C(\Omega (E,C))$ is the C*-algebra of continuous
functions on $\Omega (E,C)$ (here $\C$ has the usual topology), and
$C(\Omega (E,C))\rtimes _\alpha \mathbb F$ denotes the full
C*-algebra crossed product.
\end{enumerate}
\end{corollary}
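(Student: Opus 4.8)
The plan is to deduce Corollary \ref{cor:Lab+OECarecrossedprods} directly from the results already established, essentially as a chain of identifications, so that there is nothing to prove beyond unwinding definitions. First I would recall that by Proposition \ref{prop:univ-partialreps}, $\Lab_K(E,C)$ is the universal $*$-algebra carrying a semi-saturated partial representation of $\mathbb{F}$ satisfying (PI1)--(PI4), and by Corollary \ref{cor:unipropBinfty} the canonical partial $(E,C)$-action $\alpha$ on $B_\infty$ is universal. Combining these with Theorem \ref{thm:equivalenceofuniversalprops}, which says a partial representation $\pi$ of $\mathbb{F}$ on $\mathcal{A}$ is universal for semi-saturated partial representations satisfying (PI1)--(PI4) if and only if $\mathcal{A}\cong \mathcal{B}\rtimes_\alpha \mathbb{F}$ for a universal partial $(E,C)$-action $(\mathcal{B},\alpha)$, applied with $\mathcal{A}=\Lab_K(E,C)$ and $\mathcal{B}=B_\infty$, yields $\Lab_K(E,C)\cong B_\infty \rtimes_\alpha \mathbb{F}$.

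Next I would identify $B_\infty$ with $C_K(\Omega(E,C))$: this is exactly the remark recorded after the discussion of Stone duality, where $\Omega(E,C)$ is defined as the zero-dimensional metrizable compact space with $B_\infty\cong C_K(\Omega(E,C))$ for any field $K$ (citing \cite[Corollaire 1]{Keimel} for the identification of the locally matricial commutative $*$-algebra with continuous $K$-valued functions on its spectrum). Transporting the partial action $\alpha$ of $\mathbb{F}$ on $B_\infty$ across this isomorphism gives the partial action, still denoted $\alpha$, on $C_K(\Omega(E,C))$, and the crossed product is preserved under the isomorphism, so $\Lab_K(E,C)\cong C_K(\Omega(E,C))\rtimes_\alpha \mathbb{F}$, which is part (1). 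For part (2), the identical argument applies verbatim in the C*-category: Theorem \ref{thm:universal-tame} and Proposition \ref{prop:univ-partialreps} have C*-analogues with $\mathcal{O}(E,C)$ in place of $\Lab_K(E,C)$, Theorem \ref{thm:equivalenceofuniversalprops} and Corollary \ref{cor:unipropmathcalB0} give $\mathcal{O}(E,C)\cong \mathcal{B}_0\rtimes_\alpha \mathbb{F}$ with $\mathcal{B}_0$ the C*-completion of $B_\infty$ carrying the universal partial $(E,C)$-action on commutative C*-algebras, and then $\mathcal{B}_0\cong C(\Omega(E,C))$ by construction of $\Omega(E,C)$ as the spectrum of $\mathcal{B}_0$, with the crossed product now being the full C*-algebra crossed product.

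The one point that needs genuine (if routine) care, and which I regard as the main obstacle, is checking that the purely algebraic statement of Theorem \ref{thm:equivalenceofuniversalprops} and of Corollary \ref{cor:unipropmathcalB0} really do transfer to the C*-setting without modification: one must verify that the relevant universal C*-algebras exist (the generating partial isometries being norm-bounded by $1$, so the usual enveloping-C*-algebra construction applies), that the extension of an $(E,C)$-action to a partial action of $\mathbb{F}$ in Lemma \ref{lem:extendingECactions} and the crossed-product formalism of \cite{DokExel} work for C*-algebras (they do, cf. \cite[Proposition 6.8]{DokExel} in the C*-context), and that $\mathcal{B}_0\cong C(\Omega(E,C))$ intertwines the two partial actions. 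None of this is difficult, but it is the place where one is really using that $B_n$ sits as a C*-subalgebra of $C^*(E_n,C^n)$ (Remark \ref{rem:LabsubO}) so that the C*-completion behaves well. Once these identifications are in place, the corollary follows formally, and I would present the proof as roughly: ``This is immediate from Proposition \ref{prop:univ-partialreps}, Corollary \ref{cor:unipropBinfty} (respectively Corollary \ref{cor:unipropmathcalB0}), Theorem \ref{thm:equivalenceofuniversalprops}, and the identification $B_\infty\cong C_K(\Omega(E,C))$ (respectively $\mathcal{B}_0\cong C(\Omega(E,C))$).''
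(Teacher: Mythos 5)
Your proposal is correct and follows essentially the route the paper intends: the corollary is stated there without proof precisely because it is the formal combination of Proposition \ref{prop:univ-partialreps} (and its C*-analogue), Theorem \ref{thm:equivalenceofuniversalprops}, Corollaries \ref{cor:unipropBinfty} and \ref{cor:unipropmathcalB0}, and the identifications $B_\infty\cong C_K(\Omega(E,C))$ and $\mathcal B_0\cong C(\Omega(E,C))$, exactly as you unwind them. Your extra caution about transferring the universal-property arguments to the C*-setting matches the paper's own (later) remark that Proposition \ref{prop:univ-partialreps} generalizes to $\mathcal O(E,C)$, so nothing is missing.
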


\section{The type semigroup}
\label{sect:typesemigroup}

Let $G$ be a group acting on a set $X$.  Tarski's Theorem can be
expressed as a result on the type semigroup $S(X,G)$, see
\cite[Theorem 9.1 and Corollary 9.2]{Wagon}. As Tarski shows in
\cite[Theorem 16.12]{Tarski}, his result holds also if we replace
total actions with partial actions. If $G$ acts by continuous
transformations on a topological space $X$, and $\mathbb D$ is a
$G$-invariant subalgebra of subsets of $X$, then we may consider the
corresponding type semigroup $S(X,G,\mathbb D) $ as in \cite{Kerr},
\cite{KN}, \cite{RS}. As we will notice in this section, we may as
well consider this semigroup for invariant subalgebras under a
partial $G$-action. An important case, considered in the above
references for global actions, is the case where $X$ is a
zero-dimensional metrizable compact space and $\mathbb D$ is the
subalgebra of $\mathcal P (X)$ consisting  of all the clopen subsets
of $X$. It is an open question in this case whether the analogue of
Tarski's Theorem holds. The proof of Tarski's Theorem is based on
two special properties of the type semigroup $S(X,G)$: the Schr\"
oder-Bernstein axiom (also named Cantor-Schr\" oder-Bernstein
property) $x\le y$ and $y\le x$ imply $x=y$, and the cancellation
property $nx=ny \implies x= y$. The Schr\" oder-Bernstein axiom is
known to hold for countably complete subalgebras. However the
cancellation law is known to fail even for complete subalgebras
\cite{Truss}.

Here we will show that for global actions of free groups on
zero-dimensional metrizable compact spaces, the type semigroup
$S(X,G,\mathbb K)$, where $\mathbb K$ denotes the subalgebra of all
clopen subsets of $X$, do not satisfy in general any cancellation
condition (see Theorem \ref{thm:main-typesemigroup2}). In particular
we show in Corollary \ref{cor:non-paradoxical2} the existence, for
each pair of positive numbers $(m,n)$ such that $1<m<n$, of a global
action of a finitely generated free group $\mathbb F$ on a
zero-dimensional metrizable compact space $\Omega$, such that
$\Omega$ contains a non-paradoxical but $(n,m)$-paradoxical clopen
subset. This is the first example of the failure of Tarski's Theorem
in this setting.

Our main construction gives in a natural way \emph{partial} actions
on zero-dimensional metrizable compact spaces. Using globalization
results from \cite{Abadie} we get corresponding global actions,
although on a locally compact, non-compact, Hausdorff space. Passing
finally to the one-point compactifications of these spaces, we
obtain the desired global actions on zero-dimensional metrizable
compact spaces.

\begin{definition}
\label{def:type-semigroup} Let $(\{ X_t\}_{t\in G}, \{ \theta _t
\}_{t\in G})$ be a partial action of a group $G$ on a set $X$. Let
$\mathbb D$ be a $G$-invariant subalgebra of $\mathcal P (X)$, with
$X_t\in \mathbb D$ for all $t\in G$. Let $S(X,G, \mathbb D)$ be the
set
$$ \Big\{ \bigcup _{i=1}^n A_i\times \{ i\} \mid A_i\in \mathbb D , n\in \N   \Big\}\Big/ \sim_{S} ,$$
where the equivalence relation $\sim _S$ is defined as follows: two
sets $A=\bigcup _{i=1}^n A_i\times \{i \}$ and $B= \bigcup_{j=1}^m
B_j\times \{j\}$ are equivalent, denoted by $A\sim _S B$, if there
exist $l\in \N$, $C_k\in \mathbb D$, $t_k\in G$, and natural numbers
$n_k,m_k$, $k=1,\dots , l$, such that $C_k\subseteq X_{t_k^{-1}}$, and
$$A= \bigsqcup _{k=1}^l C_k \times \{n_k \} ,\qquad B= \bigsqcup
_{k=1}^l \theta _{t_k} (C_k) \times \{ m_k \} .$$ The equivalence class
containing $A$ is denoted by $[A]$, and addition is defined by
$$\Big[ \bigcup _{i=1}^n A_i \times \{i \} \Big] + \Big[
\bigcup_{j=1 }^m B_j\times \{j \} \Big] = \Big[ \bigcup _{i=1}^n
A_i\times \{i \} \cup \bigcup_{j=1}^m B_j \times \{n+j\} \Big] .$$
Note that $S(X,G, \mathbb D )$ is a conical refinement monoid, with
$0=[\emptyset ]$. When $\mathbb D= \mathcal P (X)$,  $S(X,G, \mathbb
D)$ is denoted by $S(X,G)$.
\end{definition}

 A subset $E$ in $\mathbb D$ is said to be {\it $G$-paradoxical} (with
 respect to $\mathbb D$) in case the element $[E]$ of $S(X,G,
 \mathbb D )$ is properly infinite, that is, $2[E] \le [E]$.
 With these definitions we can state Tarski's Theorem  (\cite[Corollary
 9.2]{Wagon},
 \cite[Theorem 16.12]{Tarski}) as follows:

 \begin{theorem}
 \label{thm:TarskiThm}
Let $(\{ X_t\}_{t\in G}, \{ \theta _t \}_{t\in G})$ be a partial
action of a group $G$ on a set $X$, and let $E$ be a subset of $X$.
Then $E$ is $G$-non-paradoxical if and only if there exists a monoid
homomorphism $\mu \colon S(X,G)\to \mathbb R ^+ \cup \{\infty \}$
such that $\mu ([E])=1$.
\end{theorem}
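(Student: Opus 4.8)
The plan is to deduce this statement from the classical Tarski theorem (stated as \cite[Corollary 9.2]{Wagon} for global actions) by reducing the partial-action case to the global case. The crux is that a partial action $(\{X_t\}_{t\in G},\{\theta_t\}_{t\in G})$ of $G$ on $X$ has a \emph{globalization}: there is a set $\widetilde{X}\supseteq X$, a global action of $G$ on $\widetilde{X}$, and the original partial action is the restriction of the global one to $X$ in the sense that $X_t = X\cap tX$ (identifying $tx$ with $\theta_t(x)$ when defined) and $\theta_t$ is the restriction of multiplication by $t$. Concretely one takes $\widetilde{X} = (G\times X)/\!\sim$ where $(g,x)\sim (h,y)$ iff $y\in X_{(h^{-1}g)^{-1}}$ and $\theta_{h^{-1}g}(x)=y$, with $G$ acting by left translation on the first coordinate; the map $x\mapsto [(1,x)]$ embeds $X$ into $\widetilde{X}$. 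This is purely set-theoretic and needs no topology, so it applies in the present generality.

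The second ingredient is that the type semigroup is unchanged under this globalization in the relevant sense: I would show $S(X,G)$ (the partial-action type semigroup of Definition \ref{def:type-semigroup} with $\mathbb D = \mathcal P(X)$) embeds naturally into $S(\widetilde{X},G)$, and moreover that for a subset $E\subseteq X$, the class $[E]$ in $S(X,G)$ is properly infinite if and only if its image in $S(\widetilde X,G)$ is properly infinite. One direction is immediate since every partial-action equidecomposition in $X$ is a global equidecomposition in $\widetilde X$. For the converse one uses that $X$ meets each $G$-orbit of $\widetilde X$ and that $E\subseteq X$; any global paradoxical decomposition of $E$ inside $\widetilde X$ can be pushed back: each piece $C_k\subseteq E\subseteq X$ gets moved by some $t_k$, and one intersects $\theta_{t_k}(C_k)$ (a subset of $\widetilde X$) with $X$, subdividing further so that on each new piece the translation by $t_k$ is realized by the partial map $\theta_{t_k}$, i.e. each piece lies in $X_{t_k^{-1}}$. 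Similarly, a finitely additive $G$-invariant measure on $\mathcal P(\widetilde X)$ with $\mu(E)=1$ restricts to one on $\mathcal P(X)$ (the restriction of an invariant measure to an invariant‑up‑to‑translation subalgebra is again invariant in the partial sense, because $\mu(\theta_t(C))=\mu(tC)=\mu(C)$), and conversely a measure on $\mathcal P(X)$ extends to $\widetilde X$ by transporting along orbit representatives.

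With these reductions in hand the proof is short: given $E\subseteq X$ with $E$ $G$-non-paradoxical for the partial action, the image of $E$ in $\widetilde X$ is $G$-non-paradoxical for the global action (by the equivalence of proper infiniteness above), so by Tarski's classical theorem there is a monoid homomorphism $\widetilde\mu\colon S(\widetilde X,G)\to \mathbb R^+\cup\{\infty\}$ with $\widetilde\mu([E])=1$; restricting along the embedding $S(X,G)\hookrightarrow S(\widetilde X,G)$ yields the desired $\mu\colon S(X,G)\to\mathbb R^+\cup\{\infty\}$ with $\mu([E])=1$. Conversely, if such a $\mu$ exists, then $2[E]\le [E]$ in $S(X,G)$ would force $2\mu([E])\le\mu([E])$, hence $\mu([E])\in\{0,\infty\}$, contradicting $\mu([E])=1$; so $E$ is non-paradoxical. (One should note the subtlety, already present in Tarski's theorem, that ``non-paradoxical'' means $2[E]\not\le[E]$ and the measure need not itself be bounded; the argument above is agnostic to this.)

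The step I expect to be the main obstacle is verifying carefully that proper infiniteness of $[E]$ is \emph{preserved in both directions} under globalization --- that is, that a global paradoxical decomposition of $E\subseteq X$ inside $\widetilde X$ can be refined so that every translation used is implemented by the partial action restricted to $X$. This requires the orbit-meeting property of $X$ in $\widetilde X$ together with a finite subdivision argument, and it is exactly the place where one must be attentive to the constraint $C_k\subseteq X_{t_k^{-1}}$ built into Definition \ref{def:type-semigroup}. Everything else --- the globalization construction, restriction of measures, and the application of the classical Tarski theorem --- is routine once this compatibility is established. In fact, Tarski himself already proved the partial-action version directly (\cite[Theorem 16.12]{Tarski}), so an alternative and even shorter route is simply to cite that result; I would present the globalization argument as the conceptual explanation of why the partial case reduces to the classical one.
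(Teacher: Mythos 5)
Your argument is sound, but note that the paper does not actually prove this statement: it is quoted directly from the literature (\cite[Corollary 9.2]{Wagon} for global actions and \cite[Theorem 16.12]{Tarski} for the partial-action version), with the surrounding text only recording that the proof rests on the Schr\"oder--Bernstein property and $n$-cancellation of $S(X,G)$ together with the purely semigroup-theoretic result \cite[Theorem 9.1]{Wagon}. So your globalization route is a genuinely different, more self-contained path: you reduce the partial case to the classical global case, whereas the paper (via Tarski) works with the cardinal-algebra properties of the type semigroup itself and, for the partial case, simply cites Tarski's own extension. Your approach has the virtue of mirroring what the paper later does in the topological setting: the compatibility you need between the partial and the global type semigroups is exactly the content of Proposition \ref{prop:InjectivityOfTypeMap{1.1}} (stated there for a ring $\mathbb{E}$ of admissible sets, which covers $\mathbb{D}=\mathcal P(X)$ by taking $\mathbb{E}=\mathcal P(\widetilde X)$).

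Two small repairs to your sketch. First, in the set-theoretic globalization the membership condition should read $x\in X_{(h^{-1}g)^{-1}}$ (equivalently $\theta_{h^{-1}g}(x)\in X_{h^{-1}g}$), not $y\in X_{(h^{-1}g)^{-1}}$. Second, the step you flag as the main obstacle is easier than you suggest and requires no further subdivision: if $2[E]\le[E]$ for the global action, then (after composing equidecompositions into a single step) one gets pieces $A_i,B_j$ partitioning two copies of $E$ whose translates $g_iA_i,\,h_jB_j$ are pairwise disjoint subsets of $E$; since all these sets lie in $E\subseteq X$ and the partial action is the restriction of the global one, each $A_i\subseteq X\cap g_i^{-1}X=X_{g_i^{-1}}$ automatically, so the constraint built into Definition \ref{def:type-semigroup} is satisfied by the very same pieces --- this is precisely the observation in the proof of Proposition \ref{prop:InjectivityOfTypeMap{1.1}}. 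With that, your reduction (non-paradoxicality transfers to the globalization, classical Tarski gives a normalized homomorphism on $S(\widetilde X,G)$, restrict along the embedding $S(X,G)\hookrightarrow S(\widetilde X,G)$), together with the trivial converse, is complete; and, as you note, the shortest route is to cite \cite[Theorem 16.12]{Tarski}, which is exactly what the paper does.
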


The proof of Tarski's Theorem is based on the following properties
of $S(X,G)$ (see \cite[Theorem 3.5 and Theorem 8.7]{Wagon}):

 \smallskip

(a) {\it Schr\" oder-Bernstein}: Let $x,y\in S(X,G)$. If $x\le y$
and $y\le x$ then $x=y$.

\smallskip

(b) {\it $n$-cancellation}: Let $n\ge 1$ and $x,y\in S(X,G)$. If
$nx=ny$ then $x=y$.

\smallskip

For any semigroup satisfying these two properties, the two
conditions stated in Tarski's Theorem are equivalent, that is, if
$e$ is a nonzero element in an abelian monoid $S$ satisfying (a) and
(b) above, then $2e\nleq e$ in $S$ if and only if there is a monoid
homomorphism $\mu \colon S\to \mathbb R^+\cup \{\infty \}$ such that
$\mu (e) =1$. This is a consequence of a purely semigroup theoretic
result \cite[Theorem 9.1]{Wagon}. As observed in \cite[Theorem
5.4]{RS}, the two conditions stated in Tarski's Theorem are also
equivalent if the abelian semigroup $S$ satisfies the following
condition:

\smallskip

(c) {\it almost unperforation}: $(n+1)x\le ny \implies  x\le y$ for
every $n\ge 1$ and $x,y\in S$.

\smallskip

We are interested in the following concrete setting. Let $X$ be a
zero-dimensional metrizable compact Hausdorff space, and let
$\mathbb K=\mathbb K (X)$ be the subalgebra of $\mathcal P (X)$
consisting of all the clopen subsets of $X$. Let $(\{ X_t\}_{t\in
G}, \{ \theta _t \}_{t\in G})$ be a partial action of a group $G$ by
continuous transformations on $X$ such that $X_t\in \mathbb K$ for
all $t\in G$. Observe that $\mathbb K$ is then automatically
$G$-invariant. Then we want to know to what extent properties
(a),(b) or (c) hold in $S(X,G, \mathbb K)$. In particular we want to
know whether Tarski's Theorem holds in this context.

We have the following general result in this setting:

\begin{lemma}
\label{lem:mapfromStoV} Let $X$ be a totally disconnected compact
Hausdorff topological space, and let $\mathbb K$ be the subalgebra
of $\mathcal P (X)$ consisting of the clopen subsets of $X$. Let
$(\{ X_t\}_{t\in G}, \{ \theta _t \}_{t\in G})$ be a continuous
partial action of a group $G$ on $X$, such that $X_t\in \mathbb K$
for all $t\in G$. Let $K$ be a field, and let $C_K(X)$ be the
K-algebra of $K$-valued continuous functions on $X$, where $K$ is
given the discrete topology. Then there exists a canonical monoid
homomorphism
$$\Phi \colon S(X,G, \mathbb K)\longrightarrow \mon{C_K(X)\rtimes
_{\theta^*} G} $$ sending $\Big[\bigcup _{i=1}^n A_i\times \{ i \}
\Big]$ to $\sum _{i=1}^n [1_{A_i}\delta _e]$, for  $A_i\in \mathbb
K$, $i=1,\dots ,n$.
\end{lemma}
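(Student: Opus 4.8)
The plan is to define $\Phi$ on the generating data and then verify, in order, (i) that it respects the equivalence relation $\sim_S$, (ii) that it is additive, and finally (iii) that the stated formula indeed gives a well-defined monoid homomorphism into $\mon{C_K(X)\rtimes_{\theta^*}G}$. Throughout, write $A=B_{\theta^*}=C_K(X)\rtimes_{\theta^*}G$, and recall that a clopen set $U\in\mathbb K$ gives an idempotent $1_U\delta_e\in A$ (here $1_U$ is the characteristic function, a projection in $C_K(X)$). For a finite disjoint-type element $[\bigcup_{i=1}^nA_i\times\{i\}]\in S(X,G,\mathbb K)$ we set $\Phi([\bigcup_iA_i\times\{i\}])=\sum_{i=1}^n[1_{A_i}\delta_e]$ in $\mon{A}$. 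The first thing I would record is that this is well defined on the underlying set of formal disjoint unions before quotienting — that is immediate since two literally equal labelled families give equal sums.

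The heart of the matter is step (i): showing $A\sim_S B$ forces $\Phi(A)=\Phi(B)$. By definition of $\sim_S$ there are clopen sets $C_k\subseteq X_{t_k^{-1}}$ and indices $n_k,m_k$ with $A=\bigsqcup_k C_k\times\{n_k\}$ and $B=\bigsqcup_k\theta_{t_k}(C_k)\times\{m_k\}$. Since the two disjoint unions are genuine partitions (within each level the $C_k$, resp.\ the $\theta_{t_k}(C_k)$, with a common index are disjoint, so their indicator functions are orthogonal idempotents), additivity reduces the claim to the single statement
\begin{equation}
\label{eq:single}
[1_C\delta_e]=[1_{\theta_{t}(C)}\delta_e]\quad\text{in }\mon{A},
\qquad C\in\mathbb K,\ C\subseteq X_{t^{-1}}.
\end{equation}
For \eqref{eq:single} the natural witness is the partial isometry $w=1_{\theta_{t}(C)}\delta_t$ (equivalently $w=1_C\delta_{t^{-1}}$ pushed over); one computes in the crossed product, using the multiplication rule $(a_g\delta_g)(b_h\delta_h)=\theta^*_g(\theta^*_{g^{-1}}(a_g)b_h)\delta_{gh}$ and the fact that $C\subseteq X_{t^{-1}}$ so $1_C$ lies in the relevant ideal, that $ww^*=1_{\theta_{t}(C)}\delta_e$ and $w^*w=1_{C}\delta_e$. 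Hence the two idempotents are Murray--von Neumann (indeed $*$-)equivalent, giving \eqref{eq:single}. This computation — getting the supports and the domains of the partial homeomorphisms to line up so that $1_C$ and $1_{\theta_t(C)}$ sit in the ideals $\mathcal D_{t^{-1}}$ and $\mathcal D_t$ respectively — is the one place requiring care, and is the main (though routine) obstacle; everything else is bookkeeping.

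Step (ii), additivity of $\Phi$, is then immediate from the definition of $+$ in $S(X,G,\mathbb K)$ (concatenation of labelled families with shifted indices) and of $+$ in $\mon{A}$ (block-diagonal sum, equivalently sum of classes of orthogonal idempotents): $\Phi\big([\bigcup_iA_i\times\{i\}]+[\bigcup_jB_j\times\{j\}]\big)=\sum_i[1_{A_i}\delta_e]+\sum_j[1_{B_j}\delta_e]$. Finally $\Phi([\emptyset])=0$, so $\Phi$ is a monoid homomorphism, and its values are those claimed in the statement. The same argument applies verbatim with $C_K(X)$ replaced by $C(X)$ and the algebraic crossed product by the C*-algebraic one, should that variant be needed later. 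I would remark that naturality (the word ``canonical'') refers to functoriality in the triple $(X,G,\mathbb K)$ under equivariant maps, which follows formally since such maps induce $*$-homomorphisms of the crossed products carrying $1_{A}\delta_e$ to $1_{f^{-1}(A)}\delta_e$.
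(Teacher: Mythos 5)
Your proof is correct and follows essentially the same route as the paper: reduce compatibility with $\sim_S$ to the single case $C\subseteq X_{t^{-1}}$, and exhibit the equivalence $[1_C\delta_e]=[1_{\theta_t(C)}\delta_e]$ via the element $w=1_{\theta_t(C)}\delta_t$ with $ww^*=1_{\theta_t(C)}\delta_e$ and $w^*w=1_C\delta_e$, with additivity being immediate. No issues.
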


\begin{proof} Since additivity is clear,
we only have to prove
 that the map is compatible with the equivalence $\sim _S$ of
 Definition \ref{def:type-semigroup}. For this it is enough to show
 that if $C\in \mathbb K$ and $C\subseteq X_{t^{-1}}$ for some $t\in G$,
 then the projections
 $1_C\delta _e$ and $1_{\theta _t(C)}\delta _e$ are equivalent. Set
 $x=1_{\theta _t(C)} \delta _t$, and note that $x^*= 1_C \delta _{t^{-1}}$. Then
 one computes that $xx^* = 1_{\theta_t(C)}\delta _e$ and $x^*x= 1_C\delta _e$,
 showing the result.
\end{proof}

We do not know whether the map defined in the lemma is injective or
surjective in general, but we can show it is bijective for the
actions of $\mathbb F$ on the spaces $\Omega (E,C)$ introduced in
Section \ref{sect:dyn-systems}.

\begin{theorem}
\label{thm:typesemigroupforOmegaEC} Let $(E,C)$ be a finite
bipartite separated graph such that $s(E^1)=E^{0,1}$ and
$r(E^1)=E^{0,0}$, and let $(\Omega (E,C),\alpha^*)$ be the universal
$(E,C)$-dynamical system (see Corollary \ref{cor:univECdynsustem}).
Then the map
$$\Phi\colon S(\Omega (E,C), \mathbb F, \mathbb K)\longrightarrow \mon{C_K
(\Omega(E,C))\rtimes _\alpha \mathbb F } $$ is an isomorphism for
any field $K$.
\end{theorem}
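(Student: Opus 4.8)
The plan is to show that the canonical map $\Phi$ of Lemma \ref{lem:mapfromStoV} is bijective by exhibiting an explicit inverse, using the description of $\Lab_K(E,C)\cong C_K(\Omega(E,C))\rtimes_\alpha\mathbb F$ as the inductive limit $A_\infty=\varinjlim A_n$ of the Leavitt path algebras $A_n=L(E_n,C^n)$ (Notation \ref{notation:Lab}, Corollary \ref{cor:Lab+OECarecrossedprods}). First I would reconcile the two descriptions of $\mon{C_K(\Omega(E,C))\rtimes_\alpha\mathbb F}$: by continuity of the functor $\mathcal V$ and Theorem \ref{thm:algebras}(c) together with Lemma \ref{lem:refinement}(d), we have $\mon{\Lab_K(E,C)}\cong\varinjlim_n M(E_n,C^n)\cong M(F_\infty,D^\infty)$, so it suffices to construct an isomorphism $S(\Omega(E,C),\mathbb F,\mathbb K)\cong M(F_\infty,D^\infty)$ compatible with $\Phi$. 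The key geometric input is that the clopen sets $\Omega_v$ for $v\in F_\infty^0=\bigsqcup_{n} D_n$ form a basis of clopen sets for $\Omega(E,C)$ (noted right after Theorem \ref{thm:universal-tame}), corresponding under $\mathcal B_0\cong C(\Omega(E,C))$ to the generating projections $v\in B_\infty$; and moreover the partition-refinement relations in $F_\infty$ translate, via the homeomorphisms $\theta_x$ of Definition \ref{def:univtopspaces}, into $\mathbb F$-equidecompositions of the corresponding clopen sets.

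\textbf{Construction of the inverse.}
I would define a map $\Psi\colon M(F_\infty,D^\infty)\to S(\Omega(E,C),\mathbb F,\mathbb K)$ on generators by $\Psi(a_v)=[\Omega_v]$ for $v\in F_\infty^0$. To see this is well defined one must check that the defining relations $a_v=\sum_{x\in X}a_{s(x)}$ ($X\in D^\infty_v$) of $M(F_\infty,D^\infty)$ are respected, i.e.\ that $[\Omega_v]=\sum_{x\in X}[\Omega_{s(x)}]$ in the type semigroup. This is exactly where the dynamical system structure enters: by Definition \ref{def:univtopspaces}, $\Omega_v=\bigsqcup_{x\in X}H_x$ as a \emph{disjoint} clopen partition, and each $\theta_x\colon\Omega_{s(x)}\xrightarrow{\ \sim\ }H_x$ is the restriction of the partial homeomorphism $\alpha^*_x$ coming from the generator $x\in E^1\subset\mathbb F$ (or, for $x$ an edge of $E_n$ with $n\ge 1$, from the word $\Phi_{n}(x)$ of $\mathbb F$, whose image in $A_n$ is a genuine partial isometry by Lemma \ref{lem:internalBnnplus1}). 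Hence $[\Omega_v]=\sum_{x\in X}[H_x]=\sum_{x\in X}[\Omega_{s(x)}]$, so $\Psi$ descends to $M(F_\infty,D^\infty)$. The routine verification that $\Phi\circ\Psi=\mathrm{id}$ on generators follows by unwinding Lemma \ref{lem:mapfromStoV}: $\Phi(\Psi(a_v))=[1_{\Omega_v}\delta_e]=[v]$, which is the image of $a_v$ under the canonical isomorphism $M(F_\infty,D^\infty)\cong\mon{\Lab_K(E,C)}$.

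\textbf{Surjectivity of $\Psi$ and the main obstacle.}
For $\Phi$ (equivalently $\Psi$) to be an isomorphism I would then argue surjectivity of $\Psi$: every element of $S(\Omega(E,C),\mathbb F,\mathbb K)$ is a class $\big[\bigcup_{i=1}^n A_i\times\{i\}\big]$ with $A_i\in\mathbb K$, and since the $\Omega_v$, $v\in F_\infty^0$, form a clopen basis, each $A_i$ is a finite disjoint union of basic clopen sets $\Omega_v$; additivity of $\Psi$ then shows the class lies in the image. The injectivity of $\Psi$ is the delicate point, and I expect it to be the main obstacle: one must show that if $\sum_i[\Omega_{v_i}]=\sum_j[\Omega_{w_j}]$ in $S(\Omega(E,C),\mathbb F,\mathbb K)$ — i.e.\ the corresponding clopen stackings are $\mathbb F$-equidecomposable using \emph{arbitrary} partial translations $\theta_t$, $t\in\mathbb F$ — then the same identity already holds in $M(F_\infty,D^\infty)$. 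The way I would attack this is to push a given equidecomposition into a single finite stage: refine all the clopen pieces $C_k$ occurring in the equivalence so that each is a union of basic sets $\Omega_v$ with $v\in D_m$ for one common $m$; observe that every $\theta_{t_k}$, restricted to such a refined piece, is (a disjoint sum of) restrictions of the partial homeomorphisms induced by the partial isometries $\Phi_m(w)$, $w\in U_m$, which genuinely live in $A_m=L(E_m,C^m)$ by Lemma \ref{lem:internalBnnplus1} and Theorem \ref{thm:commutatorsstepn}. Then a type-semigroup equivalence inside the graph algebra $A_m$ becomes a Murray--von Neumann equivalence of the associated projections in $B_m\subset A_m$, hence an equality in $\mon{A_m}\cong M(E_m,C^m)$, which maps into $M(F_\infty,D^\infty)=\varinjlim M(E_n,C^n)$. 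Conversely, $\Phi$ being a well-defined monoid homomorphism that is a retraction of $\Psi$ forces $\Psi$ injective once we know $\Phi$ is. Combining: $\Phi$ and $\Psi$ are mutually inverse, so $\Phi$ is an isomorphism for every field $K$.
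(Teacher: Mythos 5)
Your overall route is the same as the paper's: identify $\mon{C_K(\Omega(E,C))\rtimes_\alpha\mathbb F}$ with $\varinjlim M(E_n,C^n)\cong M(F_\infty,D^\infty)$, define $\Psi$ on the generators $a_v$ by the basic clopen sets, check the defining relations, and use the fact that these clopen sets form a basis to conclude that $\Phi$ and $\Psi$ are mutually inverse. However, the one place where real work is required is exactly the step you dispose of in a parenthetical, and as written that parenthetical does not work. For a vertex $v$ of $E_n$ with $n\ge 1$ and $X\in C^n_v$, you must show $[\Omega_v]=\sum_{z\in X}[\Omega_{s(z)}]$ \emph{in the type semigroup} $S(\Omega(E,C),\mathbb F,\mathbb K)$, i.e.\ the equidecomposition must be witnessed by elements of the group $\mathbb F$, not merely by partial isometries in the algebra. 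Your phrase ``the word $\Phi_n(x)$ of $\mathbb F$'' is not meaningful: $\Phi_n$ maps $A_0=L(E,C)$ onto $A_n=L(E_n,C^n)$, so it cannot be applied to an edge $x$ of $E_n$, and Lemma \ref{lem:internalBnnplus1} by itself only concerns images of the \emph{original} edges $x_i\in E^1$. What is needed is the Claim inside the proof of Theorem \ref{thm:commutatorsstepn} (formulas (\ref{eq:zinE2n}) and (\ref{eq:zinE2nplus1})): every $z\in E_n^1$ can be written as $\Phi_n\big(u_z\,e(w_1)\cdots e(w_t)\big)$ with $u_z$ a \emph{single} generator in $E^1\cup(E^1)^*$ and $w_j\in U_n$; since the $e(w_j)$ become commuting projections in $B_n$, conjugation by $z$ on $B_\infty$ agrees with the restriction of the partial action of the group element $u_z$ to a clopen subset of its domain, whence $[\Omega_{s(z)}]=[\theta_{u_z}(\Omega_{s(z)}\cap\cdot)]$ and the relation passes to $S$. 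This is precisely the bulk of the paper's proof, and without it your $\Psi$ is not known to be well defined.

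By contrast, the ``main obstacle'' you identify — injectivity of $\Psi$ — is not an obstacle at all, and your proposed strategy of pushing an arbitrary $\mathbb F$-equidecomposition into a single finite stage is unnecessary (and would itself need extra care for arbitrary $t\in\mathbb F$). Once $\Psi$ is well defined, you already have everything: $\Phi\circ\Psi=\mathrm{id}$ on the generators $a_v$, and every class in $S$ is a finite sum of classes of basic clopen sets, so $\Psi$ is surjective; together these force $\Psi$ to be bijective with inverse $\Phi$. (Also note your sentence ``$\Phi$ being a retraction of $\Psi$ forces $\Psi$ injective once we know $\Phi$ is'' has the logic backwards: $\Phi\circ\Psi=\mathrm{id}$ gives injectivity of $\Psi$ unconditionally.) All the equidecomposition data is absorbed in the well-definedness of $\Phi$, which is Lemma \ref{lem:mapfromStoV} and is already available. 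So: right skeleton, but the crucial verification of the relations at higher levels must be supplied via the Claim of Theorem \ref{thm:commutatorsstepn}, and the injectivity detour should be dropped.
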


\begin{proof}
By Corollary \ref{cor:Lab+OECarecrossedprods}(1) and Corollary
\ref{cor:VmonoidofLab}, we have
$$\mon{C_K
(\Omega(E,C))\rtimes _\alpha \mathbb F } \cong \mon{\Lab (E,C)}\cong
\varinjlim (M(E_n,C^n),\iota_n)\cong M(F_{\infty}, D^{\infty} ) .$$
Let $\Phi '$ be the composition of the map $\Phi$ with the
isomorphism $\mon{C_K (\Omega(E,C))\rtimes _\alpha \mathbb F } \cong
\varinjlim (M(E_n,C^n),\iota_n )$. It will suffice to show that
$\Phi '$ is an isomorphism.

Observe that there is a basis of clopen subsets of $\Omega (E,C)$
which is in bijective correspondence with the vertices in
$F_{\infty}$. Indeed, if $v\in D_n$ is a vertex in the $n$-th level,
then it represents a minimal projection in the commutative algebra
$B_n$. The image of this projection through the canonical limit map
$B_n\to B_{\infty}$ is a projection in $B_{\infty}$, and so
corresponds to a clopen subset of $\Omega (E,C)$. Each clopen subset
of $\Omega (E,C)$ is a finite (disjoint) union of clopen subsets
coming from the vertices of $E_n$ for some $n$. (Note that
$E_n^0=D_n\sqcup D_{n+1}$.)  The map $\Phi '$ sends the class
$[A_v]$ of the clopen set corresponding to a vertex $v\in D_n\sqcup
D_{n+1}$ to the class in the limit $\varinjlim (M(E_n,C^n),\iota_n)$
of the corresponding element $a_v$ of $M(E_n,C^n)$.

Put $S=S(\Omega (E,C),\mathbb F,\mathbb K )$. We are going to build
an inverse monoid homomorphism $\Psi \colon \varinjlim
(M(E_n,C^n),\iota_n) \to S$. To this end we need to define
compatible monoid homomorphisms $\Psi _n\colon M(E_n,C^n)\to S$. For
$v\in E_n^0$ define $\Psi _n(a_v) =[A_v]$, where, as before,  $A_v$
is the clopen subset of $\Omega (E,C)$ corresponding to $v$. If
$X\in C_v$, with $v\in E_n^{0,0}$, then we have to show that the
relation $a_v = \sum _{z\in X} a_{s(z)}$ is preserved by $\Psi _n$.
Now, it follows from either (\ref{eq:zinE2n}) or
(\ref{eq:zinE2nplus1}) that, for $z\in X$, there is an element
$u_z\in E^1\cup (E^1)^*$ such that the image of the projection
$zz^*$ in $\Lab (E,C)= A_{\infty}$ is a projection in $B_{\infty}$
which is of the form $\alpha _{u_z} (1_{A_{s(z)}})$, where the
clopen set $A_{s(z)}$ is contained in the domain of $\alpha
_{u_z}^*$. We will show this in the case where $n=2m$, and leave to the reader the verification in the case where $n$
is odd. If $n=0$, then $z=u_z$ and, using (\ref{eq:xixi*-formula}), the result is clear. Assume that 
$n=2m>0$. By (\ref{eq:zinE2n}) there is $u_z\in E^1$ and $w_1,\dots ,w_t\in U_{n}$ such that
$z= \Phi_{n}(u_ze(w_1)\cdots e(w_t))$.
As observed at the beginning of the proof of Theorem \ref{thm:commutatorsstepn}, the elements
$$\Phi_n(u_z^*u_z), \Phi_n(e(w_1)),\dots , \Phi_n (e(w_t))$$ 
are projections in $B_n$, so that in particular they are mutually
commuting projections in $A_n$. This implies that 
$$s(z)= z^*z =\Phi_n (e(u_z^*)e(w_1)\cdots e(w_t)),$$
and 
$$zz^*= \Phi_n(u_z)\Big( \Phi_n (e(u_z^*)e(w_1)\cdots e(w_t))\Big) \Phi_n(u_z)^*.$$
Let $\Phi_{i,\infty}\colon A_i\to A_{\infty}$ be the canonical map into the direct limit.
Note that, by (\ref{eq:xixi*-formula}), the element $\phi_n(zz^*)$ is a projection in $B_{n+1}$, so that
$\Phi_{n,\infty}(zz^*)=\Phi_{n+1,\infty}(\phi_n(zz^*))$ is a projection in $B_{\infty}$. 
Now, we have that  $\alpha_{u_z}$ is given by conjugation by $\Phi_{0,\infty}(u_z)=\Phi_{n,\infty}(\Phi_n(u_z))$
on  $\Phi_{n,\infty}(\Phi_n(e(u_z^*))) B_{\infty}$, which shows our claim.

\smallskip

Since $v=\sum _{z\in X} zz^*$ in $L(E_n,C^n)$, we
conclude that
\begin{equation}
\label{eq:equalityforAv} [A_v]=\sum _{z\in X}
[\alpha_{u_z}^*(A_{s(z)})]=\sum _{z\in X} [A_{s(z)}] ,
\end{equation}
showing that $\Psi _n$ preserves the relation $a_v=\sum _{z\in X}
a_{s(z)}$. To show compatibility with respect to $\iota _n\colon
M(E_n,C^n)\to M(E_{n+1}, C^{n+1})$, observe that, for $v\in
E_n^{0,1}$, we have
$$\Psi _{n+1} (\iota _n (a_v)) = \Psi _{n+1} (a_v)= [A_v]=\Psi _n(a_v).$$
If $v\in E_n^{0,0}$ then, by the proof of Lemma
\ref{lem:refinement}(a),(b), we have $\iota _n (a_v) =\sum _{z\in X}
a_{s(z)}$,
 where $X$ is any
element in $C_v$, so that using (\ref{eq:equalityforAv}), we get
$$\Psi _{n+1} (\iota _n (a_v))= \Psi _{n+1}\Big( \sum _{z\in X} a_{s(z)}\Big) =
\sum _{z\in X} [A_{s(z)}]= [A_v]= \Psi _n (a_v), $$ as desired.

The homomorphism $\Psi \colon \varinjlim (M(E_n,C^n), \iota_n)\to S$
just defined is clearly a two-sided inverse of $\Phi '$. This shows
the result.
\end{proof}

We can now show our first main result on the type semigroup.

\begin{theorem}
\label{thm:main-typesemigroup} Let $M$ be a finitely generated
abelian conical monoid. Then there exist a zero-dimensional
metrizable compact Hausdorff space $\Omega$ and a partial action of
a finitely generated free group $\mathbb F$ on $\Omega$ such that
there is a map $\iota \colon M \to S(\Omega,\mathbb F, \mathbb K)$
which is a refinement of $M$. In particular, the map $\iota \colon M
\to S(\Omega,\mathbb F, \mathbb K)$ is a unitary embedding.
\end{theorem}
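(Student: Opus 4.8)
The plan is to derive the theorem from the results already in place, essentially by transporting the refinement $M(E,C)\to M(F_{\infty},D^{\infty})$ of Lemma \ref{lem:refinement}(d) through the identification of the type semigroup with $\mon{\Lab_K(E,C)}$.

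First I would use Redei's Theorem to present $M$ as a finitely presented abelian conical monoid, and, after adjoining trivial relations $a_i=a_i$ for any generator that does not already occur in a relation, I may assume the presentation $\langle \mathcal X\mid \mathcal R\rangle$ has the form required in Definition \ref{def:sep-graph-of-cam}. This yields a finite bipartite separated graph $(E,C)$ with $M(E,C)\cong M$, $s(E^1)=E^{0,1}$ and $r(E^1)=E^{0,0}$, so the standing hypotheses of Sections \ref{sect:dyn-systems}--\ref{sect:typesemigroup} hold. I then set $\Omega=\Omega(E,C)$, which by construction is a zero-dimensional metrizable compact Hausdorff space, and let $\mathbb F$ be the free group on the finite set $E^1$, acting partially and continuously on $\Omega$ via the dual $\alpha^*$ of the canonical partial $(E,C)$-action (Corollary \ref{cor:univECdynsustem}).

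Next I would assemble the chain of isomorphisms
$$S(\Omega,\mathbb F,\mathbb K)\;\cong\;\mon{C_K(\Omega)\rtimes_{\alpha}\mathbb F}\;\cong\;\mon{\Lab_K(E,C)}\;\cong\;\varinjlim(M(E_n,C^n),\iota_n)\;\cong\;M(F_{\infty},D^{\infty}),$$
where the first isomorphism is $\Phi^{-1}$ from Theorem \ref{thm:typesemigroupforOmegaEC}, the second is Corollary \ref{cor:Lab+OECarecrossedprods}(1), and the last two are Corollary \ref{cor:VmonoidofLab}. I define $\iota\colon M\cong M(E,C)\to S(\Omega,\mathbb F,\mathbb K)$ to be the composite of the natural refinement map $M(E,C)\to M(F_{\infty},D^{\infty})$ with the inverse of this chain. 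Tracing through the explicit inverse $\Psi$ built in the proof of Theorem \ref{thm:typesemigroupforOmegaEC}, one sees that $\iota(a_v)=[A_v]$, the class of the clopen subset of $\Omega$ attached to the vertex $v\in E^0$, so $\iota$ deserves to be called the natural map.

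Finally I would conclude as follows: $S(\Omega,\mathbb F,\mathbb K)$ is automatically a conical refinement monoid by Definition \ref{def:type-semigroup}, and the three defining conditions of a refinement in Definition \ref{def:refinementofM} are visibly preserved under post-composition with monoid isomorphisms; since $M(E,C)\to M(F_{\infty},D^{\infty})$ is a refinement of $M(E,C)$ by Lemma \ref{lem:refinement}(d), it follows that $\iota$ is a refinement of $M$, and in particular a unitary embedding, the latter being condition (a) of Definition \ref{def:refinementofM}. There is no serious obstacle here: the only non-formal ingredients are Theorem \ref{thm:typesemigroupforOmegaEC} and the computation $\mon{\Lab_K(E,C)}\cong M(F_{\infty},D^{\infty})$, both already established; the remaining effort is bookkeeping to confirm that the composed map really is the natural one and that adjoining trivial relations does not disturb the isomorphism $M(E,C)\cong M$.
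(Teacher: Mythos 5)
Your proposal is correct and follows essentially the same route as the paper: realize $M\cong M(E,C)$ via Redei's Theorem and Definition \ref{def:sep-graph-of-cam}, take $\Omega=\Omega(E,C)$ with the canonical partial action of the free group on $E^1$, and transport the refinement $M(E,C)\to\mon{\Lab_K(E,C)}$ through the isomorphisms of Theorem \ref{thm:typesemigroupforOmegaEC} and Corollary \ref{cor:Lab+OECarecrossedprods}(1). Your extra remarks (adjoining trivial relations so every generator occurs in a relation, and that being a refinement is preserved under post-composition with isomorphisms) are just the bookkeeping the paper leaves implicit.
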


\begin{proof} By Redei's Theorem (\cite{Freyd}), $M$ is finitely
presented. Let $\langle \mathcal X\mid \mathcal R \rangle $ be a
finite presentation of $M$, as in Definition
\ref{def:sep-graph-of-cam}, and let $(E,C)$ be the associated finite
bipartite separated graph, so that $M\cong M(E,C)$ (see Definition
\ref{def:sep-graph-of-cam}).

Let $\mathbb F$ be the free group on $E^1$, and let $(\Omega
(E,C),\alpha^*)$ be the universal $(E,C)$-dynamical system (see
Corollary \ref{cor:univECdynsustem}). By Theorem
\ref{thm:typesemigroupforOmegaEC}, we have that the canonical map
$$\Phi\colon S(\Omega (E,C), \mathbb F, \mathbb K)\longrightarrow \mon{C_K
(\Omega(E,C))\rtimes _\alpha \mathbb F } $$ is an isomorphism, and
by Corollary \ref{cor:Lab+OECarecrossedprods}(1) we have
$$\mon{C_K
(\Omega(E,C))\rtimes _\alpha \mathbb F } \cong \mon{\Lab (E,C)} .$$
Since the natural map $M(E,C)\to \mon{\Lab (E,C)}$ is a refinement
of $M(E,C)$ (Corollary \ref{cor:VmonoidofLab}), and $M\cong M(E,C)$,
we obtain the result.
\end{proof}

In particular, we obtain the following:

\begin{corollary}
\label{cor:non-paradoxical} There exist a partial action of a
finitely generated free group $\mathbb F$ on a zero-dimensional
metrizable compact space $\Omega$, and a non-$\mathbb F$-paradoxical
(with respect to $\mathbb K$) clopen subset  $A$ of $\Omega$  such
that $\mu (A)=\infty $ for every nonzero $\mathbb F$-invariant
finitely additive measure $\mu \colon \mathbb K\to [0, \infty]$.
\end{corollary}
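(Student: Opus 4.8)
The plan is to obtain the statement as a direct consequence of Theorem \ref{thm:main-typesemigroup}, applied to a well-chosen small monoid. I would take $M=\langle a\mid 2a=3a\rangle$, the three-element abelian conical monoid $\{0,a,2a\}$ with $a+2a=2a$. Two elementary properties of $M$ will do all the work: (i) $2a\nleq a$ in $M$, since no $z\in M$ satisfies $a=2a+z$; and (ii) every monoid homomorphism $\psi\colon M\to[0,\infty]$ sends $a$ into $\{0,\infty\}$ — indeed $2\psi(a)=3\psi(a)$ forces $\psi(a)=0$ whenever $\psi(a)<\infty$, and $\psi(a)=0$ forces $\psi\equiv 0$ because $a$ generates $M$; hence any \emph{nonzero} $\psi$ has $\psi(a)=\infty$.

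Next I would feed $M$ into Theorem \ref{thm:main-typesemigroup}, producing a zero-dimensional metrizable compact space $\Omega$, a partial action of a finitely generated free group $\mathbb F$ on $\Omega$, and a unitary embedding $\iota\colon M\to S:=S(\Omega,\mathbb F,\mathbb K)$ (Definition \ref{def:unitemb}). Following the construction in that proof (which routes through Theorem \ref{thm:typesemigroupforOmegaEC}), $\iota$ carries the generator $a$ to the class $[A]$ of an actual clopen subset $A\subseteq\Omega$ — not merely a formal union of levels — and this $A$ will be the set we want. Since a unitary embedding is automatically an order-embedding, $2[A]=\iota(2a)\le\iota(a)=[A]$ in $S$ would entail $2a\le a$ in $M$, contradicting (i); so $A$ is not $\mathbb F$-paradoxical with respect to $\mathbb K$.

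It then remains to rule out finite invariant measures on $A$. Given any nonzero $\mathbb F$-invariant finitely additive measure $\mu\colon\mathbb K\to[0,\infty]$, I would check that $\bigcup_{i=1}^n A_i\times\{i\}\mapsto\sum_{i=1}^n\mu(A_i)$ is constant on $\sim_S$-classes (using finite additivity of $\mu$ inside each level together with invariance of $\mu$ under the partial action, cf.\ Definition \ref{def:type-semigroup}), so that it descends to a nonzero monoid homomorphism $\bar\mu\colon S\to[0,\infty]$ with $\bar\mu([A])=\mu(A)$. Set $\psi:=\bar\mu\circ\iota$. If $\psi$ were zero then $\bar\mu$ would vanish on $\iota(M)$; but $\iota(M)$ is cofinal in $S$ (part of being a unitary embedding), so for each $s\in S$ one has $s\le\iota(m)$ for some $m\in M$, whence $\bar\mu(s)\le\bar\mu(\iota(m))=0$ and $\bar\mu\equiv 0$, contradicting $\mu\ne 0$. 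Thus $\psi$ is a nonzero homomorphism, so $\psi(a)=\infty$ by (ii), and therefore $\mu(A)=\bar\mu([A])=\psi(a)=\infty$, which is exactly the assertion of the corollary.

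I expect the only non-formal step to be the very first one — choosing $M$ so that its distinguished element $a$ is simultaneously \emph{not} properly infinite (making $A$ non-paradoxical) and yet pinned to $\{0,\infty\}$ by every $[0,\infty]$-valued state (blocking any finite invariant measure from detecting it). The single relation $2a=3a$ threads this needle; everything after that is a mechanical combination of the order-embedding and cofinality packaged in the notion of a refinement with the standard dictionary between $\mathbb F$-invariant finitely additive $\mathbb K$-valued measures and monoid homomorphisms out of $S(\Omega,\mathbb F,\mathbb K)$.
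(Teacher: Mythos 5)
Your argument is correct and is essentially the paper's own proof: the paper also applies Theorem \ref{thm:main-typesemigroup} to a monoid $\langle a \mid ma=na\rangle$ with $1<m<n$ (your $2a=3a$ is the instance $m=2$, $n=3$, realized by $(E(m,n),C(m,n))$), identifies $\iota(a)$ with the class of the clopen set $A_w$, derives non-paradoxicality from injectivity and property (3) of the unitary embedding, and forces $\mu(A)=\infty$ from $m[A]=n[A]$ together with $[A]$ being an order-unit, which is exactly your cofinality argument. The only difference is expository: you spell out the measure-to-state dictionary that the paper leaves implicit.
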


\begin{proof}
Take positive integers $m,n$ such that $1<m<n$, and set
$(E,C)=(E(m,n), C(m,n))$. Set $\Omega := \Omega (E,C)$. Then there
is a canonical partial action of the free group $\mathbb F$ on $m+n$
generators on the zero-dimensional metrizable compact space
$\Omega$.  Moreover, it follows from Theorem
\ref{thm:main-typesemigroup} that there is a unitary embedding
$$\iota \colon \langle a \mid ma=na \rangle \longrightarrow S(\Omega,
\mathbb F, \mathbb K) .$$ Let $A=A_w$ be the clopen subset
corresponding to the vertex $w$ of the graph $E$ (see e.g. Example \ref{exam:m,ndyn-system}). Then $[A]$ is an
order-unit in $S(\Omega, \mathbb F, \mathbb K)$, and $m[A]=n[A]$.
Since $m<n$ this implies that $\mu (A)=\infty$ for every nonzero
finitely additive $\mathbb F$-invariant measure $\mu$ defined on
$\mathbb K$. Since $\iota (a) =[A]$ and $2a\nleq a$ in $\langle a
\mid ma=na \rangle $, we deduce that $2[A]\nleq [A]$ in $S(\Omega,
\mathbb F, \mathbb K)$ (use properties (3) and (1) in Definition
\ref{def:unitemb}). It follows that $A$ is not $\mathbb
F$-paradoxical (with respect to $\mathbb K$).
\end{proof}

We proceed now to extend the above results to the setting of global
actions. This is accomplished by the use of the globalization
techniques of \cite{Abadie}.

\medskip

Recall from \cite[Example 1.1]{Abadie}, that if
  $$
  \beta : G\times Y \to Y
  $$
  is an action of a given group $G$ on a set $Y$, and if $X\subseteq
Y$ is any subset, one may always restrict $\beta$ to $X$, regardless
of whether or not $X$ is $\beta$-invariant, obtaining a
\emph{partial action}, as follows: for each $g\in G$, set $X_g =
X\cap \beta_g(X)$, and observe that
  $$
  \beta_g(X_{g^{-1}}) =
  \beta_g\big(X\cap \beta_{g^{-1}}(X)\big) =
  \beta_g(X)\cap X = X_g.
  $$
  We may then define
  $$
  \alpha_g : X_{g^{-1}}\to X_g
  $$
  to be the restriction of $\beta_g$ to $X_{g^{-1}}$, and it may be
easily proven that $\alpha$ is a partial action of $G$ on $X$,
henceforth referred to as the \emph{restriction of $\beta$ to $X$}.

Conversely, if we start with a partial action $\alpha$ of $G$ on
$X$, a global (as opposed to partial) action $\beta$ on a set
$Y\supseteq X$ is called a \emph{globalization}, or an
\emph{enveloping action} for $\alpha$ \cite[Section 1]{Abadie}, if
$\alpha$ is the restriction of $\beta$ to $X$, and moreover
  $$
  Y = \bigcup_{g\in G} \beta_g(X).
  $$

Working in the category of topological spaces, as opposed to the
category of sets, we always assume that the transformations
$\alpha_g$ and $\beta_g$ are continuous and that $X$ is open in $Y$,
in which case we will clearly have that each $X_g$ is open in $X$.

It is our next goal to study the relationship between the type
semigroup of a partial action $\alpha$ and that of its globalization
or, more generally, any global action extending $\alpha$.

For this let us assume we are given a set $Y$ equipped with a fixed
nonempty collection ${\mathbb E}$ of subsets of $Y$, which is closed
under finite intersections, finite unions and relative complements.
In other words, ${\mathbb E}$ is a \emph{ring} of subsets of $Y$.
From now on we will refer to the members of ${\mathbb E}$ as
\emph{admissible} subsets.

Let us assume that we are also given a fixed admissible subset
$X\subseteq Y$, and let
  $$
  {\mathbb E}|_X = \{E\cap X: E\in{\mathbb E}\} = \{D\in {\mathbb E}:
D\subseteq X\}.
  $$
  It is readily verified that ${\mathbb E}|_X$ is a subalgebra of subsets of
$X$.

Let us now consider a global action $\beta$ of a given group $G$ on
$Y$, which we will suppose to be \emph{admissible} in the sense that
$\beta_g(E)$ is admissible for every admissible subset $E\subseteq
Y$. We may then consider the corresponding type semigroup
$S(Y,G,\mathbb E)$.

  Denoting by $\alpha$ the restriction of $\beta$ to $X$, it is clear
that each $X_g$ is admissible (relative to ${\mathbb E}|_X$) and
that $\alpha$ is an \emph{admissible partial action} in the sense
that, if $D$ is an admissible subset of some $X_{g^{-1}}$, then
$\alpha_g(D)$ is admissible.

  Given any $D\in{\mathbb E}|_X$, we shall denote the class of $D$ in
$S(X,G,{\mathbb E}|_X)$ by $[D]_X$.  Likewise, the class in
$S(Y,G,{\mathbb E})$ of any $E\in{\mathbb E}$ will be denoted by
$[E]_Y$.


\begin{proposition}
 \label{prop:InjectivityOfTypeMap{1.1}}
  The correspondence
  $$
  \phi: [D]_X \mapsto [D]_Y, \quad (D\in {\mathbb E}|_X),
  $$
  gives a well defined injective map from $S(X,G,{\mathbb E}|_X)$ to
$S(Y,G,{\mathbb E})$.
\end{proposition}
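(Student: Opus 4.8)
The statement has two parts: (i) $\phi$ is well defined, i.e.\ if $[D]_X = [D']_X$ in $S(X,G,{\mathbb E}|_X)$ then $[D]_Y = [D']_Y$ in $S(Y,G,{\mathbb E})$; and (ii) $\phi$ is injective, i.e.\ the converse implication. Part (i) is the routine direction: an $\sim_S$-equivalence inside $X$ witnessed by admissible pieces $C_k \subseteq X_{t_k^{-1}}$ and group elements $t_k$ gives, since $X_{t_k^{-1}} \subseteq Y_{t_k^{-1}}$ (writing $Y_g = \beta_g(Y)\cap Y = Y$ — here one must be a little careful, as $\beta$ is global so $Y_g = Y$, and $\alpha_{t_k} = \beta_{t_k}$ on $X_{t_k^{-1}}$), exactly the same pieces witnessing $[D]_Y = [D']_Y$ in $S(Y,G,{\mathbb E})$; one just has to check that $\theta_{t_k}(C_k) = \beta_{t_k}(C_k)$, which is precisely the definition of the restriction $\alpha$ of $\beta$. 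So (i) is a direct unwinding of Definition \ref{def:type-semigroup}.

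\textbf{Injectivity — the real content.} Suppose $D, D' \in {\mathbb E}|_X$ with $[D]_Y = [D']_Y$. By definition of $\sim_S$ in $S(Y,G,{\mathbb E})$, there are admissible $C_1,\dots,C_l \subseteq Y$, elements $t_1,\dots,t_l \in G$, and indices so that $D = \bigsqcup_k C_k$ (as a decomposition of the single set $D$, so all the ``level'' labels $n_k$ can be taken equal, and similarly for $D'$ with the $\beta_{t_k}(C_k)$). The obstacle is that the pieces $C_k$ and their translates $\beta_{t_k}(C_k)$ need not lie inside $X$; they only lie in $Y$. The key idea is to \emph{cut everything down to $X$}. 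Since $D \subseteq X$ we automatically have $C_k \subseteq X$ for all $k$ (each $C_k$ is a subset of $D$). The problem is only with the translates: $\beta_{t_k}(C_k)$ is a piece of $D' \subseteq X$, so in fact $\beta_{t_k}(C_k) \subseteq X$ as well! Hence \emph{both} $C_k \subseteq X$ and $\beta_{t_k}(C_k) \subseteq X$, which forces $C_k \subseteq X \cap \beta_{t_k^{-1}}(X) = X_{t_k^{-1}}$, and then $\alpha_{t_k}(C_k) = \beta_{t_k}(C_k)$. Each $C_k$ is admissible and contained in $X$, hence $C_k \in {\mathbb E}|_X$; likewise $\alpha_{t_k}(C_k) = \beta_{t_k}(C_k) \in {\mathbb E}|_X$ by admissibility of $\beta$. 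Therefore the very same data $(C_k, t_k)$ witness $[D]_X = [D']_X$ in $S(X,G,{\mathbb E}|_X)$.

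\textbf{Where to be careful.} The one point that genuinely needs the hypothesis that $D, D'$ are \emph{single} admissible subsets of $X$ (rather than arbitrary elements $\bigcup_i A_i \times \{i\}$ of the type semigroup) is the step ``$C_k \subseteq D \subseteq X$'': for a general element of $S(X,G,{\mathbb E})$ the pieces of an equivalence computed in $Y$ could, a priori, involve translates leaving $X$, and this is exactly why the map is only asserted to be defined and injective on classes of honest subsets $[D]_X$. So I would state clearly at the outset that $D, D'$ are represented by $D \times \{1\}$, $D' \times \{1\}$, observe that any $\sim_S$-decomposition of such a ``one-block'' element has all pieces contained in $D$ (resp.\ $D'$), and then run the cutting-down argument above. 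I expect the whole proof to be about half a page; the only subtlety is keeping straight that admissibility of $\beta$ is needed to know $\alpha_{t_k}(C_k) = \beta_{t_k}(C_k) \in {\mathbb E}|_X$, and that the identity $X_g = X \cap \beta_g(X)$ from the restriction construction is what converts ``$C_k \subseteq X$ and $\beta_{t_k}(C_k)\subseteq X$'' into ``$C_k \subseteq X_{t_k^{-1}}$''.
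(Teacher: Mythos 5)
Your injectivity argument is exactly the paper's: from $C_k \subseteq D \subseteq X$ and $\beta_{t_k}(C_k) \subseteq D' \subseteq X$ one gets $\beta_{t_k}(C_k)\subseteq X\cap\beta_{t_k}(X)=X_{t_k}$, hence $C_k\subseteq X_{t_k^{-1}}$ and $\alpha_{t_k}(C_k)=\beta_{t_k}(C_k)$, so the same data witness $[D]_X=[D']_X$; the well-definedness part is the routine unwinding that the paper leaves to the reader. One correction to your closing paragraph: the proposition asserts that $\phi$ is defined and injective on all of $S(X,G,{\mathbb E}|_X)$, not only on classes of single sets, and your worry about multi-block elements is unfounded rather than a reason to restrict the statement. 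If $A=\bigcup_i A_i\times\{i\}$ and $A'=\bigcup_j A'_j\times\{j\}$ with all blocks in ${\mathbb E}|_X$, then every piece $C_k$ of a $\sim_S$-decomposition in $Y$ lies in some block $A_{n_k}\subseteq X$ and its translate $\beta_{t_k}(C_k)$ lies in some block of $A'$, again inside $X$, so the identical cutting-down argument applies verbatim; this is exactly what the paper means by saying the general case ``may be treated similarly,'' and it is needed for the statement as given.
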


\begin{proof}
  We leave the easy verification of the well definedness of $\phi$ for
the reader.  In order to show that $\phi$ is injective, assume that
$D, D'\in {\mathbb E}|_X$ are such that
  $$
  [D]_Y = [D']_Y.
  $$
  Then there are sets $D_1,\ldots,D_n\in {\mathbb E}$, such that
$D=\bigsqcup_{i=1}^n D_i$ and, for suitable group elements
$g_1,\ldots, g_n$, one has that $D'=\bigsqcup_{i=1}^n
\beta_{g_i}(D_i)$.

Since the $D_i$ are subsets of $D$, and hence also of $X$, it is
evident that $D_i\in{\mathbb E}|_X$.  Moreover
  notice that
  $$
  \beta_{g_i}(D_i) \subseteq
  D' \cap \beta_{g_i}(X) \subseteq
  X \cap \beta_{g_i}(X) = X_{g_i},
  $$
  so $D_i\subseteq X_{g_i^{-1}}$, and it thus makes sense to speak of
$\alpha_{g_i}(D_i)$.  It is also clear that
  $$
  D'=\bigsqcup_{i=1}^n \alpha_{g_i}(D_i),
  $$
  so
  $
  [D]_X = [D']_X.
  $
  The general case, that is, when $D$ and $D'$ are of the form
  $\bigcup_{i=1}^n D_i \times \{i\}$, may be treated similarly.
  \end{proof}

A case of interest occurs when $Y$ is a Hausdorff topological space
and
  $$
  {\mathbb E} = {\mathbb K}(Y) := \{E\subseteq Y: E \hbox{ is compact
and open}\}.
  $$
  Given a compact-open subset $X\subseteq Y$, notice that
  $$
  {\mathbb K}(Y)|_X = {\mathbb K}(X).
  $$

  {\begin{proposition}
 \label{prop:BijectivityOfTypeMap{1.2}}
  Suppose we are given a continuous global action $\beta$ of a
discrete group $G$ on a Hausdorff space $Y$.  Also let $X\subseteq
Y$ be a compact-open subset such that
  $$
  Y = \bigcup_{g\in G} \beta_g(X).
  $$
  Letting $\alpha$ be the restriction of $\beta$ to $X$, one has that
the map
  $$
  \phi :S\big(X,G,{\mathbb K}(X)\big) \to S\big(Y,G,{\mathbb
K}(Y)\big)
  $$
  described above is bijective.
\end{proposition}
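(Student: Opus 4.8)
By Proposition \ref{prop:InjectivityOfTypeMap{1.1}}, the map $\phi$ is already well defined and injective, so the only thing left is surjectivity. The plan is to take an arbitrary class $[F]_Y\in S(Y,G,\mathbb K(Y))$, where $F=\bigcup_{i=1}^n F_i\times\{i\}$ with each $F_i$ compact-open in $Y$, and to show it equals $\phi([D]_X)$ for some $D\in\mathbb K(X)$. The key point is that each $F_i$ is compact, and $\{\beta_g(X):g\in G\}$ is an open cover of $Y$, hence of $F_i$; so finitely many translates $\beta_{g_1}(X),\dots,\beta_{g_r}(X)$ cover $F_i$. Using that $\mathbb K(Y)$ is a ring of sets, one can then partition $F_i$ into finitely many compact-open pieces, each contained in a single $\beta_{g_j}(X)$ (take $F_i\cap\beta_{g_1}(X)$, then $F_i\cap\beta_{g_2}(X)\setminus\beta_{g_1}(X)$, and so on).

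So after refining, we may assume each $F_i$ is a compact-open subset of a single $\beta_{g_i}(X)$; equivalently $\beta_{g_i^{-1}}(F_i)\subseteq X$ is compact-open, i.e.\ $\beta_{g_i^{-1}}(F_i)\in\mathbb K(X)$. Moreover $\beta_{g_i^{-1}}(F_i)\subseteq X\cap\beta_{g_i^{-1}}(X)=X_{g_i^{-1}}$, so $\alpha_{g_i^{-1}}$ is defined on it and agrees with $\beta_{g_i^{-1}}$ there. Set $D_i:=\beta_{g_i^{-1}}(F_i)\in\mathbb K(X)$ and $D:=\bigcup_{i=1}^n D_i\times\{i\}$, which determines a class $[D]_X\in S(X,G,\mathbb K(X))$. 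Now in $S(Y,G,\mathbb K(Y))$ we have, piece by piece, $F_i=\beta_{g_i}(D_i)$, so $[F]_Y=[D]_Y=\phi([D]_X)$ by the very definition of $\phi$. (One should be slightly careful that the decomposition witnessing $[F]_Y=[D]_Y$ respects the equivalence relation $\sim_S$: indeed $D=\bigsqcup_i D_i\times\{i\}$ and $F=\bigsqcup_i \beta_{g_i}(D_i)\times\{i\}$ is exactly of the form required in Definition \ref{def:type-semigroup}, with the $C_k$ there equal to the $D_i$, which lie in $X_{g_i^{-1}}$.)

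The main obstacle is purely bookkeeping: making the refinement of each $F_i$ into single-translate pieces precise while keeping everything compact-open, and then handling the index $\{i\}$ labels so that the resulting $D$ genuinely lies in $S(X,G,\mathbb K(X))$ and maps to $[F]_Y$ under $\phi$. None of this is deep---it rests only on compactness of the $F_i$, the open-cover property $Y=\bigcup_{g}\beta_g(X)$, the fact that $\mathbb K(Y)$ is closed under finite unions, intersections and relative complements, and the identity $\mathbb K(Y)|_X=\mathbb K(X)$ noted before the statement---but it is the step that requires the most attention to detail.

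\begin{proof}
By Proposition \ref{prop:InjectivityOfTypeMap{1.1}}, the map $\phi$ is well defined and injective, so it remains to prove surjectivity. Let $[F]_Y\in S(Y,G,\mathbb K(Y))$ be arbitrary, represented by $F=\bigcup_{i=1}^n F_i\times\{i\}$ with each $F_i$ compact and open in $Y$.

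Fix $i$. Since $Y=\bigcup_{g\in G}\beta_g(X)$ and each $\beta_g(X)$ is open, compactness of $F_i$ yields finitely many elements $g_{i,1},\dots,g_{i,r_i}\in G$ with $F_i\subseteq\bigcup_{j=1}^{r_i}\beta_{g_{i,j}}(X)$. Because $\mathbb K(Y)$ is closed under finite intersections and relative complements, the sets
$$
F_{i,j} := \Big(F_i\cap\beta_{g_{i,j}}(X)\Big)\setminus\bigcup_{l<j}\beta_{g_{i,l}}(X),\qquad j=1,\dots,r_i,
$$
are compact-open, pairwise disjoint, and satisfy $F_i=\bigsqcup_{j=1}^{r_i}F_{i,j}$ with $F_{i,j}\subseteq\beta_{g_{i,j}}(X)$. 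Relabelling, we may therefore assume from the start that $n$ has been enlarged so that for each index $i$ there is $g_i\in G$ with $F_i\subseteq\beta_{g_i}(X)$.

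Now put $D_i:=\beta_{g_i^{-1}}(F_i)$. Since $\beta_{g_i^{-1}}$ is a homeomorphism of $Y$, $D_i$ is compact and open, and $D_i\subseteq X$, so $D_i\in\mathbb K(Y)|_X=\mathbb K(X)$. Moreover
$$
D_i=\beta_{g_i^{-1}}(F_i)\subseteq\beta_{g_i^{-1}}\big(\beta_{g_i}(X)\big)\cap X=X\cap\beta_{g_i^{-1}}(X)=X_{g_i^{-1}},
$$
so $\alpha_{g_i^{-1}}$ is defined on $D_i$, and $\alpha_{g_i}(D_i)=\beta_{g_i}(D_i)=F_i$. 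Set $D:=\bigcup_{i=1}^n D_i\times\{i\}$, giving a class $[D]_X\in S(X,G,\mathbb K(X))$.

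Finally, $F=\bigsqcup_{i=1}^n F_i\times\{i\}=\bigsqcup_{i=1}^n\beta_{g_i}(D_i)\times\{i\}$ and $D=\bigsqcup_{i=1}^n D_i\times\{i\}$ with each $D_i\subseteq X_{g_i^{-1}}$; this is precisely a witness for $[F]_Y=[D]_Y$ in the sense of Definition \ref{def:type-semigroup}, so by definition of $\phi$ we have $\phi([D]_X)=[D]_Y=[F]_Y$. Hence $\phi$ is surjective, and together with Proposition \ref{prop:InjectivityOfTypeMap{1.1}} it is bijective.
\end{proof}
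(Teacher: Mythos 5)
Your proposal is correct and follows essentially the same route as the paper: injectivity from Proposition \ref{prop:InjectivityOfTypeMap{1.1}}, and surjectivity by covering each compact-open piece with finitely many translates $\beta_g(X)$, disjointifying via intersections and relative complements, and pulling each piece back by $\beta_{g^{-1}}$ into $X_{g^{-1}}$. The only cosmetic difference is that the paper treats a single generator $[E]_Y$ and invokes additivity of $\phi$, while you carry the index labels $\{i\}$ through explicitly; the substance is identical.
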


\begin{proof}
  Given $E\in{\mathbb K}(Y)$, we may view $\{\beta_g(X)\}_{g\in G}$ as
an open covering of $E$. Observing that $E$ is compact, there are
finitely many group elements $g_1,\ldots,g_n$, such that
  $
  E\subseteq \bigcup_{i=1}^n \beta_{g_i}(X).
  $
  Setting
  \def\cro{\hfill\cr}
  \def\croI{\vrule height 12pt width 0pt \cro}
  \begin{align*}
  E_1 & = E\cap \beta_{g_1}(X), \\ 
  E_2 & = \big(E\cap \beta_{g_2}(X)\big) \setminus E_1, \\ 
  E_3 & = \big(E\cap \beta_{g_3}(X)\big) \setminus (E_1 \cup E_2), \\ 
  & \vdots \vrule depth 5pt width 0pt  \\
  E_n & = \big(E\cap \beta_{g_n}(X)\big) \setminus (E_1 \cup \ldots \cup
E_{n-1}) 
  \end{align*}
  one has that $E= \bigsqcup_{i=1}^n E_i$.  Observing that each
$E_i\subseteq \beta_{g_i}(X)$, we have that
  $$
  D_i := \beta_{g_i^{-1}}(E_i) \subseteq X.
  $$
  Moreover,
  $
  \phi\big([D_i]_X\big) = [D_i]_Y = [E_i]_Y,
  $
  so
  $$
  \phi\Big(\sum_{i=1}^n [D_i]_X\Big) =
  \sum_{i=1}^n \phi\big([D_i]_X\big) =
  \sum_{i=1}^n [E_i]_Y = [E]_Y.
  $$
  Since $S\big(Y,G,{\mathbb K}(Y)\big)$ is generated by the collection
of all elements of the form $[E]_Y$, as above, we deduce that $\phi$
is surjective.  The fact that $\phi$ is injective follows from
  Proposition \ref{prop:InjectivityOfTypeMap{1.1}}.
  \end{proof}

\begin{corollary}
\label{cor:globalizedactions}
  Let $X$ be a compact Hausdorff topological space which is totally
disconnected in the sense that its topology admits a basis of
compact-open sets.  Also let $\alpha$ be a continuous partial action
of the group $G$ on $X$.
  Then $\alpha$ admits a unique globalization $\beta$ on a totally
disconnected, locally compact Hausdorff space $Y$, and moreover
  $S\big(X,G,{\mathbb K}(X)\big)$ is isomorphic to $S\big(Y,G,{\mathbb
K}(Y)\big)$.
\end{corollary}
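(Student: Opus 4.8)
The statement packages two assertions: the existence and uniqueness of a globalization $\beta$ on a totally disconnected, locally compact Hausdorff space $Y$, and the isomorphism $S(X,G,\mathbb K(X))\cong S(Y,G,\mathbb K(Y))$. The plan is to quote Abadie's globalization theorem \cite{Abadie} for the first part and then apply Proposition \ref{prop:BijectivityOfTypeMap{1.2}} for the second, once we have checked that the hypotheses of that proposition are met. The only real work is verifying that Abadie's enveloping space $Y$ inherits the topological properties we need, in particular that $X$ sits inside $Y$ as a compact-open subset and that $Y$ is totally disconnected with a basis of compact-open sets.

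\textbf{Step 1: produce the globalization.} By \cite[Theorem 1.1]{Abadie} (or the relevant statement there), a continuous partial action $\alpha$ of $G$ on a locally compact Hausdorff space $X$ admits an enveloping action $\beta$ on a locally compact Hausdorff space $Y$, with $X$ open in $Y$ and $Y=\bigcup_{g\in G}\beta_g(X)$; this $Y$ is unique up to equivariant homeomorphism fixing $X$. Since our $X$ is compact and open in $Y$, it is in particular compact-open in $Y$, so the hypothesis ``$X\subseteq Y$ compact-open'' of Proposition \ref{prop:BijectivityOfTypeMap{1.2}} holds. For total disconnectedness of $Y$: each $\beta_g(X)$ is an open homeomorphic copy of $X$, hence has a basis of compact-open subsets of $Y$ (compact-open in $\beta_g(X)$ implies compact-open in $Y$ because $\beta_g(X)$ is open in $Y$ and compactness is intrinsic); since the $\beta_g(X)$ cover $Y$, the union of these bases is a basis of compact-open sets for $Y$. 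Thus $Y$ is a totally disconnected, locally compact Hausdorff space, and $\mathbb K(Y)|_X=\mathbb K(X)$ as noted just before Proposition \ref{prop:BijectivityOfTypeMap{1.2}}.

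\textbf{Step 2: apply the type-semigroup comparison.} With $\beta$ a continuous global action of the discrete group $G$ on the Hausdorff space $Y$, $X\subseteq Y$ compact-open, and $Y=\bigcup_{g\in G}\beta_g(X)$, Proposition \ref{prop:BijectivityOfTypeMap{1.2}} gives directly that the canonical map $\phi\colon S(X,G,\mathbb K(X))\to S(Y,G,\mathbb K(Y))$ is bijective, hence a monoid isomorphism (it is additive and bijective). One should also check that $\beta$ is admissible in the sense of the discussion preceding the proposition, i.e.\ $\beta_g$ carries compact-open sets to compact-open sets; this is immediate since each $\beta_g$ is a homeomorphism of $Y$. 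This completes the proof.

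\textbf{Expected main obstacle.} The mathematically substantive point is entirely contained in Step 1: one must be careful that Abadie's construction, which is phrased for locally compact Hausdorff spaces, really does yield a space $Y$ in which $X$ is \emph{compact}-open and which is totally disconnected. The compact-openness of $X$ in $Y$ is automatic from ``$X$ compact'' plus ``$X$ open in $Y$,'' and total disconnectedness propagates from $X$ along the covering $\{\beta_g(X)\}$; neither requires new ideas, only the observation that open subspaces transfer compact-open bases upward. Everything downstream (Step 2) is a direct citation of Proposition \ref{prop:BijectivityOfTypeMap{1.2}}, so there is no genuine difficulty once the topological bookkeeping of Step 1 is done.
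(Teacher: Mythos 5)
Your overall route is exactly the paper's: invoke Abadie's globalization theorem to get $\beta$ on $Y$, check that $X$ is compact-open in $Y$ and that $Y$ is totally disconnected and locally compact, and then quote Proposition \ref{prop:BijectivityOfTypeMap{1.2}} to identify the two type semigroups. Step 2 and your argument for total disconnectedness (transporting compact-open bases along the open cover $\{\beta_g(X)\}_{g\in G}$) are fine and match what the paper does.

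There is, however, one genuine gap in Step 1: you take it for granted that Abadie's Theorem 1.1 delivers a \emph{Hausdorff} enveloping space $Y$. It does not. The enveloping action exists and is unique in the category of topological spaces, but Hausdorffness is notoriously not inherited: gluing translates of $X$ along the domains $X_g$ can produce non-Hausdorff spaces (already for a partial $\Z$-action on $[0,1]$ with an open, non-closed domain one gets a ``doubled point'' space). So the point you list as the ``main obstacle'' (compact-openness of $X$ and total disconnectedness of $Y$) is the easy part, while the step you silently assume --- Hausdorffness of $Y$ --- is precisely the one that needs an argument. The paper handles it by appealing to \cite[Proposition 1.2]{Abadie}, whose criterion is satisfied here because the domains $X_g$ are clopen (they lie in $\mathbb K(X)$, so each $\theta_g$ is a homeomorphism between compact-open subsets of the compact Hausdorff space $X$ and has closed graph). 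Your proof becomes complete once you insert this verification; without it, the citation of Abadie as giving ``a locally compact Hausdorff space $Y$'' is not justified.
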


 \begin{proof}
  By \cite[Theorem 1.1]{Abadie} we know that $\alpha$ admits a unique
globalization $\beta$ acting on a topological space $Y$.  Using
\cite[Proposition 1.2]{Abadie} we may easily show that $Y$ is
Hausdorff.  Since $X$ is open in $Y$, and $Y=\bigcup_{g\in G}
\beta_g(X)$, it is clear that $Y$ is  locally compact and totally
disconnected.

Finally, observing that $X$ is a compact-open subset of $Y$, we may
use Proposition \ref{prop:BijectivityOfTypeMap{1.2}}
  to conclude that
  $S\big(X,G,{\mathbb K}(X)\big)$ and $S\big(Y,G,{\mathbb K}(Y)\big)$
are isomorphic.
\end{proof}

We can now extend Theorem \ref{thm:typesemigroupforOmegaEC} to the
globalized actions. For a totally disconnected locally compact
Hausdorff space $Y$ and a field $K$, we denote by $C_{c,K}(Y)$ the
algebra of continuous functions with compact support from $Y$ to
$K$, where $K$ is given the discrete topology. If $\theta$ is a
global action of $G$ on $Y$ we get as in Lemma \ref{lem:mapfromStoV}
a monoid homomorphism $\Phi \colon S(Y,G,\mathbb K (Y))\to
\mon{C_{c,K}(Y)\rtimes _{\theta^*} G}$.

\begin{theorem}
\label{thm:typesemigroupforOmegaECglobalized} Let $(E,C)$ be a
finite bipartite separated graph such that $s(E^1)=E^{0,1}$ and
$r(E^1)=E^{0,0}$, and let $(\Omega (E,C),\alpha^*)$ be the universal
$(E,C)$-dynamical system. Let $\Omega^{\mathfrak G}(E,C)$ denote the
totally disconnected, locally compact Hausdorff space obtained from
the globalization of the action of $\mathbb F$ on $\Omega (E,C)$
(Corollary \ref{cor:globalizedactions}), and let $\beta^*$ be the
corresponding global action of $\mathbb F$ on it.

 Then the map
$$\Phi\colon S(\Omega^{\mathfrak G} (E,C), \mathbb F, \mathbb K (\Omega^{\mathfrak G} (E,C)))\longrightarrow
\mon{C_{c,K} (\Omega^{\mathfrak G}(E,C))\rtimes _\beta \mathbb F }
$$ is an isomorphism for any field $K$.
\end{theorem}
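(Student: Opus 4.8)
The plan is to derive the statement from Theorem~\ref{thm:typesemigroupforOmegaEC} by showing that replacing the partial action $\alpha^*$ of $\mathbb F$ on $\Omega(E,C)$ by its globalization changes neither the type semigroup nor the $\mathcal V$-monoid of the crossed product. Write $X=\Omega(E,C)$, $Y=\Omega^{\mathfrak G}(E,C)$ and $R=C_{c,K}(Y)\rtimes_\beta\mathbb F$. On the type-semigroup side, Corollary~\ref{cor:globalizedactions} (through Propositions~\ref{prop:InjectivityOfTypeMap{1.1}} and \ref{prop:BijectivityOfTypeMap{1.2}}) already supplies an isomorphism $\phi\colon S(X,\mathbb F,\mathbb K(X))\to S(Y,\mathbb F,\mathbb K(Y))$, $[D]_X\mapsto[D]_Y$. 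On the algebra side, the key point will be to exhibit $C_K(X)\rtimes_\alpha\mathbb F$ as a full corner of $R$.

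Concretely, since $X$ is a compact-open subset of $Y$, the function $1_X$ lies in $C_{c,K}(Y)$ and $e:=1_X\delta_e$ is an idempotent of $R$. First I would prove, as the algebraic counterpart of Abadie's description of a crossed product by a partial action inside the crossed product by its enveloping action, that $eRe=C_K(X)\rtimes_\alpha\mathbb F$: a direct computation with the convolution product shows $e(f\delta_g)e=\beta_g\big(\beta_{g^{-1}}(1_Xf)\,1_X\big)\delta_g$, whose coefficient lies in $C_K(X_g)$ with $X_g=X\cap\beta^*_g(X)$ compact-open, and conversely every $h\delta_g$ with $h\in C_K(X_g)$ is obtained in this way; these coefficients $h\delta_g$ are exactly the spanning elements of the crossed product of the restricted partial action $\alpha$. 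Next I would check that $e$ is full, i.e.\ $ReR=R$: since $R$ is spanned by the $1_E\delta_g$ ($E$ compact-open in $Y$) and $1_E\delta_g=(1_E\delta_g)(1_{\beta^*_{g^{-1}}(E)}\delta_e)$, it is enough to see $1_E\delta_e\in ReR$; covering $E$ by the $\beta^*_g(X)$ and using compactness, write $E=\bigsqcup_{i=1}^n E_i$ with $E_i\subseteq\beta^*_{g_i}(X)$, set $D_i=\beta^*_{g_i^{-1}}(E_i)\subseteq X$, and observe $1_{E_i}\delta_e=(1_{E_i}\delta_{g_i})\,(1_{D_i}\delta_e)\,(1_{D_i}\delta_{g_i^{-1}})$ with $1_{D_i}\delta_e=e\,(1_{D_i}\delta_e)$, whence $1_E\delta_e=\sum_i 1_{E_i}\delta_e\in ReR$. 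Since $eRe$ is unital with unit $e$ and $R$ has local units, the standard Morita-theoretic argument for a full idempotent then gives an isomorphism $\mon{eRe}\xrightarrow{\ \cong\ }\mon{R}$ induced by the inclusion $eRe\hookrightarrow R$, which on idempotents is $[p]\mapsto[p]$.

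Putting the pieces together, Theorem~\ref{thm:typesemigroupforOmegaEC} gives an isomorphism $\Phi_X\colon S(X,\mathbb F,\mathbb K(X))\to\mon{eRe}$, and hence the composite
$$S(Y,\mathbb F,\mathbb K(Y))\ \xrightarrow{\ \phi^{-1}\ }\ S(X,\mathbb F,\mathbb K(X))\ \xrightarrow{\ \Phi_X\ }\ \mon{eRe}\ \xrightarrow{\ \cong\ }\ \mon{R}$$
is an isomorphism. To finish I would identify this composite with the map $\Phi$ of the statement: both are monoid homomorphisms out of $S(Y,\mathbb F,\mathbb K(Y))$, which is generated by the classes $[A]_Y$ with $A$ clopen in $X$ (as $\phi$ is onto and such $A$ generate $S(X,\mathbb F,\mathbb K(X))$); and for such an $A$ one computes $\phi^{-1}([A]_Y)=[A]_X$, $\Phi_X([A]_X)=[1_A\delta_e]_{eRe}$, which the corner isomorphism carries to $[1_A\delta_e]_R=\Phi([A]_Y)$. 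So the two homomorphisms agree on generators, hence coincide, and $\Phi$ is an isomorphism.

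The step I expect to be the main obstacle is the identification $eRe\cong C_K(X)\rtimes_\alpha\mathbb F$ together with the fullness of $e$: Abadie's globalization theory in \cite{Abadie} is phrased for topological spaces and C*-algebras, so one must transcribe the relevant computations to the purely algebraic crossed product and keep careful track of the non-unital algebra $C_{c,K}(Y)$, the domain sets $X_g$, and the local-unit structure required to invoke Morita theory; once that is in place, the compatibility square reduces to bookkeeping.
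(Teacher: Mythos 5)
Your proposal is correct and follows essentially the same route as the paper: the paper also sets $e=1_X$, observes that $e$ is a full idempotent with $e\big(C_{c,K}(Y)\rtimes_\beta\mathbb F\big)e\cong C_K(X)\rtimes_\alpha\mathbb F$, invokes the full-corner/local-units Morita fact (cited there from \cite[Lemma 7.3]{AF} and \cite[Corollary 5.6]{GoodLPA}) to get $\mon{\iota}$ an isomorphism, and concludes via the same commutative square with $\Phi_X$ and the map of Proposition \ref{prop:BijectivityOfTypeMap{1.2}}. You merely spell out the corner identification, the fullness computation, and the check of the square on generators, which the paper leaves as ``easily seen'' or delegates to the cited references.
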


\begin{proof}
Set $X= \Omega (E,C)$ and $Y=\Omega ^{\mathfrak G}(E,C)$. Let
$e=1_X\in C_{c,K}(Y)$. Then it is easily seen that $e$ is a full
projection in $C_{c,K}(Y)\rtimes _{\beta} \mathbb F$ such that
$$e(C_{c,K}(Y)\rtimes _{\beta} \mathbb F)e\cong C_K(X)\rtimes _{\alpha} \mathbb F.$$ It follows
from the first paragraph of the proof of \cite[Lemma 7.3]{AF} that
the map
$$\mon{\iota}\colon \mon{C_K(X)\rtimes_{\alpha}
\mathbb F} \to \mon{C_{c,K}(Y)\rtimes _{\beta} \mathbb F}$$ induced
by the inclusion $\iota \colon C_K(X)\rtimes_{\alpha} \mathbb F \to
C_{c,K}(Y)\rtimes _{\beta} \mathbb F$ is an isomorphism. (See also
\cite[Corollary 5.6]{GoodLPA} for a more general statement.)

We have a commutative diagram:
$$\begin{CD} S(X,\mathbb F, \mathbb K (X))  @>{\Phi_X}>>
\mon{C_K(X)\rtimes
_{\alpha} \mathbb F}\\
@V{\cong}VV   @V{\mon{\iota}}V{\cong}V  \\
S(Y,\mathbb F, \mathbb K (Y))  @>{\Phi_Y}>> \mon{C_{c,K}(Y)\rtimes
_{\beta} \mathbb F}
\end{CD}$$
We observed before that $\mon{\iota}$ is an isomorphism. The map
$\Phi_X$ is an isomorphism by Theorem
\ref{thm:typesemigroupforOmegaEC}, and the left vertical map is an
isomorphism by Proposition \ref{prop:BijectivityOfTypeMap{1.2}}.
Therefore we conclude that $\Phi _Y$ is also an isomorphism.
\end{proof}

We finally can reach our goal of presenting global actions on
zero-dimensional metrizable compact spaces with arbitrary failure of
cancellation laws.

\begin{theorem}
\label{thm:main-typesemigroup2} Let $M$ be a finitely generated
abelian conical monoid. Then there exist a zero-dimensional
metrizable compact space $Z$ and a global action of a finitely
generated free group $\mathbb F$ on $Z$ such that there is an order
embedding $\iota \colon M \to S(Z,\mathbb F, \mathbb K (Z))$.
\end{theorem}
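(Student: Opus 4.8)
The plan is to take the partial action on $\Omega(E,C)$ produced by Theorem \ref{thm:main-typesemigroup}, pass to its globalization via Corollary \ref{cor:globalizedactions}, and then one-point compactify, checking at each step that the relevant type semigroup only grows. First, by Redei's Theorem $M$ is finitely presented; fix a finite presentation $\langle\mathcal X\mid\mathcal R\rangle$ and let $(E,C)$ be the associated finite bipartite separated graph of Definition \ref{def:sep-graph-of-cam}, so that $M\cong M(E,C)$ and $(E,C)$ satisfies $s(E^1)=E^{0,1}$, $r(E^1)=E^{0,0}$. Let $\mathbb F$ be the free group on the finite set $E^1$ and let $X=\Omega(E,C)$ with its canonical partial action (Corollary \ref{cor:univECdynsustem}). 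By Theorem \ref{thm:main-typesemigroup} there is a unitary embedding $\iota_0\colon M\cong M(E,C)\to S(X,\mathbb F,\mathbb K(X))$; a unitary embedding is automatically an order embedding (if $\psi(a)+v=\psi(b)$ then $v=\psi(c)$ by condition (3) of Definition \ref{def:unitemb}, whence $a+c=b$ by injectivity). Applying Corollary \ref{cor:globalizedactions} to this partial action we obtain a totally disconnected, locally compact Hausdorff space $Y:=\Omega^{\mathfrak G}(E,C)$ with a global $\mathbb F$-action $\beta^*$ satisfying $Y=\bigcup_{g\in\mathbb F}\beta^*_g(X)$, together with an isomorphism $S(X,\mathbb F,\mathbb K(X))\cong S(Y,\mathbb F,\mathbb K(Y))$. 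Since $\mathbb F$ is countable and $X$ is second countable, $Y$ is a countable union of second-countable open subspaces, hence second countable; being moreover locally compact Hausdorff, it is $\sigma$-compact.

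Next, let $Z:=Y\sqcup\{\infty\}$ be the one-point compactification of $Y$ and extend each homeomorphism $\beta^*_g$ of $Y$ to $\hat\beta_g\colon Z\to Z$ by fixing $\infty$. As $\beta^*_g$ carries compact sets to compact sets, $\hat\beta_g$ is a homeomorphism of $Z$, and $g\mapsto\hat\beta_g$ is a global action of $\mathbb F$ on $Z$ with $\hat\beta_g(Y)=Y$ for all $g$. Because $Y$ is $\sigma$-compact the point $\infty$ has a countable neighbourhood basis, so $Z$ is second countable and hence metrizable; and since $Y$ has a basis of compact-open sets — each of which is clopen in $Z$ — the space $Z$ is totally disconnected, hence zero-dimensional. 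Thus $Z$ is a zero-dimensional metrizable compact space carrying a global action of the finitely generated free group $\mathbb F$.

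It remains to exhibit an order embedding $\phi\colon S(Y,\mathbb F,\mathbb K(Y))\to S(Z,\mathbb F,\mathbb K(Z))$; the composite of $\iota_0$ with the isomorphism of Corollary \ref{cor:globalizedactions} and with $\phi$ is then the desired order embedding of $M$. The decisive point is that $\mathbb K(Y)=\{D\in\mathbb K(Z):\infty\notin D\}$: a subset of $Y$ is compact-open in $Y$ precisely when it is clopen in $Z$. Define $\phi([D]_Y)=[D]_Z$ for $D\in\mathbb K(Y)$ and extend additively to formal disjoint unions with levels; this is well defined because any $\sim_S$-witness inside $Y$ uses pieces from $\mathbb K(Y)\subseteq\mathbb K(Z)$ and translations by the $\beta^*_g=\hat\beta_g|_Y$. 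For the order-reflection property, suppose $A,B$ are built from members of $\mathbb K(Y)$ and $[A]_Z\le[B]_Z$, so that for some $C=\bigcup_j C_j'\times\{j\}$ with $C_j'\in\mathbb K(Z)$ we have $[A]_Z+[C]_Z=[B]_Z$; unwinding Definition \ref{def:type-semigroup} (with all domains equal to $Z$) there are clopen pieces $C_k$ and group elements $t_k$ so that $A$ together with $C$ is partitioned into the $C_k$ (at various levels) and $B$ into the $\hat\beta_{t_k}(C_k)$ (at levels of $B$). Each $\hat\beta_{t_k}(C_k)$ is contained in a component of $B$, hence in $Y$, and $\hat\beta_{t_k}(Y)=Y$ forces $C_k=\hat\beta_{t_k^{-1}}(\hat\beta_{t_k}(C_k))\subseteq Y$; therefore every piece, and in particular every component of $C$, lies in $\mathbb K(Y)$, so the same data witness $[A]_Y+[C]_Y=[B]_Y$, i.e. $[A]_Y\le[B]_Y$. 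Taking $B=A$ yields injectivity of $\phi$, and the general case of indexed unions is handled identically. Hence $\phi$ is an order embedding and the theorem follows.

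The main obstacle is the bookkeeping in the last paragraph: one has to spell out precisely what a witness for $[A]\le[B]$ in a type semigroup looks like and verify that invariance of the open subset $Y$ under the global $\mathbb F$-action confines every piece of such a witness to $Y$. Everything else — the reduction to a separated graph, the existence of the globalization, and the metrizability and zero-dimensionality of the one-point compactification — is either quoted from earlier results or is routine point-set topology, using only that $\mathbb F$ is countable.
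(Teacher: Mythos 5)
Your proposal follows essentially the same route as the paper's proof: realize $M\cong M(E,C)$ via Theorem \ref{thm:main-typesemigroup}, globalize the partial action on $\Omega(E,C)$ using Corollary \ref{cor:globalizedactions}, and pass to the one-point compactification with the point at infinity fixed, so that $S(Y,\mathbb F,\mathbb K(Y))\to S(Z,\mathbb F,\mathbb K(Z))$ is an order-embedding. The only difference is that you spell out the details the paper leaves as ``easily seen'' (metrizability and zero-dimensionality of $Z$, and the confinement of equidecomposition witnesses to the invariant open set $Y$), and these verifications are correct.
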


\begin{proof}
Let $(E,C)$ be the finite bipartite separated graph considered in
the proof of Theorem \ref{thm:main-typesemigroup}, and let $\mathbb
F$ be the free group on $E^1$. Let $\Omega^{\mathfrak G,
\star}(E,C)=\Omega ^{\mathfrak G}(E,C)\cup \{\star \}$ be the
one-point compactification of the locally compact Hausdorff space
$\Omega ^{\mathfrak G}(E,C)$, with the action  $\beta^*$ extended to
$\Omega^{\mathfrak G, \star}(E,C)$ by $\beta^*_t(\star) =\star$ for
all $t\in \mathbb F$. Set $X=\Omega(E,C)$, $Y=\Omega ^{\mathfrak
G}(E,C)$ and $Z=\Omega^{\mathfrak G, \star}(E,C)$.

By the proof of Theorem \ref{thm:main-typesemigroup}, there is an
order-embedding $M\hookrightarrow S(X, \mathbb F, \mathbb K (X))$,
and,  by Corollary \ref{cor:globalizedactions}, $Y$ is a totally
disconnected locally compact Hausdorff space such that the natural
map
$$S(X,\mathbb F, \mathbb K (X))\to S(Y,\mathbb F, \mathbb K (Y)) $$
is an isomorphism. It is easily seen that the natural map
$S(Y,\mathbb F, \mathbb K (Y))\to S(Z,\mathbb F,\mathbb K (Z))$ is
an order-embedding. Combining all these facts we get that $Z$ is a
zero-dimensional metrizable compact space and there is an
order-embedding $M\hookrightarrow S(Z,\mathbb F, \mathbb K (Z))$.
\end{proof}

\begin{corollary}
\label{cor:non-paradoxical2} There exist a global action of a
finitely generated free group $\mathbb F$ on a zero-dimensional
metrizable compact space $Z$, and a non-$\mathbb F$-paradoxical
(with respect to $\mathbb K$) clopen subset  $A$ of $Z$ such that
$\mu (A)=\infty $ for every finitely additive $\mathbb F$-invariant
measure $\mu \colon \mathbb K \to [0,\infty ]$ such that $\mu
(A)>0$.
\end{corollary}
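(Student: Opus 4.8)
The plan is to follow the proof of Corollary \ref{cor:non-paradoxical} almost verbatim, replacing the partial-action input (Theorem \ref{thm:main-typesemigroup}) by its global counterpart (Theorem \ref{thm:main-typesemigroup2}), and then to read off the measure-theoretic conclusion from the identity $m[A]=n[A]$ in the type semigroup. I do not expect any serious obstacle; the one point that needs care — and the reason the statement is phrased with the hypothesis $\mu(A)>0$ rather than ``$\mu$ nonzero'' as in Corollary \ref{cor:non-paradoxical} — is the presence of the $\mathbb F$-fixed point $\star$ adjoined in the one-point compactification, against which the Dirac measure $\delta_\star$ is a nonzero finitely additive invariant measure vanishing on $A$.

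First I would fix positive integers $m,n$ with $1<m<n$ and apply Theorem \ref{thm:main-typesemigroup2} to the monoid $M=\langle a\mid ma=na\rangle$, obtaining a zero-dimensional metrizable compact space $Z$, a global action of a finitely generated free group $\mathbb F$ on $Z$, and an order-embedding $\iota\colon M\to S(Z,\mathbb F,\mathbb K(Z))$. I would then unwind the construction behind that theorem (and behind Theorem \ref{thm:main-typesemigroup}): $(E,C)$ is the finite bipartite separated graph attached to the presentation $\langle a\mid ma=na\rangle$ as in Definition \ref{def:sep-graph-of-cam}, so $M\cong M(E,C)$ with $a$ corresponding to the unique vertex $w\in E^{0,1}$; $Z=\Omega^{\mathfrak G,\star}(E,C)$ is the one-point compactification of the globalized space $\Omega^{\mathfrak G}(E,C)$; and $\iota$ is the composite of $M\cong M(E,C)\hookrightarrow S(\Omega(E,C),\mathbb F,\mathbb K)$ with the isomorphism $S(\Omega(E,C),\mathbb F,\mathbb K)\cong S(\Omega^{\mathfrak G}(E,C),\mathbb F,\mathbb K)$ of Corollary \ref{cor:globalizedactions} and the order-embedding $S(\Omega^{\mathfrak G}(E,C),\mathbb F,\mathbb K)\hookrightarrow S(Z,\mathbb F,\mathbb K(Z))$ used in the proof of Theorem \ref{thm:main-typesemigroup2}. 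As all of these maps are induced by inclusions of spaces, $\iota(a)=[A]$, where $A$ is the clopen subset of $\Omega(E,C)$ attached to $w$, regarded inside $Z$; it is compact-open in $\Omega^{\mathfrak G}(E,C)$, hence clopen in $Z$.

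Next I would extract the two algebraic facts about $[A]$. From $ma=na$ in $M$ and the fact that $\iota$ is a monoid homomorphism we get $m[A]=n[A]$ in $S(Z,\mathbb F,\mathbb K(Z))$. A direct inspection of $M=\langle a\mid ma=na\rangle$ shows $2a\nleq a$ (here one uses $1<m$), and since $\iota$ is an order-embedding it reflects the order, so $2[A]\nleq[A]$; by the definition of $\mathbb F$-paradoxicality (with respect to $\mathbb K$) this says exactly that $A$ is not $\mathbb F$-paradoxical. Also $[A]\neq 0$, since $a\neq 0$ in $M$ and $\iota$ is injective, so $A\neq\emptyset$.

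Finally, for the measure statement, I would take a finitely additive $\mathbb F$-invariant measure $\mu\colon\mathbb K(Z)\to[0,\infty]$ with $\mu(A)>0$. As in the discussion preceding Theorem \ref{thm:TarskiThm}, the $\mathbb F$-invariance of $\mu$ against the relation $\sim_S$ of Definition \ref{def:type-semigroup} gives a well-defined monoid homomorphism $\overline\mu\colon S(Z,\mathbb F,\mathbb K(Z))\to[0,\infty]$ with $\overline\mu([B])=\mu(B)$ for $B\in\mathbb K(Z)$. Applying $\overline\mu$ to $m[A]=n[A]$ yields $m\,\mu(A)=n\,\mu(A)$, and since $0<\mu(A)$ and $m<n$ this forces $\mu(A)=\infty$, completing the argument.
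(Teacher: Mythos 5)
Your proposal is correct and follows essentially the same route as the paper, whose proof simply takes the clopen set $A=A_w$ from Corollary \ref{cor:non-paradoxical} inside $Z=\Omega^{\mathfrak G,\star}(E,C)$ and invokes the order-embedding of Theorem \ref{thm:main-typesemigroup2}; you have merely unwound those references, including the correct observation about the fixed point $\star$ forcing the hypothesis $\mu(A)>0$.
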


\begin{proof}
Consider the space $X=\Omega (E,C)$  and the clopen subset $A$
appearing in the proof of Corollary \ref{cor:non-paradoxical}, and
set $Z=\Omega^{\mathfrak G, \star}(E,C)$ as in the proof of Theorem
\ref{thm:main-typesemigroup}. Then $A$ is a clopen subset of $Z$
satisfying the desired conditions.
\end{proof}

\section{Descriptions of the space $\Omega (E,C)$. }

We follow with the general hypothesis of Sections
\ref{sect:bipsepgraphs}}--\ref{sect:dyn-systems}, so that $(E,C)$ is
a finite bipartite separated graph with $s(E^1)= E^{0,1}$ and
$r(E^1)= E^{0,0}$. In this section we will provide descriptions of
the zero-dimensional metrizable compact space $\Omega (E,C)$ and the
partial action of the free group on $E^1$, $\mathbb F$, on the space
$\Omega (E,C)$. These descriptions generalize the descriptions
obtained in \cite{AEK} for the universal $(m,n)$-dynamical system.

\smallskip

We will use the notation established in Definition
\ref{def:univtopspaces}, so that $\Omega (E,C)=\bigsqcup _{v\in
E^0}\Omega(E,C)_v$, $\Omega (E,C)_v=\bigsqcup _{x\in X} H_x$ for all
$X\in C_v$ $\, \, $($v\in E^{0,0}$), and $\, \theta _x\colon
\Omega(E,C)_{s(x)}\to H_x \, $ are the structural clopen sets and
homeomorphisms of the universal $(E,C)$-dynamical system.

\smallskip

The main point to understand the structure of the space $\Omega
(E,C)$ is the following: Points in $\Omega (E,C)$ are completely
determined by their dynamics. Namely, suppose that we have a point
$\xi$ in $\Omega(E,C)_w$, where $w\in E^{0,1}$.  Then we can apply
the maps $\theta _x$ to $\xi$, for all $x\in E^1$ such that
$s(x)=w$. Given one such arrow $x$, the point $\theta_x (\xi )$
belongs to $H_x\subseteq \Omega(E,C)_{r(x)}$. For each $X\in
C_{r(x)}$ with $x\notin X$, we have a decomposition
$\Omega(E,C)_{r(x)} =\bigsqcup _{y\in X} H_y$, and the point
$\theta_x (\xi)$ belongs to exactly one of these sets $H_y$, say
$\theta _x(\xi)\in H_{y(\xi, x)}$ for a unique $y(\xi, x)\in X$.
Then we can apply the map $\theta _{y(\xi,x)}^{-1}$ to $\theta_x
(\xi)$ and obtain a new point in $\Omega(E,C)_{s(y(\xi ,x))}$, and
again we can re-start the process. The point $\xi$ is determined by
the information provided by this infinite process, and conversely
any coherent set of data determines a point in the space $\Omega
(E,C)$.

It is easy to see that Proposition \ref{prop:univ-partialreps}
generalizes to ${\mathcal O}(E,C)$, meaning that the latter is the
universal C*-algebra for semi-saturated partial representations of
$\mathbb F$ satisfying relations (PI1)--(PI4).

We may thus apply \cite[Theorem 4.4]{ELQ} to obtain a
characterization of $\Omega(E,C)$, the spectrum of relations
(PI1)--(PI4), as a subset of $2^{\mathbb F}$.  The first step is to
translate our relations in terms of functions on $2^{\mathbb F}$.
Relative to (PI1) we have the functions:
  $$
  f^1_{x, y}(\xi) = [x\in\xi][y\in\xi] - \delta_{x,y}[x\in\xi],
  \quad\forall \xi\in 2^{\mathbb F},
  \leqno{{\rm (F1)}}
  $$
  for $x,y\in X$, $X\in C$, where the brackets correspond to boolean
value.  Speaking of (PI2), we have functions of the form:
  $$
  f^2_{x, y}(\xi) = [x^{-1}\in\xi]-[y^{-1}\in\xi],
  \leqno{{\rm (F2)}}
  $$
  for all $x,y\in E^1$ such that $s(x)=s(y)$.
As for (PI3), the functions we consider are:
  $$
  f^3_{X, Y}(\xi) =   \sum_{x\in X} [x\in\xi] - \sum_{y\in Y} [y\in\xi],
  \leqno{{\rm (F3)}}
  $$
  for all $X, Y \in C_v$,  $v\in E^{0,0}$.
In order to account for (PI4), we must consider a single  function,
namely:
  $$
  f^4(\xi) =  -1 +\sum_{w\in E^{0,1}} [x_w^{-1}\in \xi] +
  \sum_{v\in E^{0,0}}\ \sum_{x\in X_v} [x\in \xi],
  \leqno{{\rm (F4)}}
  $$
  where we choose some $x_w$ in $s^{-1}(w)$, for each $w\in E^{0,1}$, and
some $X_v$ in $C_v$, for each $v\in E^{0,0}$.

Finally,  since ${\mathcal O}(E,C)$ is the universal C*-algebra for
a collection of  partial representations which are
\emph{semi-saturated}, we must include the functions
  $$
  f^5_{g, h}(\xi) =
  [h\in \xi] [g\in \xi] - [gh\in \xi],
  $$
  for all $g,h\in{\mathbb F}$, such that $|gh|=|g|+|h|$.  See
  \cite[Proposition 5.4]{ExelAmena}
  for more details.

Considering the set ${\mathcal R}$ formed by all of the above
functions, we have by \cite[Proposition 4.1]{ELQ}, that
  $$
  \Omega(E,C) = \Omega_{\mathcal R} =
  \{\xi\in 2^{\mathbb F}: 1\in\xi, \  f(g^{-1}\xi) = 0, \hbox{ for all } f\in {\mathcal R}, \hbox{ and all } g\in\xi\}.
  $$

  In order to give a more explicit description for this space, let us
first introduce some terminology.  Given $\xi$ in $2^{\mathbb F}$,
and given $g$ in $\xi$, we will let $\xi_g$ be the \emph{local
configuration of $\xi$ at $g$}, meaning the set of all elements
  $$
  x\in E^1 \cup (E^1)^{-1}
  $$
  (recall that ${\mathbb F}$ is the free group generated by $E^1$),
  such that $gx\in \xi$.   Thus the set $g\xi_g$ consists precisely of the
elements in $\xi$ whose distance to $g$ equals 1 relative to the
Cayley graph metric.

One may then check that $\Omega(E,C)$ consists precisely of all
$\xi\in 2^{\mathbb F}$ such that:
  \begin{enumerate}
  \item[(a)] $1\in\xi$,
 \item[(b)] $\xi$ is convex\footnote{A subset $\xi\subseteq\mathbb{F}$ is said to be convex if, whenever $g,h,k\in\mathbb{F}$ are such that $g,h\in\xi$ and
$|g^{-1} h| = |g^{-1} k| +  |k^{-1} h|$, then $k\in\xi$.},
  \cite[Definition 4.4]{ExelLaca} and  
  \cite[Proposition 4.5]{ExelLaca},   
\item[(c)] for every $g\in\xi$, there is some $v\in E^0$ such that:
    \begin{enumerate}
    \item[(c.1)] in case $v\in E^{0,1}$, the local
configuration $\xi_g$ consists of the elements of the form $e^{-1}$,
for all $e\in E^1$ such that $s(e) = v$ (and nothing more),
    \item[(c.2)] in case $v\in E^{0,0}$, the local
configuration $\xi_g$ consists of exactly one element of each group
$X\in C_v$ (and nothing more).
\end{enumerate}
\end{enumerate}

\medskip

Having completed the description of $\Omega (E,C)$, the partial
action of $\mathbb F$ is now easy to describe: for each $g\in
\mathbb F$ we put
  $$
  \Omega_g = \big\{\xi\in \Omega (E,C) : g\in\xi\big\},
  $$
  and we let
  $$
  \theta_g: \Omega _{g^{-1}} \to \Omega_g,
  $$
  be given by $\theta_g(\xi) = g\xi = \{gh: h\in\xi\}$.

\medskip

We now start the description of the space $\Omega (E,C)$ in terms of
certain ``choice functions".

\medskip

To avoid cumbersome considerations, we will from now on assume the
following:

\begin{hypothesis}
\label{hypo:groupsbiggerthantwo}
 $(E,C)$ is a finite bipartite
separated graph such that $|C_v|\ge 2$ for all $v\in E^{0,0}$ and
$s(E^1) =E^{0,1}$.
\end{hypothesis}

This is justified by Proposition \ref{prop:reductiontoHypo}.

\medskip

Given $e\in E^1$, there is a unique $X\in C$ such that $e\in X$.
This unique $X$ will be denoted by $X_e$.

\medskip

Let $v\in E^{0,0}$. For $X\in C_v$, define
$$Z_X= \prod _{Y\in C_{v},\,  Y\ne X} Y.$$
For each $Y_0\in C_v$, $Y_0\ne X$, we denote by $\pi _{Y_0}$ the
canonical projection map $Z_X\to Y_0$.

\medskip

We now provide a description of the points belonging to
$\Omega(E,C)_w$ for $w\in E^{0,1}$.

\begin{definition}
\label{def:Efunctions}  A {\it partial $E$-function} for $(E,C)$ is
a finite sequence
$$(\Omega_1,f_1), (\Omega _2, f_2),\dots , (\Omega _r, f_r),$$
where each $\Omega _i$ is a certain finite subset of $\mathbb F$,
and each $f_i$ is a function from $\Omega _i$ to $\bigsqcup _{X\in
C} Z_X$ satisfying the following rules:

\begin{enumerate}
\item $\Omega _1= s^{-1}(w)$ for some $w\in E^{0,1}$ and $f_1(e) \in Z_{X_e}$ for all $e\in
s^{-1}(w)$.
\item For each $j=2, \dots , r$,  $\Omega _j$ is the set of all the elements
of $\mathbb F$ of the form
$$ e_{2j-1} e_{2j-2}^{-1} e_{2j-3}\cdots e_4^{-1}e_3e_2^{-1}e_1 $$
where $$e_{2j-3}\cdots e_4^{-1}e_3e_2^{-1}e_1 \in \Omega _{j-1},
$$ $e_{2j-2}= (\pi _Y\circ f_{j-1})(e_{2j-3}\cdots e_4^{-1}e_3e_2^{-1}e_1)$
for some $Y\in C$ which is a factor of $Z_X$, where
$f_{j-1}(e_{2j-3}\cdots e_4^{-1}e_3e_2^{-1}e_1)\in Z_X$, and where
$e_{2j-1}\in E^1$ satisfies that $s(e_{2j-1})=s(e_{2j-2})$ and
$e_{2j-1}\ne e_{2j-2}$.
\item The function $f_j\colon \Omega _j\to \bigsqcup _{X\in
C} Z_X$ satisfies
$$f_j (e_{2j-1} e_{2j-2}^{-1} e_{2j-3}\cdots e_4^{-1}e_3e_2^{-1}e_1)
\in Z_{X_{e_{2j-1}}}$$ for all $e_{2j-1} e_{2j-2}^{-1}
e_{2j-3}\cdots e_4^{-1}e_3e_2^{-1}e_1\in \Omega _j$. Note that, by
Hypothesis \ref{hypo:groupsbiggerthantwo}, there is always at least
one such function $f_j$.
\end{enumerate}
An $E$-function is an infinite sequence $(\Omega _1,f_1), (\Omega
_2, f_2),\dots $ satisfying the above conditions for all $j=1,2,3,
\dots $.
\end{definition}

Note that it might happen that for some $E$-function  $(\Omega
_1,f_1), (\Omega _2, f_2),\dots $ we have $\Omega _r=\emptyset$ for
some $r\ge 2$. In this case the $E$-function will be of the form
$$(\Omega_1, f_1),(\Omega_2,f_2),\dots , (\Omega _{r-1}, f_{r-1}),
(\emptyset,\emptyset), (\emptyset,\emptyset), \dots .  $$ This case
occurs for instance for the separated graph $(E,C)$ with
$E^{0,0}=\{v\}$, $E^{0,1}=\{ w_1, w_2\}$, and $C=C_v=\{ X, Y \}$
with $X=\{\alpha\}$, $Y=\{ \beta \}$, $s(\alpha )=s(\beta )= v$ and
$r(\alpha )= w_1$, $r(\beta ) = w_2$.

\medskip

Note that every partial $E$-function $(\Omega _1,f_1),\dots ,(\Omega
_r,f_r)$ can be extended to an $E$-function $(\Omega _1,f_1),\dots ,
(\Omega_r , f_r), (\Omega _{r+1}, f_{r+1}),\dots $, although
possibly $\Omega _i =\emptyset $ for all sufficiently large $i$.
Another degenerate case is the case where there is only one
extension of $(\Omega _1,f_1),\dots , (\Omega _r,f_r)$ to an
$E$-function  $(\Omega _1,f_1),\dots , (\Omega_r , f_r), (\Omega
_{r+1}, f_{r+1}),\dots $, but $\Omega _i\ne \emptyset $ for all $i$.
Note that this happens for instance for the unique partial
$E$-function $(\Omega_1,f_1)$ of the separated graph
$(E(1,1),C(1,1))$.

\begin{theorem}
\label{thm:Efunctions} Let $\Omega (E,C)$ be the universal space
associated to the separated graph $(E,C)$, and let $w$ be a vertex
in $E^{0,1}$. Then the points of $\Omega(E,C)_w$ are in one-to-one
correspondence with the $E$-functions $(\Omega _1,f_1),(\Omega _2,
f_2),\dots $ such that $\Omega _1= s^{-1}(w)$. Moreover, given a
point $\xi$ in $\Omega(E,C)_w$, represented by the $E$-function
$(\Omega _1,f_1),(\Omega _2, f_2),\dots $, the partial $E$-functions
$(\Omega _1,f_1),(\Omega_2,f_2),\dots ,  (\Omega _r,f_r)$, $r\ge 1$
represent a fundamental system of clopen subsets of $\xi$ in $\Omega
(E,C)$. The clopen set represented by the partial $E$-function
$(\Omega _1,f_1),(\Omega_2,f_2),\dots , (\Omega _r,f_r)$ corresponds
to a vertex in $F_{\infty}$ lying in the $2r+1$ level $D_{2r+1}$ of
$F_{\infty}^0= \bigsqcup_{i=0}^{\infty} D_i$.
\end{theorem}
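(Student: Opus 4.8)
The plan is to combine the two descriptions of $\Omega(E,C)$ that are now available: the identification (from Section~\ref{sect:main-construct}, see the paragraph following Theorem~\ref{thm:universal-tame}) of a basis of clopen sets of $\Omega(E,C)$ with the vertex set $F_{\infty}^{0}=\bigsqcup_{i\ge 0}D_{i}$, and the $2^{\mathbb F}$-model of $\Omega(E,C)$ derived above from \cite{ELQ}. Write $A_{u}$ for the basic clopen set attached to $u\in F_{\infty}^{0}$. I first record two structural facts. (i)~By (SCK2) and formula \eqref{eq:xixi*-formula}, for $u\in D_{m}$ one has $A_{u}=\bigsqcup_{u'}A_{u'}$, the union over the children of $u$, namely the $u'\in D_{m+2}$ with $r_{m+2}(u')=u$ (equivalently, in $B_{\infty}$ the projection $u$ is the sum of the projections of its children). (ii)~Iterating (i) downwards, every $A_{u}$ with $u$ in an even layer $D_{2i}$ is contained in some $\Omega(E,C)_{v}$, $v\in D_{0}=E^{0,0}$, and hence is disjoint from $\Omega(E,C)_{w}$ for every $w\in E^{0,1}$; likewise $A_{u}\subseteq \Omega(E,C)_{w}$ whenever $u\in D_{2r+1}$, $r\ge 1$, and $w=\gotr_{2r+1}(u)$. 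It follows that the clopen subsets of $\Omega(E,C)_{w}$ are generated by the sets $A_{v}$ with $v\in D_{2r+1}$, $\gotr_{2r+1}(v)=w$, $r\ge 1$, and that, by (i), these form for each fixed $\xi\in\Omega(E,C)_{w}$ a decreasing fundamental system of clopen neighbourhoods (one for each $r$).

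Thus the theorem reduces to the following assertion, proved by induction on $r\ge 1$: for each $w\in E^{0,1}$ there is a bijection $\Theta_{r}$ from the set of vertices $v\in D_{2r+1}$ with $\gotr_{2r+1}(v)=w$ onto the set of partial $E$-functions $(\Omega_{1},f_{1}),\dots,(\Omega_{r},f_{r})$ with $\Omega_{1}=s^{-1}(w)$, such that $\Theta_{r}$ intertwines the refinement map $r_{2r+3}$ with truncation, and such that $A_{v}$ is exactly the set of $\xi\in\Omega(E,C)_{w}$ whose $E$-function extends $\Theta_{r}(v)$. Granting this, Theorem~\ref{thm:Efunctions} follows: passing to the inverse limit over $r$ and using compactness of $\Omega(E,C)$ (a decreasing sequence of nonempty basic clopen sets has a single common point, since every $A_{v}\ne\emptyset$ and the $A_{v}$ generate the topology) identifies $\Omega(E,C)_{w}$ with the set of $E$-functions having $\Omega_{1}=s^{-1}(w)$; the first paragraph gives the fundamental-system claim; and the level is precisely the index $2r+1$ carried by $\Theta_{r}$.

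The base case $r=1$ is obtained by unwinding Construction~\ref{cons:complete-multiresolution} one step at a time: the vertices of $D_{3}=F^{0,3}$ lying over $w$ are the vertices $w(\vec{z})$ produced by the multiresolution of $(F_{1},D^{1})$ at $w$, and by Definition~\ref{multiresatv} one has $(D^{1})_{w}=\{X(x):x\in s^{-1}(w)\}$ with a canonical identification $X(x)\cong Z_{X_{x}}$, where $X_{x}\in C_{r(x)}$ is the member containing $x$. Hence a choice $\vec{z}$ of one element from each $X(x)$ is precisely a function $f_{1}$ with $f_{1}(x)\in Z_{X_{x}}$ for all $x\in s^{-1}(w)$, i.e.\ a length-$1$ partial $E$-function with $\Omega_{1}=s^{-1}(w)$; and the coincidence of $A_{w(\vec{z})}$ with the set of $\xi$ compatible with $f_{1}$ is read off from the dynamical description given before Hypothesis~\ref{hypo:groupsbiggerthantwo} (apply each $\theta_{x}$ to $\xi$ and record, for each group $Y\ne X_{x}$ at $r(x)$, the unique $y\in Y$ with $\theta_{x}(\xi)\in H_{y}$) together with \eqref{eq:xixi*-formula}. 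For the inductive step, if $v\in D_{2r+1}$ corresponds to $\mathbf{f}=(\Omega_{1},f_{1}),\dots,(\Omega_{r},f_{r})$, its children in $D_{2r+3}$ are the $v(\vec{z})$ with $\vec{z}\in\prod_{Z\in C_{v}}Z$, where $C_{v}=\{X(z):z\in s^{-1}(v)\}$ and once more $X(z)\cong Z_{(\text{member of }C_{r(z)}\text{ containing }z)}$; the crux is to exhibit, by unwinding the recursive structure of the graphs $F_{n}$ (using Lemma~\ref{lem:firststep}, the canonical enumerations of Remark~\ref{rem:enumerations}, and the dynamical picture), a canonical bijection between $\prod_{Z\in C_{v}}Z$ and the set of admissible functions $f_{r+1}$ on the layer $\Omega_{r+1}$ determined by $(\Omega_{r},f_{r})$, compatible with both dynamical descriptions, so that $v(\vec{z})\leftrightarrow(\mathbf{f},f_{r+1})$ and $A_{v(\vec{z})}$ is the required clopen set. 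The degenerate cases ($\Omega_{r}=\emptyset$, equivalently $s^{-1}(v)=\emptyset$, so $C_{v}=\emptyset$ and $v$ has a single child) are handled uniformly.

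The main difficulty is exactly this last bijection: matching the recursively defined index set $\Omega_{r+1}$, and the choice data $f_{r+1}$ on it, with the vertices and groups at level $D_{2r+3}$ of $F_{\infty}$. This forces one to keep careful track of the combinatorics of the multiresolution; the key qualitative point is that, because the graphs glued to form $F_{\infty}$ are bipartite, one round of the choice process (applying some $\theta_{x}$ and then returning by some $\theta_{y}^{-1}$) advances the construction by two layers, which is what produces the index $2r+1$ in the statement. The remaining ingredients — the reduction in the first paragraph, the inverse-limit and compactness step, and the verification that $\Theta_{r}$ intertwines refinement with truncation — are routine.
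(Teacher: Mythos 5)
Your framework is sound and follows the same route as the paper: you identify the basic clopen sets of $\Omega(E,C)$ with the vertices of $F_{\infty}$, observe that the odd-layer vertices over $w$ partition $\Omega(E,C)_w$ and give each point a decreasing fundamental system of clopen neighbourhoods, and recover points as intersections of descending chains by compactness; your base case $r=1$ (children of $w$ in $D_3$ $\leftrightarrow$ functions $f_1$ with $f_1(x)\in Z_{X_x}$, via $X(x)\cong Z_{X_x}$) is exactly the paper's. The only organizational difference is that you deduce the bijection between points of $\Omega(E,C)_w$ and $E$-functions from the vertex correspondence plus the inverse-limit argument, whereas the paper obtains that part from the configuration model of $\Omega(E,C)\subseteq 2^{\mathbb F}$ by the argument of \cite[proof of Theorem 4.1]{AEK}; that reorganization is harmless, but it places the entire weight of the theorem on the correspondence $\Theta_r$ between odd-layer vertices and partial $E$-functions.

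And that is precisely where your argument stops: the inductive step is stated, not proved. The ``canonical bijection between $\prod_{Z\in C_v}Z$ and the set of admissible functions $f_{r+1}$ on the layer $\Omega_{r+1}$'' compatible with the dynamics, which you call the crux and defer to ``unwinding the recursive structure of the graphs $F_n$'', is the whole content of the theorem, and it is not routine: the two sides are indexed in completely different ways (one by the finitely many edges in $s^{-1}(v)$, each group $X(z)$ being itself a set of tuples of edges of $F_{\infty}$; the other by the words in $\Omega_{r+1}$, whose choice data live back in $(E,C)$), and matching them forces one to set up the canonical bijections $e\leftrightarrow e'$ between $s^{-1}(v)$ and $s^{-1}(v')$ for $v'\in D_3$ over $v$, the induced bijections $X\leftrightarrow X'$ between $C_{r(e)}$ and $C_{r(e')}$, and a precise compatibility statement expressing the new functions in terms of $f_j$ and $f_{j+1}$ through the edges $\alpha^{x}(\cdots)$. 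The paper does this by re-encoding a length-$r$ partial $E$-function for $(E,C)$ as a length-$(r-1)$ partial $E$-function for $(E_2,C^2)$ and inducting with the graph shifted two layers, carrying along the two compatibility conditions (its conditions (1) and (2)) whose verification occupies most of the proof; some equivalent bookkeeping is unavoidable in your ``fixed graph, extend by one level'' formulation as well. As written, your proposal is a correct reduction plus the case $r=1$, with the combinatorial heart of the argument asserted rather than proved.
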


\begin{proof}
One can show, using arguments similar to those in \cite[proof of
Theorem 4.1]{AEK} that the set of $E$-functions
$(\Omega_1,f_1),(\Omega _2,f_2), \dots ,  $ such that $\Omega _1=
s^{-1} (w)$ for $w\in E^{0,1}$, is in bijective correspondence with
the set of configurations $\xi \in \Omega _{\mathcal R}$ such that
the local configuration $\xi_1$ of $\xi $ at $1$ consists of
elements of the form $e^{-1}$ for all $e\in s^{-1}(w)$ (form (c.1)).

\medskip

We now proceed to show the statement about the relation between
partial $E$-functions and vertices in the graph $F_{\infty}$.
First, we show that the partial $E$-functions $(\Omega_1, f_1)$ such
that $\Omega _1 = s^{-1}(w)$ (for $w\in D_1$), correspond to
vertices $v$ in $D_3$ such that $r_3(v)=w$, where $r_n\colon D_n\to
D_{n-2}$ is the map defined just before Lemma \ref{lem:firststep}.
Suppose that $\Omega _1=s^{-1}(w)= \{x_1,\dots ,x_r \}$. For
$i=1,\dots ,r$, consider
\begin{equation}
\label{eq:f1andOmega1}
y_i:= \alpha^{x_i} (f_1(x_i)) \in E^1_1 .
\end{equation}
This has sense because $f_1(x_i)\in \prod_{Y\in C_{r(x_i)}, \, Y\ne
X_{x_i} } Y$. Note that there are exactly $r$ distinct elements in
$C_w$, namely $X(x_1),\dots , X(x_r)$, and that $y_i\in X(x_i)$ for
all $i$. We can therefore build the elements
$$z_i = \alpha ^{y_i} (y_1,\dots \widehat{y}_i, \dots ,y_r)\in
E^1_2 $$ for $i=1,\dots ,r$, and the vertex $v:=v(y_1,\dots ,y_r)\in
D_3$. Note that $s^{-1}(v)=\{z_1,\dots , z_r \}$, so there is a
canonical bijection $x_i\leftrightarrow z_i$ between $s^{-1}(w)$ and
$s^{-1}(v)$. The vertex $v$ just constructed is the vertex
associated to the partial $E$-function $(\Omega _1,f_1)$.
Conversely, the vertex $v=v(y_1,\dots ,y_r)$ determines $y_1,\dots ,
y_r$, which in turn determine the function $f_1$ by using
(\ref{eq:f1andOmega1}).

We will show that a partial $E$-function $$ (\Omega _1,f_1),\dots ,
(\Omega _r,f_r),$$ with $r\ge 1$, for $(E,C)$, such that $\Omega
_1=s^{-1}(w)$, corresponds to a partial $E$-function
$$(\Omega _1',f_1'), \dots , (\Omega _{r-1}',f_{r-1}')$$
for the separated graph $(E_2,C^2)$, such that $\Omega
_1'=s^{-1}(v)$, where $v$ is the vertex in $D_3=E_2^{0,1}$
corresponding to $(\Omega _1,f_1)$ (under the correspondence defined
above). By induction hypothesis, this determines (and is determined
by) a vertex $v'$ in $(F'_{\infty})^{0, 2r-1}$, where $F'_{\infty}$
is the $F$-graph associated to $(E_2,C^2)$ (see Construction
\ref{cons:complete-multiresolution}). Since $F'_{\infty}$ is the
separated graph obtained from $F_{\infty}$ by eliminating its two
first levels, we see that the original partial $E$-function
corresponds to the vertex $v'$ in $F_{\infty}^{0,2r+1}=D_{2r+1}$.

So we only need to prove the desired correspondence between partial
$E$-functions for $(E,C)$ and $(E_2,C^2)$.  Observe that, given a
vertex $v'$ in $D_3$ such that $r_3(v')= v\in D_1$, there is a
canonical bijective map between $s^{-1}(v)$ and $s^{-1}(v')$. We
will denote this bijection by $e\leftrightarrow e'$, for $e\in
s^{-1}(v)$. Now take $e$ in $s^{-1}(v)$. Then the corresponding
arrow $e'$ has an end vertex $r(e')$ and we are going to check that
$r_2(r(e'))= r(e)$.

We have $s(e') =v(y_1,\dots ,y_l)$ and $e'= \alpha ^{y_t}(
y_1,\dots, \widehat{y}_t,\dots , y_l)$ for some $t$, where
$s^{-1}(s(e))= \{ x_1,\dots ,x_l\}$ and $x_t = e$. Now set
$$C_{r(x_t)}= C_{r(e)}= \{ X_1,\dots ,X_s \}$$
and write $x_t=\widetilde{x}_p\in X_p$ for a unique $p\in \{1,\dots
, s \}$. Then there are $\widetilde{x}_i\in X_i$ for $i\ne p$ such
that
$$y_t = \alpha ^{\widetilde{x}_p} (\widetilde{x}_1,\dots ,
\widehat{\widetilde{x}_p},\dots ,\widetilde{x}_s ).$$ Note that
$e\in X_p$, so that $X_e=X_p$. On the other hand
$$r(e')= r(\alpha ^{y_t}(
y_1,\dots, \widehat{y}_t,\dots ,
y_l))=s(y_t)=v(\widetilde{x}_1,\dots ,\widetilde{x}_s),$$ so that
$r_2(r(e'))=r(e)$. It follows that there is a canonical bijective
correspondence between $C_{r(e)}$ and $C_{r(e')}$ (see Lemma
\ref{lem:firststep} and Remark \ref{rem:enumerations}), and we will
denote this correspondence by $X\leftrightarrow X'$ for $X\in
C_{r(e)}$. Note that $X_{e'}=X_e'$.

\medskip

Let $(\Omega _1,f_1),\dots , (\Omega _r,f_r)$ be a partial
$E$-function for $(E,C)$, where $r> 1$. Let $w\in E^{0,1}$ such that
$\Omega _1= s^{-1}(w)$, and let $w'$ be the vertex in $D_3$
corresponding to $(\Omega_1, f_1)$. We first define $(\Omega_1',
f_1')$. Let $\Omega _1'= \{ e'\mid e\in \Omega _1 \}$. Then $f_1'$
is determined as follows. Given $e'\in \Omega_1'$, there is a unique
$e\in \Omega_1=s^{-1}(w)$ corresponding to it through the bijection
between $s^{-1}(w')$ and $s^{-1}(w)$. Now we are going to define
$f_1'(e')\in Z_{X{e'}}$. Adopt the notation of the above paragraph,
so that $e'= \alpha ^{y_t}( y_1,\dots, \widehat{y}_t,\dots , y_l)$
for some $t$, where $s^{-1}(s(e))= \{ x_1,\dots ,x_l\}$ and $x_t =
e$. Now set $C_{r(x_t)}= C_{r(e)}= \{ X_1,\dots ,X_s \}$ and write
$e=\widetilde{x}_p\in X_p$ for a unique $p\in \{1,\dots , s \}$.
Then there are $\widetilde{x}_i\in X_i$ for $i\ne p$ such that
$$y_t = \alpha ^{\widetilde{x}_p} (\widetilde{x}_1,\dots ,
\widehat{\widetilde{x}_p},\dots ,\widetilde{x}_s ).$$ For $i\ne p$,
we have $X_i'\in C_{r(e')}$, and $X_i'\ne X_p'=X_e'= X_{e'}$, and we
aim to define $\pi _{X_i'}(f_1'(e'))$. First set
$$\widetilde{y}_i= \alpha ^{\widetilde{x}_i}(\widetilde{x}_1,\dots ,
\widehat{\widetilde{x}_i},\dots ,\widetilde{x}_s ).$$ Observe that
$r(\widetilde{y}_i) =s(\widetilde{x}_i)$. Moreover, note that it
follows from (\ref{eq:f1andOmega1}) that $\widetilde{x}_i = \pi
_{X_i} (f_1(e))$. The elements in $C_{r(\widetilde{y}_i)}$ are in
bijective correspondence with the elements in
$s^{-1}(s(\widetilde{x}_i))$. Write $\mathcal S =
s^{-1}(s(\widetilde{x}_i))\setminus \{ \widetilde{x}_i \}$. Observe
that, since $\widetilde{x}_i =\pi _{X_i}(f_1(e))$, the element
$x\widetilde{x}_i^{-1} e$ belongs to $\Omega _2$ for all $x\in
\mathcal S$. Define $\pi _{X_i'} (f_1'(e'))= \alpha
^{\widetilde{y}_i}( (\widetilde{y}_{i, x })_{ x\in \mathcal S})$,
where
$$\widetilde{y}_{i, x} = \alpha ^{x} (f_2 (x\widetilde{x}_i^{-1}e )).$$
Note that, for $X_i$ as above, with $i\ne p$, we have
$$r_3(s(\pi
_{X_i'}(f_1'(e'))))=s(\widetilde{x}_i)=s(\pi_{X_i}(f_1(e))) ,$$ so
that given any element $e_3'(\pi_{X_i'}(f_1'(e')))^{-1} e'$ in
$\Omega _2'$ we can form a corresponding element $e_3
(\pi_{X_i}(f_1(e)))^{-1} e $ in $\Omega _2$.

\medskip

Assume that for some $r> j> 1$, we have defined a partial
$E$-function $(\Omega _1',f_1'),\dots , (\Omega _{j-1}', f_{j-1}')$
on $(E_2,C^2)$ satisfying the following conditions:
\begin{enumerate}
\item For all $1\le j_0<j$ and all $
e_{2j_0+1}'(e_{2j_0}')^{-1}\cdots e_3'(e_2')^{-1}e_1'$ in
$\Omega_{j_0+1}'$ we have $r_3(s(e_{2j_0}'))=s(e_{2j_0})$, where
$e_{2j_0}$ is inductively defined by $e_{2j_0}=\pi _Y(
f_{j_0}(e_{2j_0-1}e_{2j_0-2}^{-1}\cdots e_3e_2^{-1}e_1))$ if
$e'_{2j_0}=\pi _{Y'}( f'_{j_0}(e_{2j_0-1}'(e_{2j_0-2}')^{-1}\cdots
e_3'(e_2')^{-1}e_1'))$ for some $Y\in C_{r(e_{2j_0-1})}\setminus \{
X_{e_{2j_0-1}}\}$.

\smallskip

\noindent Therefore we have a well-defined element
$e_{2j_0+1}(e_{2j_0})^{-1}\cdots e_3e_2^{-1}e_1$ in $\Omega
_{j_0+1}$ for each element $
 e_{2j_0+1}'(e_{2j_0}')^{-1}\cdots e_3'(e_2')^{-1}e_1' $ in $\Omega _{j_0+1}'$.

\smallskip

\item For all $1\le j_0<j$, all
$e_{2j_0-1}'(e_{2j_0-2}')^{-1}\cdots e_3'(e_2')^{-1}e_1'\in \Omega
_{j_0}'$, all $X\in C_{r(e_{2j_0-1})}\setminus \{ X_{e_{2j_0-1}}
\}$, and all $x\in s^{-1}\Big( s\Big (\pi_X f_{j_0}
(e_{2j_0-1}e_{2j_0-2}^{-1}\cdots e_3e_2^{-1}e_1)\Big)\Big)\setminus
\, \Big{\{} \pi_X  f_{j_0}(e_{2j_0-1}e_{2j_0-2}^{-1}\cdots
e_3e_2^{-1}e_1) \Big{\}}$, we have
\begin{align*}\pi & _x\Big( \pi _{X'}f_{j_0}'(e_{2j_0-1}'(e_{2j_0-2}')^{-1}\cdots
e_3'(e_2')^{-1}e_1')\Big)\\
&  = \alpha ^x\Big(f_{j_0+1}\Big(x(\pi_{X}f_{j_0} (e_{2j_0-1}\cdots
e_2^{-1}e_1 ))^{-1} e_{2j_0-1}\cdots e_3e_2^{-1}e_1 \Big)\Big) ,
\end{align*}
where $\pi_x$ denotes the projection $\pi_{X(x)}$, being $X(x)$ the
element in $C_{s(x)}$ corresponding to $x$.
\end{enumerate}

Note that indeed condition (2) implies condition (1), but we found
useful to state condition (1). As we have seen before, both
conditions hold when $j_0=1$.

To complete the induction step, we have to define the map $f_j'$.
This is done essentially as in the case $j=1$, only the notation is
a little bit worse.

Let $e_{2j-1}'(e_{2j-2}')^{-1} \cdots (e_2')^{-1} e_1'$ be an
element in $\Omega _j'$, and let $e_{2j-1}e_{2j-2}^{-1}\cdots
e_2^{-1}e_1$ be the corresponding element in $\Omega _j$ (see condition
(1)). We want to define $f_j'(e_{2j-1}'(e_{2j-2}')^{-1} \cdots
(e_2')^{-1} e_1')\in Z_{X_{e_{2j-1}}'}$.

We have $s(e_{2j-1}') =v(y_1,\dots ,y_l)$ and $e_{2j-1}'= \alpha
^{y_t}( y_1,\dots, \widehat{y}_t,\dots , y_l)$ for some $t$, where
$s^{-1}(s(e_{2j-1}))= \{ x_1,\dots ,x_l\}$ and $x_t = e_{2j-1}$. Now
set
$$C_{r(x_t)}= C_{r(e_{2j-1})}= \{ X_1,\dots ,X_k \}$$
and write $x_t=\widetilde{x}_p\in X_p$ for a unique $p\in \{1,\dots
, k \}$. Then there are $\widetilde{x}_i\in X_i$ for $i\ne p$ such
that
$$y_t = \alpha ^{\widetilde{x}_p} (\widetilde{x}_1,\dots ,
\widehat{\widetilde{x}_p},\dots ,\widetilde{x}_k).$$ We have to
check that $\widetilde{x}_i=\pi_{X_i}(f_j ( e_{2j-1}\cdots
e_2^{-1}e_1))$ for $i\ne p$. For this we need condition (2). Indeed,
observe that there is $s\ne t$ such that
$$e_{2j-2}' = \alpha^{y_s}(y_1,\dots ,\widehat{y_s}, \dots , y_l)$$
Observe that
$$e'_{2j-2}= \pi _{X_{e'_{2j-2}}}( f_{j-1}'(e'_{2j-3}\cdots (e'_2)^{-1}
e_1')).$$ On the other hand
$$\pi _{x_t}(e_{2j-2}') = \pi_{x_t}(\alpha ^{y_s}(y_1,\dots
,\widehat{y_s},\dots , y_l))= y_t,$$ so that, putting everything
together and using (2), we  get
\begin{align*}
y_t & =\pi_{x_t}\pi_{X_{e'_{2j-2}}}(f_{j-1}'(e'_{2j-3}\cdots (e'_2)^{-1} e_1'))\\
& = \alpha^ {x_t} (f_j( e_{2j-1}e_{2j-2}^{-1}e_{2j-3} \cdots
e_2^{-1} e_1 ))
\end{align*}
because $e_{2j-2}=\pi _{X_{e_{2j-2}}}(f_{j-1}( e_{2j-3}\cdots
e_2^{-1}e_1 ))$ and $e_{2j-1}=x_t$. This shows that, for $i\ne p$,
we have $\widetilde{x}_i=\pi _{X_i}(f_j(e_{2j-1}\cdots
e_2^{-1}e_1))$, as desired.

For $i\ne p$, we have $X_i'\in C_{r(e_{2j-1}')}$, and $X_i'\ne X_p'=
X_{e_{2j-1}'}$, and we aim to define $\pi
_{X_i'}(f_j'(e'_{2j-1}\cdots  (e_2')^{-1} e_1'))$. First set
$$\widetilde{y}_i= \alpha ^{\widetilde{x}_i}(\widetilde{x}_1,\dots ,
\widehat{\widetilde{x}_i},\dots ,\widetilde{x}_k).$$ Observe that
$r(\widetilde{y}_i) =s(\widetilde{x}_i)$.

The elements in $C_{r(\widetilde{y}_i)}$ are in bijective
correspondence with the elements in $s^{-1}(s(\widetilde{x}_i))$.
Write $\mathcal S = s^{-1}(s(\widetilde{x}_i))\setminus \{
\widetilde{x}_i \}$. Observe that, since
$\widetilde{x}_i=\pi_{X_i}(f_j (e_{2j-1}\cdots e_2^{-1} e_1))$, the
element $x\widetilde{x}_i^{-1} e_{2j-1}\cdots e_2^{-1} e_1$ belongs
to $\Omega _{j+1}$ for all $x\in \mathcal S$. Define
$$\pi _{X_i'}
(f_j'( e_{2j-1}'\cdots (e_2')^{-1}e_1'))= \alpha ^{\widetilde{y}_i}(
(\widetilde{y}_{i, x })_{ x\in \mathcal S}),$$ where
$$\widetilde{y}_{i, x} = \alpha ^{x} (f_{j+1} (x \widetilde{x}_i^{-1}e_{2j-1}
\cdots e_2^{-1} e_1 )).$$ Note that, for $X_i$ as above, with $i\ne
p$, we have
$$r_3(s(\pi
_{X_i'}(f_j'(e_{2j-1}' \cdots
(e_2')^{-1}e_1')))=s(\widetilde{x}_i)=s(\pi_{X_i}(f_j(e_{2j-1}\cdots
e_2^{-1}e_1))) ,$$ so that given any element $
e_{2j+1}'(e_{2j}')^{-1}\cdots  (e_2')^{-1}e_1'$ in $\Omega _{j+1}'$
we can form a corresponding element $e_{2j+1}e_{2j}^{-1}\cdots
e_2^{-1}e_1$ in $\Omega _{j+1}$. This shows (1) for $j$, while (2)
for $j$ follows from the construction.

\medskip

It follows from (2) that the partial $E$-function
$(\Omega_1,f_1),\dots ,(\Omega _r,f_r)$ is completely determined by
$(\Omega _1',f_1'),\dots , (\Omega _{r-1}',f_{r-1}')$. (Notice that
knowledge of the latter provides knowledge of the initial vertex
$w'$ in $D_3= E_2^{0,1}$.)

\medskip

Now observe that the points in $F_{\infty}$ are in bijective
correspondence with a basis of clopen subsets for the topology of
$\Omega (E,C)$. Given an $E$-function $(\Omega_1,f_1),
(\Omega_2,f_2),\dots $ we have a descending chain of clopen subsets
$U_1\supseteq U_2 \supseteq \cdots $, where $U_r$ is the clopen set
corresponding to $(\Omega_1,f_1),\dots , (\Omega _r,f_r)$. The
intersection $\bigcap _{i=1}^{\infty} U_i$ will be non-empty, and
using that $\Omega (E,C)$ is Hausdorff, it is easy to check that it
consists of a single point $\xi $ in $\Omega (E,C)$. This completes
the proof of the theorem.
\end{proof}

\begin{remarks}
 \label{rem:kinfofstems}
(1) In certain applications of [26, Theorem 4.4], notably to the case of the free group, the elements $\xi$ in the space $\Omega_R$
may be more or less completely described by means of their intersections with the positive cone in the free group.  Moreover, this
intersection often consists of the prefixes of a certain infinite word (in the positive generators), called the ``stem''
of $\xi$, which therefore completely determines the configuration $\xi$.  However, due to the zig-zag nature of the configurations
present in $\Omega(E,C)$, no such easy description is available.  Our use of partial $E$-functions is therefore an attempt at
giving a concrete description of these highly intricated configurations. 

\smallskip

\noindent (2) We now give the intuitive idea behind the concept of an $E$-function. Let $\xi\in \Omega(E,C)_w$, where $w\in E^{0,1}$.
 Consider the picture of $\xi$ as a subset of vertices in the Cayley graph $\Gamma$ of the free group $\mathbb F$
 generated by $E^1$. The intersection $\xi \cap B_1$ of $\xi$ with the ball of radius $1$ (centered at $1$) does not involve any choice, since 
 it is given by $1$ and all vertices $e^{-1}$, where $e\in s^{-1}(w)$ (because the local configuration $\xi _1$ of $\xi$ at $1$ 
 is of type (c.1)). 
 The intersection $\xi\cap B_2$ with the ball of radius $2$
 involves the choices which are codified by the function $f_1\colon s^{-1}(w) \to \bigsqcup_{X\in C} Z_X$, because, for $e\in s^{-1}(w)$, 
 the local configuration $\xi_{e^{-1}}$ of 
$\xi$ at $e^{-1}$ must be of type (c.2).
 In particular the vertices at distance $2$ from the origin are the elements of the form $e^{-1}\pi_X(f_1(e))$, for all
 $e\in s^{-1}(w)$ and all $X\in C_{r(e)}$ with $X\ne X_e$.
 In general, the vertices which are at distance $2r-1$ of the origin are completely determined by the vertices which are
 at distance $2r-2$, and the choice function $f_r$ determines the vertices which are at distance $2r$ from the vertices at distance $2r-1$.
 In conclusion knowing the partial $E$-function $(\Omega_1,f_1), \dots , (\Omega_r,f_r)$ of $\xi$ is equivalent to knowing the intersection of $\xi$ with the ball
 $B_{2r}$. Of course, this can be identified with the set of paths in the Cayley graph connecting $1$ to a vertex in $\xi$ at distance $2r$ from $1$. 
 Obviously, the element $\xi$ is determined by the intersections $\xi \cap B_{2r}$ for all $r\ge 1$. 
 
 \smallskip
 
 \noindent (3)
 The hardest part of the proof of Theorem \ref{thm:Efunctions} consists in showing the deep fact that
 knowledge of the intersection $\xi \cap B_{2r}$ is equivalent to knowledge of a vertex in the $2r+1$ layer $D_{2r+1}$ of the
 graph $F_{\infty}$ associated to $(E,C)$ in Construction \ref{cons:complete-multiresolution}.
 \qed
 \end{remarks}

The action of $\mathbb  F$ on (partial) $E$-functions is described  in our next result.

\begin{lemma}
 \label{lem:actionForFuncts}
  Let $$g= z_r^{-1}x_rz_{r-1}^{-1} x_{r-1}\cdots z_1^{-1}x_1 $$ be a reduced
word in $\mathbb F$, where $x_1,\dots ,x_r,z_1,\dots ,z_r\in E^1$.
Then $\text{Dom}(\theta _g)=\emptyset $ unless $z_i\in Y_i$ for some
$Y_i \in C_{r(x_i)}$ with $Y_i\ne X_{x_i}$ for all $i=1,\dots ,r$,
and $s(x_{i+1})=s(z_i)$ for $i=1,\dots ,r-1$. Assume that the latter
conditions hold. Then the domain of $\theta_g$ is precisely the set
of all $E$-functions $\mathfrak f=((\Omega_1,f_1),(\Omega_2,f_2),
\dots )$ such that $\Omega _1 = s^{-1}(s(x_1))$ and
$$\pi _{Y_i}f_i(x_i \cdots z_2^{-1}x_2z_1^{-1}x_1)=z_i $$
for all $ i=1,\dots ,r$, and the range of $\theta_g$ is the set of those
$E$-functions $\mathfrak f'=(f_1',f_2',\dots , )$ such that
$\pi_{X_{x_{r-i+1}}}f_i'(z_{r-i+1} \cdots z_{r-1}x_r^{-1}z_{r})=
x_{r-i+1}$ for all $i=1,\dots ,r$. Moreover for $\mathfrak f \in
\text{Dom}(\theta_g)$ let $\mathfrak f'= {^{g}\mathfrak f}$ denote
the image of $\mathfrak f$ under the action of $g$. Then $\mathfrak
f'=((\Omega _1',f_1'),(\Omega _2', f_2'),\dots )$ with
  $$
  f_{r+t}'(y_{2t-1} \cdots y_2^{-1}y_1 x_1^{-1}z_1 \cdots z_{r-1}x_r^{-1}z_{r})=
f_t(y_{2t-1} \cdots y_2^{-1}y_1) $$ if $y_{2t-1} \cdots y_2^{-1}y_1
\in \Omega _{t}$ and $y_1\ne x_1$.

  Moreover, for $i=2,3,\dots , r$ and $y_{2t-1} \cdots y_2^{-1}
  y_1z_{i-1}^{-1}x_{i-1}\cdots z_1^{-1}x_1
\in \Omega _{t+i-1}$ with $y_1\ne x_i$,
  $$
  f_{r-i+1+t}'(y_{2t-1} \cdots y_2^{-1}
  y_1x_i^{-1}z_i \cdots z_{r-1}x_r^{-1}z_r)= f_{i-1+t}(y_{2t-1} \cdots y_2^{-1}
  y_1z_{i-1}^{-1}x_{i-1}\cdots z_1^{-1}x_1),$$
and for $y_{2t-1}\cdots y_2^{-1}y_1z_r^{-1}x_r\cdots
z_2^{-1}x_2z_1^{-1}x_1\in \Omega _{t+r}$ we have
  $$
  f_t'(y_{2t-1}\cdots y_2^{-1} y_1)= f_{r+t}(y_{2t-1} \cdots
  y_2^{-1}y_1z_r^{-1}x_r\cdots z_2^{-1}x_2z_1^{-1}x_1)\, .$$
\end{lemma}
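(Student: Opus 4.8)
The strategy is to reduce everything to the dynamical description of $\Omega(E,C)$ already established: points of $\Omega(E,C)$ are the configurations $\xi \in 2^{\mathbb F}$ satisfying conditions (a)--(c) of the boxed description, the domain $\Omega_{g^{-1}}$ is $\{\xi : g \in \xi\}$, and $\theta_g(\xi) = g\xi$. Via Theorem \ref{thm:Efunctions}, a point $\xi \in \Omega(E,C)_w$ with $w \in E^{0,1}$ is the same datum as an $E$-function $\mathfrak f = ((\Omega_1,f_1),(\Omega_2,f_2),\dots)$ with $\Omega_1 = s^{-1}(w)$, and the correspondence is \emph{explicit}: the intersection $\xi \cap B_{2r}$ (paths of length $\le 2r$ from $1$ lying in $\xi$) is codified by the partial $E$-function $(\Omega_1,f_1),\dots,(\Omega_r,f_r)$, with the vertices of $\Omega_j$ being precisely the zig-zag words $e_{2j-1}e_{2j-2}^{-1}\cdots e_2^{-1}e_1 \in \xi$ at distance $2j-1$ from $1$, and $f_j$ recording, for each such word, which element of each group $Y \ne X_{e_{2j-1}}$ in $C_{r(e_{2j-1})}$ lies in the local configuration at that vertex (encoded as $\pi_Y f_j(\cdot) = $ that element). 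So the whole lemma is a bookkeeping translation of the set-theoretic identity $\theta_g(\xi) = g\xi$ through this dictionary.

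First I would analyze the domain. By the description, $\Omega_{g^{-1}} = \{\xi \in \Omega(E,C): g \in \xi\}$, and since $\xi$ is convex, $g = z_r^{-1}x_r \cdots z_1^{-1}x_1 \in \xi$ forces all prefixes of the reduced word $g$ to lie in $\xi$; reading off conditions (c.1)/(c.2) at each prefix vertex gives exactly the stated constraints: $x_1 \in \xi$ forces the vertex $1$ to be of type (c.1) with $\Omega_1 = s^{-1}(s(x_1))$; then $z_1^{-1}x_1 \in \xi$ together with $x_1 \in \xi$ forces the vertex $x_1^{-1}$ (after translating by $x_1^{-1}$) to be of type (c.2), which means $z_1 \in Y_1$ for some $Y_1 \in C_{r(x_1)}$, $Y_1 \ne X_{x_1}$, i.e. $z_1 = \pi_{Y_1} f_1(x_1)$; then $x_2 z_1^{-1} x_1 \in \xi$ forces the vertex $z_1^{-1}x_1$ to be of type (c.1), giving $s(x_2) = s(z_1)$; and so on inductively up the word. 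This yields the characterization of $\mathrm{Dom}(\theta_g)$ in terms of the values $\pi_{Y_i} f_i(x_i \cdots z_1^{-1} x_1) = z_i$. The range $\mathrm{Ran}(\theta_g) = \Omega_g$ is handled by the symmetric computation applied to $g^{-1} = x_r^{-1}z_r x_{r-1}^{-1} z_{r-1}\cdots$, reading conditions along prefixes of $g^{-1}$.

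Next, the formula for $\mathfrak f' = {}^g\mathfrak f$. Since $\theta_g(\xi) = g\xi$, a path $p$ of length $\le 2r'$ from $1$ in $g\xi$ corresponds to the path $g^{-1}p$ in $\xi$, which I re-express as a path from $1$ in $\xi$ by factoring through the geodesic structure — this is where the three displayed cases come from. A reduced word $h' = y_{2t-1}\cdots y_2^{-1}y_1 \in g\xi$ at distance $2t-1$ from $1$ means $g^{-1}h' \in \xi$; after cancellation, $g^{-1}h'$ either (i) has initial segment disjoint from $g$ so that $g^{-1}h'$ re-expands as $h' x_1^{-1} z_1 \cdots z_{r-1} x_r^{-1} z_r$ hanging off the endpoint of $g^{-1}$ inside $\xi$ (the first case, when $y_1 \ne x_1$, so no cancellation with the $x_1$ at the end of $g$); (ii) cancels partway into $g$ down to some intermediate prefix $z_{i-1}^{-1}x_{i-1}\cdots z_1^{-1}x_1$ and then branches with $y_1 \ne x_i$ (the second case); or (iii) cancels all the way to $1$, meaning $h'$ was itself a path continuing past the end of $g$, so the corresponding word in $\xi$ is $y_{2t-1}\cdots y_1 z_r^{-1} x_r \cdots z_1^{-1} x_1$, of length $2t-1 + 2r$ — giving the third displayed formula, which inverts the relation (it expresses $f_t'$ in terms of $f_{r+t}$ rather than vice versa). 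In each case the identification of $f'$-values with $f$-values is immediate because both sides record the same local configurations of $\xi$ at the same vertex of the Cayley graph, just reached via two different paths; convexity of $\xi$ guarantees the intermediate vertices are all in $\xi$ so everything is consistent.

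The main obstacle is purely notational: keeping the zig-zag words, the index shifts ($t \mapsto r+t$, $t \mapsto r-i+1+t$, $t\mapsto t$ with a $+2r$ shift in the argument), and the matching of groups $Y_i \leftrightarrow X_{x_i}$ straight, and verifying that the words appearing as arguments of $f_i, f_i'$ really do lie in the respective $\Omega$'s (so that the formulas are meaningful). The conceptual content — that $\theta_g$ is just left translation and an $E$-function is just the $\xi\cap B_{2r}$ data — is established; I expect the write-up to consist of carefully setting up the dictionary (using Theorem \ref{thm:Efunctions} and Remark \ref{rem:kinfofstems}(2)), doing the domain/range prefix analysis once in detail, and then the three range-formula cases by the "two paths to the same vertex" principle, with the cancellation combinatorics spelled out just enough to be convincing.
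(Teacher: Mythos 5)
Your overall strategy is the paper's own: the paper's proof is a single observation, namely that $\theta_g$ is the composition $\theta_{z_r}^{-1}\theta_{x_r}\cdots\theta_{z_1}^{-1}\theta_{x_1}$, equivalently that in the configuration picture $\theta_g(\xi)=g\xi$ is a translation, after which the lemma is read off through the dictionary of Theorem \ref{thm:Efunctions} and Remark \ref{rem:kinfofstems}(2). So the route is fine; the problem is that in a lemma whose entire content is this bookkeeping, your bookkeeping is oriented backwards at several key points, and as written the argument would not produce the stated conditions.

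Concretely: with $\theta_g(\xi)=g\xi$ one needs $1\in g\xi$, so $\text{Dom}(\theta_g)=\Omega_{g^{-1}}=\{\xi: g^{-1}\in\xi\}$, not $\{\xi: g\in\xi\}$ as you wrote; the prefix analysis must therefore run along $g^{-1}=x_1^{-1}z_1x_2^{-1}z_2\cdots x_r^{-1}z_r$ (you also miswrite $g^{-1}$ as $x_r^{-1}z_rx_{r-1}^{-1}z_{r-1}\cdots$), whose prefixes $x_1^{-1},\,x_1^{-1}z_1,\,x_1^{-1}z_1x_2^{-1},\dots$ lie in $\xi$ --- these are the \emph{inverses} of the $\Omega_i$-words $x_iz_{i-1}^{-1}\cdots z_1^{-1}x_1$ appearing in the statement. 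Your claim that ``$x_1\in\xi$ forces the vertex $1$ to be of type (c.1) with $\Omega_1=s^{-1}(s(x_1))$'' is false under the paper's conventions: an element of $E^1$ in the local configuration $\xi_1$ forces type (c.2); it is $x_1^{-1}\in\xi$ that forces type (c.1) at $w=s(x_1)$. The same inverse confusion affects the translation formulas: since an $\Omega_t$-word $\omega=y_{2t-1}\cdots y_2^{-1}y_1$ labels the vertex $\omega^{-1}\in\xi$, and $g\cdot\omega^{-1}=(\omega g^{-1})^{-1}$, the induced correspondence on $E$-function arguments is right multiplication $\omega\mapsto\omega g^{-1}$ (which is exactly what the three displayed formulas encode), whereas you pass to $g^{-1}h'$ and then assert it ``re-expands as'' $h'x_1^{-1}z_1\cdots x_r^{-1}z_r$, i.e.\ you equate $g^{-1}h'$ with $h'g^{-1}$, which is false in $\mathbb F$. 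Once these orientations are straightened out (domain given by $g^{-1}\in\xi$, vertex equal to the inverse of the $\Omega$-word, correspondence $\omega\mapsto\omega g^{-1}$, and the three cancellation cases measured against how much of the suffix $z_{i-1}^{-1}x_{i-1}\cdots z_1^{-1}x_1$ the argument already contains), your case analysis does reproduce the stated formulas and coincides with the paper's one-line proof; but as written the central steps do not hold.
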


\begin{proof}
The result is clear once we realize that the action of $g$ on a
point $\xi$ in $\Omega(E,C)_{s(x_1)}$ consists in moving $\xi$ using
the composition $\theta_{z_r}^{-1}\theta_{x_r}\cdots
\theta_{z_1}^{-1}\theta_{x_1}$, in case this is possible.
\end{proof}

Notice that, for $g$ as in Lemma \ref{lem:actionForFuncts}, the condition that $\text{Dom}(\theta_g)\ne \emptyset$
is precisely the condition that the path $z_r^*x_r\cdots z_1^*x_1$ is an admissible path in the double $\hat{E}$
of $E$ (see Section \ref{sect:topfree} below for the definition of admissible path). For such an element 
$g\in \mathbb F$ and $\xi \in \text{Dom}(\theta_g)$, we have that the configuration 
$\theta_g(\xi)$ is just a translation of the configuration $\xi$.

\section{Examples}
\label{sect:examples}

Our first example is of a generic type. At first sight, it looks
restrictive to concentrate attention to bipartite separated graphs.
However, the following result shows that (modulo Morita equivalence)
{\it all} graph C*-algebras (or Leavitt path algebras) can be
studied that way. For a general separated graph $(E,C)$ we define
$\Lab (E,C)$ (respectively $\mathcal O (E,C)$) as the quotient of
$L(E,C)$ (respectively $C^*(E,C)$) by the ideal (resp. closed ideal)
generated by the commutators $[e(u), e(u')]$, for all $u,u'\in U$,
where $U$ is the multiplicative semigroup generated by $E^1\sqcup
(E^1)^*$,  and $e(u)=uu^*$.

\begin{proposition}
\label{prop:reductiontobipartitegraphs} Let $(E,C)$ be a separated
graph. Then there exists a bipartite separated graph
$(\widetilde{E},\widetilde{C})$ such that
$$L_K(\widetilde{E},\widetilde{C}) \cong  M_2(L_K(E,C)),\qquad
C^*(\widetilde{E},\widetilde{C}) \cong M_2(C^*(E,C)).$$  Moreover we
have
$$\Lab _K(\widetilde{E}, \widetilde{C}) \cong M_2(\Lab _K
(E,C)),\qquad \mathcal O (\widetilde{E}, \widetilde{C})\cong
M_2(\mathcal O (E,C)) .$$
\end{proposition}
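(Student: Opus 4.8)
## Proof Plan for Proposition \ref{prop:reductiontobipartitegraphs}

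The plan is to construct $(\widetilde{E}, \widetilde{C})$ explicitly by the standard "bipartite double" trick: duplicate each vertex $v$ of $E$ into a source copy $v'$ and a range copy $v''$, place $v''$ in the layer $\widetilde{E}^{0,0}$ and $v'$ in $\widetilde{E}^{0,1}$, replace each edge $e\colon v \to w$ of $E$ (i.e.\ $s(e)=v$, $r(e)=w$) by an edge $\widetilde{e}\colon v' \to w''$, and additionally add, for each vertex $v$, a "connector" edge $c_v \colon v' \to v''$ forming its own singleton partition class. The separation $\widetilde{C}$ at a range vertex $w''$ is obtained by transporting $C_w$ (each $X \in C_w$ becomes $\{\widetilde{e} : e \in X\}$), together with the extra singleton class $\{c_v\}$ at $v''$ for each $v$. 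One then checks that $L_K(\widetilde{E}, \widetilde{C}) \cong M_2(L_K(E,C))$ and $C^*(\widetilde{E}, \widetilde{C}) \cong M_2(C^*(E,C))$: this is essentially \cite[Proposition 9.X type argument]{AG}/\cite{AG2}, done by exhibiting matrix units $\{v'', c_v c_v^*\text{-type elements}\}$ and matching generators. The connector edges $c_v$ give the partial isometries implementing the Morita equivalence (they satisfy $c_v^* c_v = v'$, $c_v c_v^* = v''$), and the relation (SCK2) at $v''$ forces $v'' = c_v c_v^*$ since $\{c_v\}$ is a singleton class. So the first pair of isomorphisms is routine verification on generators and relations.

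For the second pair of isomorphisms, the key point is that the passage to the "tame quotient" $\Lab$ (resp.\ $\mathcal O$) is natural with respect to this construction. Concretely, $\Lab_K(E,C) = L_K(E,C)/J$ where $J$ is the ideal generated by all commutators $[e(u), e(u')]$, $u, u' \in U$, and similarly $\Lab_K(\widetilde{E}, \widetilde{C}) = L_K(\widetilde{E}, \widetilde{C})/\widetilde{J}$. Under the isomorphism $L_K(\widetilde{E}, \widetilde{C}) \cong M_2(L_K(E,C))$, I would show that $\widetilde{J}$ corresponds exactly to $M_2(J)$. Since $M_2(L_K(E,C))/M_2(J) \cong M_2(L_K(E,C)/J) = M_2(\Lab_K(E,C))$, this yields $\Lab_K(\widetilde{E}, \widetilde{C}) \cong M_2(\Lab_K(E,C))$, and identically for $\mathcal O$ with closed ideals (using that $M_2(-)$ is exact and commutes with C*-completion and quotients by closed ideals).

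The main obstacle, and the step I would spend the most care on, is proving $\widetilde{J} = M_2(J)$. The inclusion $M_2(J) \subseteq \widetilde{J}$ is the easier direction: each commutator $[e(u), e(u')]$ in $L_K(E,C)$, when conjugated into $M_2(L_K(E,C))$ via the connector edges, is seen to lie in the ideal generated by commutators of final projections of words in the new generating set $\widetilde{E}^1 \cup (\widetilde{E}^1)^*$ — because words $u$ in the old generators are, up to multiplication by the $c_v$'s and $v''$'s, words in the new generators. For the reverse inclusion $\widetilde{J} \subseteq M_2(J)$, I would argue that every word $\widetilde{u}$ in $\widetilde{E}^1 \cup (\widetilde{E}^1)^*$ either is itself (a matrix unit times) a word in the old generators, or has $e(\widetilde{u})$ already equal to a diagonal projection of the form $v' \oplus 0$ or $0 \oplus v''$ or a single $\widetilde{e}\widetilde{e}^*$-type term — in all cases the relevant commutators land in $M_2(J)$. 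The bookkeeping here requires being careful about how the connector edges $c_v$ (which are "tame" — their final projections $v''$ are central enough to commute with everything relevant) interleave with the $\widetilde{e}$'s; but since the $c_v$ form singleton classes and satisfy $c_v^* c_w = \delta_{v,w} v'$, they contribute no new obstruction to tameness beyond what is already present in $L_K(E,C)$. I would phrase this as: the natural surjection $M_2(L_K(E,C)) \to M_2(\Lab_K(E,C))$ factors through $L_K(\widetilde{E},\widetilde{C}) \to \Lab_K(\widetilde{E},\widetilde{C})$ and vice versa, giving mutually inverse maps. The C*-algebraic statement then follows by taking enveloping C*-algebras, using that $M_2(-)$ and the full crossed-product/enveloping constructions all commute appropriately.
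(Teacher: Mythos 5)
Your proposal is correct and follows essentially the same route as the paper: the identical bipartite double construction (your connector edges $c_v$ are the paper's $h_v$, your $\widetilde e$ the paper's $e_0$), explicit mutually inverse $*$-homomorphisms built from matrix units for the first pair of isomorphisms, and the observation that under this isomorphism the commutator ideal $\widetilde J$ is carried onto $M_2(J)$ (since final projections of words in the new generators are diagonal with entries of the form $e(u)$, $u\in U$), with the C*-case handled by the same argument via universal properties.
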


\begin{proof}
Let $V_0$ and $V_1$ be two disjoint copies of $E^0$, and denote the
canonical maps $E^0\to V_i$ by $v\mapsto v_i$ for $i=0,1$.  Write
$\widetilde{E}^{0,0}= V_0$ and $\widetilde{E}^{0,1}=  V_1$. Now
$\widetilde{E}^1$ will be the disjoint union of a copy of $E^0$ and
a copy of $E^1$: $$\widetilde{E}^1= \{h_v\mid v\in E^0 \} \bigsqcup
\{e_0\mid e\in E^1 \},$$ with
$$
\tilde{r}(h_v)=v_0, \quad \tilde{s}(h_v)=v_1, \quad \tilde{r}(e_0)=
r(e)_0, \quad \tilde{s}(e_0)= s(e)_1, \qquad (v\in E^0, e\in E^1 )
.$$ Denote by $e_{ij}$, $0\le i,j\le 1$ the standard matrix units of
$M_2(K)$.  We define maps $\varphi \colon L(\widetilde{E},
\widetilde{C})\to M_2(L(E,C))$ and $\psi \colon M_2(L(E,C))\to
L(\widetilde{E},\widetilde{C})$ by the rules
$$\varphi (v_i)= v\otimes e_{ii},\quad \varphi (h_v)= v\otimes
e_{01},\quad \varphi (e_0)= e\otimes e_{01}, \qquad (v\in E^0, e\in
E^1, i=0,1) ,$$ and
$$\psi (v\otimes e_{ii}) =v_i,\qquad \psi (v\otimes e_{01})=h_v,
\qquad (v\in E^0, i=0,1) ,$$
\begin{align*}\psi (e\otimes e_{00}) & =
e_0h_{s(e)}^*,\qquad \, \psi (e\otimes e_{11})  = h ^*_{r(e)}e_0\\
 \psi
(e\otimes e_{01}) & =e_0,\qquad \qquad \psi (e\otimes e_{10})  =
h^*_{r(e)} e_0 h^*_{s(e)}, \qquad (e\in E^1) .
\end{align*}
It is straightforward (though somewhat tedious) to check that
$\varphi$ and $\psi$ give well-defined $*$-homomorphisms which are
mutually inverse. Let $U$ and $\widetilde{U}$ denote the
multiplicative semigroups of the algebras $L(E,C)$ and
$L(\widetilde{E}, \widetilde{C})$ generated by $E^1\sqcup (E^1)^*$
and $\widetilde{E}^1\sqcup (\widetilde{E}^1)^*$ respectively. Let
$J$ denote the ideal of $L(E,C)$ generated by all elements of the
form $[e(u), e(u')]$, with $u,u'\in U$, and let $\widetilde{J}$
denote the corresponding ideal of $L(\widetilde{E},\widetilde{C})$.
 Then
it is easy to check that, for $\tilde{u}\in \widetilde{U}$, $\varphi
(e(\tilde{u}))$ is a matrix consisting of an element of the form
$e(u)$, for some $u\in U$, in one of the two main diagonal positions
and $0$s in the rest of positions. This clearly implies that
$\varphi (\widetilde{J})\subseteq M_2(J)$. Since the image by $\psi
$ of an element of the form $u\otimes e_{00}$, for  $u\in U$,
belongs clearly to $\widetilde{U}$, we see that the reverse
containment $M_2(J)\subseteq \varphi (\widetilde{J})$ also holds.
Thus $\varphi (\widetilde{J})= M_2(J)$ and we conclude that $\varphi
$ defines an isomorphism from  $\Lab (\widetilde{E},\widetilde{C})$
onto $M_2(\Lab (E,C))$, as desired.

The same proof applies (thanks to the universal properties of the
involved C*-algebras) to show the corresponding result for the graph
C*-algebras.
\end{proof}

In particular, since any classical graph C*-algebra $C^*(E)$
satisfies that the final projections $[e(u),e(u')]=0$ for all
$u,u'\in U$, we see that $\mathcal O (E)= C^*(E)$ can always be
interpreted (through a very concrete Morita equivalence) as a graph
C*-algebra of a bipartite separated graph. A similar statement
applies to Leavitt path algebras. The description of a Leavitt path
algebra in terms of a crossed product by a partial action has been
recently achieved by Gon\c calves and Royer in \cite{GR}.

We are now going to show that we can always reduce our study of the
graph algebras of finite bipartite separated graphs, up to Morita
equivalence and finite-dimensional summands, to the study of finite
bipartite separated graphs satisfying Hypothesis
\ref{hypo:groupsbiggerthantwo}.

\begin{proposition}
\label{prop:reductiontoHypo} Let $(E,C)$ be a finite bipartite
separated graph. Then there exists a finite bipartite separated
graph $(E',C')$ satisfying Hypothesis \ref{hypo:groupsbiggerthantwo}
such that $L_K(E,C)$ is Morita equivalent to $L_K(E',C')\oplus K^n$
for some $n\ge 0$. Similar statements hold for $C^*(E,C)$, $\Lab
(E,C)$ and $\mathcal O (E,C)$.
\end{proposition}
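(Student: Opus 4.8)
The plan is to reduce the vertices with singleton partition sets or with $|C_v|=1$ one at a time, showing that each such reduction changes the graph algebra only up to Morita equivalence and a finite-dimensional summand, and then iterate. There are two ``bad'' phenomena forbidden by Hypothesis \ref{hypo:groupsbiggerthantwo}: a vertex $v\in E^{0,0}$ with $|C_v|=1$, and a vertex $w\in E^{0,1}$ with $s^{-1}(w)=\emptyset$; note the latter is already excluded by our standing convention $s^{-1}(w)\ne\emptyset$, so the essential case is $|C_v|=1$. (We also need $|C_v|\ge 2$, i.e.\ we must handle $|C_v|=1$.) First I would treat a vertex $v\in E^{0,0}$ with $C_v=\{X\}$ a single block. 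The relation at $v$ is then simply $v=\sum_{e\in X}ee^*$ with the $e^*e'=\delta_{e,e'}s(e)$ relations, which is exactly the classical (non-separated) Cuntz--Krieger-type relation at $v$; intuitively $v$ and the edges into it can be ``absorbed'' into the source vertices $s(e)$, $e\in X$, at the cost of passing to a corner. Concretely, one checks that the projection $1-v$ (or rather the sum of all vertices other than $v$, plus suitable modifications) is full in $L_K(E,C)$ unless $v$ is isolated in a degenerate way, and the corner algebra is $L_K(E_1,C_1)$ for the graph $E_1$ obtained by deleting $v$ and the edges of $X$ and reassigning; this is the separated-graph analogue of the ``source elimination'' / ``outsplitting'' moves familiar from the non-separated theory (cf.\ the reduction arguments in \cite{AG}). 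The finite-dimensional summand $K^n$ appears when a connected component collapses entirely to a point, e.g.\ when $v$ has a single edge to a source.

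After handling $|C_v|=1$ for $v\in E^{0,0}$, the resulting graph may fail to be bipartite or may no longer satisfy $s(E^1)=E^{0,1}$, $r(E^1)=E^{0,0}$. So the second step is to reapply Proposition \ref{prop:reductiontobipartitegraphs} to return to the bipartite setting, and to discard or absorb any newly created sources in $E^{0,1}$ using the standing conventions. One must check that this does not reintroduce singleton partition sets; this follows because the bipartite-ization construction in the proof of Proposition \ref{prop:reductiontobipartitegraphs} gives each new vertex in $\widetilde E^{0,0}$ a partition with at least as many blocks as before (the blocks $C_v$ are carried over, together with possibly the new ``$h_v$'' block), so once $|C_v|\ge 2$ everywhere this is preserved. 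Since the graph is finite, after finitely many rounds of (reduce singletons)\,$\to$\,(re-bipartize) we reach a finite bipartite separated graph $(E',C')$ with $|C_v|\ge 2$ for all $v\in E'^{0,0}$, i.e.\ satisfying Hypothesis \ref{hypo:groupsbiggerthantwo}, and by construction $L_K(E,C)$ is Morita equivalent to $L_K(E',C')\oplus K^n$. The same argument, using the universal property of $C^*(E,C)$ and the fact that the reduction moves are implemented by explicit elements (projections, partial isometries) that exist in both categories, gives the statement for $C^*(E,C)$; and since $\Lab_K(E,C)$, $\mathcal O(E,C)$ are the quotients of $L_K(E,C)$, $C^*(E,C)$ by the ideals generated by the commutators $[e(u),e(u')]$, and these ideals are preserved under Morita equivalence (being generated by elements of the multiplicative semigroup of partial isometries, which corresponds under the corner isomorphisms), the statement descends to $\Lab$ and $\mathcal O$ as well.

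The main obstacle I anticipate is making precise and correct the ``source/singleton elimination'' move for separated graphs: one must identify the right full corner and the right reduced graph, keep careful track of the partitions $C_w$ at the source vertices $w=s(e)$ (which may acquire new blocks or have blocks modified when $v$ is deleted), and correctly account for the finite-dimensional summand in the degenerate cases (isolated components, lines ending in a source). A clean way to organize this is to argue at the level of the monoid $M(E,C)$ — using that $M(E,C)\cong\mon{L_K(E,C)}$ by \cite[Theorem 4.3]{AG} — together with explicit Morita contexts; but the Morita equivalence itself must be exhibited concretely (via a full idempotent) rather than merely at the level of monoids, so that it simultaneously handles $L_K$, $C^*$, $\Lab$ and $\mathcal O$. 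I would also need to double-check that the hypothesis $s(E^1)=E^{0,1}$, $r(E^1)=E^{0,0}$ assumed throughout Sections \ref{sect:bipsepgraphs}--\ref{sect:dyn-systems} is restored at the end, trimming any stray isolated vertices into the $K^n$ summand.
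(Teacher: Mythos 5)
Your first step coincides with the paper's: for $v\in E^{0,0}$ with $C_v=\{X\}$ a single block, take the projection $p=\sum_{w\in E^0\setminus\{v\}}w$, which is full because $v=\sum_{e\in X}ee^*=\sum_{e\in X}e\,s(e)\,e^*$ lies in the ideal generated by $p$, and identify $pL_K(E,C)p$ with $L_K(F,D)$, where $(F,D)$ is obtained by deleting $v$ and the edges of $r^{-1}(v)$. The identification works precisely because (SCK1) for the single block gives $e^*f=\delta_{e,f}s(e)$ for all $e,f\in r^{-1}(v)$, so inside the corner every occurrence of these edges collapses onto vertices of the smaller graph; you leave this as ``one checks'' and flag it as the main obstacle, but it is exactly the assertion the paper makes, and your instinct here is correct. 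Likewise your final trimming of isolated vertices into the $K^n$ summand matches the paper.

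The genuine flaw is your second step. Deleting $v\in E^{0,0}$ together with $r^{-1}(v)$ cannot destroy bipartiteness: every remaining edge still runs from $E^{0,1}$ to $E^{0,0}$, and no partition at any other vertex changes, since the vertices of $E^{0,1}$ are sources (so $C_w=\emptyset$ there) and $r^{-1}(w)$ is untouched for $w\in E^{0,0}\setminus\{v\}$. So your worry that the partitions at the vertices $s(e)$ get modified, and the ensuing round of re-applying Proposition \ref{prop:reductiontobipartitegraphs}, rest on a false premise; no new singleton-block vertices are ever created by the elimination, and one simply removes all vertices with $|C_v|=1$ one after another, then deletes the vertices of $\widetilde{E}^{0,0}\setminus r(\widetilde{E}^1)$ and $\widetilde{E}^{0,1}\setminus s(\widetilde{E}^1)$, each isolated vertex splitting off a copy of $K$. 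Moreover, the check you offer for the re-bipartization step is incorrect: in the construction of Proposition \ref{prop:reductiontobipartitegraphs} the new vertex $v_0$ carries the blocks $\{h_v\}$ and $X_0$ for $X\in C_v$, so the copy $w_0$ of a vertex $w$ with $C_w=\emptyset$ (e.g.\ every vertex of $E^{0,1}$) acquires exactly one block, namely $\{h_w\}$. Thus re-bipartizing does reintroduce vertices with $|C|=1$, contrary to your claim, and your ``finitely many rounds'' termination argument is unjustified as stated. Finally, for $\Lab$ and $\mathcal O$ the phrase ``the commutator ideals are preserved under Morita equivalence'' should be replaced by the concrete observation (as in the proof of Proposition \ref{prop:reductiontobipartitegraphs}) that under the explicit corner isomorphism the final projections $e(u)$ of the two semigroups correspond, so the corner of the commutator ideal is the commutator ideal of the corner.
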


\begin{proof}
Suppose that there is $v\in E^{0,0}$ such that $|C_v|=1$. Let
$(F,D)$ be the bipartite separated graph such that
$F^{0,0}=E^{0,0}\setminus \{v\}$, $F^{0,1}= E^{0,1}$, and
$F^1=E^1\setminus r^{-1}(v)$. Set $p=\sum _{w\in E^0\setminus
\{v\}}w$. Then $p$ is a full projection in $L_K(E,C)$ such that
$$L_K(F,D)\cong pL_K(E,C)p.$$
Therefore $L_K(E,C)$ is Morita equivalent to $L_K(F,D)$. We can
apply this process to suppress all the vertices $v'$ in $E^{0,0}$
such that $|C_{v'}|=1$, and thus we arrive at a bipartite separated
graph $(\widetilde{E}, \widetilde{C})$ such that $L_K(E,C)$ is
Morita equivalent to $L_K(\widetilde{E},\widetilde{C})$ and
$|C_{w}|\ne 1$ for all $w\in E^{0,0}$.

Now let $(E',C')$ be the bipartite separated graph obtained from
$(\widetilde{E}, \widetilde{C})$ by deleting all the vertices in
$\widetilde{E}^{0,0}\setminus r(\widetilde{E}^1)$ and all the
vertices in $\widetilde{E}^{0,1}\setminus s(\widetilde{E}^1)$. Then
clearly $(E',C')$ satisfies Hypothesis
\ref{hypo:groupsbiggerthantwo}, and
$L_K(\widetilde{E},\widetilde{C})\cong L_K(E',C')\oplus K^n$, where
$n=|E^0|- |r(\widetilde{E}^1)|- |s(\widetilde{E}^1)|$. This shows
the result.
\end{proof}

We start our description of concrete examples with a motivational
example for the entire theory of separated graphs, see \cite{AG},
\cite{AG2}, \cite{Aone-rel}, \cite{AEK}.

\begin{example}
\label{exam:m,ndyn-system}

For integers $1\le m\le n$, define the separated graph
$(E(m,n),C(m,n))$, where
\begin{enumerate}
\item $E(m,n)^0 := \{v,w\}$ (with $v\ne w$).
\item $E(m,n)^1 :=\{\alpha_1,\dots , \alpha_n,\beta_1,\dots ,\beta_m\}$ (with $n+m$ distinct edges).
\item $s(\alpha_i)=s(\beta _j) =v$ and $r(\alpha _i)=r(\beta _j)=w$
for all $i$, $j$.
\item $C(m,n)= C(m,n)_v := \{X,Y\}$, where $X:= \{\alpha_1,\dots ,\alpha_n\}$ and $Y:=  \{\beta _1,\dots, \beta _m \}$.
\end{enumerate}
See Figure \ref{fig:m,nsepargraph} for a picture in the case $m=2$,
$n=3$. By \cite[Proposition 2.12]{AG},
$$L(E(m,n),C(m,n))\cong
M_{n+1}(L(m,n)) \cong M_{m+1}(L(m,n)),$$ where $L(m,n)$ is the
classical Leavitt algebra of type $(m,n)$. The same argument (by way
of universal properties) shows that
\begin{equation*}  \label{isomatUnc}
C^*(E(m,n),C(m,n)) \cong M_{n+1}(U^{\text{nc}}_{m,n})\cong
M_{m+1}(U^{\text{nc}}_{m,n}) \,,
\end{equation*}
where $U^{\text{nc}}_{m,n}$ denotes the C*-algebra generated by the
entries of a universal unitary $m\times n$ matrix, as studied by
Brown and McClanahan in \cite{Brown, McCla1, McCla2, McCla3}.

The C*-algebras $\mathcal O _{m,n}$ and $\mathcal O _{m,n}^r$
studied in \cite{AEK} are precisely the C*-algebras $\mathcal O
(E(m,n), C(m,n))$ and $\mathcal O ^r( E(m,n), C(m,n))$ in the
notation of the present paper. (See Definition
\ref{def:reducedO(E,C)} for the definition of the reduced C*-algebra
$\mathcal O ^r(E,C)$.) It was shown in \cite{AEK} that these two
C*-algebras are not isomorphic.

The algebras $\Lab _{m,n}(K):= \Lab _K(E(m,n), C(m,n))$ have not
appeared before in the literature. They provide natural examples
(indeed universal examples) of actions on a compact Hausdorff space
supporting $(m,n)$-paradoxical decompositions (Corollary
\ref{cor:non-paradoxical}).
\end{example}

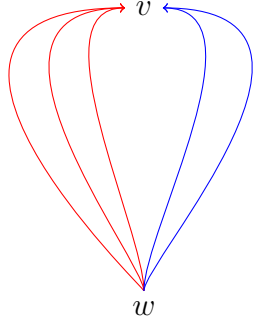
\begin{figure}[htb]
\begin{tikzpicture}[scale=4]
 \node (v) at (0,1)  {$v$};
 \node (w) at (0,0) {$w$};
 \draw[<-,red] (v.west) .. controls+(left:9mm) and +(up:.1mm) ..
 (w.north);
 \draw[<-,red] (v.west) .. controls+(left:6mm) and +(up:1mm) ..
 (w.north);
\draw[<-,red] (v.west) .. controls+(left:3mm) and +(up:2mm) ..
 (w.north);
\draw[<-,blue] (v.east) .. controls+(right:7mm) and +(up:1mm) ..
 (w.north);
\draw[<-,blue] (v.east) .. controls+(right:3.5mm) and +(up:2mm) ..
 (w.north);
\end{tikzpicture}
\caption{The separated graph $(E(2,3),C(2,3))$}
\label{fig:m,nsepargraph}
\end{figure}

We now proceed to describe several concrete examples of separated
graphs and their associated algebras. Curiously enough, these
algebras (or the dynamical systems underlying them) have appeared
before in the literature in different contexts. In many cases, one
has to look at an appropriate full corner of the algebras to find
the significant construction.

\begin{example}
\label{exam:purerefine} Our first example is the ``pure refinement"
example, given in Figure \ref{fig:pureref}. Let $(E,C)$ be the
separated graph described by that picture, with $C_v=\{ X, Y\}$ and
$X=\{ \alpha _1,\alpha_2\}$, $Y=\{ \beta _1,\beta _2 \}$. The corner
$vL_K(E,C)v$ is isomorphic to a free product $K^2*_KK^2$. The corner
of the abelianized Leavitt algebra $\Lab _K(E,C)$ is isomorphic to
$K^4$. We thus see a drastic reduction of the complexity in the
transition from $L(E,C)$ to $\Lab (E,C)$. A similar statement holds
for the C*-algebras $C^*(E,C)$ and $\mathcal O (E,C)$.

\begin{center}{
\begin{figure}[htb]
\begin{tikzpicture}[scale=2]
\node (v) at (1.5,1)  {$v$};
 \node (w_1) at (0,0) {$w_1$};
 \node (w_2) at (1,0) {$w_2$};
 \node (w_3) at (2,0) {$w_3$};
 \node (w_4) at (3,0) {$w_4$};
 \draw[<-,red]  (v.west) ..  node[above]{$\alpha_2$} controls+(left:3mm) and +(up:3mm) ..
 (w_1.north) ;
 \draw[<-,red] (v.south) .. node[below, left]{$\alpha_1$}  controls+(left:2mm) and +(up:2mm) ..
 (w_2.north);
\draw[<-,blue] (v.south) .. node[below, right]{$\beta_1$}
controls+(right:2mm) and +(up:2mm) ..
 (w_3.north);
\draw[<-,blue] (v.east) .. node[above]{$\beta_2$}
controls+(right:3mm) and +(up:3mm) ..
 (w_4.north);
\end{tikzpicture}
\caption{The separated graph of pure refinement}
\label{fig:pureref}
\end{figure}
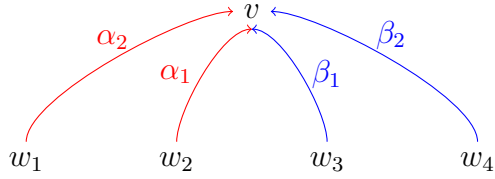}
\end{center}

\end{example}

\begin{example}
\label{exam:trussexam} Let $(E,C)$ be the separated graph described
in Figure \ref{fig:trussexample}, with $C_v=\{ X, Y\}$ and $X=\{
\alpha _0,\alpha_1\}$ and $Y=\{ \beta _0,\beta _1 \}$.

\begin{center}{
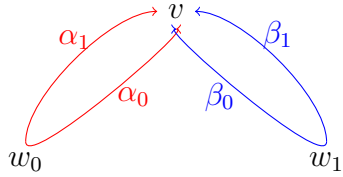
\begin{figure}[htb]
\begin{tikzpicture}[scale=2]
\node (v) at (1,1)  {$v$}; \node (w_0) at (0,0) {$w_0$};
 \node (w_1) at (2,0) {$w_1$};
 \draw[<-,red]  (v.west) ..  node[above]{$\alpha_1$} controls+(left:3mm) and +(up:3mm) ..
 (w_0.north) ;
 \draw[<-,red] (v.south) .. node[below, right]{$\alpha_0$}  controls+(right:1mm) and +(down:2mm) ..
 (w_0.north);
\draw[<-,blue] (v.south) .. node[below, left]{$\beta_0$}
controls+(left:1mm) and +(down:2mm) ..
 (w_1.north);
\draw[<-,blue] (v.east) .. node[above]{$\beta_1$}
controls+(right:3mm) and +(up:3mm) ..
 (w_1.north);
\end{tikzpicture}
\caption{The separated graph corresponding to Truss example}
\label{fig:trussexample}
\end{figure}}
\end{center}

The Leavitt path algebra $L_K(E,C)$ is Morita equivalent to the
corner $vL(E,C)v$, which is isomorphic to $M_2(K)*_K M_2(K)$.

By Corollary \ref{cor:Lab+OECarecrossedprods}, $\Lab _K(E,C)\cong
C_K(\Omega (E,C))\rtimes \mathbb F_4$, where $\Omega :=\Omega (E,C)$
is a 0-dimensional compact space admitting a decomposition $\Omega =
X\sqcup Y\sqcup Z$ into clopen subsets, such that $Z$ decomposes in
two different ways as a disjoint union of clopen subsets
$Z=H_0\sqcup H_1 =V_0\sqcup V_1$, together with homeomorphisms
$\mathfrak h_i\colon X\to H_i$ for $i=0,1$, and $\mathfrak v_j\colon
Y\to V_j$ for $j=0,1$. This construction is closely related to Truss
example in \cite{Truss}, which we briefly describe now. Let
$\mathcal E$ be the non-directed graph with vertices $\{ v_i \mid
i\in \Z \}$ and with an edge connecting a vertex $v_{i}$ with
$v_{i+1}$, for all $i\in \Z$. Let $T$ be the set of all the
``labelings" of the edges and vertices of $\mathcal E$ satisfying
the following conditions:
\begin{enumerate}
\item[(i)] the vertices are labelled $0$ or $1$, and adjacent
vertices have different labels,
\item[(ii)] the edges are labelled by pairs $(i,j)$ where $i,j\in
\{0,1\}$,
\item[(iii)] if $(i,j), (k,l)$ are the labels on the edges incident
with a vertex labelled $0$ then $i\ne k$.
\item[(iv)] if $(i,j),(k,l)$ are the labels on the edges incident
with a vertex labelled $1$ then $j\ne l$.
\end{enumerate}
Let
$$X=\{ t\in T \mid  v_0 \text{ is labelled } 0 \text{ and } (v_0,v_1) \text{ is labelled } (0,*) \}. $$
$$Y=\{ t\in T \mid  v_0 \text{ is labelled } 1 \text{ and } (v_0,v_1) \text{ is labelled } (*,1) \}. $$

Observe that $(v_{-1},v_0)$ is labelled $(1,*)$ (respectively
$(*,0)$) for every $t$ in $X$ (respectively $Y$).

Let $T'$ be a disjoint copy of $T$, with vertices $v_i'$ and with
the same conditions on the labels of vertices and edges. Let $\iota
\colon T \to T'$ be the canonical map. Set
$$Z= \{t'\in T'\mid v_0' \text{ is labelled } 0 \}.$$

Consider the shift map $S\colon T\to T$ defined by the following
conditions: $S(t)$ is the labelled graph such that the label of
$S(t)$ at $v_i$ is the label of $t$ at $v_{i-1}$, and the label of
$S(t)$ at $(v_{i},v_{i+1})$ is the same as the label of $t$ at
$(v_{i-1},v_{i})$. Let $S'= \iota \circ S \circ \iota ^{-1}$ be the
corresponding bijection of $T'$.

We also need the involution $\phi \colon T\to T$, which sends any
$t\in T$ to the labelled graph $\phi (t)$ defined by the condition
that the bijection $v_i\to v_{-i}$ on the set of vertices $\{ v_i
\mid i\in \Z \}$ is an isomorphism of labelled graphs between $t$
and $\phi (t)$.

We first indicate the connection between the definition using
$E$-functions (see Theorem \ref{thm:Efunctions}) and the description
above. Consider an $E$-function $(\Omega _1,f_1), (\Omega
_2,f_2),\dots $, where $\Omega _1= s^{-1}(w_0)=\{ \alpha _0,\alpha
_1 \}$. This $E$-function will correspond, upon identification of
the labels $(i,j)$ with the labels $(\alpha _i, \beta_j)$, to an
element $t$ of $X$, as follows. The function $f_1$ takes values on
$\{ \beta_0,\beta_1 \}$ so we define the label at the edge
$(v_0,v_1)$ as $(\alpha_0,f_1(\alpha _0))$ and the label at the edge
$(v_{-1},v_0)$ as $(\alpha_1, f_1(\alpha_1))$. Now the label at the
edge $(v_1,v_2)$ is defined by $(f_2( \ol{f_1(\alpha _0)}f_1(\alpha
_0)^{-1} \alpha _0), \ol{f_1(\alpha _0)})$, where we set
$\ol{\alpha_i}=\alpha _{1-i}$ and $\ol{\beta _j}=\beta _{1-j}$.
Similarly, the label at the edge $(v_{-2},v_{-1})$ is defined by
$(f_2(\ol{f_1(\alpha_1)}f_1(\alpha_1)^{-1} \alpha_1), \ol{f_1(\alpha
_1)} ) $. Continuing in this way, we define the labels at all the
edges $(v_i,v_{i+1})$. The labels at vertices are defined so as to
make $t$ an element of $X$. It is easy to see that this procedure
establish a bijection between the set of $E$-functions $(\Omega
_1,f_1), (\Omega _2,f_2),\dots $ with $\Omega _1= s^{-1}(w_0)$ and
$X$. Similarly there is a bijection between the set of $E$-functions
$(\Omega _1,f_1), (\Omega _2,f_2),\dots $ with $\Omega _1=
s^{-1}(w_1)$ and $Y$. For each $r\ge 1$, the intersection of an element $\xi \in X$ with the ball $B_{2r}$
in the Cayley graph of $\mathbb F_4$ has exactly two vertices: the one obtained by going in the ``positive direction'',
and the one obtained by going in the ``negative direction''. 

We are now in a position to describe the maps $\mathfrak
h_i,\mathfrak v_j$ in our picture of $\Omega$. Set
$$H_0=\{ t'\in  Z \mid  (v_0',v_1') \text{ is labelled } (0,*)
\} ,\quad H_1=\{ t'\in  Z \mid   (v_0',v_1') \text{ is labelled }
(1,*) \} ,$$
$$V_0=\{ t'\in  Z \mid  (v_0',v_1') \text{ is labelled } (*,0)
\} ,\quad V_1=\{ t'\in  Z \mid   (v_0',v_1') \text{ is labelled }
(*,1) \} ,$$

Then $Z=H_0\sqcup H_1= V_0\sqcup V_1$. Define homeomorphisms
$\mathfrak h_0\colon X\to H_0$ and $\mathfrak h_1\colon X\to H_1$ by
$$\mathfrak h_0 = \iota |_{X},\qquad  \text{ and } \qquad \mathfrak h_1= (\iota \circ
\phi)|_X.$$ Define homeomorphisms $\mathfrak v_0\colon Y\to V_0$ and
$\mathfrak v_1\colon Y\to V_1$ by
$$\mathfrak v_0 = (S'\circ \iota) |_Y,\qquad  \text{ and } \qquad \mathfrak v_1= (S'\circ \iota
\circ \phi)|_Y.$$
Observe that 
$$2[X]= [\mathfrak h _0(X)]+[\mathfrak h _1(X)] = [Z] = [\mathfrak v_0(Y)]+[\mathfrak v_1(Y)]= 2[Y]$$
in $S(\Omega , \mathbb F_4, \mathbb E)$.
Truss showed in \cite{Truss} that $[X]\ne [Y]$ even in the semigroup
$S(\Omega , \mathbb F_4, \mathbb B)$, where $\mathbb B$ denotes the
subalgebra of $\mathcal P (\Omega )$ consisting of the Borel subsets
of $\Omega $. Since $M(E,C)=\langle a,b\mid 2a=2b \rangle $, the
proof of Theorem \ref{thm:main-typesemigroup} gives that $[X]\ne
[Y]$ in $S(\Omega , \mathbb F_4, \mathbb E)$.
\end{example}

\begin{example}
\label{exam:alggenepi} Let $(E,C)$ be the separated graph described
in Figure \ref{fig:partiisometry}, with $C_v=\{ X, Y\}$ and $X=\{
\alpha _1,\alpha_2\}$ and $Y=\{ \beta _1,\beta _2 \}$. By
\cite[Lemma 5.5(a)]{Aone-rel}, the corner $vC^*(E,C)v$ is the unital
universal C*-algebra generated by a partial isometry. See \cite{BN}
for a recent study of the {\it non-unital} universal C*-algebra
generated by a partial isometry (of which $vC^*(E,C)v$ is the
unitization).

\begin{center}{
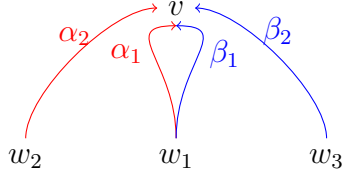
\begin{figure}[htb]
\begin{tikzpicture}[scale=2]
\node (v) at (1,1)  {$v$};
 \node (w_1) at (1,0) {$w_1$};
 \node (w_2) at (0,0) {$w_2$};
 \node (w_3) at (2,0) {$w_3$};
 \draw[<-,red]  (v.west) ..  node[above]{$\alpha_2$} controls+(left:3mm) and +(up:3mm) ..
 (w_2.north) ;
 \draw[<-,red] (v.south) .. node[below, left]{$\alpha_1$}  controls+(left:4mm) and +(up:5mm) ..
 (w_1.north);
\draw[<-,blue] (v.south) .. node[below, right]{$\beta_1$}
controls+(right:4mm) and +(up:5mm) ..
 (w_1.north);
\draw[<-,blue] (v.east) .. node[above]{$\beta_2$}
controls+(right:3mm) and +(up:3mm) ..
 (w_3.north);
\end{tikzpicture}
\caption{The separated graph of a partial isometry}
\label{fig:partiisometry}
\end{figure}}
\end{center}

The C*-algebra $v\mathcal O (E,C) v$ is the C*-algebra of the {\it
free monogenic inverse semigroup}, which was studied, among other
places, in \cite{HR}. The $0$-dimensional compact space $\Omega
(E,C)$ decomposes as $\Omega (E,C)= X_v\sqcup \bigsqcup_{i=1}^3
Y_{w_i}$, and we have two decompositions $X_v= H_{\beta _1}\sqcup
H_{\beta _2}= H_{\alpha_1}\sqcup H_{\alpha_2}$ in clopen sets, with
a universal homeomorphism $\alpha:=  \theta _{\alpha_1}\circ
\theta_{\beta_1}^{-1}\colon H_{\beta_1}\to H_{\alpha _1}$, in the
sense that given any other homeomorphism $\beta \colon X'_1\to
Z_1'$, where $X_1'$ and $Z_1'$ are clopen subsets of a compact
Hausdorff space $X'$, there exists a unique equivariant continuous
map from $X'$ to $X_v$ (cf. Corollary \ref{cor:univECdynsustem}).
The picture of the space $X_v$, together with the action of $\alpha
$ on it is shown in Figure \ref{Fig:spaceXv}. Observe that the sets
$H_{\beta _2}$ and $H_{\alpha _2}$ are precisely the sets of points
at the bottom row and right-most column, respectively. The unique
fixed point of $\alpha$ is sited at the top left corner.

\begin{figure}[htb]
 \[
 {
 \xymatrix{
 & \bullet \ar@{.>}[d] &  \ar@{.>}[l]  & \cdots \cdots   &   \bullet\ar[l] & \bullet \ar[l] & \bullet \ar[l]  \\
 &  \ar@{--}[d]    &  \cdots  \cdots &  \cdots &  \cdots  &  \cdots &  \\
& \bullet \ar[d] & \cdots   \cdots  & \cdots    \cdots  &  \cdots \cdots & \bullet \ar[ld] & \bullet \ar[ld] \\
& \bullet \ar[d] &  \cdots   \cdots &  \cdots \cdots  &  \bullet  \ar[ld]  & \bullet \ar[ld] & \bullet \ar[ld] \\
 & \bullet & \cdots \cdots &  \bullet  & \bullet & \bullet & \bullet
   }
 }
 \]
\caption{The compact space $X_v$.} \label{Fig:spaceXv}
\end{figure}
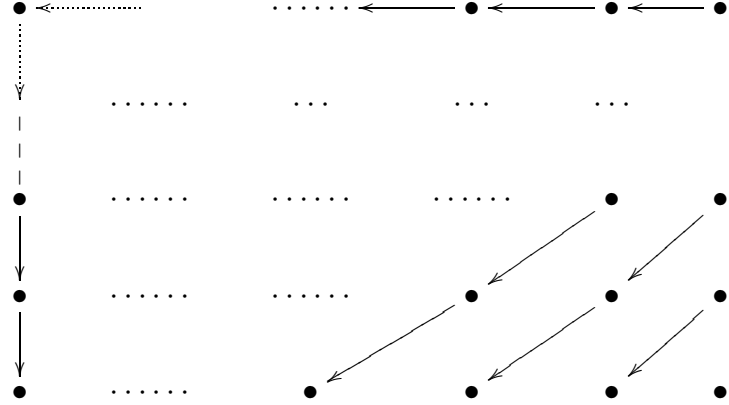

It is not hard to show, using the universal properties of
$C(X_v)\rtimes _{\alpha^*}\Z$ and $\mathcal O (E,C)$, that there
is a canonical isomorphism 
$$C(X_v)\rtimes _{\alpha^*}\Z \overset{\cong}{\longrightarrow}   v\mathcal O (E,C)v $$
which is the identity on $C(X_v)$ and sends the canonical partial isometry $u_1$
to the partial isometry $u_{\alpha_1\beta_1^{-1}}$ in $v\mathcal O (E,C)v $.
It can be shown that this partial crossed product description of the C*-algebra of the free monogenic inverse semigroup 
coincides with the one obtained by applying \cite[Theorem 6.5]{ExelPAMS98} to the group 
$G=\mathbb Z$.

\end{example}

\begin{example}
\label{exam:lamplighter} Let $(F,D)$ be the separated graph
described in Figure \ref{fig:lampgroup}, with $C_v=\{ X, Y\}$ and
$X=\{ \alpha _1,\alpha_2\}$ and $Y=\{ \beta _1,\beta _2 \}$. By
\cite[Lemma 5.5(b)]{Aone-rel}, we have
$$vC^*(E,C)v\cong C^*( (\bgast _{\Z}\Z_2)\rtimes \Z) \,$$
where $\Z$ acts on $\bgast_{\Z} \Z_2$ by shifting the factors of the
free product. Similar computations give that $vL_K(E,C)v$ is
isomorphic to the group algebra $K[(\bgast _{\Z}\Z_2)\rtimes \Z]$.

\begin{center}{
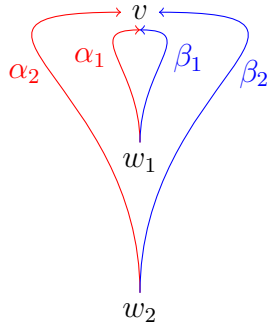
\begin{figure}[htb]
\begin{tikzpicture}[scale=2]
\node (v) at (0,1)  {$v$};
 \node (w_1) at (0,0) {$w_1$};
 \node (w_2) at (0,-1) {$w_2$};
 \draw[<-,red]  (v.south) ..  node[below, left]{$\alpha_1$} controls+(left:4mm) and +(up:5mm) ..
 (w_1.north) ;
 \draw[<-,red] (v.west) .. node[below, left]{$\alpha_2$}  controls+(left:14mm) and +(up:14mm) ..
 (w_2.north);
\draw[<-,blue] (v.east) .. node[below, right]{$\beta_2$}
controls+(right:14mm) and +(up:14mm) ..
 (w_2.north);
\draw[<-,blue] (v.south) .. node[below, right]{$\beta_1$}
controls+(right:4mm) and +(up:5mm) ..
 (w_1.north);
\end{tikzpicture}
\caption{The separated graph underlying the lamplighter group}
\label{fig:lampgroup}
\end{figure}}
\end{center}

It is easy to show that
$$v\mathcal O (F, D)v\cong C^*(\Z_2\wr \Z), \qquad v\Lab _K(F,D)v\cong K[\Z_2\wr \Z
] , $$ where $\Z_2 \wr Z$ is the wreath product $(\oplus _{\Z}
\Z_2)\rtimes \Z$. This is a well-known group, called the lamplighter
group. This group provided the first counter-example to the Strong
Atiyah's Conjecture, see \cite{DS}, \cite{GLSZ}. Observe that
$\Omega (E,C)=X_v\sqcup Y_{w_1}\sqcup Y_{w_2}$, with $X_v\cong \prod
_{\Z} \Z_2$ being the {\it Pontrjagin dual} of $\oplus _{\Z} \Z_2$.
Here we have two decompositions $X_v= H_{\beta _1}\sqcup H_{\beta
_2}= H_{\alpha_1}\sqcup H_{\alpha_2}$ in clopen sets, and a
universal homeomorphism $\alpha $ of $X_v$ sending $H_{\beta_i}$ to
$H_{\alpha_i}$, for $i=1,2$. The action of $\alpha $ on $X_v$ is
essentially the dual action of the action of $\Z$ on $\oplus _\Z
\Z_2$. We obtain the well-known representation of the group algebra
of the lamplighter group:
$$K[\Z_2\wr \Z]\cong v\Lab _K(F,D) v\cong C_K\Big( \prod _\Z \Z_2 \Big) \rtimes
\Z.$$ The group algebras $K[\Z_p\wr \Z]$, $p\ge 3$, can be similarly
represented, using a bipartite separated graph with $p+1$ vertices
$v,w_1,\dots ,w_p$, with $E^{0,0}=\{ v\}$, $E^{0,1}=\{w_1,\dots ,w_p
\}$, and $2p$ arrows $\{\alpha _i,\beta _i\mid i=1,\dots ,p \}$,
with $s(\alpha _i)=s(\beta _i)=w_i$ and $C_v=\{ X, Y\}$, for
$X=\{\alpha _1,\dots ,\alpha _p \}$, $Y=\{ \beta _1,\dots ,\beta
_p\}$.
\end{example}

\section{Topologically free actions}
\label{sect:topfree}

In this final section we characterize the finite bipartite separated
graphs $(E,C)$ such that the canonical action of the free group
$\mathbb F$ on the universal space $\Omega (E,C)$ is topologically
free, and we obtain consequences for the algebraic structure of the
reduced C*-algebra $\mathcal O^r (E,C)$ (see Definition
\ref{def:reducedO(E,C)}) and of the abelianized Leavitt algebra
$\Lab (E,C)$.

Recall the following definition from \cite{ELQ}.

\begin{definition}
 \label{def:TopFree}
  Let $\theta $ be a partial action of a group $G$ on a compact Hausdorff space
$X$. The partial action $\theta$ is {\it topologically free} if for
every $t\in G\setminus \{1\}$, the set $F_t:=\{ x\in U_{t^{-1}}\mid
\theta _t(x)=x\}$ has empty interior.
\end{definition}

We now give our particular version of condition (L).

We will use paths in the \emph{double} of $E$, that is, the graph
$\Ehat$ obtained from $E$ by adjoining, for each $e\in E^1$, an edge
$e^*$ going in the reverse direction of $e$, that is $s(e^*)=r(e)$
and $r(e^*)=s(e)$. Given a path
$$\gamma = e_{2r}^* e_{2r-1} \cdots e_4^*e_3e_2^*e_1 ,$$
in $\Ehat$, we say that $\gamma $ is an {\it admissible path} in
case $X_{e_{2i-1}}\ne X_{e_{2i}}$ for $i=1,2\dots ,r$, that is, the
edges $e_{2i-1}$ and $e_{2i}$ belong to different elements of
$C_{r(e_{2i-1})}=C_{r(e_{2i})}$ for all $i=1,2\dots ,r$, and
$e_{2i+1}\ne e_{2i}$ for $i=1,\dots ,r-1$. The same definition
applies to paths of the forms $ e_{2r-1} \cdots e_4^*e_3e_2^*e_1$,
or $e_{2r}^* e_{2r-1} \cdots e_4^*e_3e_2^* $, or $e_{2r-1} \cdots
e_4^*e_3e_2^* $.

\begin{definition}
\label{def:cycles} Let $(E,C)$ be a finite bipartite separated
graph, with $s(E^1)=E^{0,1}$ and $r(E^1)=E^{0,0}$. A {\it closed
path} in $(E,C)$ is a non-trivial admissible path $\gamma =
e_{2r}^*e_{2r-1}\cdots e_2^*e_1$ such that $s(e_1)=s(e_{2r})$.

A {\it cycle} in $(E,C)$ is a closed path $\gamma =
e_{2r}^*e_{2r-1}\cdots e_2^*e_1$ in $(E,C)$ such that $e_1\ne
e_{2r}$ and $s(e_{2i-1})\ne s(e_{2j})$ for all $1\le i\le j< r$.

Let $\gamma $ be a closed path in $(E,C)$. An {\it entry} of $\gamma
$ is a non-trivial admissible path of the form
$x_{2t-1}x_{2t-2}^*\cdots x_4^*x_3x_2^*x_1$ such that
$s(x_1)=s(e_1)$, and $|X|\ge 2$ for some $X\in C_{r(x_{2t-1})}$ such
that $X\ne X_{x_{2t-1}}$.

We say that $(E,C)$ {\it satisfies condition (L)} if every cycle has
an entry.
\end{definition}

Note that condition (L) is very common. Indeed, for the separated
graphs $(E,C)$ associated to one-relator monoids
$$\langle a_1,a_2,\dots ,a_n \mid \sum_{i=1}^n r_ia_i = \sum
_{i=1}^n s_ia_i \rangle $$ with $r_i,s_i$ non-negative integers such
that $r_i+s_i>0$ for all $i$, $\sum _i r_i >0$,  and $\sum _i s_i
>0$ (see \cite{Aone-rel}), the only one that does not satisfy
condition (L) is the one corresponding to the presentation $\langle
a \mid a=a \rangle$. Its separated graph is the $(E(1,1), C(1,1))$.
Obviously, the cycle $\beta^*\alpha$ does not have entries.

We need a couple of preparatory lemmas.

\begin{lemma}
\label{lem:well-definedexit} Let $\gamma = e_{2r}^*e_{2r-1}\cdots
e_2^*e_1$ be a closed path in $(E,C)$, and assume that, for some
$1\le i< r$  there exists an admissible path $\nu
=x_{2t-1}x_{2t}^*\cdots x_4^*x_3x_2^*x_1 $ such that
$s(x_1)=s(e_{2i})$ and $|X|\ge 2$ for some $X\in C_{r(x_{2t-1})}$
such that $X\ne X_{x_{2t-1}}$. Then there exists an entry for
$\gamma $.
\end{lemma}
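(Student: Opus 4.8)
The statement asserts that if a closed path $\gamma=e_{2r}^*e_{2r-1}\cdots e_2^*e_1$ admits, at some intermediate source vertex $s(e_{2i})$, an admissible path $\nu$ with a ``branching'' final group (some $X\in C_{r(x_{2t-1})}$, $X\ne X_{x_{2t-1}}$, with $|X|\ge 2$), then $\gamma$ has an entry in the sense of Definition \ref{def:cycles} — that is, a branching admissible path starting specifically at $s(e_1)$. The key idea is that a closed path can be ``rotated'': since $s(e_1)=s(e_{2r})$, the path $\gamma$ gives us a way to travel around, and in particular to move from $s(e_1)$ to the vertex $s(e_{2i})$ where the branching path $\nu$ is available. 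Concretely, I would set
$$\delta = e_{2i}^*e_{2i-1}\cdots e_4^*e_3e_2^*e_1,$$
which is an admissible path (being an initial segment of the admissible path $\gamma$) running from $s(e_1)$ to $s(e_{2i})$. Then the naive candidate for an entry is the concatenation $\nu\delta = x_{2t-1}x_{2t}^*\cdots x_2^*x_1 e_{2i}^*e_{2i-1}\cdots e_2^*e_1$, which starts at $s(e_1)$, ends with the same branching group structure at $r(x_{2t-1})$, and has the right shape.

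\textbf{Key steps.} First I would verify that $\delta$ is admissible: this is immediate because the admissibility conditions $X_{e_{2j-1}}\ne X_{e_{2j}}$ and $e_{2j+1}\ne e_{2j}$ for the relevant range of $j$ are inherited from $\gamma$. Second, I would check that the concatenation $\nu\delta$ is a valid path in $\hat E$ and that the vertex matching works: $s(x_1)=s(e_{2i})$ by hypothesis, and $s(x_1)=r(e_{2i}^*)$ in $\hat E$, so the composition is well-formed; it starts at $s(e_1)$ as required. Third — and this is where care is needed — I would check that $\nu\delta$ is \emph{admissible}, i.e.\ that the junction between the last letter $x_2^*x_1$ of $\nu$ (or rather the letter $x_1$) and the first letter $e_{2i}^*$ of $\delta$ satisfies the required conditions: we need $x_1\ne e_{2i}$ at the junction (since $e_{2i}^*$ follows $x_1$, the condition $e_{2i+1}\ne e_{2i}$-type constraint translates to needing the edge before $e_{2i}^*$ to differ from $e_{2i}$). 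If $x_1 = e_{2i}$, then $\nu\delta$ is not reduced/admissible as written, and I would need to modify the construction — e.g.\ replace $\nu$ by a suitable shorter admissible path, or use the other direction around the cycle, observing that one can instead go from $s(e_1)$ toward $s(e_{2i})$ via the complementary segment $e_{2i+1}^*e_{2i+2}\cdots$ of $\gamma$ (using $s(e_1)=s(e_{2r})$), which ends in $e_{2i+1}$; since $\gamma$ closed-path-admissibility forces $e_{2i+1}\ne e_{2i}$, at least one of the two routes avoids the clash. Finally, having produced an admissible path from $s(e_1)$ ending with the branching group at its terminal range vertex, I conclude it is an entry for $\gamma$ by Definition \ref{def:cycles}.

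\textbf{Main obstacle.} The main subtlety is precisely the admissibility check at the splice point between $\nu$ and the rotation path $\delta$ (or its complement). All the other verifications are bookkeeping about ranges and sources in the double graph $\hat E$. The argument that \emph{one} of the two ways around the closed path gives a legitimate admissible concatenation with $\nu$ is the crux: it relies on the parity/structure of closed paths and on the fact that $e_{2i}\ne e_{2i+1}$ (and, if needed, minimal surgery on $\nu$ when its first letter coincides with the offending edge). I expect the clean way to organize this is to case-split on whether $x_1 = e_{2i}$; in the ``bad'' case one shows that $\nu' = x_{2t-1}x_{2t}^*\cdots x_3 x_2^* e_{2i+1}$ (continuing from $\nu$ one step further into the cycle, if $t\ge 1$ permits, or trimming) still witnesses a branching and splices admissibly with the complementary rotation. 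This case analysis, while elementary, is the part that requires genuine attention rather than routine computation.
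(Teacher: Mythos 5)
Your argument is correct and takes a genuinely different, and in fact shorter, route than the paper. Your two splices do all the work: if $x_1\ne e_{2i}$, the concatenation $\nu\, e_{2i}^*e_{2i-1}\cdots e_2^*e_1$ is admissible (the only new condition is the distinctness of $x_1$ and $e_{2i}$ at the junction vertex $s(e_{2i})\in E^{0,1}$, where there are no groups to worry about), it starts at $s(e_1)$, ends in $x_{2t-1}$, and inherits the branching from $\nu$, hence is an entry; if $x_1=e_{2i}$, then $x_1\ne e_{2i+1}$ because $\gamma$ is admissible and $i<r$, so the splice $\nu\, e_{2i+1}^*e_{2i+2}\cdots e_{2r-1}^*e_{2r}$ with the reversed complementary segment of $\gamma$ (whose internal admissibility conditions are exactly those of $\gamma$, and which starts at $s(e_{2r})=s(e_1)$) is an entry for the same reason. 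The paper proceeds differently: instead of going the other way around the closed path, it cancels the backtracking prefix of $\nu$ against the initial segment $e_{2i}^*\cdots e_2^*e_1$, which leads to a longer case analysis (backtracking stopping at an odd or even position, exhausting the initial segment, or exhausting all of $\nu$), and in the fully cancelling subcases the branching group of the resulting entry has to be re-derived from the hypothesis rather than inherited directly from the top edge of $\nu$. Your dichotomy buys brevity; the paper's cancellation produces an entry without backtracking, but since Definition \ref{def:cycles} does not require reducedness, nothing is lost by your shortcut. One caveat: your closing suggestion to replace $\nu$ by $\nu'=x_{2t-1}\cdots x_3x_2^*e_{2i+1}$ in the bad case is both unnecessary and, as written, not even a path in general, since there is no reason that $r(e_{2i+1})$ should equal $r(x_2)$; in that case you should splice the \emph{unmodified} $\nu$ onto the complementary segment, exactly as in your ``two routes'' observation, and then the argument is complete.
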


\begin{proof}
If $x_1\ne e_{2i}$ then $\nu e_{2i}^* e_{2i-1} \cdots e_2^*e_1$ is
an entry for $\gamma $.

If $t\ge i+1$ and  $x_1=e_{2i}, x_2=e_{2i-1},\dots  , x_{2i}=e_{1}$,
then $x_{2t-1}x_{2t-2}^*\cdots x_{2i+2}^*x_{2i+1}$ is an entry for
$\gamma$.

 Suppose that $x_1=e_{2i}, x_2=e_{2i-1},\dots  ,
x_{2k}=e_{2i-2k+1}$, but $x_{2k+1}\ne e_{2i-2k}$ for some $k< i$.
Then
$$x_{2t-1} x_{2t-2}^* \cdots x_{2k+1} e_{2i-2k}^* e_{2i-2k-1} \cdots
e_2^*e_1 $$ is an entry for $\gamma $.

Suppose that
 $x_1=e_{2i}, x_2=e_{2i-1},\dots  , x_{2k}=e_{2i-2k+1}$,
$x_{2k+1}= e_{2i-2k}$, for some $k< i$, and that either $2t-1=2k+1$
or $x_{2k+2}\ne e_{2i-2k-1}$.

If $2k+1=2t-1$ then $$\nu = x_{2k+1}x_{2k}^* \cdots x_2^*x_1=
e_{2i-2k}e_{2i-2k+1}^*\cdots e_{2i-1}^*e_{2i}.$$ There exists $X\in
C_{r(e_{2i-2k-1})}$ such that $|X|\ge 2$ and $e_{2i-2k}\notin X$. We
distinguish two cases:

(a) If $e_{2i-2k-1}\notin X$, then $e_{2i-2k-1}e_{2i-2k-2}^* \cdots
e_2^*e_1$ is an entry for $\gamma $.

(b) If $e_{2i-2k-1}\in X$, then $e_{2i-2k} e_{2i-2k+1}^*\cdots
e_{2r-1}^*e_{2r}$ is an entry for $\gamma $.

Assume now that $x_{2k+2}\ne e_{2i-2k-1}$, and write
$$\nu= \nu' x_{2k+1} x_{2k}^* \cdots x_2^*x_1 $$
with $\nu '$ of positive length. Again we distinguish two cases:

(a) $x_{2k+2}$ and $e_{2i-2k-1}$ belong to different sets of
$C_{r(e_{2i-2k-1})}$. In this case the admissible path
$$\nu ' x_{2k+2}^* e_{2i-2k-1} e_{2i-2k-2}^* \cdots e_2^*e_1$$
is an entry for $\gamma $.

(b) $x_{2k+2}$ and $e_{2i-2k-1}$ belong to $X\in
C_{r(e_{2i-2k-1})}$. In particular, we have $|X|\ge 2$, because we
are assuming that $x_{2k+2}\ne e_{2i-2k-1}$. It follows that
$$e_{2i-2k} e_{2i-2k+1}^* \cdots e_{2r-1}^* e_{2r}$$
is an entry for $\gamma $.

\end{proof}

\begin{lemma}
\label{lem:exitsforCPs} Let $\gamma = e_{2r}^*e_{2r-1}\cdots
e_2^*e_1$ be a closed path in $(E,C)$, and suppose that $(E,C)$
satisfies condition (L). Then the path $\gamma $ has an entry.
\end{lemma}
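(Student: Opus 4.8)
The statement is: if $(E,C)$ satisfies condition (L), then every closed path $\gamma = e_{2r}^*e_{2r-1}\cdots e_2^*e_1$ has an entry. The idea is to reduce a general closed path to a genuine cycle (for which condition (L) provides an entry), and then use Lemma \ref{lem:well-definedexit} to transport that entry back to $\gamma$. First I would recall the definitions: a closed path is a non-trivial admissible path with $s(e_1)=s(e_{2r})$, a cycle additionally satisfies $e_1\ne e_{2r}$ and $s(e_{2i-1})\ne s(e_{2j})$ for $1\le i\le j<r$, and an entry of a closed path $\gamma$ is a non-trivial admissible path $x_{2t-1}x_{2t-2}^*\cdots x_2^*x_1$ with $s(x_1)=s(e_1)$ and $|X|\ge 2$ for some $X\in C_{r(x_{2t-1})}$ with $X\ne X_{x_{2t-1}}$.

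\textbf{Main step: extracting a cycle.} I would argue by induction on the length $r$ of the closed path $\gamma$. If $\gamma$ is already a cycle, condition (L) gives an entry directly. Otherwise one of the two defining conditions of a cycle fails. If $e_1 = e_{2r}$, then (using admissibility, which forces $e_{2i+1}\ne e_{2i}$, so in particular $r\ge 2$) I can cancel the matching pair at the ends: $e_{2r-2}^*e_{2r-3}\cdots e_2^* \cdot (e_1 e_{2r}^*)$ — more carefully, I would look at $\gamma' = e_{2r-2}^* e_{2r-3}\cdots e_2^* e_1$ and check it is again a closed path with $s(e_1) = s(e_{2r-2})$; admissibility of $\gamma'$ needs $e_{2r-3}\ne e_{2r-2}$, which holds since $\gamma$ was admissible, and that $X_{e_{2r-3}}\ne X_{e_{2r-2}}$, which also holds since these pairs are inherited from $\gamma$ (here I must be slightly careful that removing the last two edges doesn't create a new adjacency that violates admissibility — it does not, because $e_1$ and $e_{2r-2}$ are both ``sources'' in the bipartite sense, i.e. they meet at a vertex of $E^{0,1}$, where there is no separation condition to violate). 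The second failure mode is $s(e_{2i-1}) = s(e_{2j})$ for some $1\le i\le j<r$; then I can splice out the sub-loop between these two positions to get a shorter closed path $\gamma''$. In either case $\gamma''$ (or $\gamma'$) is a strictly shorter closed path, and by the induction hypothesis it has an entry $\nu$.

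\textbf{Transporting the entry back.} Having obtained an entry $\nu = x_{2t-1}x_{2t-2}^*\cdots x_2^*x_1$ for the shorter closed path, I need to produce an entry for $\gamma$ itself. When $\gamma'$ arose by cancelling $e_1=e_{2r}$, the source condition $s(x_1) = s(e_1)$ is literally the same, since $e_1$ is unchanged; so $\nu$ is already an entry for $\gamma$ (the condition on $X_{x_{2t-1}}$ is intrinsic to $\nu$). When $\gamma''$ arose by splicing out a sub-loop starting at position $2i$, the entry $\nu$ of $\gamma''$ has $s(x_1) = s(e_1)$ but may need to be ``re-based'': here is exactly where Lemma \ref{lem:well-definedexit} applies — it takes an admissible path attached at $s(e_{2i})$ (with the required $|X|\ge 2$ property at its far end) and manufactures an entry for $\gamma$. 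So I would either arrange the induction so that $\nu$ ends up attached at the right vertex, or invoke Lemma \ref{lem:well-definedexit} directly to convert the available admissible path into a genuine entry of $\gamma$.

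\textbf{Expected main obstacle.} The delicate point is bookkeeping with admissibility across the cancellation/splicing operations: I must verify that removing or inserting segments does not create a forbidden adjacency (consecutive edges in the same $C_v$-class, or an immediate backtrack $e_{2i+1}=e_{2i}$), and that the resulting path is still \emph{non-trivial}. The bipartite structure helps — cancellations happen at $E^{0,1}$-vertices where there is no separation — but one has to track carefully which of the two failure modes occurs and handle the boundary subcases (e.g. $i=j$, or the spliced loop having length forcing $r$ to drop to $1$, which would make $\gamma''$ trivial and require instead applying Lemma \ref{lem:well-definedexit} rather than the induction hypothesis). I expect the cleanest writeup to case on whether $\gamma$ is a cycle, and if not, to reduce to a cycle in finitely many steps while keeping a pointer to a vertex $s(e_{2i})$ on $\gamma$ from which Lemma \ref{lem:well-definedexit} can be fed.
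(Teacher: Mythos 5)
Your overall strategy (reduce to a cycle, apply condition (L), transport entries via Lemma \ref{lem:well-definedexit}) is the right one and matches the paper's, but the key case is handled incorrectly. When $e_1=e_{2r}$, your proposed shortening $\gamma'=e_{2r-2}^*e_{2r-3}\cdots e_2^*e_1$ is in general \emph{not} a closed path: closedness would require $s(e_1)=s(e_{2r-2})$, and nothing gives you that (what you know is $s(e_1)=s(e_{2r})$, which is vacuous once $e_1=e_{2r}$). The correct move is a cyclic rotation, passing to $e_2^*e_{2r-1}e_{2r-2}^*\cdots e_4^*e_3$, which is a closed path based at $s(e_3)=s(e_2)$; but this rotation is admissible only when $X_{e_2}\ne X_{e_{2r-1}}$. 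The heart of the argument, which your proposal omits entirely, is the dichotomy at the vertex $r(e_1)$: if $X_{e_2}=X_{e_{2r-1}}$ then, since $e_2\ne e_{2r-1}$ (and proving this inequality itself requires having first reduced to the situation $s(e_{2i-1})\ne s(e_{2j})$ for $1\le i\le j<r$, using admissibility when $r=2$), one gets $|X_{e_2}|\ge 2$ with $X_{e_2}\ne X_{e_1}$, so the single edge $e_1$ is already an entry of $\gamma$; if $X_{e_2}\ne X_{e_{2r-1}}$, the rotated path is a cycle, condition (L) gives it an entry, and Lemma \ref{lem:well-definedexit} (applied at $s(e_2)$) converts it into an entry of $\gamma$. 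Without this case split your induction simply does not get off the ground in the $e_1=e_{2r}$ case.

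There is a second, smaller gap in your other reduction. Splicing out the sub-loop between positions with $s(e_{2i-1})=s(e_{2j})$ can destroy admissibility at the junction (namely when $e_{2j+1}=e_{2i-2}$, creating an immediate backtrack), and you flag this but do not resolve it. The paper avoids splicing altogether: it takes $(i,j)$ with $j-i$ minimal, treats the sub-closed-path $e_{2j}^*\cdots e_{2i-1}$ (based at $s(e_{2j})$, with $j<r$) in its own right, and feeds any entry of it into Lemma \ref{lem:well-definedexit} to manufacture an entry of $\gamma$ — which works because an entry is just an admissible path out of a vertex with the $|X|\ge 2$ property at its far end, independent of the rest of $\gamma$. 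You gesture at this alternative, so this part is repairable; the $e_1=e_{2r}$ case is the genuine missing idea.
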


\begin{proof}
Suppose that there are $1\le i \le j <r$ such that
$s(e_{2i-1})=s(e_{2j})$. Taking such a pair $(i,j)$ with $j-i$
minimal, we have that $s(e_{2k-1})\ne s(e_{2l})$ for all $k,l$ such
that $i\le k\le l< j$. Using Lemma \ref{lem:well-definedexit}, and
changing notation, we can thus assume that $s(e_{2i-1})\ne
s(e_{2j})$ for $1\le i\le j <r$.

If in addition $e_1\ne e_{2r}$, then $\gamma $ is a cycle and so
there is an entry for $\gamma $ by condition (L).

\smallskip

Suppose that $e_1=e_{2r}$. Note that this implies $r>1$ because
$\gamma $ is an admissible path. We claim that $e_{2r-1}\ne e_2$. If
$r=2$ then $e_2\ne e_3=e_{2r-1}$. If $r\ge 3$, then
$$s(e_2) =s(e_3) \ne s(e_{2(r-1)})=s(e_{2r-1})$$
and consequently $e_2\ne e_{2r-1}$. Therefore $e_2\ne e_{2r-1}$ in
any case. Write $X=X_{e_{2r-1}}$, $Y=X_{e_1}=X_{e_{2r}}$,
$Z=X_{e_2}$. Then $X\ne Y$ and $Y\ne Z$, and $X,Y,Z\in C_{r(e_1)}$.
There are two cases: $X= Z$ and $X\ne Z$. If $X= Z$, then since
$e_2,e_{2r-1}\in X$ and $e_2\ne e_{2r-1}$ we have $|X| \ge 2$, and
thus $e_1$ is an entry for $\gamma $. If $X\ne Z$, then
$$     e_2^* e_{2r-1} e_{2r-2}^*e_{2r-3} \cdots   e_6^*e_5e_4^*e_3
$$
is a cycle, and consequently it has an entry $\eta$, because $(E,C)$
has condition (L). It follows from Lemma \ref{lem:well-definedexit}
that $\gamma $ has an entry.
\end{proof}

\begin{theorem}
\label{thm:top-free} Let $(E,C)$ be a finite bipartite separated
graph, with $s(E^1)=E^{0,1}$ and $r(E^1)=E^{0,0}$. Then the partial
action of $\mathbb F$ on $\Omega (E,C)$ is topologically free if and
only if $(E,C)$ satisfies condition (L).
\end{theorem}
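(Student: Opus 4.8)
The plan is to prove both implications via the combinatorial description of $\Omega(E,C)$ as a subset of $2^{\mathbb F}$ together with the action $\theta_g(\xi) = g\xi$ described just before Hypothesis \ref{hypo:groupsbiggerthantwo}, and the partial-$E$-function picture of Theorem \ref{thm:Efunctions} and Lemma \ref{lem:actionForFuncts}. Recall that an element $g \in \mathbb F$ with $\mathrm{Dom}(\theta_g) \neq \emptyset$ corresponds exactly to an admissible path $\widehat\gamma$ in the double $\widehat E$ (as noted after Lemma \ref{lem:actionForFuncts}), and for $\xi \in \mathrm{Dom}(\theta_g)$ the configuration $\theta_g(\xi) = g\xi$ is a translate of $\xi$. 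So a fixed point $\theta_g(\xi) = \xi$ means $g\xi = \xi$, i.e. $\xi$ is invariant under left translation by $g$. I would first reduce to the case where $g$ has the form of a \emph{reduced} word $z_r^* x_r \cdots z_1^* x_1$ with $s(x_1) = s(z_r)$ (otherwise the domain is empty, or $g$ is not of the right parity to have nonempty domain and a fixed point), so that $\gamma = z_r^* x_r \cdots z_1^* x_1$ is, after possibly cyclically reducing, a \emph{closed path} in the sense of Definition \ref{def:cycles}.

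\textbf{($\Leftarrow$, condition (L) implies topologically free.)} Suppose $(E,C)$ satisfies condition (L); I must show every $F_g$ ($g \neq 1$) has empty interior. Equivalently, no nonempty basic clopen set $U$ (corresponding to a vertex of $F_\infty$, i.e. to a partial $E$-function) is contained in $F_g$. Suppose for contradiction that $\theta_g$ fixes every point of some nonempty clopen $U$. First I would argue that $g$ must be conjugate in $\mathbb F$ (by an element realizing a prefix common to all configurations in $U$) to a word $g_0$ whose associated closed path is genuinely ``periodic'' — reducing the problem to showing that if a closed path $\gamma = e_{2r}^* e_{2r-1}\cdots e_2^* e_1$ has $\theta_\gamma$ fixing a whole clopen set, then $\gamma$ has no entry, contradicting Lemma \ref{lem:exitsforCPs}. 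Concretely: using the translation description, $\theta_\gamma(\xi) = \xi$ on a clopen set $U$ forces the configuration of each $\xi \in U$ to be periodic with period the ``length vector'' of $\gamma$ along the bi-infinite path in the Cayley graph that $\gamma$ traces out. Because $U$ has nonempty interior and the vertices of $F_\infty$ give a basis, this periodicity propagates to \emph{all} choices of the $E$-functions representing points of $U$: at every vertex the path passes through, every group $X \in C_{r(\cdot)}$ transverse to $\gamma$ must be a singleton, and at every branch point the continuation $e_{2i+1}$ forced by periodicity is the \emph{only} available continuation. This is precisely the statement that $\gamma$ has no entry (Definition \ref{def:cycles}), and it contradicts Lemma \ref{lem:exitsforCPs}. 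Hence no such $U$ exists and $F_g$ has empty interior.

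\textbf{($\Rightarrow$, not condition (L) implies not topologically free.)} Conversely, suppose $(E,C)$ fails condition (L), so there is a cycle $\gamma = e_{2r}^* e_{2r-1}\cdots e_2^* e_1$ with no entry. Set $g = [\gamma] \in \mathbb F$ (the element $e_{2r}^{-1} e_{2r-1} \cdots e_2^{-1} e_1$), which is reduced since $\gamma$ is admissible. I would construct, using the recipe of Definition \ref{def:Efunctions}, a whole basic clopen set $U \subseteq \Omega(E,C)_{s(e_1)}$ of $E$-functions that are periodic under translation by $g$: because $\gamma$ has no entry, all the transverse groups $X \in C_{r(e_{2i-1})}$ with $X \neq X_{e_{2i-1}}$ are singletons and all branch continuations along $\gamma$ are forced, so the data of a partial $E$-function supported on the ball that just covers one period of $\gamma$ is ``free'' only off the cycle, and extends uniquely along the cycle in a $g$-periodic way. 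More precisely, fix a partial $E$-function $(\Omega_1, f_1), \dots, (\Omega_N, f_N)$ whose associated ball contains the segment of the Cayley graph traced by $\gamma$, choose its values off $\gamma$ arbitrarily, and then extend periodically; the resulting clopen set $U$ (the vertex of $F_\infty$ that this partial $E$-function names, intersected appropriately with $\Omega_{g^{-1}}$) is nonempty, open, and by Lemma \ref{lem:actionForFuncts} consists of fixed points of $\theta_g$. Then $U \subseteq F_g$ has nonempty interior, so the action is not topologically free.

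\textbf{Main obstacle.} The delicate point is the ($\Leftarrow$) direction: turning ``$\theta_g$ fixes a nonempty clopen set'' into ``the underlying closed path has no entry.'' One must carefully bookkeep how the common prefix shared by all configurations in a basic clopen set forces periodicity, pass from $g$ to a cyclically reduced (hence closed-path) representative, and translate ``every extension of the partial $E$-function is forced along $\gamma$'' into the precise combinatorial definition of ``no entry'' in Definition \ref{def:cycles} — in particular handling the various degenerate overlaps exactly as in the case analysis of Lemma \ref{lem:well-definedexit}. The machinery of Theorem \ref{thm:Efunctions} and Lemma \ref{lem:actionForFuncts} does most of the heavy lifting, but matching it up with Definition \ref{def:cycles} cleanly is where the real work lies.
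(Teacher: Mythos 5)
Your overall plan follows the paper's in spirit (configurations and $E$-functions, Lemma \ref{lem:actionForFuncts}, Lemma \ref{lem:exitsforCPs}, cyclic reduction), but both implications as you sketch them rest on a misreading of Definition \ref{def:cycles}: an entry of $\gamma$ is \emph{any} non-trivial admissible path starting at $s(e_1)$ whose last edge arrives at a vertex having a transverse group of size $\ge 2$; it is not required to travel along $\gamma$ at all. Consequently ``$\gamma$ has no entry'' is not the statement that the transverse groups met along $\gamma$ are singletons and the continuations along $\gamma$ are forced; it is the much stronger statement that \emph{no} admissible path issuing from $s(e_1)$ ever reaches a vertex with a non-singleton transverse group. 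That stronger statement is exactly what makes the direction ``not (L) $\Rightarrow$ not topologically free'' easy: it forces the $E$-function with $\Omega_1=s^{-1}(s(e_1))$ to be \emph{unique}, so $\Omega(E,C)_{s(e_1)}$ is a one-point clopen set fixed by $\theta_g$. Your construction for this direction --- pin one period of $\gamma$, ``choose its values off $\gamma$ arbitrarily, and then extend periodically'' --- does not produce a clopen set of fixed points: a basic clopen set prescribes only finitely many levels of the $E$-function, so if there were any genuine choice at a vertex reachable from $s(e_1)$ (even off the cycle), extensions inside your $U$ could make different choices at positions $h$ and $gh$ beyond the pinned ball and would then fail to be fixed by $\theta_g$.

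The same misreading undermines your converse. From ``$\theta_g$ fixes every point of a nonempty clopen $U$'' you assert that ``periodicity propagates'' and conclude that the transverse data along $\gamma$ is forced, which in your reading is ``no entry''; but the entry supplied by Lemma \ref{lem:exitsforCPs} may leave the cycle, and the real work --- absent from your sketch --- is to convert that entry into a non-fixed point \emph{inside} an arbitrary basic neighborhood. In the paper this is done by (i) taking $t$ minimal so that all transverse groups met before the last edge of the entry $\eta=y_{2t-1}\cdots y_1$ are singletons, which guarantees the entry is forced to lie in every configuration of $\Omega(E,C)_{s(x_1)}$ (so $g_1=y_{2t-1}\cdots y_2^{-1}y_1\in\Omega_t$ for every point of $U$); (ii) concatenating $\eta$ with a high power $(\gamma^*)^i$ or $\gamma^i$ of the cycle so that the level $ir+t$ exceeds the finitely many levels pinned by $U$, and then altering the choice of $f''_{ir+t}$ inside the size-$\ge 2$ transverse group at the end of $\eta$, which by Lemma \ref{lem:actionForFuncts} yields $\xi'\in U$ with $\theta_g(\xi')\neq\xi'$; and (iii) separate treatment of the non-cyclically-reduced case $x_1=z_r$ (conjugating off the common prefix) and of reduced words of the opposite parity $g=x_rz_{r-1}^{-1}\cdots z_1^{-1}x_1z_0^{-1}$. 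Your appeal to ``periodicity propagates to all choices'' asserts the conclusion of (i)--(ii) without the mechanism that makes it true, so as written the proposal has a genuine gap in both directions.
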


\begin{proof}
Suppose first that $(E,C)$ does not satisfy condition (L), and let
$$\gamma =e_{2r}^*e_{2r-1}\cdots e_2^*e_1 $$
by a cycle without entries in $(E,C)$. Write
$$g= e_{2r}^{-1} e_{2r-1} \cdots e_2^{-1}e_1\in \mathbb F.$$
Note that $g\ne 1$, because the defining expression of $g$ is the
reduced form of $g$ (with respect to the canonical generating set
$E^1$).  Observe that it follows from the condition that the cycle
$\gamma $ does not have entries that there is a unique $E$-function
$(\Omega _1,f_1), (\Omega _2,f_2),\dots  $ such that $\Omega _1=
s(e_1)$. Let $\xi$ be the point of $\Omega (E,C)$ represented by
this $E$-function (see Theorem \ref{thm:Efunctions}). Then we have
$\Omega (E,C)_{s(e_1)}=\{\xi \}$.

Since $\gamma $ is a cycle, it follows that $\theta _g(\xi)= \xi$.
Therefore $\Omega (E,C)_{s(e_1)}= \{\xi \}$ is a clopen subset of
$\Omega (E,C)$, fixed by the non-trivial element $g$ of $\mathbb F$.

Therefore $\Omega (E,C)_{s(e_1)}= \{\xi \}$ is a clopen subset of
$\Omega (E,C)$, fixed by the non-trivial element $g$ of $\mathbb F$.

Conversely, suppose that $(E,C)$ satisfies condition (L). Assume
first that
$$g= z_r^{-1}x_rz_{r-1}^{-1} x_{r-1}\cdots z_1^{-1}x_1 $$
is a reduced word in $\mathbb F\setminus \{1\}$. We may assume that
$\text{Dom}(\theta _g)\ne \emptyset$, and this implies by Lemma
\ref{lem:actionForFuncts} that
$$\gamma =  z_r^*x_rz_{r-1}^* x_{r-1}\cdots z_1^*x_1 $$
is an admissible path in $(E,C)$. We may also assume that $\theta_g$
has some fixed point. So let $\xi$ be a fixed point of $\theta_g$,
with associated $E$-function $\mathfrak f =(\Omega _1,f_1),(\Omega
_2,f_2) ,\dots $. By Lemma \ref{lem:actionForFuncts}, we have
$\Omega _1= s^{-1}(s (x_1))$. Let $\mathfrak f '= ((\Omega
_1',f_1'),(\Omega _2',f_2'),\dots )$ be the $E$-function
corresponding to $\theta_g(\xi)$. Since $\theta _g(\xi)= \xi$, we
must have $\Omega_i'=\Omega _i$ and $f_i'=f_i$ for all $i$. In
particular, since $\Omega _1'=s^{-1}(s(z_r))$, we must have
$s(x_1)=s(z_r)$. It follows that $\gamma $ is a closed path in
$(E,C)$.

Suppose first that $x_1\ne z_r$. Note that then for every $i\ge 1$, 
$$\gamma ^i =z_r^*x_r\cdots z_1^*x_1z_r^*x_r\cdots z_2^*x_2z_1^*x_1
$$
is a closed path in $(E,C)$. Let $W$ be an open neighborhood of
$\xi$. By Theorem \ref{thm:Efunctions}, there exists a positive
integer $k\ge r$ such that $W$ contains the clopen neighborhood $U$
of $\xi$ defined by the partial $E$-function $(\Omega _1,f_1),\dots,
(\Omega _k,f_k)$. We will show that $U$ contains a point $\xi '$
such that $\theta _g(\xi ')\ne \xi '$. By Lemma
\ref{lem:exitsforCPs}, there exists an entry
$$\eta = y_{2t-1}y_{2t-2}^*y_{2t-1}\cdots  y_2^*y_1$$
for $\gamma $. By definition, this means that $\eta $ is an
admissible path such that $s(y_1)=s(x_1)$ and such that there is
$X\in C_{r(y_{2t-1})}$ such that $|X|\ge 2$ and $y_{2t-1}\notin X$.
We may assume that $t$ is minimal with this property, so that for
every $1\le t'<  t$, we have $|X'| =1$ for all $X'\in
C_{r(y_{2t'-1})}$ such that $y_{2t'-1}\notin X'$. Observe that this
implies that $\eta \in \Omega _{t}$, because for every $X'\in
C_{r(y_{2t'-1})}$ such that $X_{y_{2t'-1}}\ne X'$ we have that
$\pi_{X'}f_{t'}(y_{2t'-1}y_{2t'-2}^{-1}\cdots y_2^{-1}y_1)$ is the
unique element of $X'$, and so in particular we have that
$$y_{2t'}=\pi _{X_{y_{2t'}}} f_{t'}(y_{2t'-1}y_{2t'-2}^{-1}\cdots
y_2^{-1}y_1).$$ Write $g_1= y_{2t-1}y_{2t-2}^{-1}\cdots
y_2^{-1}y_1\in \mathbb F$.

 Now let $i$ be a positive integer such that
$ir+t> k$. If $x_1\ne y_1$, then
$$\eta (\gamma ^*)^{i}= y_{2t-1}y_{2t-2}^*\cdots y_2^*y_1x_1^*z_1 \cdots
x_r^*z_rx_1^*z_1 \cdots \,\, \, \cdots x_r^*z_r $$ is an admissible
path. Now observe that, by Lemma \ref{lem:actionForFuncts}, we have
$g_1g^{-i}\in \Omega _{ir+t}'=\Omega _{ir+t}$ and
$$f_{ir+t}(g_1g^{-i}) = f'_{ir+t}(g_1g^{-i})= f_{t}(g_1).$$
Now let $\xi'$ be the $E$-function $(\Omega _1'',f_1''), (\Omega
_2'',f_2''),\dots $ defined as follows. For $1\le j\le ir+t-1$, set
$\Omega _j''=\Omega _j$ and $f_j''=f_j$. Now let $X\in
C_{r(y_{2t-1})}$ such that $|X|\ge 2$ and $y_{2t-1}\notin X$. This
means that there is $x\in X$ such that
$$x\ne f_{t}(y_{2t-1}y_{2t-2}^{-1} \cdots y_2^{-1} y_1)= f_{t}(g_1).$$
We define
$$\pi_X f_{ir+t}''(g_1g^{-i}) = x ,$$
and we complete the definition of $f_{ir+t}''$ arbitrarily, so that
we get a partial $E$-function $(\Omega _1'', f_1''),\dots (\Omega''
_{ir+t}, f''_{ir+t})$. Extend this partial $E$-function to an
$E$-function $(\Omega _1'', f_1''),(\Omega _2'', f_2''),\dots $ and
let $\xi '$ be the point in $\Omega (E,C)$ determined by this
$E$-function. Then $\xi ' \in U\subseteq W$, because $(\Omega
_j'',f_j'')=(\Omega _j, f_j)$ for $1\le j\le ir+t-1$ and $ir+t-1 \ge
k$. On the other hand, if $\theta _g(\xi ')=\xi '$, then it follows
from Lemma \ref{lem:actionForFuncts} that
$$x= f_{ir+t}''(g_1g^{-i}) = f_{t}''(g_1)=f_{t}(g_1)$$
which is a contradiction. Therefore $\theta _g(\xi ')\ne \xi'$.

Now assume that $x_1=y_1$. Then, since we are assuming that $x_1\ne
z_r$, we have that $y_1\ne z_r$ and consequently
$$\eta \gamma ^{i}= y_{2t-1}y_{2t-2}^*\cdots y_2^*y_1z_r^*x_r \cdots
z_1^*x_1z_r^*x_r \cdots \,\, \, \cdots z_1^*x_1 $$ is an admissible
path. By Lemma \ref{lem:actionForFuncts}, we have $f_{ir+t}(g_1g^i)=
f'_t(g_1)= f_{t}(g_1)$. So, proceeding as above, but now using the
element $g_1g^i$ instead of $g_1g^{-i}$, we obtain an element $\xi'$
in $U$ such that $\theta _g (\xi ')\ne \xi '$.

We now consider the case where $x_1=z_r$. In this case we have,
since $\theta _g(\xi ) =\xi$,
$$z_1= f_1(x_1) = f_1(z_r)=f_1'(z_r)= x_r.$$
Analogously, if in addition $x_2=z_{r-1}$, then $z_2=x_{r-1}$.
Proceeding in this way, we see that there is an admissible path $\nu
=z_i^*x_i\cdots z_1^*x_1$ such that
$$\gamma = \nu ^* \gamma ' \nu$$
and $\gamma '= z_{r-i}^*x_{r-i}\cdots z_{i+1}^*x_{i+1}$ is a closed
path in $(E,C)$ such that $x_{i+1}\ne z_{r-i}$. Write $g'=
z_{r-i}^{-1}x_{r-i}\cdots z_{i+1}^{-1}x_{i+1}$ and
$h=z_i^{-1}x_i\cdots z_1^{-1}x_1$, and observe that $\theta _h(\xi)$
is a fixed point of $\theta _{g'}$.

Let $W$ be an open neighborhood of $\xi$, which we may assume to be
contained in the domain of $\theta_h$. By the above, there is a
point $\xi '$ in the open neighborhood $\theta_h(W)$ of $\theta
_h(\xi)$ such that $\theta _{g'}(\xi') \ne \xi'$. Then
$\theta_{h^{-1}}(\xi ')\in W$ and $\theta _g(\theta
_{h^{-1}}(\xi'))\ne \theta _{h^{-1}}(\xi ')$.

This completes the proof of the result in the case where $g$ is a
reduced word of the form $z_r^{-1}x_rz_{r-1}^{-1} x_{r-1}\cdots
z_1^{-1}x_1 $. Now assume that
$$g=x_rz_{r-1}^{-1}x_{r-1}\cdots z_1^{-1}x_1 z_0^{-1}$$
is a reduced word in $\mathbb F$, with $x_1,\dots ,x_r,z_0,\dots
,z_{r-1}\in E^1$ and $r\ge 1$.
  Assume that $\theta _g (\xi) =\xi$ for all $\xi\in V$, where $V$ is a non-empty
  open subset of $\Omega (E,C)$ such that $V\subseteq \text{Dom}(\theta _g)$.
In particular this implies that $\text{Dom}(\theta _g)\ne
\emptyset$, so that $s(z_0)=s(x_1)$, $z_{r-1}^*x_{r-1}\cdots
z_1^*x_1$ is an admissible path, and $s(x_r)=s(z_{r-1})$. Since
$\theta _g$ has fixed points we have $r(z_0)=r(x_r)$. By the same
reason, either $x_r=z_0$ or $x_r$ and $z_0$ belong to different
elements of $C_{r(x_r)}$. Indeed, if $z_r\ne z_0$, and $x_r$ and
$z_0$ belong to $X\in C_{r(z_0)}$, then
$$V\subseteq \theta _{x_r}(\Omega(E,C)_{s(x_r)})\cap \theta
_{z_0}(\Omega (E,C)_{s(z_0)})=H_{x_r}\cap H_{z_0}=\emptyset ,$$
which is a contradiction.

Assume first that $x_r=z_0$. Then for $g'= z_{r-1}^*x_{r-1}\cdots
z_1^*x_1$, we have that $g'\ne 1$ and that the set of fixed points
of $\theta _{g'}$ contains the non-empty open set $\theta
_{z_0^{-1}}(V)$. So we arrive to a contradiction by the first part
of the proof.

If $z_0$ and $x_r$ belong to different elements of $C_{r(z_0)}$,
then set
  $$
  g'':=z_0^{-1}x_rz_{r-1}^{-1}x_{r-1}\cdots z_1^{-1}x_1.
  $$
Then the set of fixed points of $\theta_{g''}$ contains the
non-empty open set $\theta _{z_0^{-1}}(V)$, and we arrive again to a
contradiction. This concludes the proof of the theorem.
\end{proof}

\begin{remark}
\label{rem:cantorset}
Let $w\in E^{0,1}$. We may define an entry based on $w$ as a
non-trivial admissible path of the form $x_{2t-1}x_{2t-2}^*\cdots
x_4^*x_3x_2^*x_1$ such that $s(x_1)=w$, and $|X|\ge 2$ for some
$X\in C_{r(x_{2t-1})}$ such that $X\ne X_{x_{2t-1}}$. Every vertex
$w\in E^{0,1}$ such that there is no entry based on $w$ determines
an isolated point in $\Omega (E,C)$. As another possible source of
isolated points in $\Omega (E,C)$ we mention vertices $w\in E^{0,1}$
such that $|s^{-1}(w)|=1$. For instance, in Example
\ref{exam:alggenepi} there is a dense set of isolated points in the
space $X_v$, namely all the points (in the representation provided
by Figure \ref{Fig:spaceXv}) out of the union of the top row and the
left-most column. Observe that however all three vertices in
$E^{0,1}$ have entries based on them in that example. In many cases,
$\Omega (E,C)$ is a Cantor set, that is, a metrizable,
zero-dimensional compact set without isolated points.
\end{remark}

We can now prove an analogue to the uniqueness theorem for graph
C*-algebras with property (L). It applies to a {\it reduced} version
of $\mathcal O (E,C)$, defined as follows:

\begin{definition}
\label{def:reducedO(E,C)} Let $(E,C)$ be a finite bipartite
separated graph. Then we denote by $\mathcal O ^r (E,C)$ the
C*-algebra defined by the {\it reduced crossed product} of $C(\Omega
(E,C))$ by $\mathbb F$:
$$\mathcal O ^r(E,C)= (C(\Omega (E,C))\rtimes ^r_{\alpha} \mathbb F .$$
There exists a canonical {\it faithful} conditional expectation
$E\colon \mathcal O ^r(E,C)\to C(\Omega (E,C))$, see \cite[Section
2]{ExelAmena}.
\end{definition}

Recall the following standard definition.

\begin{definition}
\label{def:(SP)}
  A C*-algebra satisfies property (SP) (for {\it
small projections}) in case every nonzero hereditary C*-subalgebra
contains a nonzero projection. Equivalently, for every nonzero
positive element $a$ in $A$ there is $x\in A$ such that $x^*ax$ is a
nonzero projection.
\end{definition}

\begin{theorem}
\label{thm:SPproperty} Let $(E,C)$ be a finite bipartite separated
graph satisfying condition (L). Then the C*-algebra $\mathcal O^r
(E,C)$ satisfies property ${\rm (SP)}$. More precisely, given a
nonzero positive element $c$ in $\mathcal O^r(E,C)$, there is an
element $x\in \mathcal O^r(E,C)$ such that $x^*cx$ is a nonzero
projection in $C(\Omega (E,C))$. In particular every nonzero ideal
of $\mathcal O^r (E,C)$ contains a nonzero projection of $C(\Omega
(E,C))$.
\end{theorem}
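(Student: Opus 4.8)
The plan is to exploit the crossed product picture $\mathcal O^r(E,C) = C(\Omega(E,C)) \rtimes^r_\alpha \mathbb F$ together with the faithful conditional expectation $E\colon \mathcal O^r(E,C)\to C(\Omega(E,C))$ and the topological freeness of the partial action, which we have by Theorem \ref{thm:top-free} since $(E,C)$ satisfies condition (L). This is exactly the setting in which the Exel--Laca--Quigg machinery applies. First I would recall from \cite{ELQ} (and the general theory of reduced crossed products by topologically free partial actions) that for a topologically free partial action $\theta$ of a discrete group $G$ on a compact Hausdorff space $X$, the canonical conditional expectation $E\colon C(X)\rtimes^r G \to C(X)$ has the property that for every nonzero positive $c\in C(X)\rtimes^r G$ and every $\varepsilon>0$ there is a nonzero positive $h\in C(X)$ with $\|h\| \le \|E(c)\|$ (or $\|c\|$) and $\|hch - hE(c)h\| < \varepsilon$. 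This is the standard ``approximation'' lemma underlying the uniqueness theorems; I would cite it from \cite{ExelAmena} or \cite{ELQ} rather than reprove it.

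Next I would pass from $c$ to $E(c)$. Since $c\ge 0$ and $c\ne 0$, and $E$ is faithful, $E(c)$ is a nonzero positive element of $C(\Omega(E,C))$, so $E(c) = f$ for some $f\in C(\Omega(E,C))$ with $f\ge 0$, $\|f\|=1$ after rescaling. Because $\Omega(E,C)$ is zero-dimensional, the open set $\{f > 1/2\}$ contains a nonempty clopen set $V$; let $p = 1_V$ be the corresponding projection in $C(\Omega(E,C))$, so $pf = fp$ and $pfp \ge \tfrac12 p$. The plan then is: choose $\varepsilon$ small (say $\varepsilon < 1/8$), apply the approximation lemma to get $h\in C(\Omega(E,C))_+$, $\|h\|\le 1$, with $\|hch - hfh\| < \varepsilon$, but arranged so that $h$ is supported in $V$ and $h$ is bounded below on a smaller clopen set; then $hch$ is within $\varepsilon$ of $hfh$, and on a suitable clopen set $hfh$ is invertible (bounded below by a positive constant). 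Using functional calculus in the commutative algebra $C(\Omega(E,C))$ one produces $g\in C(\Omega(E,C))$ supported on that clopen set with $g(hfh)g$ equal to a projection $q\in C(\Omega(E,C))$; then $\|g h c h g - q\| \le \|g\|^2\|hch - hfh\| < \|g\|^2\varepsilon$, and since $q$ is a projection, for $\|g\|^2\varepsilon$ small enough a standard perturbation argument yields a projection $q'$ close to $ghchg$ that is equivalent to $q$ and lies in the hereditary subalgebra generated by $c$; a final adjustment shows $x^*cx$ can be taken to be a (nonzero) projection lying in $C(\Omega(E,C))$ itself, with $x$ of the form $g'h'$ times a partial isometry correcting the equivalence. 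I would be careful to keep everything inside $C(\Omega(E,C))$ in the end, using that projections in a commutative AF-algebra that are close are in fact equal after cutting down.

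For the last sentence: if $I$ is a nonzero (closed, two-sided) ideal of $\mathcal O^r(E,C)$, pick a nonzero positive $c\in I$; then $x^*cx\in I$ as well, and by the above $x^*cx$ is a nonzero projection in $C(\Omega(E,C))$, which proves the claim. In particular property (SP) follows, since any nonzero hereditary subalgebra contains a nonzero positive element $c$, and $x^*cx$ is then a nonzero projection in that hereditary subalgebra.

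\textbf{Main obstacle.} The delicate point is the bookkeeping in the approximation step: one must produce the cutting function $h$ inside the commutative subalgebra, supported on a clopen set on which $E(c)$ is bounded below, while simultaneously controlling $\|hch - hE(c)h\|$; this requires invoking the topological-freeness input from Theorem \ref{thm:top-free} at exactly the right place (it is what guarantees one can approximate the ``off-diagonal'' part of $c$ away on a nonempty open set). The rest — extracting a clopen set where a positive continuous function is invertible, building projections by functional calculus, perturbing a near-projection to a genuine projection, and transporting it by a partial isometry — is standard and zero-dimensionality of $\Omega(E,C)$ makes the projection-extraction automatic. I would model the argument closely on the classical Cuntz--Krieger uniqueness theorem for graph C*-algebras with condition (L) and on \cite[Theorem 4.4]{ELQ}.
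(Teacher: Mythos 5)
Your proposal follows essentially the same route as the paper, which simply invokes topological freeness (Theorem \ref{thm:top-free}) and then cites the argument of \cite[Theorem 4.9]{AEK}, itself based on \cite[Proposition 2.4]{ELQ}: cut down by a function (here a characteristic function of a clopen set, thanks to zero-dimensionality) so that $hch$ is close to $hE(c)h$ with the latter bounded below. Your final perturbation-of-near-projections step can be streamlined as in \cite{AEK}: since $hch$ is invertible in the corner $h\,\mathcal O^r(E,C)\,h$, taking $x=h\,(hch)^{-1/2}$ gives $x^*cx=h$, a projection already in $C(\Omega(E,C))$, avoiding any adjustment.
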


\begin{proof}
Since $\Omega (E,C)$ is topologically free when $(E,C)$ satisfies
condition (L) (Theorem \ref{thm:top-free}), the same proof used in
\cite[Theorem 4.9]{AEK} (which is based on \cite[Proposition
2.4]{ELQ}) applies here.
\end{proof}

We can also derive a purely algebraic consequence. Note that the
consideration of crossed products enables us to give a basically
unified treatment of the purely algebraic case and the C*-case.

Recall that for a given field with involution $K$ and a compact
zero-dimensional topological space $X$, we denote by $C_K(X)$ the
*-algebra of continuous functions $f\colon X\to K$, where $K$ is given
the discrete topology. Note that $C_{\C}(X)$ is a dense *-subalgebra
of $C(X)$.

\begin{theorem}
\label{thm:AlgebraicSP} Let $(E,C)$ be a finite bipartite separated
graph satisfying condition (L). Then the *-algebra $\Lab (E,C)=
C_K(\Omega (E,C)) \rtimes _{\alpha} \mathbb F$ satisfies property
${\rm (SP)}$. More precisely, given a nonzero element $a$ in $\Lab
(E,C)$, there is a projection $h$ in $C_K(\Omega (E,C))$, a nonzero
element $\lambda \in K$  and an element $g\in \mathbb F$ such that
$$ha(e(g)\delta _g)h=\lambda h\delta _e.$$
In particular every nonzero ideal of $\Lab (E,C)$ contains a nonzero
projection of $C_K(\Omega (E,C))$.
\end{theorem}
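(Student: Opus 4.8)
The plan is to mimic the C*-algebraic argument behind Theorem \ref{thm:SPproperty}, but carried out entirely inside the algebraic crossed product $\Lab(E,C) = C_K(\Omega(E,C)) \rtimes_\alpha \mathbb{F}$, using that $\Omega(E,C)$ is topologically free (Theorem \ref{thm:top-free}, since $(E,C)$ satisfies condition (L)). Write a nonzero $a \in \Lab(E,C)$ as a finite sum $a = \sum_{g \in S} a_g \delta_g$, where $S \subseteq \mathbb{F}$ is finite and $a_g \in C_K(\Omega(E,C))\, e(g)$. First I would pick $g_0 \in S$ and a point $\xi_0 \in \Omega(E,C)$ with $a_{g_0}(\xi_0) = \lambda \neq 0$; multiplying $a$ on the right by $e(g_0)\delta_{g_0^{-1}}$ (which only reindexes $S$ and does not kill $a_{g_0}$, by the conditional-expectation faithfulness on the diagonal, cf.\ \cite[Section 2]{ExelAmena}), I may assume $g_0 = e$, so $a = a_e \delta_e + \sum_{g \in S \setminus\{e\}} a_g\delta_g$ with $a_e(\xi_0) = \lambda \neq 0$.

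The core step is a \emph{separation} argument: because the action is topologically free, for each $g \in S \setminus \{e\}$ the fixed-point set $F_g = \{\xi \in \Omega_{g^{-1}} : \theta_g^*(\xi) = \xi\}$ has empty interior, so $\Omega(E,C) \setminus \bigcup_{g \in S\setminus\{e\}} F_g$ is dense; hence I can choose $\xi_0$ in this dense set with $a_e(\xi_0) = \lambda$ (shrinking the earlier choice if necessary). Then I would build a clopen neighbourhood $h = 1_V$ of $\xi_0$, with $V \in \mathbb{K}(\Omega(E,C))$ small enough that simultaneously: (i) $a_e \equiv \lambda$ on $V$; and (ii) $\theta_g^*(V) \cap V = \emptyset$ for every $g \in S \setminus \{e\}$ such that $V \subseteq \Omega_{g^{-1}}$ — this is possible by continuity of the $\theta_g^*$ and by topological freeness, exactly as in \cite[Proposition 2.4]{ELQ}. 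Existence of such a basic clopen $V$ uses the explicit basis of $\Omega(E,C)$ by the vertices of $F_\infty$ (equivalently, the partial $E$-function description of Theorem \ref{thm:Efunctions}), which lets me take $V$ arbitrarily deep in the tree.

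Once $V$, $h = 1_V$ are fixed, I would compute $h\, a\, (e(g)\delta_g)\, h$ for a suitable $g$; the natural choice is $g = e$ when $S = \{e\}$, and more generally one absorbs the off-diagonal part. The point is that $h (a_g \delta_g) h = (1_V \cdot a_g \cdot (g \cdot 1_V))\,\delta_g$, and $g \cdot 1_V = 1_{\theta_g^*(V)}$, so for $g \neq e$ the support condition (ii) forces $1_V \cdot 1_{\theta_g^*(V)} = 0$, killing every off-diagonal term; the surviving diagonal term is $1_V a_e 1_V \delta_e = \lambda\, 1_V \delta_e = \lambda h \delta_e$. Thus $h a h = \lambda h \delta_e$, which is exactly the asserted identity (with $g = e$, $e(g) = 1$); note $h \delta_e$ is a projection in $C_K(\Omega(E,C))$, so $\lambda^{-1} h a h$ is a nonzero idempotent, and (after a standard symmetrization $p \mapsto$ a projection Murray–von Neumann equivalent to it, or simply noting $h\delta_e$ is already a projection) we obtain property (SP). For the final sentence, any nonzero ideal $I$ contains a nonzero $a$, hence contains $h a h = \lambda h \delta_e$, hence contains the nonzero projection $h \delta_e \in C_K(\Omega(E,C))$.

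The main obstacle will be step (ii): arranging a single clopen $V$ that is disjoint from all its translates $\theta_g^*(V)$ for the finitely many $g$ occurring in $a$, \emph{while} keeping $V$ inside $V_0 := \{a_e = \lambda\}$. This is where topological freeness is essential and where I would lean on the concrete description of $\Omega(E,C)$ via partial $E$-functions (Theorem \ref{thm:Efunctions}) and the formula for the action in Lemma \ref{lem:actionForFuncts}: going sufficiently many levels down the tree $F_\infty$ separates a point from its non-fixing translates, because a translate $\theta_g^*(\xi)$ differs from $\xi$ in some ball $B_{2r}$ of the Cayley graph, which corresponds to a finite-level clopen set. The bookkeeping of how the word $g$ interacts with the zig-zag $E$-function data is the genuinely technical part; everything else is the routine crossed-product computation sketched above, and is the reason the authors simply refer to the proof of \cite[Theorem 4.9]{AEK} in the C*-case.
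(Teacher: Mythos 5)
Your proposal is correct and follows essentially the same route as the paper: reduce to a nonzero coefficient at the identity by right-multiplying by $e(g_0^{-1})\delta_{g_0^{-1}}$, then use topological freeness (Theorem \ref{thm:top-free}) together with zero-dimensionality, as in \cite[Proposition 2.4]{ELQ}, to find a clopen projection $h=1_V$ with $V$ inside the level set of the identity coefficient that kills all off-diagonal terms, giving $h\,a\,(e(g)\delta_g)\,h=\lambda h\delta_e$. The only caveats are cosmetic: the right multiplier should be $e(g_0^{-1})\delta_{g_0^{-1}}$ (its coefficient must lie in $\mathcal D_{g_0^{-1}}$), in step (ii) you should first shrink $V$ so that for each $g\in S\setminus\{e\}$ it is either contained in or disjoint from $\Omega_{g^{-1}}$, and the explicit $E$-function/Cayley-graph machinery is not needed—Hausdorff separation of $\xi_0$ from its finitely many non-fixing translates already suffices, which is exactly how the paper argues.
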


\begin{proof}
If is sufficient to show the result for any crossed product
$\mathcal A= C_K(X)\rtimes _{\theta^*} G$, where $G$ is a discrete
group, $X$ is a zero-dimensional compact space, and $\theta $ is a
topologically free partial action of $G$ on $X$ such that $U_t$ is a
clopen subset of $X$ for all $t\in G$. For $g\in G$, we denote by
$e(g)$ the characteristic function of $U_g$, so that the ideals
$D_g$ associated to the induced action $\theta^*$ of $G$ on $C_K(X)$
are given by $D_g= e(g)C_K(X)$. Recall that, setting $\alpha
=\theta^*$, we have $\alpha_g (e(g^{-1})f)(\xi) =
(e(g^{-1})f)(\theta _{g^{-1}}(\xi))= f(\theta _{g^{-1}}(\xi ))$ for
$\xi \in U_t$ and $f\in C_K(X)$.

We will show the statement for $\mathcal A=C_K(X)\rtimes _{\theta^*}
G$ as above by using a slight  modification of the arguments
presented in \cite[Proposition 2.4]{ELQ}. Let $a$ be a nonzero
element of $\mathcal A$. Multiplying $a$ on the right by a suitable
element of the form $e(g)\delta _g$, for $g\in G$, we obtain an
element $c:=a(e(g)\delta _g)$ with $c_e\ne 0$, where $c=\sum _{t\in
T} c_t\delta _t$ for a finite subset $T$ of $G$. Note that $c_e=\sum
_{i=1}^n \lambda _i \chi _{W_i}$, for some non-zero scalars $\lambda
_i\in K$ and pairwise disjoint non-empty clopen subsets $W_i$ of
$X$. Setting $\lambda := \lambda _1$, and using the fact that $X$ is
zero-dimensional, we obtain as in the proof of \cite[Proposition
2.4]{ELQ} a clopen subset $V$ contained in $W_1$ such that, putting
$h:=\chi _{V}$, we have
$$h(c_t\delta _t)h =0, \qquad \text{for all } t\in T\setminus \{ e
\}.$$ Since $V\subseteq W_1$ we thus get $hch= hc_eh= \lambda
h\delta _e$, so that we obtain
$$ha (e(g) \delta _g) h= hch =\lambda h\delta _e\, ,$$
as desired.
\end{proof}

\begin{remark}
\label{rem:one-sidedideals} Observe that, in the conditions of
Theorem \ref{thm:AlgebraicSP}, we also have that every nonzero
one-sided ideal of $\Lab (E,C)$ contains a nonzero idempotent.
Indeed if $a$ is a nonzero element of $\Lab (E,C)$, and
$ha(e(g)\delta _g)h=\lambda h \delta_e$ for some nonzero projection
$h$ in $C_K(X)$, some $g\in G$ and some $\lambda \in K\setminus
\{0\}$, then $ a[\lambda ^{-1}(e(g)\delta_g)h]$ is a nonzero
idempotent belonging to the right ideal generated by $a$, and
analogously $[\lambda^{-1}(e(g)\delta _g)h]a$ is a nonzero
idempotent in the left ideal generated by $a$.
\end{remark}

\medskip

\section*{Acknowledgements}
It is a pleasure to thank Ken Goodearl and Takeshi Katsura for their
useful comments. We also thank the anonymous referee for carefully reading 
the paper and for providing many helpful suggestions.

\medskip


\begin{thebibliography}{12}

\bibitem{Abadie}
  F. Abadie,
  \emph{Enveloping actions and Takai duality for partial actions},
J. Funct. Anal. {\bf 197} (2003), 14--67.

\bibitem{AA1} G. Abrams, G. Aranda Pino, \emph{ The Leavitt path algebra of a
graph}, J. Algebra  {\bf 293}  (2005),  319--334.

\bibitem{Areal} P.  Ara, The realization problem for von Neumann regular rings,
{\it Ring Theory 2007. Proceedings of the Fifth China-Japan-Korea
Conference}, (eds. H. Marubayashi, K. Masaike, K. Oshiro, M. Sato);
World Scientific, 2009, pp. 21--37.

\bibitem{Aone-rel} P. Ara, \emph{Purely infinite simple reduced
C*-algebras of one-relator separated graphs}, J. Math. Anal. Appl.
{\bf 393} (2012),  493--508.

\bibitem{AEK} P. Ara, R. Exel, T. Katsura, \emph{Dynamical systems
of type $(m,n)$ and their C*-algebras}, Ergodic Theory and Dynamical
Systems {\bf 33} (2013), 1291--1325.

\bibitem{AF} P. Ara, A. Facchini, \emph{Direct sum decompositions of modules, almost trace
ideals, and pullbacks of monoids}, Forum Math. {\bf 18} (2006),
365--389.

\bibitem{AG2} P. Ara, K. R. Goodearl, \emph{C*-algebras of
separated graphs}, J. Funct. Anal. {\bf 261} (2011), 2540--2568.

\bibitem{AG} P. Ara, K. R. Goodearl, \emph{Leavitt path algebras of
separated graphs}. J. reine angew. Math. {\bf 669} (2012), 165--224.

\bibitem{AGprep} P. Ara, K. R. Goodearl, \emph{A wild refinement
monoid}, in preparation.

\bibitem{AMP} P. Ara, M. A. Moreno, E. Pardo, \emph{Nonstable $K$-theory for graph
algebras}, Algebr. Represent. Theory {\bf 10} (2007), 157--178.


  \bibitem{BHRSByRuy}
  T. Bates, J. Hong, I. Raeburn and W. Szyma{\'n}ski,
  \emph{The ideal structure of the C*-algebras of infinite graphs},
  Illinois J. Math. \bf 46 \rm (2002), 1159--1176.

  \bibitem{BPRSByRuy}
  T. Bates, D. Pask, I. Raeburn and W. Szyma{\'n}ski,
  \emph{The C*-algebras of row-finite graphs},
  New York J. Math. \bf 6 \rm (2000), 307--324 (electronic).


\bibitem{Black} {\sc B. Blackadar}, ``K-Theory for Operator
Algebras'', Second Edition, M.S.R.I. Publications, vol. 5, Cambridge
Univ. Press, Cambridge, 1998.

\bibitem{BN} B Brenken, Z. Niu, \emph{The C*-algebra of a partial isometry}, Proc. Amer. Math. Soc.
{\bf 140} (2012), 199--206.

\bibitem{Brown} L. G. Brown, \emph{Ext of certain free product $C^{\ast}
$-algebras}, J. Operator Theory {\bf 6} (1981), 135--141.


\bibitem{Berg1} G. M. Bergman, \emph{Modules over coproducts of rings}, Trans. Amer. Math. Soc. {\bf 200}
(1974) 1--32.

\bibitem{Berg2} G. M. Bergman, \emph{Coproducts and some universal ring constructions},
Trans. Amer. Math. Soc. {\bf 200} (1974), 33--88.

\bibitem{CK} J. Cuntz and W. Krieger, \emph{A class of
$C^*$-algebras and topological Markov chains}, Inventiones Math.
{\bf 56} (1980), 251--268.

\bibitem{DS} W. Dicks, T. Schick, \emph{The spectral measure of
certain elements of the complex group ring of a wreath product},
Geom. Dedicata {\bf 93} (2002), 121--137.


\bibitem{DokExel} M. Dokuchaev, R. Exel, \emph{Associativity of
crossed products by partial actions, enveloping actions and partial
representations}, Trans. Amer. Math. Soc. {\bf 357} (2005),
1931--1952.


  \bibitem{WatatanibyRuy}
  M. Enomoto, Y. Watatani,
  \emph{A graph theory for C*-algebras},
  Math. Japon. \bf 25 \rm (1980), 435--442.


 \bibitem{ExelAmena}
  R. Exel,
  \emph {Amenability for Fell bundles},
  J. Reine Angew. Math. {\bf 492} (1997), 41--73,
[arXiv:funct-an/9604009].

\bibitem{ExelPAMS98}
R. Exel,
\emph{Partial actions of groups and actions of inverse semigroups},
Proc. Amer. Math. Soc. {\bf 126} (1998),  3481--3494,
 [arXiv:funct-an/9511003].


\bibitem{ExelPJM} R. Exel, \emph{Partial representations and amenable Fell bundles over
free groups}, Pacific J. Math. {\bf 192} (2000), 39--63.

  \bibitem{CombinByRuy}
  Ruy Exel,
  \emph{Inverse semigroups and combinatorial C*-algebras},
  Bull. Braz. Math. Soc. (N.S.) \bf 39 \rm (2008), 191--313.

\bibitem{ExelLaca}
  R. Exel and M. Laca,
  \emph {Cuntz--Krieger algebras for infinite matrices},
  J. reine angew. Math. {\bf 512} (1999), 119--172, [arXiv:funct-an/9712008].


\bibitem{ELQ} R. Exel, M.  Laca, J.  Quigg, \emph{Partial dynamical systems and
C*-algebras generated by partial isometries}, J. Operator Theory 47
(2002), 169--186.


  \bibitem{MuhlyByRuy}
  C. Farthing, P.  Muhly, and T. Yeend,
  \emph{Higher-rank graph C*-algebras: an inverse semigroup and groupoid approach},
  Semigroup Forum \bf 71 \rm (2005), 159--187.

  \bibitem{FLRByRuy}
  N. Fowler, M. Laca, and I. Raeburn,
  \emph{The C*-algebras of infinite graphs},
  Proc. Amer. Math. Soc. \bf 128 \rm (2000), 2319--2327.


\bibitem{GR} D. Gon\c calves, D. Royer, Leavitt path algebras as
partial skew group rings, arXiv:1202.2704v1 [math.RA].

\bibitem{GLSZ} R. I. Grigorchuk, P. Linnell, T. Schick, A. Zuk,
\emph{On a question of Atiyah}, C. R. Acad. Sci. Paris S\'er. I
Math. {\bf 331} (2000), 663--668.

\bibitem{Freyd} P. Freyd, \emph{Redei's finiteness theorem for commutative
semigroups}, Proc. Amer. Math. Soc. {\bf 19} (1968), 1003.

\bibitem{GoodLPA} K. R. Goodearl, \emph{Leavitt path algebras and direct limits}, Rings, modules
and representations, 165--187, Contemp. Math., {\bf 480}, Amer.
Math. Soc., Providence, RI, 2009.


\bibitem{HR} R. Hancock, I. Raeburn, \emph{The C*-algebras of some inverse
semigroups}, Bull. Austral. Math. Soc. {\bf 42} (1990), 335--348.

\bibitem{Keimel} K. Keimel, \emph{Alg\`ebres commutatives engendr\'ees par leurs \'el\'ements idempotents},
Canad. J. Math. {\bf 22} (1970), 1071--1078.


\bibitem{Kerr} D. Kerr, \emph{C*-algebras and topological dynamics:
finite approximation and paradoxicality}, CRM Advanced course books,
Birkhauser, to appear; http://www.math.tamu.edu/~kerr/.


\bibitem{KN} D. Kerr, P. W. Nowak, \emph{Residually finite actions
and crossed products}, Ergodic Theory and Dynamical Systems {\bf 32}
(2012), 1585--1614.

  \bibitem{KPActionsByRuy}
  A. Kumjian and D. Pask,
  \emph{$C\sp *$-algebras of directed graphs and group actions},
  Ergodic Theory Dynam. Systems \bf 19 \rm (1999), 1503--1519.


\bibitem{KPR} A. Kumjian, D. Pask, I. Raeburn, \emph{Cuntz-Krieger
algebras of directed graphs}, Pacific J. Math. {\bf 184} (1998),
161--174.


  \bibitem{KPPRByRuy}
  A. Kumjian, D. Pask, I. Raeburn and J.  Renault,
  \emph{Graphs, groupoids, and Cuntz-Krieger algebras},
  J. Funct. Anal. \bf 144 \rm (1997), 505--541.

\bibitem{lea}
W. G. Leavitt. {\it The module type of a ring.}
 Trans. Amer. Math. Soc. {\bf 103} (1962) 113--130.

\bibitem{McCla1} K. McClanahan, \emph{$C\sp *$-algebras generated by elements of a
unitary matrix}, J. Funct. Anal. {\bf 107} (1992), 439--457.

\bibitem{McCla2} K. McClanahan, \emph{$K$-theory and ${\rm
Ext}$-theory for rectangular unitary $C\sp *$-algebras}, Rocky
Mountain J. Math. {\bf 23} (1993), 1063--1080.

\bibitem{McCla3} K. McClanahan, \emph{Simplicity of reduced amalgamated products of
C*-algebras}, Canad. J. Math. {\bf 46} (1994), 793--807.

\bibitem{McCla4}, \bysame,  \emph{K-theory for partial crossed products by
discrete groups}, J. Funct. Anal. \textbf{130} (1995), 77--117.


  \bibitem{PatGraphByRuy}
  A. L. T. Paterson,
  \emph{Graph inverse semigroups, groupoids and their C*-algebras},
  J. Operator Theory \bf 48 \rm (2002), 645--662.


\bibitem{Raeburn} I. Raeburn, Graph algebras. CBMS Regional Conference Series in
Mathematics, 103. Published for the Conference Board of the
Mathematical Sciences, Washington, DC; by the American Mathematical
Society, Providence, RI, 2005.


  \bibitem{RSYByRuy}
  I. Raeburn, A. Sims and T. Yeend,
  \emph{The C*-algebras of finitely aligned higher-rank graphs},
  J. Funct. Anal. \bf 213 \rm (2004), 206--240.

  \bibitem{PTWByRuy}
  I. Raeburn, M. Tomforde and D. P. Williams,
  \emph{Classification theorems for the C*-algebras of graphs with sinks},
  Bull. Austral. Math. Soc. \bf 70 \rm (2004), 143--161.

  \bibitem{RaeSzyByRuy}
  I. Raeburn and W. Szyma\'nski,
  \emph{Cuntz-Krieger algebras of infinite graphs and matrices},
  Trans. Amer. Math. Soc. \bf 356 \rm (2004), 39--59 (electronic).

\bibitem{rordam} M. R\o rdam, F. Larsen, N.J. Laustsen,
``An Introduction to $K$-Theory for $C^*$-Algebras", Cambridge
University Press, LMS Student Texts 49, 2000.

\bibitem{RS} M. R\o rdam, A. Sierakowski, \emph{Purely infinite
C*-algebras arising from crossed products}, Ergodic Theory and
Dynamical Systems {\bf 32} (2012), 273--293.

\bibitem{Stone} M. H. Stone, \emph{Applications of the theory of Boolean rings to general topology}, 
Trans. Amer. Math. Soc. {\bf 41} (1937), 375--481. 

\bibitem{Tarski} A. Tarski, Cardinal Algebras. With an Appendix: Cardinal
Products of Isomorphism Types, by Bjarni J\' onsson and Alfred
Tarski. Oxford University Press, New York, N. Y., 1949.


  \bibitem{TomfordeByRuy}
  M. Tomforde,
  \emph{A unified approach to Exel-Laca algebras and C*-algebras associated to graphs},
  J. Operator Theory \bf 50 \rm (2003), 345--368.


\bibitem{Truss} J. K. Truss, \emph{The failure of cancellation laws for
equidecomposability types}, Canad. J. Math. {\bf 42} (1990),
590--606.

\bibitem{Wagon} S. Wagon, The Banach-Tarski paradox. With a foreword by Jan
Mycielski. Corrected reprint of the 1985 original. Cambridge
University Press, Cambridge, 1993.

\bibitem{W} F. Wehrung, \emph{Embedding simple commutative monoids into
simple refinement monoids}, Semigroup Forum {\bf 56} (1998)
104--129.

\end{thebibliography}
\end{document}